\newcommand{\supp}{\operatorname{supp}}
\newcommand{\R}{\mathbb R}
\newcommand{\N}{\mathbb N}
\newcommand{\C}{\mathbb C}
\newcommand{\Z}{\mathbb Z}
\newcommand{\E}{\mathbb E}
\newcommand{\F}{\mathbb F}
\newcommand{\X}{\mathbb X}
\renewcommand{\Re}{\mathop{\text{\upshape{Re}}}}
\renewcommand{\Im}{\mathop{\text{\upshape{Im}}}}
\newcommand{\tr}{\mathop{\text{\upshape{tr}}}}
\newcommand{\RR}{\mathcal{R}}
\newcommand{\sign}{\mathop{\textrm{sign}}}
\newcommand{\rad}{\mathop{\textrm{\upshape{Rad}}}\nolimits}
\newcommand{\spur}{\mathop{\textrm{\upshape{tr}}}}
\newcommand{\sym}{\R^{n\times n}_{\textrm{sym}}}
\newcommand{\mreg}{\mathop{\text{\upshape{MR}}}}
\newcommand{\buc}{\mathop{\text{\upshape{BUC}}}}
\newcommand{\Isom}{\mathop{\text{\upshape{Isom}}}}
\newcommand{\HT}{\mathop{\text{\upshape{HT}}}}
\newcommand{\op}{\mathop{\text{\upshape{op}}}}
\newcommand{\cal}[1]{{\mathcal #1}}
\renewcommand{\Im}{\operatorname{Im}}
\renewcommand{\epsilon}{\varepsilon}
\renewcommand{\bar}[1]{\overline{#1}}
\newcommand{\id}{\mathop{\textrm{id}}\nolimits}
\newcommand{\norm}{|\!|\!|}
\renewcommand{\tilde}{\widetilde}
\renewcommand{\phi}{\varphi}
\renewcommand{\div}{\mathop{\text{\upshape{div}}}}
\DeclareMathOperator{\sgn}{sgn}
\newcommand{\co}{\mathop{\text{\upshape{conv}}}}
\newcommand{\aco}{\mathop{\text{\upshape{aconv}}}}
\newcommand{\symb}{\mathop{\textrm{symb}}}
\newcommand{\MRO}{\leftidx{_0}{\!\mreg}{_p}}
\newtheorem{theorem}{Theorem}[section]
\newtheorem{lemma}[theorem]{Lemma}
\newtheorem{corollary}[theorem]{Corollary}
\theoremstyle{definition}
\newtheorem{definition}[theorem]{Definition}
\newtheorem{remark}[theorem]{Remark}
\newtheorem{example}[theorem]{Example}
\numberwithin{equation}{section}
\begin{document}

\title[Maximal regularity for parabolic evolution equations]
{An introduction to maximal regularity\\ for parabolic evolution equations}
\author{Robert Denk}
\address{University of  Konstanz, Department of Mathematics and Statistics,
         78457 Konstanz, Germany}
\email{robert.denk@uni-konstanz.de}

\begin{abstract}
In this note, we give an introduction to the concept  of maximal $L^p$-regularity as a method to solve nonlinear
partial differential equations. We first define maximal regularity for autonomous and non-autonomous problems and
describe the connection to Fourier multipliers and $\RR$-boundedness.  The abstract results are applied to a large
class of parabolic systems in the whole space and to general parabolic boundary value problems. For this, both the
construction of solution operators for boundary value problems and a characterization of trace spaces of Sobolev
spaces are discussed. For the nonlinear equation, we obtain local in time well-posedness in appropriately chosen
Sobolev spaces. This manuscript is based on known results and consists of an extended version of lecture notes on
this topic.
\end{abstract}

\subjclass{Primary 35-02;  35K90; Secondary 42B35, 35B65}

\keywords{Maximal regularity, Fourier multipliers, parabolic boundary value problems, quasilinear evolution equations}

\thanks{The author wants to express his gratitude to Hideo Kozono and Takayoshi Ogawa for the possibility to give a
lecture series with this topic at the Tohoku Forum for Creativity, Sendai, and for their great hospitality during his
stay at Tohoku University.}

\date{March 3, 2020}

 \maketitle

\tableofcontents

\section{Introduction}

In this survey, we give an introduction to the method of maximal $L^p$-regularity which has turned out to be useful for
the analysis of nonlinear (in particular, quasilinear) partial differential equations. The aim of this note is to
present an overview on the main ideas and tools for this approach. Therefore, we are not trying to present the state of
the art but restrict ourselves to relatively simple situations. At the same time, we focus on the mathematical
presentation and not on the historical development of this successful branch of analysis. So we do not give detailed
bibliographical remarks but refer to some nowadays standard literature, where more details on the history and on the
bibliography can be found. This survey could serve as a basis for an advanced lecture course in partial differential
equations, for instance for Ph.D. students. In fact, the present paper is based on a series of lectures given in July
2017  at the Tohoku University in Sendai, Japan, and on an advanced course for master students at the University of
Konstanz, Germany, in the summer term 2019.

Although the concept of maximal regularity is classical, some main achievements for the abstract theory were obtained
in the 1990's and in the first decade of the present century by, e.g., Amann (see \cite{Amann04}, \cite{Amann95}) and
Pr\"uss (see \cite{Pruess02}). The basic idea of maximal regularity is to solve nonlinear partial differential
equations by a linearization approach. Let us consider an abstract quasilinear equation of the form
\begin{equation}
  \label{1-1}
  \begin{aligned}
    \partial_t u(t) - A(u(t))u(t) & = F(u(t)),\\
    u(0) & = u_0.
  \end{aligned}
\end{equation}
The linearization of \eqref{1-1} at some fixed function $u$ is given by
\begin{equation}
  \label{1-2}
  \begin{aligned}
    \partial_t v(t) - A(u(t))v(t) & = F(u(t)),\\
    v(0) & = u_0.
\end{aligned}
\end{equation}
In the maximal regularity approach, one tries to solve the linear equation in appropriate function spaces and to show
that the solution has the optimal regularity one could expect. In this case, let $v =: S_u(F(u), u_0)$ denote the
($u$-dependent) solution operator  of the linear equation \eqref{1-2}. If $S_u$  induces an isomorphism between
appropriately chosen pairs of Banach spaces, then the solvability of the nonlinear equation \eqref{1-1} can be reduced
to a fixed-point equation of the form $u = S_u(F(u), u_0)$. In many situations, the contraction mapping principle can
be applied to obtain a unique solution of the fixed point equation and, consequently, of the nonlinear equation
\eqref{1-1}. In this way, typically short-time existence or existence for small data can be shown. For the long-time
asymptotics and the stability of the solution, different methods have to be used. Here, we mention the monograph by
Pr\"uss and Simonett \cite{Pruess-Simonett16}, which covers  the abstract theory of maximal regularity, stability
results, and many examples in fluid mechanics and geometry.

As mentioned above, one key ingredient in the maximal regularity approach is the choice of appropriate function spaces
for the right-hand sides and the solution of the nonlinear equation. In the present note, we restrict ourselves to the
$L^p$-setting, where the basic spaces are $L^p$-Sobolev spaces. (For maximal regularity in H\"older spaces, we mention
the monograph by Lunardi \cite{Lunardi95}.) Maximal $L^p$-regularity is closely related to the question of Fourier
multipliers, as we will see in Section~3 below. Therefore, it was a breakthrough for the application of this concept,
when an equivalent description for maximal regularity in terms of vector-valued Fourier multipliers and
$\RR$-sectoriality was found by Weis \cite{Weis01} in the year 2001.

The description of maximal $L^p$-regularity by $\RR$-boundedness made it possible to show that a large class of
parabolic boundary value problems have this property. As standard references for $\RR$-boundedness and applications to
partial differential operators, we mention \cite{Denk-Hieber-Pruess03} and \cite{Kunstmann-Weis04}. For boundary value
problems, also the question of appropriate function spaces on the boundary appears, which leads to the characterization
of trace spaces. Here the trace can be taken with respect to time (for the initial value at time 0) or with respect to
the space variable (for inhomogeneous boundary data). It turns out that the theory of trace spaces is highly nontrivial
and connected with interpolation properties of intersections of Sobolev spaces. In this way, modern theory of
vector-valued Sobolev spaces with non-integer order of  differentiability enters. Results on trace spaces can be found,
e.g., in \cite{Denk-Hieber-Pruess07}, for a survey on vector-valued Sobolev spaces we refer to \cite{Amann19} and
\cite{Hytonen-vanNeerven-Veraar-Weis16}.

The plan of the present survey follows the topics just mentioned. In Section~2, we state the idea and the formal
definition of maximal regularity, mentioning the graphical mean curvature flow as a prototype example. The connection
to vector-valued Fourier multipliers and $\RR$-boundedness is given in Section~3. In Section~4, we briefly summarize
the main definitions of the different types of (non-integer) Sobolev spaces and give some key references. The
application of the abstract concept to parabolic partial differential equations in the whole space is given in
Section~5, the application to parabolic boundary value problems in Section~6. Finally, we return to nonlinear evolution
equations in Section~7, where local well-posedness and higher regularity for the solution are discussed.

There are, of course, many topics in the context of maximal $L^p$-regularity which are not covered here. First, we want
to mention the application of maximal regularity to stochastic partial differential equations, which leads to the
notion of stochastic maximal regularity. Here, the class of radonifying operators plays an important role. A survey on
stochastic maximal regularity can be found, e.g., in \cite{VanNeerven-Veraar-Weis15}, for random sums and radonifying
operators see also \cite{Hytonen-vanNeerven-Veraar-Weis17}. Another development that could be mentioned is the maximal
$L^p$-regularity approach for boundary value problems which are not parabolic in a classical sense (as defined in
Sections~5 and 6 below). Some main applications are free boundary value problems from fluid mechanics or problems
describing phase transitions like the Stefan problem. Here, the related symbols are not quasi-homogeneous, and the
theory described below cannot be applied. One concept to show maximal $L^p$-regularity for such problems uses the
Newton polygon, and we refer to \cite{Denk-Kaip13} for more details.

\section{Maximal regularity and $L^p$-Sobolev spaces}

\subsection{Linearization and maximal regularity}

We start with an example of a quasilinear parabolic equation.

\begin{example}[Graphical mean curvature flow]
  \label{2.1}
  Let $T_0\in(0,\infty]$, let $M$ denote an  $n$-dimensional parameter space, and let  $X(t, \cdot) \colon
M  \to\R^{n+1} $, $t\in [0,T_0)$,  be a   family of regular maps. Here, regular means that the Jacobian $D_x X(t,x)$
with  respect to $x\in M$ is injective for all $x\in M$ and $t\in [0,T_0)$. We set $M_t := X(t,M )$.  Then the vectors
$\partial_{x_1} X(t,x), \dots, \partial_{x_n} X(t,x)$ form a basis for the tangent space $T_x M_t$ at the point
$X(t,x)$. In particular, we are interested in the graphical situation where $M  = \R^n$ (or some domain in $\R^n$) and
where $X$ is given as the graph of some function  $u\colon[0,T_0)\times\R^n\to\R$, so we have $X(t,x) = (x,u(t,x))$ for
$x\in\R^n$ and $t\in [0,T_0)$.

Let $\nu\colon [0,T_0)\times M \to\R^{n+1}$ be one choice of the normal vector  to $M_t$, so $\nu(t,x)$  is a unit
vector which  is orthogonal to the tangent space $T_xM$. For each $i=1,\dots,n$, the vector $\partial_{x_j}\nu(t,x)$ is
an element of $T_xM_t$, and therefore we can write
\[ \partial_{x_j}\nu(t,x) = \sum_{i=1}^n S_{ij}(t,x) \partial_{x_i}X(t,x).\]
The matrix $S(t,x) := (S_{ij}(t,x))_{i,j=1,\dots,n}$ is called the shape operator at the point $X(t,x)$,  its
eigenvalues are called  the principal curvatures, and its trace $H(t,x) := \tr S(t,x)$ is called the mean curvature.

The family of hypersurfaces $(M_t)_{t\in [0,T_0)}$ is said to move according to the mean curvature flow (see, e.g.,
\cite{Colding-Minicozzi-Pedersen15} for a survey) if
\[ \partial_t X (t,x)\cdot \nu(t,x) = - H(t,x)\nu(t,x)\quad ((t,x)\in [0,T_0)\times M^n).\]
In the graphical situation, one choice of the normal vector is given by
\[ \nu(t,x) = \frac{1}{\sqrt{1+|\nabla u|^2}}\,\binom{-\nabla u(t,x)}{1}.\]
From this, we obtain for the mean curvature
\[ H(t,x) = -\div \Big( \frac{ \nabla u(t,x)}{\sqrt{1+|\nabla u|^2}}\Big),\]
and  the equation for the graphical mean curvature flow  is given by
  \begin{equation}
    \label{2-1}
    \begin{aligned}
    \partial_t u - \Big(\Delta u -\sum_{i,j=1}^n\frac{\partial_i u \partial_j u}{1+|\nabla u|^2}\,\partial_i \partial_j
     u\Big)  & = 0 \quad \text{in } (0,T_0),\\
    u(0) & = u_0.
    \end{aligned}
  \end{equation}
Here, $u_0$ is the initial value at time $t=0$, so $M_0$ is given as the $X(0,\R^n)$ with $X(0,x)=(x,u_0(x))$. As the
coefficients of the second derivatives of $u$ depend on $u$ itself, this is an example of a quasilinear parabolic
equation.
\end{example}

The above example can be written in the abstract form
\begin{equation}\label{2-2}
  \begin{aligned}
    \partial_t u + F(u) u & = G(u),\\
    u(0) & = u_0,
  \end{aligned}
\end{equation}
where $F(u)$ is a linear operator depending on $u$ and $G(u)$ (which equals zero in the example) is, in general, some
nonlinear function  depending on $u$. For the  linearization of \eqref{2-2}, we fix some function $u$ and are looking
for a solution of the Cauchy problem
\begin{equation}
  \label{2-3}
  \begin{aligned}
    \partial_tv + F(u) v & = G(u),\\
    v(0) & = u_0.
  \end{aligned}
\end{equation}
Note that \eqref{2-3} is a linear equation with respect to $v$, and therefore it can be treated with methods from
linear operator  theory and semigroup theory. In general,  \eqref{2-3} is  a non-autonomous problem, as $u$ and
therefore also $F(u)$ still depend on time. Setting $A(t) := F(u(t))$ and $f(t):= G(u(t))$, we obtain
\begin{equation}
  \label{2-3a}
  \begin{aligned}
  \partial_t v(t) - A(t) v & = f(t) \quad (t>0),\\
  v(0) & = u_0.
\end{aligned}
\end{equation}

The idea of maximal regularity consists in showing ``optimal'' regularity for the linearized equation. Roughly
speaking, one should not  loose any regularity when solving the linear equation, as the solution will be inserted into
the equation in the next step of some iteration process. Considering \eqref{2-3a} in an operator theoretic sense, we
want to have good mapping properties of the solution operator who maps the right-hand side data $f$ and $u_0$ to the
solution $v$. For this, we have to fix function spaces for the right-hand side and the solution. So we have to choose
the basic space $\F$ for the right-hand side $f$ and a solution space $\E$ for $v$. The choice of the space $\gamma_t
\E$ for the initial value $u_0$ will then be canonical, see below.

In case of maximal regularity, we expect a unique solution of \eqref{2-3a} and a continuous solution operator $S_u$
(depending on $A(t)$ and  therefore on $u$)
\[ S_u\colon \F\times \gamma_t \E \to \E,\; (f,u_0)\mapsto v\]
of the linear equation \eqref{2-3a}. Then the nonlinear Cauchy problem is uniquely solvable if and only if the fixed
point  equation
\[ u = S_u(G(u),u_0)\]
has a unique solution $u\in\E$.

In many cases, one can show that the right-hand side of this fixed point equation defines a contraction, and therefore
Banach's fixed point  theorem (contraction mapping principle) gives a unique solution. To obtain the contraction
property, one usually has to choose a small time interval or small initial data $u_0$. Typical applications for this
method are
\begin{itemize}
  \item the graphical mean curvature flow or more general geometric equations,
  \item Stefan problems describing phase transitions with a free boundary,
  \item Cahn-Hilliard equations,
  \item variants of the Navier-Stokes equation.
\end{itemize}
For a survey on the idea of maximal regularity and on the above applications, we mention the monographs \cite{Amann95},
\cite{Pruess02}, and \cite{Pruess-Simonett16}.

The notion of maximal regularity depends on the function spaces in which the equation is considered. Typical function
spaces for partial  differential equations are H\"older spaces and $L^p$-Sobolev spaces. In the present survey, we
restrict ourselves to $L^p$-Sobolev spaces, i.e., we are considering maximal $L^p$-regularity. Here, the basic function
space for the right-hand side of \eqref{2-3a} will be $f  \in L^p((0,T); X)$, where  $X$ is some Banach space. In the
$L^p$-setting, one will typically choose $X=L^p(G)$ for some domain $G\subset\R^n$. The aim is to show that the
operator $A(t):= F(u(t))$ has, for every fixed $u$, maximal regularity in the sense specified below.

\subsection{Definition of maximal $L^p$-regularity}

We start with the notion of maximal $L^p$-regularity in the autonomous setting, i.e. for an operator $A$ independent of
$t$. Let $X$ be a Banach space, and let $A\colon X\supset D(A)\to X$ be a closed and densely defined linear operator.
Let $J=(0,T)$ with  $T\in (0,\infty]$. We consider the initial value problem
  \begin{align}
  \partial_t u(t) - A u(t) & = f(t)\quad (t\in J),\label{2-4}\\
  u(0) & = u_0.\label{2-5}
  \end{align}
Here, the right-hand side of \eqref{2-4} belongs to $\F:= L^p(J;X)$. For optimal regularity, we will expect $\partial_t
u\in L^p(J;X)$ and (consequently)  $Au\in L^p(J;X)$. An even stronger assumption would include $u\in L^p(J;X)$, too, so
that the  ``optimal'' space for the solution $u$ is given by
\begin{equation}\label{2-5a}
 \E := W_p^1(J;X)\cap L^p(J;D(A)).
\end{equation}
Here, for $k\in\N_0$ the vector-valued Sobolev space  $W_p^k(J;X)$ is defined as the space of all $X$-valued
distributions $u$ for which $\partial^\alpha u \in L^p(J;X)$ for all $|\alpha|\le k$, see Section~4 (cf. also
\cite{Hytonen-vanNeerven-Veraar-Weis16}, Section~2.5).

For the initial value $u_0$, we define the trace space:

\begin{definition}
  \label{2.2}
  a) The trace space $\gamma_t\E$ is defined by $\gamma_t\E:= \{ \gamma_t u: u\in\E\}$, where $\gamma_t u := u|_{t=0}$ stands
  for
  the time trace of the function $u$ at time $t=0$. We endow $\gamma_t \E$ with its canonical norm
  \[ \|x\|_{\gamma_t \E} := \inf\{ \|u\|_{\E}: u\in\E,\, \gamma_t u = x\}.\]

  b) We set $\leftidx{_0}{\E}:= \{u\in\E: \gamma_t u =0\}$ for the space of all functions in $\E$ with vanishing time
  trace at
  $t=0$.
\end{definition}

\begin{remark}
\label{2.3}
a) Note in the above definition that, by Sobolev's embedding theorem, one has the continuous embedding
\[ W_p^1((0,T);X) \subset C([0,T], X)\]
for every finite $T$, where the right-hand side stands for the space of continuous $X$-valued functions. Therefore, the
value  $\gamma_t u = u(0)$ is well defined as an element of $X$ for every $u\in\E$.

b) Let $T\in (0,\infty)$ again. By a), we obtain for $x\in \gamma_t\E$ and for every $u\in\E$ with $\gamma_t u=x$,
\[ \|x\|_X = \|\gamma_t u\|_X \le \max_{t\in [0,T]}\|u(t)\|_X \le C \|u\|_{W_p^1(J;X)} \le C \|u\|_{\E}.\]
Therefore, $\gamma_t\E\subset X$ with continuous embedding. On the other hand, if $x\in D(A)$, then the function $u(t)
:= e^{-t} x$ belongs  to $\E$ with $\|u\|_{\E}\le C\|x\|_X$ and satisfies $\gamma_t u = x$. Therefore, also the
continuous embedding $D(A)\subset \gamma_t\E$ holds.
\end{remark}

The following theorem is a deep result in the theory of interpolation of Banach spaces. Here, the real interpolation
functor  $(\cdot,\cdot)_{\theta,p}$ appears. We refer to \cite{Lunardi18} and \cite{Triebel95} for an introduction and
survey on interpolation spaces.

\begin{lemma}
  \label{2.4a}
  Let $A$ be a closed and densely defined operator, and let $\E$ be defined by \eqref{2-5a}.

  a) The  trace space $\gamma_t\E$ coincides with the real interpolation space with parameters  $1-\frac 1p$ and $p$, i.e.,
   we
  have
   \[ \gamma_t\E = (X, D(A))_{1-1/p,p}\]
   in the sense of equivalent norms.

   b) We have the continuous embedding  $\E\subset  C([0,T];\gamma_t\E)$. In particular, the time trace $\gamma_t\colon
    \E\to\gamma_t\E,\,  u\mapsto u(0)$ is well defined, and  $\gamma_t\E$ is independent of $T$.

  c) The norm of the continuous embedding  $\E\subset C([0,T];\gamma_t\E)$ depends, in general, on  $T$ and grows for
  decreasing $T$. On the  subspace  $\leftidx{_0}{\E}$, however, this norm can be chosen independently of  $T>0$, i.e.,
  there exists a constant $C_1$ independent of $T$ such that
  \[ \|u\|_{C([0,T];\gamma_t \E)} \le C_1 \|u\|_{\E} \quad ( u\in \leftidx{_0}{\E}{} ).\]
\end{lemma}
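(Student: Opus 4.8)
The plan is to prove the three parts of Lemma~\ref{2.4a} in a way that leverages the abstract trace characterization in part~a), from which parts b) and c) follow with quantitative control of the constants. For part~a), the statement $\gamma_t\E = (X,D(A))_{1-1/p,p}$ is the classical trace theorem for the maximal regularity space; I would reduce it to the well-known description of the real interpolation space as a trace space of weighted $L^p$ functions on $\R_+$. Concretely, I would first treat the model case $J=(0,\infty)$. One inclusion: given $u\in\E=W_p^1(\R_+;X)\cap L^p(\R_+;D(A))$, the function $t\mapsto u(t)$ satisfies $u\in L^p(\R_+;D(A))$ and $u'\in L^p(\R_+;X)$, and the trace theorem of Lions--Peetre (see \cite{Triebel95}, \cite{Lunardi18}) identifies exactly such trace values $u(0)$ with the interpolation space $(D(A),X)_{1/p,p}=(X,D(A))_{1-1/p,p}$, giving $\|u(0)\|_{(X,D(A))_{1-1/p,p}}\le C\|u\|_{\E}$. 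Conversely, given $x\in(X,D(A))_{1-1/p,p}$, I would construct an explicit extension: since $A$ is closed and densely defined I cannot assume it generates a semigroup, so instead I would use the abstract K-functional. Writing $x = a(t)+b(t)$ with $a(t)\in X$, $b(t)\in D(A)$ and $t\mapsto (t^{-1/p}\|a(t)\|_X, t^{1-1/p}\|b(t)\|_{D(A)})\in L^p(\R_+;dt/t)$ realizing the norm up to a constant, a suitable mollified/averaged choice of $b$ (e.g.\ $u(t):=\frac1t\int_0^t b(s)\,ds$ composed with a cutoff, or the standard Sobolevskii-type extension) yields $u\in\E$ with $\gamma_t u=x$ and $\|u\|_{\E}\le C\|x\|_{(X,D(A))_{1-1/p,p}}$. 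The finite-$T$ case follows by extension/restriction: restriction $\E(0,\infty)\to\E(0,T)$ is bounded, and a bounded extension operator $\E(0,T)\to\E(0,\infty)$ exists (reflect and cut off), so the trace space and its norm are equivalent, and in particular $\gamma_t\E$ is independent of $T$.

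For part~b), continuity $\E\hookrightarrow C([0,T];\gamma_t\E)$ follows by translating the trace estimate in time: for fixed $s\in[0,T]$ the shifted function $\tau\mapsto u(s+\tau)$ lies in $\E(0,T-s)$ with norm controlled by $\|u\|_{\E(0,T)}$, hence $\gamma_t$ applied to it, namely $u(s)$, satisfies $\|u(s)\|_{\gamma_t\E}\le C(T)\|u\|_{\E}$; continuity in $s$ comes from density of $C^1([0,T];D(A))$ in $\E$ together with this uniform bound. That $\gamma_t\E$ is $T$-independent was already obtained in part~a).

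For part~c), the point is that on $\leftidx{_0}{\E}$ the embedding constant can be taken uniform in $T$. Here I would use the zero time trace to extend by zero: if $u\in\leftidx{_0}{\E}(0,T)$ then its extension $\bar u$ by $0$ to $(0,\infty)$ lies in $W_p^1(\R_+;X)\cap L^p(\R_+;D(A))=\E(0,\infty)$ with $\|\bar u\|_{\E(0,\infty)}=\|u\|_{\E(0,T)}$ --- the vanishing trace is exactly what makes the extension-by-zero preserve the $W_p^1$ norm with no boundary term. Then $\|u\|_{C([0,T];\gamma_t\E)}\le\|\bar u\|_{C([0,\infty);\gamma_t\E)}\le C_1\|\bar u\|_{\E(0,\infty)}=C_1\|u\|_{\E(0,T)}$, where $C_1$ is the ($T$-independent) constant from the half-line case in part~b). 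The main obstacle I anticipate is the converse direction in part~a): since only closedness and dense domain of $A$ are assumed (no semigroup, no sectoriality), one cannot use the semigroup formula $u(t)=e^{tA}x$ for the extension and must instead build the extension operator purely from the K-functional decomposition and verify the required $L^p$-in-time bounds on $u$, $u'$, and $Au$ by Hardy-type inequalities; this is the technically delicate step, and it is precisely why the lemma is attributed to interpolation theory rather than semigroup theory.
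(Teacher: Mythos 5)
The paper itself offers no proof of this lemma --- it is quoted as a known deep result of interpolation theory, with \cite{Lunardi18} and \cite{Triebel95} as references --- and your part a) is essentially the Lions--Peetre trace-method argument found there: the direct inclusion via the trace theorem, the converse via a near-optimal $K$-functional decomposition, averaging, a cutoff and Hardy's inequality, and the reduction of finite $T$ to the half line by restriction/extension. Apart from the usual measurable-selection details and the restriction $1<p<\infty$ (for $p\in\{1,\infty\}$ the unweighted trace method needs modification), that part of your sketch is sound.

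The gaps are in b) and c). In b), your forward shift exhibits $u(s)$ as the time trace of a function on the interval $(0,T-s)$, so the estimate you can quote from a) carries a constant depending on the length $T-s$, not on $T$; and this one-point trace constant necessarily blows up as the interval shrinks (test with constants $u\equiv x$, $x\in D(A)$: the norm of $u$ in $W_p^1((0,\delta);X)\cap L^p((0,\delta);D(A))$ is of order $\delta^{1/p}\|x\|_{D(A)}$ while $\|x\|_{\gamma_t\E}$ stays fixed --- exactly the growth part c) of the lemma warns about). Hence your bound degenerates as $s\to T$ and the claimed uniform estimate $\|u(s)\|_{\gamma_t\E}\le C(T)\|u\|_{\E}$ is not established; you must handle $s$ near $T$ by a backward, time-reversed shift $\tau\mapsto u(s-\tau)$ on $(0,s)$, or first extend $u$ to a longer interval. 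In c) the error is sharper: the vanishing trace of $u\in\leftidx{_0}{\E}{}$ is at $t=0$, but your zero extension is glued at $t=T$, where $u(T)$ is generically nonzero; so $\bar u$ has a jump at $T$, is not in $W_p^1(\R_+;X)$, and the asserted equality of norms fails --- the ``boundary term'' you dismiss sits at the other endpoint. The zero trace must be used through time reversal: for $s\in(0,T]$ set $v_s(\tau):=u(s-\tau)$ for $0<\tau<s$ and $v_s(\tau):=0$ for $\tau\ge s$; since $u(0)=0$ this lies in $W_p^1((0,\infty);X)\cap L^p((0,\infty);D(A))$ with norm at most $\|u\|_{\E}$ and satisfies $v_s(0)=u(s)$, so the half-line trace estimate yields $\|u(s)\|_{\gamma_t\E}\le C_1\|u\|_{\E}$ with $C_1$ independent of $s$ and $T$ (alternatively, reflect evenly about $T$ and extend by zero beyond $2T$, at the cost of a factor $2^{1/p}$). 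With these repairs your overall strategy --- reduce everything to the half-line trace theorem --- works, but as written neither the uniformity in $s$ in b) nor the $T$-independence in c) is actually proved.
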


\begin{definition}
  \label{2.4}
  Let $T\in (0,\infty],\, J:=(0,T)$, and $p\in [1,\infty]$.

    a) We say that $A$ has maximal $L^p$-regularity ($A\in \mreg_p(J;X)$) if for each $f\in\F$ and $u_0\in\gamma_t\E$
    there exists a unique  solution $u\in\E$ of  \eqref{2-4}. Here, a function $u\in\E$ is called a solution of
    \eqref{2-4}--\eqref{2-5} if equality in \eqref{2-4} holds in the space $L^p(J;X)$ (i.e., for almost all $t\in (0,T)$),
    and equality \eqref{2-5} holds in $X$.

  b) We write $A\in \MRO(J;X)$ if for each $f\in\F$ and $u_0\in\gamma_t\E$ there exists a function  $u\colon [0,T]\to X$
   satisfying $\partial_t u\in L^p(J;X)$ and $Au \in L^p(J;X)$ such that  \eqref{2-4} holds for almost all $t\in (0,T)$ and
   \eqref{2-5} holds as equality in $X$, and if for all $f\in \F$ and $u_0\in\gamma_t\E$ the inequality
  \begin{equation}\label{2-7} \|\partial_t u\|_{L^p(J;X)} + \|Au\|_{L^p(J;X)}\le C\big( \|f\|_{L^p(J;X)} +
  \|u_0\|_{\gamma_t\E}\big)
  \end{equation}
  holds with a constant $C=C(J)$ independent of $f$ and $u_0$.

  c) We set $\mreg_p(X) := \mreg_p((0,\infty);X)$ and $\MRO(X) := \MRO((0,\infty);X)$.
\end{definition}

\begin{remark}
  \label{2.5}
  a) By the  definition of the spaces, the map
  \[  \binom {\partial_t - A}{\gamma_t}\colon \E\to \F\times \gamma_t\E, u\mapsto \binom{\partial_t u - Au}{\gamma_t u}\]
  is continuous. If $A\in \mreg_p(J;X)$, then, due to the definition of maximal regularity, this map is a bijection and
  therefore, by
  the open mapping theorem, an isomorphism. In particular, we obtain the a priori estimate
  \begin{equation}
   \label{2-6}
   \|u\|_{L^p(J;X)} + \|\partial_t u\|_{L^p(J;X)} + \| Au\|_{L^p(J;X)} \le C \big( \|f\|_{L^p(J;X)} +
   \|u_0\|_{\gamma_t\E}\big),
   \end{equation}
   which is stronger than \eqref{2-7}.

    b) If $A\in\mreg_p(J;X)$, then \eqref{2-4}--\eqref{2-5} with $u_0:=0$ is uniquely solvable for all $f\in\F$. On the
    other hand, for a given $u_0\in\gamma_t\E$, there exists an extension $u_1\in\E$ with $\gamma_t u_1 = u_0$
    by the definition of the trace
    space. Setting $u=u_1+u_2$, then we see that $u_2$ has to satisfy
    \begin{equation}
      \label{2-4a}
      \begin{aligned}
        \partial_t u_2(t) - A u_2(t) &= \tilde f(t)\quad (t>0),\\
        u_2(0) & = 0,
      \end{aligned}
    \end{equation}
    where $\tilde f := f - Au_1\in \F$. Therefore, the operator $A$ has maximal regularity if and only if the Cauchy
    problem \eqref{2-4a} is uniquely solvable for all $\tilde f \in \F$.

    c) Let the time interval  $J$ be finite, and assume $A\in \MRO (J;X)$. Then the Cauchy problem \eqref{2-4a} has a
     unique solution
    $u$ for all $\tilde f\in\F$ with $\partial_t u\in L^p(J;X)$. As $u(0)=0$, we can apply  Poincar\'e's inequality in the
    vector-valued Sobolev space $W_p^1((0,T);X)$ and obtain  $u\in L^p(J;X)$. This yields $u\in \E$, and by part b) of
    this remark, we see that $A\in \mreg_p(J;X)$. Therefore, $\MRO(J;X)=\mreg_p(J;X)$ for finite time intervals.
    Similarly, if $A\in \MRO((0,\infty);X)$ and if $A$ is invertible, we can estimate
    $\|u\|_{L^p(J;X)} \le C \|Au\|_{L^p(J;X)}$ and obtain
    $u\in\E$ again, which implies  $A\in \mreg_p((0,\infty);X)$.
\end{remark}

It turns out that the property of maximal $L^p$-regularity is independent of $p$. For a proof of the following result,
we refer to \cite{Dore93}, Theorem~4.2.

\begin{lemma}
If $A\in \mreg_p(X)$ holds for some $p\in (1,\infty)$, then $A\in \mreg_p(X)$ holds for every $p\in (1,\infty)$.
\end{lemma}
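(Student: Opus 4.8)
The plan is to reduce maximal $L^p$-regularity to the $L^p$-boundedness of a single convolution operator with an operator-valued singular kernel, and then to invoke the Calder\'on--Zygmund extrapolation principle, which promotes boundedness on one $L^{p_0}$ to boundedness on every $L^p$ with $1<p<\infty$. First I would reduce to the homogeneous initial condition: by Remark~\ref{2.5}~b), $A\in\mreg_p((0,\infty);X)$ is equivalent to unique solvability in $\E$ of $\partial_t u - Au = f$, $\gamma_t u = 0$, for every $f\in L^p((0,\infty);X)$. Using the classical fact that maximal regularity forces $-A$ to generate a bounded analytic semigroup $(e^{-tA})_{t>0}$ on $X$, the solution is $u(t)=\int_0^t e^{-(t-s)A}f(s)\,ds$, and for $f$ in a dense subclass one has $Au(t)=(Lf)(t):=\int_0^t Ae^{-(t-s)A}f(s)\,ds$ and $\partial_t u = f + Lf$. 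Hence $A\in\mreg_p((0,\infty);X)$ is equivalent to the statement that $L$ extends to a bounded operator on $L^p((0,\infty);X)$. Extending functions by $0$ to the negative half-line, $L$ becomes the convolution operator $Lf=k*f$ on $L^p(\R;X)$ with $\LL(X)$-valued kernel $k(t):=Ae^{-tA}$ for $t>0$ and $k(t):=0$ for $t\le 0$.

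Next I would record the kernel estimates that follow from bounded analyticity,
\[ \|k(t)\|_{\LL(X)}\le \frac{C}{|t|},\qquad \|k'(t)\|_{\LL(X)}=\|A^2e^{-tA}\|_{\LL(X)}\le\frac{C}{|t|^2}\quad(t>0),\]
and verify the H\"ormander integral condition
\[ \sup_{s\ne0}\;\int_{|t|>2|s|}\|k(t-s)-k(t)\|_{\LL(X)}\,dt\;\le\; C'<\infty .\]
Since $k$ is supported in $(0,\infty)$, for $|t|>2|s|$ the segment joining $t-s$ to $t$ stays away from $0$ (indeed every point $\xi$ on it has $|\xi|\ge|t|-|s|>|t|/2$), so the mean value estimate through $k'$ applies and the resulting integral $\int_{2|s|}^{\infty}|s|\,|t|^{-2}\,dt$ is bounded uniformly in $s$; in particular there are no issues at small $t$.

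Then I would invoke the operator-valued Calder\'on--Zygmund extrapolation theorem: a convolution operator that is bounded on some $L^{p_0}(\R;X)$ with $1<p_0<\infty$, and whose $\LL(X)$-valued kernel satisfies the above size and H\"ormander conditions, is of weak type $(1,1)$; hence, by Marcinkiewicz interpolation, it is bounded on $L^p(\R;X)$ for every $p\in(1,p_0]$. For $p\in[p_0,\infty)$ one passes to the transpose operator, whose kernel is the reflected, adjoint kernel $t\mapsto k(-t)^*$ acting on $X^*$; this satisfies exactly the same size and smoothness bounds (operator norms are unchanged under adjoints), so the same weak-$(1,1)$/interpolation argument applies and, upon dualizing, yields boundedness of $L$ on the remaining range. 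Combining the two ranges, $L$ is bounded on $L^p(\R;X)$, hence on $L^p((0,\infty);X)$, for every $p\in(1,\infty)$, which by the reduction of the first step is precisely the assertion $A\in\mreg_p(X)$ for all such $p$.

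The main obstacle is the last step, the operator-valued Calder\'on--Zygmund extrapolation. The point worth stressing is that no geometric hypothesis on $X$ (such as UMD) and no $\RR$-boundedness of the resolvent enters here --- in contrast with the Fourier-multiplier characterization of maximal regularity discussed in Section~3 --- because the $L^{p_0}$-boundedness is \emph{assumed} rather than deduced from a multiplier theorem, so the classical scalar Calder\'on--Zygmund proof (Calder\'on--Zygmund decomposition, weak-$(1,1)$ estimate, Marcinkiewicz interpolation, duality) carries over verbatim once the kernel is controlled only through its $\LL(X)$-operator norm. A secondary technical point I would want to check is the duality step when $X$ is non-reflexive --- identifying the relevant dual of the Bochner space $L^{p_0}((0,\infty);X)$ and confirming that $k(-t)^*$ inherits the kernel bounds --- but this is routine. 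For a complete treatment along these lines one may consult \cite{Dore93}.
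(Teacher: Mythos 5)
The paper itself does not prove this lemma but defers to Dore (\cite{Dore93}, Theorem~4.2), and your proposal reconstructs what is essentially that classical argument: reduce to the convolution operator with kernel $t\mapsto Ae^{tA}$ (your sign convention differs harmlessly from the paper's $\partial_t u-Au=f$), use that maximal regularity forces the semigroup to be bounded analytic in order to get the kernel bounds $\|Ae^{tA}\|\le C/t$ and $\|A^2e^{tA}\|\le C/t^2$, verify H\"ormander's condition, and apply operator-valued Calder\'on--Zygmund (Benedek--Calder\'on--Panzone) extrapolation: weak $(1,1)$ plus Marcinkiewicz for $p\le p_0$, duality for $p\ge p_0$. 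That is the standard route, and your kernel verification is correct. Note only that the ingredient ``maximal regularity implies bounded analytic semigroup'' is itself a nontrivial theorem (also due to Dore), so your proof, like the paper's, still leans on the cited literature for one substantive step; your remark about duality for non-reflexive $X$ is indeed handled in the classical Benedek--Calder\'on--Panzone framework.

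There is, however, one genuine gap relative to the paper's definitions. By Definition~\ref{2.4}, $\mreg_p(X)=\mreg_p((0,\infty);X)$ requires the solution to lie in $\E=W^1_p((0,\infty);X)\cap L^p((0,\infty);D(A))$, so in particular $u\in L^p((0,\infty);X)$; the weaker class in which only $\partial_t u,\,Au\in L^p$ are required is the paper's $\MRO$. Your claimed equivalence ``$A\in\mreg_p((0,\infty);X)$ iff $L$ is bounded on $L^p((0,\infty);X)$'' holds only in one direction: boundedness of $L$ yields $\|\partial_t u\|_{L^p}+\|Au\|_{L^p}\le C\|f\|_{L^p}$, i.e.\ membership in $\MRO((0,\infty);X)$, but gives no control of $\|u\|_{L^p}$ on the infinite interval (for a non-invertible generator, such as the Laplacian on $L^q(\R^n)$, the variation-of-constants solution need not be $p$-integrable in time). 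So as written your extrapolation argument proves the $p$-independence of $\MRO((0,\infty);X)$, not yet of $\mreg_p(X)$ as defined. The fix is short: the hypothesis $A\in\mreg_{p_0}(X)$ in the strong sense gives the a priori estimate \eqref{2-6}, i.e.\ the multiplier $\mathscr F_t^{-1}(i\tau-A)^{-1}\mathscr F_t$ is bounded on $L^{p_0}(\R;X)$; testing it on functions concentrated near a fixed frequency yields $\sup_{\tau\neq 0}\|(i\tau-A)^{-1}\|_{L(X)}<\infty$, hence $0\in\rho(A)$. Then $\|u\|_{L^p}\le\|A^{-1}\|\,\|Au\|_{L^p}$, and Remark~\ref{2.5}~c) upgrades $\MRO((0,\infty);X)$ to $\mreg_p((0,\infty);X)$ for every $p\in(1,\infty)$. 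With this addendum your proof is complete.
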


Based on this, we write $\mreg(X)$ instead of $\mreg_p(X)$. Note that the constant $C$ in \eqref{2-7} still depends on
$p$.

By Definition~\ref{2.4} and Remark~\ref{2.5} b), the operator $A$ has maximal $L^p$-regularity in $J=(0,\infty)$ if and
 only if the Cauchy problem
\begin{equation}
  \label{2-10}
  \begin{aligned}
  \partial_t u(t) - A u(t) & = f(t)\quad (t\in (0,\infty)),\\
  u(0) & = 0
  \end{aligned}
\end{equation}
has a unique solution $u\in W_p^1(J;X)$. We can extend $f$ and $u$ by zero to the whole line $t\in\R$ and obtain
functions $f\in L^p(\R;X)$ and $u\in W_p^1(\R;X)$ (for this, we need $u(0)=0$). After this, we apply the Fourier
transform in $t$, which is defined for smooth functions by
\[ (\mathscr F_t u)(\tau) := (2\pi)^{-1/2} \int_\R u(t) e^{-it\tau} dt.\]
For tempered distributions, we define $\mathscr F_t $ by duality. Note that $[\mathscr F_t (\partial_t u)](\tau) =
i\tau (\mathscr F_tu)(\tau)$. Therefore, \eqref{2-10} is equivalent to
\begin{equation}
  \label{2-11}
  (i\tau-A)(\mathscr F_tu)(\tau ) = (\mathscr F_t f)(\tau)\quad (\tau\in\R).
\end{equation}

\begin{theorem}
  \label{2.7}
  Let $J=(0,\infty)$ and $A$ be a closed densely defined operator. Then $A\in\MRO(J;X)$  if and only if the operator
  \[ \mathscr F_t^{-1} i\tau (i\tau-A)^{-1} \mathscr F_t \]
  defines a continuous operator in $L^p(\R;X)$.
\end{theorem}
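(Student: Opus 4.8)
The plan is to carry the reduction sketched just before the statement all the way through. By Remark~\ref{2.5}\,b) --- which applies equally to the estimate-only notion $\MRO$, since an extension $u_1\in\E$ with $\gamma_t u_1=u_0$ reduces the general case to $u_0=0$ --- and by the trivial bound $\|Au\|_{L^p(J;X)}\le\|\partial_t u\|_{L^p(J;X)}+\|f\|_{L^p(J;X)}$ read off from \eqref{2-4}, I would first note that $A\in\MRO(J;X)$ for $J=(0,\infty)$ is equivalent to the following: for every $f\in L^p((0,\infty);X)$ the problem \eqref{2-10} has a solution $u$ with $\partial_t u\in L^p((0,\infty);X)$ and $\|\partial_t u\|_{L^p}\le C\|f\|_{L^p}$, $C$ independent of $f$. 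Extending $f$ and $u$ by zero to $\R$ (which keeps $u\in W^1_p(\R;X)$, precisely because $u(0)=0$) turns \eqref{2-10} into \eqref{2-11}; provided $i\R\subset\rho(A)$, so that $m(\tau):=i\tau(i\tau-A)^{-1}=I+A(i\tau-A)^{-1}$ is well defined, \eqref{2-11} gives $\widehat u=(i\tau-A)^{-1}\widehat f$ and hence
\[
 \partial_t u=\mathscr F_t^{-1}\,i\tau(i\tau-A)^{-1}\,\mathscr F_t f=:T_m f .
\]
Thus the theorem reduces to matching the a priori estimate $\|\partial_t u\|_{L^p}\le C\|f\|_{L^p}$ for \eqref{2-10} with the $L^p(\R;X)$-boundedness of $T_m$.

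For the direction $A\in\MRO(J;X)\Rightarrow T_m$ bounded, the reduction above shows that $f\mapsto\partial_t u=T_m f$ is bounded by the constant $C$ on $\DD:=\{f\in L^p(\R;X):\supp f\subset[0,\infty)\}$ (on such $f$ the zero-extended solution of \eqref{2-10} realizes it). Since the symbol $m$ does not depend on $t$, $T_m$ commutes with the time translations $t\mapsto t-a$; combined with the bound on $\DD$ this yields $\|T_m f\|_{L^p}\le C\|f\|_{L^p}$ for every $f\in L^p(\R;X)$ whose support is bounded below. Such $f$ form a dense subspace of $L^p(\R;X)$, so a closure argument produces a bounded operator $T$ on $L^p(\R;X)$; as $T$ is translation-invariant and agrees with $\mathscr F_t^{-1}m\,\mathscr F_t$ on the dense subspace, $T=\mathscr F_t^{-1}m\,\mathscr F_t$, which is the asserted continuity.

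For the converse, I would assume $T_m$ is bounded on $L^p(\R;X)$, take $f\in L^p((0,\infty);X)$, extend it by zero, and set $u:=\mathscr F_t^{-1}(i\tau-A)^{-1}\mathscr F_t f$. From $i\tau(i\tau-A)^{-1}=I+A(i\tau-A)^{-1}$ and the closedness of $A$ one obtains $\partial_t u=T_m f\in L^p(\R;X)$ and $Au=T_m f-f\in L^p(\R;X)$, so $u\in W^1_p(\R;X)\hookrightarrow C(\R;X)$ solves $\partial_t u-Au=f$ on $\R$, with $\|\partial_t u\|_{L^p}\le\|T_m\|\,\|f\|_{L^p}$ and $\|Au\|_{L^p}\le(\|T_m\|+1)\|f\|_{L^p}$. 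It then remains to check that $u$ vanishes on $(-\infty,0)$; once that is known, $u|_{(0,\infty)}$ is a solution of \eqref{2-10} with $u(0)=0$ satisfying \eqref{2-7}, and a general $u_0\in\gamma_t\E$ is handled as in Remark~\ref{2.5}\,b). The vanishing on $(-\infty,0)$ is a causality statement: writing $u=G\ast f$ with $G$ the $\LL(X)$-valued kernel whose time-Fourier transform is $(i\tau-A)^{-1}$, one would argue $\supp G\subset[0,\infty)$ via the vector-valued Paley--Wiener theorem, using that $\lambda\mapsto(\lambda-A)^{-1}$ extends holomorphically and boundedly to the open right half-plane $\{\Re\lambda>0\}$.

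The hard part will be precisely this last step in the converse --- and it is where the one-sidedness of the time interval is essential: one needs the holomorphic, uniformly bounded extension of the resolvent to $\{\Re\lambda>0\}$ (equivalently, that $-A$ generates a bounded analytic semigroup), together with the normalization $i\R\subset\rho(A)$ already needed for $m$ to make sense. In the forward direction these are classical byproducts of maximal regularity on a half-line, so there the only nonformal step is the translation-invariance and density extension of $T_m$, which is routine. In the converse, however, one must extract this spectral information from the single hypothesis that $T_m$ is $L^p(\R;X)$-bounded --- a natural device being to test $T_m$ on functions whose Fourier transform is supported in $(0,\infty)$, which are boundary values of $X$-valued functions holomorphic in a half-plane --- and this is the main obstacle I expect. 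Everything else is bookkeeping with $\partial_t u$ and $Au$ and the identification of the multiplier symbol.
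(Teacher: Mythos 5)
Your set-up and your forward direction follow the same route as the paper's (very short) proof: zero-extend, Fourier transform \eqref{2-10} into \eqref{2-11}, and identify $\partial_t u=\mathscr F_t^{-1}i\tau(i\tau-A)^{-1}\mathscr F_t f$. Your extra step in the forward direction --- using translation invariance of $T_m$ and density of right-hand sides with support bounded below to pass from causal $f$ to all of $L^p(\R;X)$ --- fills in something the paper leaves implicit and is fine, modulo the fact (which you correctly defer to the classical theory) that $A\in\MRO((0,\infty);X)$ forces $i\R\setminus\{0\}\subset\rho(A)$ so that the symbol is defined at all.

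The converse, however, contains a genuine gap, and it is not one you can close from the stated hypotheses: the causality you need (the multiplier solution vanishing on $(-\infty,0)$, equivalently $u(0)=0$) does \emph{not} follow from the $L^p(\R;X)$-boundedness of $T_m$. Concretely, take $X=\C$ and $A=1$. Then $m(\tau)=i\tau(i\tau-1)^{-1}$ is smooth, bounded, and satisfies Mikhlin's condition, so $T_m\in L(L^p(\R))$; but every solution of $u'-u=f$ on $(0,\infty)$ has the form $u(t)=e^t\bigl(u(0)+\int_0^t e^{-s}f(s)\,ds\bigr)$, and $u,\partial_t u\in L^p((0,\infty))$ forces $u(0)=-\int_0^\infty e^{-s}f(s)\,ds$, which is nonzero for generic $f$; hence there is no solution with $u(0)=0$ in the required class and $A\notin\MRO((0,\infty);\C)$. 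So the bounded holomorphic extension of the resolvent to the open right half-plane (the Paley--Wiener input you identified) is genuinely additional information, not something to be ``extracted from the single hypothesis that $T_m$ is bounded''; your plan for the hard step would fail. Be aware that the paper's own proof does not supply this step either: it is a purely formal unwinding of the definitions that identifies the zero-extended problem \eqref{2-10} with the transformed equation \eqref{2-11}, tacitly taking the multiplier formula to produce the solution of the initial value problem. To make the ``if'' direction correct one must add a hypothesis such as $\bar\C_+\setminus\{0\}\subset\rho(A)$ with $\sup_{\lambda\in\C_+}\|\lambda(\lambda-A)^{-1}\|<\infty$ (equivalently, generation of a bounded holomorphic semigroup), which is exactly the standing assumption under which this characterization is proved in the literature and in Theorem~\ref{3.34}; under that assumption your causality argument and the remaining bookkeeping with $\partial_t u$, $Au$, and Remark~\ref{2.5}~b) do go through.
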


\begin{proof}
By definition, $A\in \MRO(J;X)$ if and only if \eqref{2-10} has a unique solution $u$ with $\partial_t u\in L^p(\R;X)$
(again extending  the functions by zero to the whole line), and if we have an estimate of $\partial_t u$. This is
equivalent to unique solvability of the Fourier transformed problem \eqref{2-11}, i.e., the existence of
$(i\tau-A)^{-1}$ for almost all $\tau\in\R$ such that the solution $u$ satisfies
\[ \partial_t u = \mathscr F_t^{-1} i\tau (i\tau-A)^{-1} \mathscr F_t f \in L^p(\R;X),\]
and the estimate of $\partial_t u$ is equivalent to the condition $\mathscr F_t^{-1} i\tau (i\tau-A)^{-1} \mathscr
F_t\in L(L^p(\R;X))$.
\end{proof}

\subsection{Maximal regularity for non-autonomous problems}

With respect to the nonlinear equation \eqref{2-3} and its linearization \eqref{2-3a}, it makes sense to define maximal
regularity  also for non-autonomous problems. So we consider
\begin{align}
  \partial_t u(t) - A(t) u (t)& = f(t)\quad (t\in(0,T)),\label{2-12}\\
  u(0) & = u_0.\label{2-12a}
\end{align}
Here we assume that all operators $A(t)$ are closed and densely defined operators in some Banach space $X$ and have the
same  domain $D_A$. We also assume that we have a norm $\|\cdot\|_A$ on $D(A)$ which is, for every $t\in (0,T)$,
equivalent to the graph norm of $A(t)$, which is given by $\|\cdot \|_X + \|A(t)\,\cdot\,\|_X$. In this way, we can
identify the unbounded operator $A(t)\colon X\supset D_A\to X$ with the bounded operator $A(t)\in L(D_A,X)$. Moreover,
we assume that $A\in L^\infty((0,T); L(D_A,X))$.

Analogously to the autonomous case, we consider the basic space for the right-hand side $\F := L^p(J;X)$ with
$J:=(0,T)$ and  the solution space
\begin{equation}
  \label{2-13}
  \E := W_p^1(J;X) \cap L^p(J;D_A).
\end{equation}
We identify  $A\colon (0,T)\to L(D_A,X)$ with a function on  $\E$ by setting
\[ (Au)(t) := A(t)u(t)\quad (t\in (0,T),\; u\in \E).\]
The trace space $\gamma_t\E$ is defined as in Definition~\ref{2.2} a).

\begin{definition}
  \label{2.9}
  a) Let $f\in\F$ and $u_0\in\gamma_t\E$. Then a function  $u\colon (0,T)\to X $ is called a strong ($L^p$)-solution
   of \eqref{2-12}--\eqref{2-12a} if  $u\in\E$ and if \eqref{2-12} holds for almost all  $t\in (0,T)$ and \eqref{2-12a}
   holds in $X$.

  b) We say that  $A\in L^\infty((0,T);L(D_A,X))$ has maximal $L^p$-regularity on $(0,T)$ if for all $f\in\F$ and
  $u_0\in \gamma_t\E$   there exists a unique strong solution  $u\in\E$ of \eqref{2-12}--\eqref{2-12a}.
\end{definition}

\begin{remark}
  \label{2.10}
  Similarly to the autonomous case, the operator $A\in L^\infty((0,T);L(D_A,X))$ has maximal regularity if and only if
  \[  (\partial_t - A,\gamma_t)\colon\E\to \F\times\gamma_t\E\]
  is an isomorphism of Banach spaces. By trace results, this is equivalent to the condition that
  \eqref{2-12}--\eqref{2-12a} with $u_0=0$ has a unique solution $u\in\E$ for every $f\in \F$.
\end{remark}

The following result shows that maximal regularity for the non-autonomous operator family $(A(t))_{t\in (0,T)}$ can be
reduced to maximal  regularity for each $A(t)$ if the operator depends continuously on time.

\begin{theorem}
  \label{2.11}
  Let $T\in (0,\infty)$ and  $A\in C([0,T], L(D_A,X))$. Then $A$ has maximal $L^p$-regularity in the sense of
  Definition~\ref{2.9}  if and only if for every $t\in [0,T]$ we have $A(t)\in\mreg((0,T);X)$.
\end{theorem}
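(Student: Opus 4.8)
The plan is to prove the two implications separately. The forward direction is immediate: if $A$ has maximal $L^p$-regularity in the non-autonomous sense, then for a fixed $t_0\in[0,T]$ the constant operator family $\tilde A(s):=A(t_0)$ differs from $A$ by a perturbation that is small on short time intervals (since $A$ is uniformly continuous on the compact interval $[0,T]$), and a perturbation argument transfers maximal regularity; alternatively, one can simply quote that maximal regularity of a constant operator on a short interval follows from maximal regularity on a neighbouring point and then patch intervals. So the substance is the converse.

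For the converse, assume $A(t)\in\mreg((0,T);X)$ for every $t\in[0,T]$. By Remark~\ref{2.10} it suffices to solve \eqref{2-12}--\eqref{2-12a} with $u_0=0$ uniquely in $\leftidx{_0}{\E}$ for every $f\in\F$. First I would freeze coefficients: fix $t_0\in[0,T]$ and consider the autonomous problem $\partial_t v - A(t_0)v = g$, $v(0)=0$. Its solution operator $S_{t_0}\colon\leftidx{_0}{\F}\to\leftidx{_0}{\E}$ is bounded, and by Lemma~\ref{2.4a}~c) the embedding constant of $\leftidx{_0}{\E}\hookrightarrow C([0,T];\gamma_t\E)$ is independent of the length of the interval — this uniformity is what makes the localization work. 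On a subinterval $I=(a,a+\delta)\subset(0,T)$ of length $\delta$ we write the equation for $u$ restricted to $I$ (after reducing to zero initial trace at $t=a$ by subtracting an extension, exactly as in Remark~\ref{2.5}~b)) as $\partial_t u - A(t_0)u = f + (A(t)-A(t_0))u$ with $t_0:=a$, i.e. $u = S_{a}\big(f + (A(\cdot)-A(a))u\big)$. The operator $u\mapsto S_a\big((A(\cdot)-A(a))u\big)$ on $\leftidx{_0}{\E}(I)$ has norm at most $\|S_a\|\cdot\sup_{t\in I}\|A(t)-A(a)\|_{L(D_A,X)}$, which by uniform continuity of $A$ and the $\delta$-independence of $\|S_a\|$ can be made $\le\tfrac12$ once $\delta$ is small enough, uniformly in $a$. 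Hence the fixed-point equation has a unique solution on each such short interval.

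Next I would glue: choose a partition $0=t_0<t_1<\dots<t_N=T$ with each $t_{k+1}-t_k\le\delta$, solve successively on $(t_k,t_{k+1})$ taking as initial datum at $t_k$ the trace of the solution already constructed on the previous interval (which lies in $\gamma_t\E$ by Lemma~\ref{2.4a}~b)), and check that the pieces match up to give a function $u\in\E$ on all of $(0,T)$ — continuity of the pieces in $C([0,T];\gamma_t\E)$ ensures the glued function is again in $W_p^1(J;X)$ with the correct trace, and it solves \eqref{2-12} a.e. on each subinterval, hence a.e. on $(0,T)$. Uniqueness follows the same way: a solution with $f=0$, $u_0=0$ must vanish on $(0,t_1)$ by the contraction estimate, then on $(t_1,t_2)$, and so on. Finally the open mapping theorem (Remark~\ref{2.10}) upgrades the bijectivity of $(\partial_t-A,\gamma_t)$ to an isomorphism, giving the a priori estimate.

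The main obstacle is the uniformity of the constants, not the perturbation argument itself: one must be sure that the norm $\|S_a\|$ of the autonomous solution operator and the embedding constant of $\leftidx{_0}{\E}\hookrightarrow C$ do \emph{not} blow up as the subinterval length $\delta\to 0$, so that a single $\delta$ works simultaneously for all the finitely many freezing points. This is precisely the content of Lemma~\ref{2.4a}~c) together with Remark~\ref{2.5}~c), which is why those statements were isolated beforehand; once one grants them, the gluing and the bookkeeping of traces are routine but must be done carefully to see that the assembled $u$ genuinely lies in $W_p^1((0,T);X)$.
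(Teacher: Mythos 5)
Your overall strategy --- freeze the coefficient, solve the frozen autonomous problems, absorb $(A(\cdot)-A(a))u$ by a Neumann series on short intervals, and glue --- is the right one; the paper itself gives no proof but defers to \cite{Amann04}, Theorem~7.1, which proceeds by exactly such perturbation arguments. There is, however, one genuine gap at the quantitative heart of your argument: to make $\|S_a\|\,\sup_{t\in I}\|A(t)-A(a)\|_{L(D_A,X)}\le\tfrac12$ for a single $\delta$ valid ``uniformly in $a$'', you need $\sup_{a\in[0,T]}\|S_a\|<\infty$, i.e.\ a \emph{uniform} maximal regularity constant for the frozen operators $A(a)$. The hypothesis only gives $A(a)\in\mreg((0,T);X)$ for each individual $a$, with a constant that a priori depends on $a$, and the results you invoke for uniformity (Lemma~\ref{2.4a}~c) and Remark~\ref{2.5}~c)) control dependence on the interval length $\delta$, not on the freezing point $a$. (Even the $\delta$-uniformity of $\|S_a\|$ is not literally those statements: it comes from extending the right-hand side by zero, restricting the solution on $(0,T)$, and proving uniqueness on the subinterval by continuing the solution forward using Definition~\ref{2.4}~a); that part is routine.) As written there is also a circularity risk: the partition points are chosen after $\delta$, while $\delta$ is chosen using constants attached to freezing points.

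The gap is fixable by a short argument that should be made explicit: $a\mapsto\|S_a\|$ is locally bounded, since if $\|A(s)-A(a)\|_{L(D_A,X)}\|S_a\|\le\tfrac12$ the same Neumann series shows $A(s)$ has maximal regularity with $\|S_s\|\le 2\|S_a\|$; continuity of $A$ and compactness of $[0,T]$ then give $M:=\sup_{a\in[0,T]}\|S_a\|<\infty$, and $\delta$ can be fixed from $M$ and the modulus of continuity of $A$ (alternatively, select finitely many freezing intervals by compactness \emph{before} fixing $\delta$). Relatedly, the forward implication is not ``immediate'': you need that non-autonomous maximal regularity restricts to subintervals with controlled constant (zero-extension plus a uniqueness continuation), then the perturbation step, and finally the standard fact that an autonomous operator with maximal regularity on some interval of positive length has it on all of $(0,T)$; each step is routine, but none is free, and the proof should say so.
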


This is shown, using perturbation arguments, in  \cite{Amann04}, Theorem~7.1.

\section{The concept of $\mathcal R$-boundedness and the theorem of Mikhlin}

In Theorem~\ref{2.7}, we have seen that maximal regularity of the operator $A$ is equivalent to the boundedness of the
 operator
\[\mathscr F_t^{-1} m \mathscr F_t\colon L^p(\R;X) \to L^p(\R;X),\] where the operator-valued symbol $m\colon \R\to L(X)$
is given
 by $m(\tau) := i\tau (i\tau-A)^{-1}$. The classical theorem of Mikhlin gives sufficient conditions for a scalar-valued
 symbol to induce a  bounded operator in $L^p(\R^n)$. For the operator-valued analogue, the concept of
 $\mathcal R$-boundedness can be used. Therefore, we discuss in this section the notion of an $\RR$-bounded family and
 vector-valued variants of Mikhlin's theorem. As references for this section, we mention \cite{Denk-Hieber-Pruess03},
 Section~3, and \cite{Kunstmann-Weis04}, Section~2.

\subsection{$\RR$-bounded operator families}

Let $X$ and $Y$ be Banach spaces.

\begin{definition}\label{3.1}
A family  $\mathcal{T}\subset L(X,Y)$ is called  $\RR$-bounded if there exists a constant $C>0$ and some
$p\in[1,\infty)$ such that for all  $N\in\N$, $T_j\in\mathcal T$, $x_j\in X$ ($j=1,\dots,N$) and all sequences
$(\epsilon_j)_{j\in\N}$ of independent and identically distributed $\{-1,1\}$-valued and symmetric random variables on
a probability space $(\Omega,\mathscr A,\mathbb P)$ we have
\begin{equation}\label{3-1}
\Big\|\sum_{j=1}^N\varepsilon_jT_jx_j\Big\|_{L^p(\Omega,Y)}\leq
C\Big\|\sum_{j=1}^N\varepsilon_jx_j\Big\|_{L^p(\Omega,X)}.
\end{equation}
In this case,  $\RR_p(\mathcal{T}):=\inf\{C>0: \mbox{(\ref{3-1})} \textnormal{ holds}\}$ is called the $\RR$-bound of
$\mathcal{T}$.
\end{definition}

\begin{remark}
  \label{3.2}
a) For the sequence of random variables as above, we have $\mathbb P(\{\epsilon_j=1\}) = \mathbb P(\{\epsilon_j=-1\}) =
\frac12$. As the measure $\mathbb P\circ(\epsilon_1,\dots,\epsilon_N)^{-1}$ is discrete, the independence of the
sequence is equivalent to the condition
\[ \mathbb P(\{\epsilon_1=z_1,\dots,\epsilon_N=z_N\}) = 2^{-N} \quad
\Big((z_1,\dots,z_N)\in\{-1,1\}^N,\; N\in\N\Big).\] Therefore, $\RR$-boundedness is equivalent to the condition
\begin{equation}
\label{3-2}
\begin{aligned}
\exists\; & C>0\;\forall\; N\in\N\; \forall\; T_1,\dots,T_N\in\cal
T\;\forall\;x_1,\dots,x_N\in X\\
& \quad \Big(\sum_{z_1,\dots,z_N=\pm 1} \Big\|\sum_{j=1}^N z_j
T_jx_j\Big\|_Y^p\Big)^{1/p}
 \le C \Big(\sum_{z_1,\dots,z_N=\pm 1} \Big\|\sum_{j=1}^N z_j
 x_j\Big\|_X^p \Big)^{1/p}.
 \end{aligned}
\end{equation}
However, the stochastic description is advantageous, in particular, one can choose the probability space
$(\Omega,\mathscr A,\mathbb P) = ([0,1],\mathscr B([0,1]),\lambda)$, where $\mathscr B([0,1])$ stands for the Borel
$\sigma$-algebra, $\lambda$  for the Lebesgue measure, and the random variables  $\epsilon_j$ are given by the
Rademacher functions (see below). It seems to be unclear if the notation ``$\RR$'' stands for ``randomized'' or for
``Rademacher''.
\end{remark}

\begin{definition}
  \label{3.3}
  The Rademacher functions $r_n\colon[0,1]\to \{-1,1\}$ are defined by
  \[ r_n(t) := \sign \sin(2^n\pi t)\quad (t\in [0,1]).\]
\end{definition}

By definition, we have
\[ r_1 (t) = \begin{cases}
  1, & t\in (0,\frac12),\\ -1, & t\in (\frac 12, 1).
\end{cases}\]
The function $r_2$ has value $1$ on the intervals
$(0,\frac 14)$ and $(\frac 12, \frac 34)$. An immediate calculation yields
\[ \int_0^1 r_n(t) r_m(t) dt = \delta_{nm}\quad (n,m\in\N).\]
Moreover,
\[ \lambda(\{t\in [0,1]: r_{n_1}(t) = z_1,\dots, r_{n_M}(t)=z_M\}) =
\frac{1}{2^M} = \prod_{j=1}^M \lambda(\{t\in [0,1]: r_{n_j}(t) = z_j\}).\] Therefore, the sequence $(r_n)_{n\in\N}$ is
independent and identically distributed on the probability space $([0,1], \mathscr B([0,1]),\lambda)$ as in
Definition~\ref{3.1}. As all properties of  $(\epsilon_j)_j$ which are needed in this definition only depend on the
joint probability distribution, we can always choose $\epsilon_n = r_n$.

\begin{definition}
  \label{3.4} Let $X$ be a Banach space and  $1\le p<\infty$. Then $\rad_p(X)$
  is defined as the Banach space of all sequences $(x_n)_{n\in\N}\subset
  X$ for which the limit $\lim_{N\to\infty} \sum_{n=1}^N r_n(t)x_n =:f(t)$ exists for almost all $t\in [0,1]$ and
  defines a function $f\in L^p([0,1];X)$. For $(x_n)_{n\in\N}\in \rad_p(X)$, we define
  \[ \big\| (x_n)_{n\in\N} \big\|_{\rad_p(X)} := \Big\| \sum_{n=1}^\infty r_n x_n
  \Big\|_{L^p([0,1];X)}. \]
\end{definition}

\begin{remark}
  \label{3.5}
  a) It can be shown that for any sequence $(x_n)_{n\in\N}\subset X$,
  the sequence $\big( \big\|\sum_{n=1}^N r_n x_n\big\|_{L^p([0,1];X)}\big)_{N\in\N}$ is increasing, and therefore
 $\rad_p(X)$ is the space of all sequences $ (x_n)_{n\in\N}$ such that
 \[ \Big\|\sum_{n=1}^\infty r_n x_n
  \Big\|_{L^p([0,1];X)}<\infty.\]

  b) By definition, the map $J\colon\rad_p(X)\to L^p([0,1];X),\; (x_n)_n\mapsto
  \sum_{n=1}^\infty r_n x_n$ is well-defined. Assume that $J((x_n)_n)=0$, i.e., $\sum_n r_n x_n =0$ holds in
  $L^p([0,1];X)$. Then $\sum_n r_n f(x_n) =0$ for all
  $f\in X'$. Taking the inner product in $L^2$ with  $r_{n_0}$ for some fixed $n_0$, we get, using the orthogonality,
  $f(x_{n_0})=0$ for all  $f\in X'$ and therefore $x_{n_0}=0$. As $n_0$ was arbitrary, we obtain $x_n=0$ for all
  $n\in\N$, which shows that $J$ is injective. Therefore,  $\rad_p(X)$
 can be considered as a subspace of  $L^p([0,1];X)$, and the norm in $\rad_p(X)$ is the restriction of the norm in
 $L^p([0,1];X)$.
\end{remark}

\begin{theorem}[Kahane-Khintchine inequality]
  \label{3.6}
  The spaces  $\rad_p(X)$ are isomorphic for all  $1\le p<\infty$, i.e., there
  exist constants $C_p>0$ with
  \[ \frac1{C_p} \Big\| \sum_{n=1}^\infty r_n x_n
  \Big\|_{L^2([0,1];X)} \le \Big\| \sum_{n=1}^\infty r_n x_n
  \Big\|_{L^p([0,1];X)} \le C_p  \Big\| \sum_{n=1}^\infty r_n x_n
  \Big\|_{L^2([0,1];X)}.\]
\end{theorem}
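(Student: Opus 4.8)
First I would reduce everything to an estimate for finite Rademacher sums $S_N := \sum_{n=1}^N r_n x_n$ with $x_1,\dots,x_N \in X$. One inequality is automatic: since $([0,1],\lambda)$ is a probability space, Jensen's inequality applied to $t \mapsto t^{q/p}$ gives
\[ \|S_N\|_{L^p([0,1];X)} \le \|S_N\|_{L^q([0,1];X)} \qquad (1 \le p \le q < \infty). \]
Hence the theorem reduces to the single one-sided bound
\[ \|S_N\|_{L^q([0,1];X)} \le C_q\, \|S_N\|_{L^1([0,1];X)} \qquad (1 \le q < \infty), \]
with $C_q$ independent of $N$ and of the $x_n$: combined with the Jensen direction this already yields $\|S_N\|_{L^2} \le C_2\|S_N\|_{L^p}$ when $p \le 2$ and $\|S_N\|_{L^p} \le C_p\|S_N\|_{L^2}$ when $p \ge 2$. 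To pass from finite sums to series, I would apply the finite-sum bound to blocks $\sum_{n=M+1}^{N} r_n x_n$: if $\sum_n r_n x_n$ converges in $L^2([0,1];X)$ then its partial sums are $L^2$-Cauchy, hence $L^p$-Cauchy, so the series converges in $L^p$ as well; letting $N \to \infty$ in the two-sided finite-sum inequality and using the monotonicity recorded in Remark~\ref{3.5}(a) then gives Theorem~\ref{3.6}.

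\textbf{The core estimate.} To establish $\|S_N\|_{L^q} \le C_q\|S_N\|_{L^1}$ I would use a deviation inequality for $\|S_N\|$. The starting point is that each coefficient is recovered from the sum: by orthogonality of the Rademacher functions, $x_j = \int_0^1 r_j(t) S_N(t)\,dt$, whence $\|x_j\| \le \int_0^1 \|S_N(t)\|\,dt = \|S_N\|_{L^1} =: \mu$ for every $j$, so $\max_j \|x_j\| \le \mu$. Next I would invoke the Hoffmann--J\o rgensen inequality for sums of independent symmetric $X$-valued random variables, which here reads
\[ \mathbb{P}\big(\|S_N\| > 3t + s\big) \le \mathbb{P}\big(\|S_N\| > t\big)^2 + \mathbb{P}\big(\max\nolimits_j \|x_j\| > s\big) \qquad (s,t > 0). \]
Taking $s := \mu$ kills the last term, and $t_0 := 2\mu$ gives $\mathbb{P}(\|S_N\| > t_0) \le \tfrac{1}{2}$ by Markov's inequality. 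Iterating along $t_{k+1} := 3 t_k + \mu$ produces $\mathbb{P}(\|S_N\| > t_k) \le 2^{-2^k}$, while solving the recursion (for $t_k + \mu/2$) gives $t_k \le \tfrac{5}{2}\, 3^k \mu$. Then
\[ \mathbb{E}\|S_N\|^q = \int_0^\infty q\, t^{q-1}\, \mathbb{P}(\|S_N\| > t)\, dt \le t_0^q + \sum_{k \ge 0} t_{k+1}^q\, \mathbb{P}(\|S_N\| > t_k), \]
and the super-exponential decay $2^{-2^k}$ dominates the polynomial growth $t_{k+1}^q = O(3^{kq}\mu^q)$, so the series converges and $\mathbb{E}\|S_N\|^q \le C_q \mu^q$, i.e. $\|S_N\|_{L^q} \le C_q^{1/q}\|S_N\|_{L^1}$.

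\textbf{The main obstacle.} The step I expect to be genuinely hard is the Hoffmann--J\o rgensen inequality itself (an equally good alternative is a L\'evy-type reflection inequality combined with a similar iteration). Its proof rests on a stopping-time argument: set $\tau := \min\{k \le N : \|S_k\| > t\}$, decompose $S_N = S_{\tau-1} + r_\tau x_\tau + (S_N - S_\tau)$, observe that conditionally on the first $\tau$ signs the tail $\sum_{j>\tau} r_j x_j$ is again a symmetric sum independent of $\mathcal{F}_\tau$, and apply L\'evy's maximal inequality $\mathbb{P}(\max_{k\le N}\|S_k\| > t) \le 2\,\mathbb{P}(\|S_N\| > t)$ to that tail. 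The care is needed in the conditioning (symmetry is what lets one ``reflect'' the tail) and in bookkeeping the three summands; everything else above is routine real analysis. In a survey one may, of course, simply cite this inequality from the standard references on vector-valued analysis.
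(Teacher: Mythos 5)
The paper itself does not prove Theorem~\ref{3.6}: it explicitly omits the proof and refers to \cite{Hytonen-vanNeerven-Veraar-Weis16}, Theorem~3.2.23, so your proposal is not competing with an argument in the text but supplying one, and what you outline is essentially the standard proof given in that reference: reduce to finite sums $S_N$, get the easy direction from Jensen on a probability space, recover $\max_j\|x_j\|\le\|S_N\|_{L^1([0,1];X)}$ from orthogonality of the Rademacher functions, and obtain the reverse bound from a Hoffmann--J{\o}rgensen/L\'evy deviation inequality iterated along $t_{k+1}=3t_k+\mu$, so that the tail probabilities decay like $2^{-2^k}$ while $t_k$ grows only geometrically, which controls all moments by the first. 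Two points in your sketch deserve a little more care. First, the symmetric Hoffmann--J{\o}rgensen inequality stated for $\|S_N\|$ rather than for $\max_{k\le N}\|S_k\|$ carries a constant, e.g.\ $\mathbb{P}(\|S_N\|>3t+s)\le 4\,\mathbb{P}(\|S_N\|>t)^2+\mathbb{P}(\max_j\|x_j\|>s)$, since L\'evy's maximal inequality enters twice; your constant-free version is not the standard statement, but the iteration is insensitive to this --- start at $t_0=8\mu$ so that $4\,\mathbb{P}(\|S_N\|>t_0)\le\tfrac12$ and everything goes through. Second, the passage from finite sums to the series is slightly more delicate than ``$L^2$-Cauchy implies $L^p$-Cauchy'': from the definition of $\rad_p(X)$ you only know almost everywhere convergence and $f\in L^p([0,1];X)$, so you should first obtain $\sup_N\|S_N\|_{L^p}<\infty$ (Remark~\ref{3.5}\,a) or Fatou), deduce $\sup_N\|S_N\|_{L^q}<\infty$ for every $q<\infty$ from the finite-sum inequality, and then conclude $S_N\to f$ in every $L^q$ by uniform integrability (Vitali's theorem); after that the two-sided finite-sum inequality passes to the limit and identifies the spaces $\rad_p(X)$ for all $p$. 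With these repairs your argument is correct, and it is the same route as the proof the paper cites.
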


In the scalar case $X=\C$, the proof of this inequality is elementary, for arbitrary Banach spaces, however, rather
complicated. In the  scalar case Theorem~{3.6} is known as Khintchine's inequality, in the Banach space valued case as
Kahane's inequality. We omit the proof which can be found, e.g., in  \cite{Hytonen-vanNeerven-Veraar-Weis16}, Theorem
3.2.23.

\begin{lemma}
  \label{3.8}
  a) If condition  \eqref{3-1} in Definition~\ref{3.1}
  holds for some $p\in[1,\infty)$, then it holds for all $p\in [1,\infty)$. For the corresponding $\RR$-bounds $\RR_p(\cal T)$ the
  inequality
  \[ \frac1{C_p^2} \RR_2(\cal T)\le \RR_p(\cal T)\le C_p^2 \RR_2(\cal
  T)\]
holds, where the constants  $C_p$ are from Theorem~\ref{3.6}.

b) A family $\cal T\subset L(X,Y)$ is  $\RR$-bounded with
 $\RR_2(\cal T)\le C$ if and only if for all $N\in\N$ and all
$T_1,\dots,T_N\in \cal T$, the map
\[ \mathbf T((x_n)_{n\in\N}) :=(y_n)_{n\in\N},\; y_n:=\begin{cases} T_n
x_n,&n\le N,\\ 0,&n>N\end{cases}\]
defines a bounded linear operator
$\mathbf T \in L(\rad_2(X))$ with norm $\|\mathbf T\|\le C$.
\end{lemma}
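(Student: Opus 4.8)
The plan is to derive part~(a) directly from the Kahane--Khintchine inequality (Theorem~\ref{3.6}) applied in both $X$ and $Y$, and to obtain part~(b) by recognizing the defining inequality~\eqref{3-1} as a statement about the operator $\mathbf T$ acting on the space $\rad_2(X)$ introduced in Definition~\ref{3.4}, using Remark~\ref{3.5}~b) to identify $\rad_2$ with a closed subspace of $L^2([0,1];\cdot)$.

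For part~(a), suppose \eqref{3-1} holds for some exponent $q\in[1,\infty)$ with constant $C$. Fix any $p\in[1,\infty)$, take $N\in\N$, $T_j\in\TT$, $x_j\in X$, and choose $\epsilon_j=r_j$ the Rademacher functions. First I would apply the right-hand Kahane inequality from Theorem~\ref{3.6} in the space $Y$ to pass from the $L^p(\Omega;Y)$-norm of $\sum_j\epsilon_jT_jx_j$ to its $L^q(\Omega;Y)$-norm, picking up a factor $C_p$ (more precisely, one compares both $L^p$ and $L^q$ norms to the $L^2$ norm, so the combined constant is $C_pC_q\le C_p^2$ after absorbing, but since the statement only records $C_p^2$ one simply bounds by $C_p^2$). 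Then apply \eqref{3-1} with exponent $q$ to move the operators out, and finally apply the left-hand Kahane inequality in $X$ to return from $L^q(\Omega;X)$ to $L^p(\Omega;X)$, picking up another $C_p$-type factor. Collecting constants gives $\RR_p(\TT)\le C_p^2\,\RR_q(\TT)$; taking $q=2$ yields the stated upper bound, and the lower bound follows by symmetry (swap the roles of $p$ and $2$). One should be slightly careful that the Kahane constants $C_p$ in Theorem~\ref{3.6} compare $L^p$ to $L^2$, so the bookkeeping must route everything through $L^2$; this is routine.

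For part~(b), observe that by Definition~\ref{3.4} and Remark~\ref{3.5}~b), $\rad_2(X)$ is isometrically a subspace of $L^2([0,1];X)$ via $(x_n)_n\mapsto\sum_n r_nx_n$, and likewise for $Y$. Given $T_1,\dots,T_N\in\TT$, the map $\mathbf T$ sends the finitely supported sequence $(x_1,\dots,x_N,0,\dots)$ to $(T_1x_1,\dots,T_Nx_N,0,\dots)$, so in terms of the $L^2$-representatives it sends $\sum_{n=1}^N r_nx_n$ to $\sum_{n=1}^N r_nT_nx_n$. Hence the operator-norm bound $\|\mathbf T\|_{L(\rad_2(X),\rad_2(Y))}\le C$ on finitely supported sequences is exactly inequality~\eqref{3-1} with $p=2$ and constant $C$; since finitely supported sequences are dense in $\rad_2(X)$ (by the definition of the norm as the limit of partial sums), the bound extends to all of $\rad_2(X)$, and conversely restricting to finitely supported sequences recovers \eqref{3-1}. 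Thus $\RR_2(\TT)\le C$ if and only if every such $\mathbf T$ has norm $\le C$, which is the claim. Strictly speaking the statement as written has $\mathbf T\in L(\rad_2(X))$, implicitly using that the target sequence $(y_n)_n$, being finitely supported, lies in $\rad_2(Y)\subset\rad_2(X)$ after identification, or more cleanly one reads it as a map into $\rad_2(Y)$; this is a harmless abuse.

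The only genuine subtlety — and the step I would flag as the main obstacle — is the constant bookkeeping in part~(a): one must be disciplined about always comparing to the $L^2$-norm when invoking Theorem~\ref{3.6}, so that the two applications of Kahane's inequality (once in $Y$ and once in $X$) produce precisely the factor $C_p^2$ claimed rather than $C_p^2C_q^2$ or similar. Everything else is a direct unwinding of definitions: part~(b) is essentially a tautology once the identification $\rad_2\hookrightarrow L^2$ from Remark~\ref{3.5}~b) is in hand, and the density of finitely supported sequences is immediate from Definition~\ref{3.4}.
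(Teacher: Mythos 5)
Your proof is correct and takes essentially the same route as the paper, whose entire proof is the one-liner that a) follows directly from Kahane's inequality and b) is a reformulation of the definition: you apply Theorem~\ref{3.6} in $Y$ and in $X$ with everything routed through $L^2$, and read off b) from the identification of $\rad_2(X)$ with a subspace of $L^2([0,1];X)$ in Remark~\ref{3.5}. Only note that your parenthetical ``$C_pC_q\le C_p^2$'' is not valid for a general starting exponent $q$ (there the constant would be $C_p^2C_q^2$), but this is immaterial since the displayed inequality of the lemma only compares $p$ with $2$, where your bookkeeping is exactly right.
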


\begin{proof}
Part a) follows directly from Kahane's inequality, and part b) is a reformulation of the definition of
$\RR$-boundedness and an application  of the $p$-independence from a).
\end{proof}

\begin{remark}
  \label{3.9}
  a) If $\cal T\subset L(X,Y)$ is $\RR$-bounded, then $\cal
  T$ is uniformly bounded with  $\sup_{T\in\cal T}\|T\|\le \RR(\cal
  T)$. This follows immediately if we set $N=1$ in  the definition of  $\RR$-boundedness.

  b) If $X$ and $Y$ are Hilbert spaces, then
  $\RR$-boundedness is equivalent to uniform boundedness. In fact, in this situation also the spaces
 $L^2([0,1];X)$ and $L^2([0,1];Y)$
  are Hilbert spaces, and  $(r_n x_n)_{n\in\N}\subset L^2([0,1];X)$ and
  $(r_n T_n x_n)_{n\in\N}\subset L^2([0,1];Y)$ are orthogonal sequences. If $\|T\|\le C_{\cal T}$ for all $T\in\cal T\subset
  L(X,Y)$, then
  \begin{align*}
   & \Big\| \sum_{n=1}^N r_n  T_n x_n\Big\|_{L^2[0,1];Y)}^2 =
    \sum_{n=1}^N \| r_n T_n x_n\|^2_{L^2([0,1];Y)}= \sum_{n=1}^N \|T_n x_n\|_Y^2 \le C_{\cal
    T}^2\sum_{n=1}^N\|x_n\|_X^2\\
    & \qquad = C_{\cal T}^2\Big\|\sum_{n=1}^N r_n
    x_n\Big\|^2_{L^2([0,1];X)}.
  \end{align*}
\end{remark}

\begin{remark}
  \label{3.10}
  Let $X,Y,Z$ be Banach spaces, and  $\cal T,\cal S\subset L(X,Y)$ and
  $\cal U\subset L(Y,Z)$ be $\RR$-bounded. Then the families
  \[ \cal T + \cal S := \{ T+S: T\in\cal T, \, S\in\cal S\}\]
  and
  \[ \cal U\cal T := \{ UT: U\in\cal U,\, T\in\cal T\}\]
  are $\RR$-bounded, too, with
  \[ \RR( \cal T+\cal S) \le \RR(\cal T)+\RR(\cal S),\; \RR(\cal
  U\cal T)\le \RR(\cal U)\RR(\cal T).\]
  To see this, let $S_n\in\cal S$, $T_n\in\cal T$ and $U_n\in\cal U$ for
  $n=1,\dots,N$. Then the statement follows from
  \[ \Big\|\sum_{n=1}^N r_n(T_n+S_n)x_n\Big\|_{L^1([0,1];Y)}\le
   \Big\|\sum_{n=1}^N r_n T_nx_n\Big\|_{L^1([0,1];Y)} +
   \Big\|\sum_{n=1}^N r_nS_nx_n\Big\|_{L^1([0,1];Y)}\]
   and
   \[ \Big\|\sum_{n=1}^N r_nU_nT_nx_n\Big\|_{L^1([0,1];Z)}\le
   \RR(\cal U)\Big\|\sum_{n=1}^N r_n T_n x_n\Big\|_{L^1([0,1];Y)}.\]
\end{remark}

The following result turns out to be useful for showing $\RR$-boundedness.

\begin{lemma}[Kahane's contraction principle]\label{3.7}
Let $1\leq p<\infty$. Then for all  $ N\in\N$, for all $x_j\in X$ and all $ a_j, b_j\in\C$ with $|a_j|\leq|b_j|$,
$j=1,\dots,N$ we have
\begin{equation}\label{3-3}
\Big\|\sum_{j=1}^Na_jr_jx_j\Big\|_{L^p([0,1];X)}\leq
2\Big\|\sum_{j=1}^Nb_jr_jx_j\Big\|_{L^p([0,1];X)}.
\end{equation}
\end{lemma}

 \begin{proof}
Considering $\tilde x_j:=b_j x_j$, we may assume without loss of generality that  $b_j = 1$ and $|a_j|\leq 1$ for all
$j=1,\dots,N$. Treating  $\Re a_j$ and $ \Im a_j$ separately, we only have to show that for real $a_j $ with $|a_j|
\leq 1$ the inequality
\begin{equation}\label{3-4}
\Big\|\sum\limits_{j=1}^N a_jr_j x_j\Big\|_{L^p([0,1];X)} \leq
 \Big\|\sum\limits_{j=1}^N  r_j x_j\Big\|_{L^p([0,1];X)}
\end{equation}
holds. For this, let $\{e^{(k)}\}_{k=1,...,2^N}$ be a numbering of all vertices of the  cube  $[-1,1]^N$.  Because of
$a:= (a_1,\dots,a_N)^T\in [-1,1]^N$, the vector $a$ can be written as a convex combination of all $e^{(k)}$, i.e.,
there exist $\lambda_k \in [0,1]$ with
\[
\sum\limits_{k=1}^{2^N}\lambda_k = 1 \quad\mbox{and}\quad a =
\sum\limits_{k=1}^{2^N}\lambda_k e^{(k)}.
\]
Therefore, for  $e^{(k)} = (e^{(k)}_1,\dots,e^{(k)}_N)^T$ we see that
\begin{align*}
& \Big\|\sum\limits_{j=1}^N a_jr_j x_j\Big\|_{L^p([0,1];X)} \le
\sum\limits_{k=1}^{2^N} \lambda_k \Big\|\sum\limits_{j=1}^N r_j e^{(k)}_j x_j\Big\|_{L^p([0,1];X)} \\
&\qquad \le  \max\limits_{1\leq k\leq 2^N}\Big\|\sum\limits_{j=1}^N r_j e^{(k)}_j x_j\Big\|_{L^p([0,1];X)}
= \Big\|\sum\limits_{j=1}^N  r_j x_j\Big\|_{L^p([0,1];X)}.
\end{align*}
In the last equality we used the fact that $\{r_j: j=1,...,N\}$ and $\{r_j e^{(k)}_j : j=1,...,N\}$ have the same joint
probability distribution.
\end{proof}

\begin{theorem}
  \label{3.14} Let $\cal T\subset L(X,Y)$ be $\RR$-bounded. Then also the convex hull
  \[ \co\cal T := \Big\{ \sum_{k=1}^n \lambda_k T_k : n\in\N, \,
  T_k\in\cal T,\, \lambda_k\in [0,1],\, \sum_{k=1}^n \lambda_k
  =1\Big\}\]
  and the absolute convex hull
  \[ \aco\cal T := \Big\{ \sum_{k=1}^n\lambda_k T_k: n\in\N,\, T_k\in\cal
  T,\,\lambda_k\in\C,\,\sum_{k=1}^n |\lambda_k|=1\Big\}\]
  are $\RR$-bounded. The same holds for the closures $ \bar{\co \cal T}^s$ of $\co\cal T$ and $\bar{\aco\cal T}^s$ of
  $\aco \cal T$ with respect to the strong
  operator topology.  We have $\RR(\bar{\co \cal T}^s)
  \le \RR(\cal T)$ and $\RR(\bar{\aco\cal T}^s)\le 2\RR(\cal T)$.
\end{theorem}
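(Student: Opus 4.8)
The plan is to reduce everything to Kahane's contraction principle (Lemma~\ref{3.7}) by a convexity argument, treating the four claims in the order: $\co\cal T$, then $\aco\cal T$, then the strong closures. First I would take finitely many operators $U_1,\dots,U_N\in\co\cal T$; each can be written as $U_n=\sum_{k=1}^{m_n}\lambda_k^{(n)}T_k^{(n)}$ with $T_k^{(n)}\in\cal T$, $\lambda_k^{(n)}\in[0,1]$, $\sum_k\lambda_k^{(n)}=1$. For $x_1,\dots,x_N\in X$ the vector $\big(\sum_n r_n U_n x_n\big)$ expands as a convex combination, over the product index set $\prod_n\{1,\dots,m_n\}$ with weights $\prod_n\lambda_{k_n}^{(n)}$ summing to $1$, of the terms $\sum_n r_n T_{k_n}^{(n)} x_n$. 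Since each $T_{k_n}^{(n)}\in\cal T$, every such term has $L^p([0,1];Y)$-norm bounded by $\RR_p(\cal T)\,\big\|\sum_n r_n x_n\big\|_{L^p([0,1];X)}$, by the definition of $\RR$-boundedness (in the form \eqref{3-1}). By the triangle inequality applied to the convex combination, the same bound holds for $\big\|\sum_n r_n U_n x_n\big\|_{L^p([0,1];Y)}$; hence $\co\cal T$ is $\RR$-bounded with $\RR_p(\co\cal T)\le\RR_p(\cal T)$.

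For $\aco\cal T$ I would first observe that $\aco\cal T\subset\{zT:|z|\le1,\ T\in\co(\cal T\cup(-\cal T))\}$ up to the usual splitting into real and imaginary parts; more directly, writing $\lambda_k=|\lambda_k|\,\sigma_k$ with $|\sigma_k|=1$, an element $\sum_k\lambda_k T_k$ of $\aco\cal T$ lies in the set of $z\cdot(\text{convex combination of }\{\sigma_k T_k\})$ with $|z|\le1$. So it suffices to combine the convex-hull bound just proved with Kahane's contraction principle: for $U_n=z_n V_n$ with $|z_n|\le1$ and $V_n$ in the convex hull of $\{\sigma T:T\in\cal T,\,|\sigma|=1\}$, Lemma~\ref{3.7} gives $\big\|\sum_n r_n z_n V_n x_n\big\|_{L^p}\le 2\big\|\sum_n r_n V_n x_n\big\|_{L^p}$, and the convexity argument handles the $V_n$. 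Tracking the modulus-one scalars $\sigma$ costs another factor handled again by Lemma~\ref{3.7}; organizing the constants to land exactly at $2\RR(\cal T)$ is the one place requiring a little care, and is essentially the reason the factor $2$ appears (and only $2$, not $4$). The cleanest route is: $\RR\big(\{\sigma T:|\sigma|=1,T\in\cal T\}\big)\le 2\RR(\cal T)$ by Lemma~\ref{3.7} with $a_j=\sigma_j$, $b_j=1$; then $\aco\cal T\subset\co\big(\{\sigma T:|\sigma|=1,T\in\cal T\}\big)$, so the convex-hull bound gives $\RR(\aco\cal T)\le 2\RR(\cal T)$.

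Finally, for the strong-operator-topology closures, fix $U_1,\dots,U_N\in\overline{\co\cal T}^{\,s}$ and $x_1,\dots,x_N\in X$. For each $n$ choose a net (or sequence, since we only evaluate at the finitely many vectors $x_1,\dots,x_N$) $U_n^{(\alpha)}\in\co\cal T$ with $U_n^{(\alpha)}x_n\to U_n x_n$ in $Y$. Then $\sum_n r_n U_n^{(\alpha)}x_n\to\sum_n r_n U_n x_n$ pointwise a.e.\ on $[0,1]$, and since only finitely many functions $r_n$ are involved these are uniformly bounded, so by dominated convergence the convergence also holds in $L^p([0,1];Y)$. Passing to the limit in the inequality $\big\|\sum_n r_n U_n^{(\alpha)} x_n\big\|_{L^p}\le\RR_p(\co\cal T)\big\|\sum_n r_n x_n\big\|_{L^p}$ yields the bound for $\overline{\co\cal T}^{\,s}$ with the same constant; the identical argument applies to $\overline{\aco\cal T}^{\,s}$. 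I expect the main (mild) obstacle to be purely bookkeeping: handling the complex scalars in $\aco\cal T$ so that the final constant is exactly $2\RR(\cal T)$ rather than something larger, which is why I would route the $\aco$ case through the auxiliary family $\{\sigma T:|\sigma|=1,\,T\in\cal T\}$ and apply Kahane's contraction principle just once.
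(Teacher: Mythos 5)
Your proposal is correct and follows essentially the same route as the paper: the product-index convex-combination expansion with the triangle inequality for $\co\cal T$, Kahane's contraction principle combined with part a) for $\aco\cal T$ (the paper uses $\cal T_0=\{\lambda T: T\in\cal T,\,|\lambda|\le 1\}$ and $\co\cal T_0\supset\aco\cal T$, you use the unimodular family $\{\sigma T:|\sigma|=1\}$ -- the same idea), and the finitely-many-vectors limiting argument for the closures in the strong operator topology, which the paper only states as immediate. The only inaccuracy is the unused aside that $\aco\cal T\subset\{zT:|z|\le 1,\ T\in\co(\cal T\cup(-\cal T))\}$, which fails for genuinely complex coefficients, but your final argument does not rely on it.
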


\begin{proof}
  a) Let $T_1,\dots,T_N\in\co(\cal T)$. Then there exist
  $\lambda_{k,j}\in[0,1]$  and $T_{k,j}\in\cal T$
  with $\sum_{j=1}^{m_k} \lambda_{k,j}=1$ and
  $T_k=\sum_{j=1}^{m_k} \lambda_{k,j} T_{k,j}$.

  Define $\lambda_{k,j} := 0$ and $T_{k,j}:=0$ for $j\in\N$ with $j>m_k$ and $k=1,\dots,N$. For
  $\ell\in\N^N$ we define $\lambda_\ell := \prod_{k=1}^N \lambda_{k,\ell_k}$ and $T_{k,\ell}:= T_{k,\ell_k}$
  for $k=1,\dots,N$. Then $\lambda_\ell\in [0,1]$ as well as
  \[\sum_{\ell\in\N^n}\lambda_\ell = \sum_{\ell_1\in\N} \cdots\sum_{\ell_N\in\N} \lambda_{1,\ell_1}
  \cdot\ldots\cdot \lambda_{N,\ell_N} = 1.\]
  For all  $k=1,\dots,N$ we obtain
\begin{align*}
  &\sum_{\ell\in\N^N} \lambda_\ell T_{k,\ell} = \sum_{\ell\in\N^N} \lambda_\ell T_{k,\ell_k} = \Big( \sum_{\ell_k\in\N}
   \lambda_{k,\ell_k} T_{k,\ell_k}\Big) \prod_{j\not=\ell}\Big( \sum_{\ell_j\in\N} \lambda_{j,\ell_j}\Big) \\
  & \qquad= \sum_{\ell_k\in\N} \lambda_{k,\ell_k} T_{k,\ell_k} = T_k.
\end{align*}
 Note that these sums are finite. We get
  \begin{align*}
    &\Big\| \sum_{k=1}^N r_k T_k x_k\Big\|_{L^p([0,1];Y)}  = \Big\|\sum_{k=1}^N \sum_{\ell\in\N^N} r_k
    \lambda_\ell T_{k,\ell} x_k\Big\|_{L^p([0,1];Y)} \\
    & \qquad \le
    \sum_{\ell\in\N^N} \lambda_\ell \Big\| \sum_{k=1}^N r_k T_{k,\ell} x_k
    \Big\|_{L^p([0,1];Y)}  \le \RR(\cal T)\sum_{\ell \in\N^n} \lambda_\ell \Big\| \sum_{k=1}^N
    r_k x_k\Big\|_{L^p([0,1];X)} \\
    & \qquad = \RR(\cal T) \Big\| \sum_{k=1}^N r_k
    x_k\Big\|_{L^p([0,1];X)}.
  \end{align*}
  Consequently,  $\RR(\co \cal T) \le \RR(\cal T)$.

  b) By Kahane's contraction principle, $\RR(\cal T_0)\le 2\RR(\cal
  T)$, where we define
  \[ \cal T_0 := \{ \lambda T: T\in\cal T,\, \lambda\in\C,\,
  |\lambda|\le 1\}.\]
  Because of $\co\cal T_0=\aco \cal T$, we get $\RR(\aco\cal T) \le
  2\RR(\cal T)$ due to a).

  c) The closedness with respect to the strong operator topology follows directly from the definition of $\RR$-boundedness.
\end{proof}

The above results are useful to prove $\RR$-boundedness in general Banach spaces. In the special situation that $X$ is
some $L^q$-space, there is a helpful description of $\RR$-boundedness:

\begin{lemma}[Square function estimate]
  \label{3.11} Let $(G,\mathscr A, \mu)$ be a $\sigma$-finite measure space, $X=L^q(G)$,
  and let $1\le q<\infty$. Then $\cal T\subset L(X)$
  is $\RR$-bounded if and only if there exists an $M>0$ with
  \[ \Big\| \Big(\sum_{j=1}^N |T_n
  f_n|^2\Big)^{1/2}\Big\|_{L^q(G)} \le M \Big\| \Big(\sum_{j=1}^N
  |f_n|^2\Big)^{1/2}\Big\|_{L^q(G)}\]
  for all $N\in\N$, $T_n\in\cal T$ and $f_n\in L^q(G)$.
\end{lemma}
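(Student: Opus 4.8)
The plan is to prove the equivalence by using the Kahane--Khintchine inequality (Theorem~\ref{3.6}) to replace the Bochner norm $\|\sum r_j f_j\|_{L^p([0,1];L^q(G))}$ appearing in the definition of $\RR$-boundedness by the $L^2$-Rademacher average, and then to recognize that for scalar-valued functions the $L^2$-Rademacher average of a finite sum is, pointwise in the $G$-variable, exactly the square function. Concretely, for fixed $x\in G$ one has by orthogonality of the Rademacher functions
\[ \Big\| \sum_{j=1}^N r_j f_j(x)\Big\|_{L^2([0,1])}^2 = \int_0^1 \Big| \sum_{j=1}^N r_j(t) f_j(x)\Big|^2 dt = \sum_{j=1}^N |f_j(x)|^2, \]
so that $\big\|\sum_{j=1}^N r_j f_j(x)\big\|_{L^2([0,1])} = \big(\sum_{j=1}^N |f_j(x)|^2\big)^{1/2}$. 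Raising this to the $q$-th power and integrating over $G$, then using Tonelli's theorem to interchange the order of integration in $t$ and $x$, yields
\[ \Big\| \Big(\sum_{j=1}^N |f_j|^2\Big)^{1/2}\Big\|_{L^q(G)}^q = \int_G \Big(\int_0^1 \Big|\sum_{j=1}^N r_j(t) f_j(x)\Big|^2 dt\Big)^{q/2} d\mu(x). \]

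The first key step is therefore to establish the identity
\[ \Big\| \sum_{j=1}^N r_j f_j\Big\|_{L^q([0,1];L^q(G))} = \Big\| \Big(\sum_{j=1}^N |f_j|^2\Big)^{1/2}\Big\|_{L^q(G)} \quad \text{when } p=q, \]
which follows by noting that $L^q([0,1];L^q(G)) = L^q([0,1]\times G)$ isometrically (by Tonelli), applying the pointwise-in-$x$ Khintchine inequality $\big\|\sum r_j f_j(x)\big\|_{L^q([0,1])} \sim_q \big(\sum |f_j(x)|^2\big)^{1/2}$, and integrating over $G$; actually only the two-sided equivalence with $q$-dependent constants is needed, not an exact identity. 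The second step is to apply this identity, together with the scalar Khintchine inequality if $q\neq p$, both to the input sequence $(f_j)$ and to the output sequence $(T_j f_j)$: the defining inequality \eqref{3-1} of $\RR$-boundedness with $p=q$,
\[ \Big\|\sum_{j=1}^N r_j T_j f_j\Big\|_{L^q([0,1];L^q(G))} \le \RR_q(\cal T)\Big\|\sum_{j=1}^N r_j f_j\Big\|_{L^q([0,1];L^q(G))}, \]
then translates directly, via the square-function identity applied to both sides, into the stated square function estimate, with $M$ comparable to $\RR_q(\cal T)$. Conversely, the same translation runs backwards: the square function estimate gives \eqref{3-1} for $p=q$, and by Lemma~\ref{3.8}a) this is equivalent to $\RR$-boundedness. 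One should remember to invoke Lemma~\ref{3.8}a) (the $p$-independence of \eqref{3-1}) so that it does not matter that the definition of $\RR$-boundedness only requires \eqref{3-1} for \emph{some} $p$; we are free to test with $p=q$.

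The main technical point — though not really an obstacle — is the careful bookkeeping of the Fubini/Tonelli interchange: one needs $(G,\mathscr A,\mu)$ $\sigma$-finite (which is assumed) to identify $L^q([0,1];L^q(G))$ with $L^q([0,1]\times G; \lambda\otimes\mu)$ as a genuine isometric isomorphism, and then the order of integration may be swapped because the integrand $|\sum r_j(t) f_j(x)|^q$ is nonnegative and measurable on the product space. A secondary point is that the scalar Khintchine inequality used pointwise in $x$ has $q$-dependent constants $c_q, C_q$, so the resulting constant $M$ in the lemma and the $\RR$-bound $\RR(\cal T)$ differ by a factor depending only on $q$; since the statement only asserts existence of such an $M$, this is harmless. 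Everything else is a direct substitution, so I expect the proof to be short.
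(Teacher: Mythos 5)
Your plan is correct and coincides with the paper's own proof: both identify $L^q([0,1];L^q(G))$ with $L^q([0,1]\times G)$ via Fubini, apply the scalar Khintchine inequality pointwise in the $G$-variable to convert the Rademacher average into the square function, and then use the $p$-independence of the $\RR$-bound (Lemma~\ref{3.8}\,a), via Kahane) to test the defining inequality with $p=q$ on both the input and output sequences. No gaps; the constant loss from Khintchine is handled exactly as you describe.
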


\begin{proof}
  We write $f\approx g$ if there are constants $C_1,C_2>0$ with
   $C_1|f|\le |g|\le C_2|f|$. To show  $\RR$-boundedness, by Kahane's inequality, we can consider the
    $\RR_q$-bound. For this, we can calculate
  \begin{align*}
    & \Big\|\sum_{n=1}^N r_n f_n  \Big\|_{L^q([0,1];L^q(G))}^q
    = \int_0^1 \Big\| \sum_{n=1}^N r_n(t)
    f_n(\cdot)\Big\|_{L^q(G)}^q dt = \int_0^1 \int_G \Big|\sum_{n=1}^N r_n(t)
    f_n(\omega)\Big|^q d\mu(\omega) \; dt\\
    & \qquad = \int_G \int_0^1 \Big| \sum_{n=1}^N
    r_n(t)f_n(\omega)\Big|^q dt d\mu(\omega)\approx \int_G \Big( \int_0^1 \Big| \sum_{n=1}^N r_n(t)
    f_n(\omega)\Big|^2 dt\Big)^{q/2} d\mu(\omega) \\
    & \qquad = \int_G \Big( \sum_{n=1}^N |f_n(\omega)|^2\Big)^{q/2}
    d\mu(\omega)= \Big\| \Big( \sum_{n=1}^N
    |f_n|^2\Big)^{1/2}\Big\|_{L^q(G)}^q.
  \end{align*}
  Here, Fubini's theorem and the inequality of  Khintchine were used.
  Now the statement follows by considering the above calculation for both sides of the definition of $\RR$-boundedness.
\end{proof}

\begin{example}
  \label{3.12}
  Using the square function estimate, it is easy to construct an example of a uniformly bounded operator family which
  is not $\RR$-bounded.   Let $p\in[1,\infty)\setminus\{2\}$. Then the family $\{T_n:n\in\N_0\}
  \subset L(L^p(\R))$, $T_nf(\cdot) :=
  f(\cdot-n)$ of translations is not $\RR$-bounded, as for
  $f_n=\chi_{[0,1]}$ we have
  \begin{align*}
  \Big\| \Big( \sum_{n=0}^{N-1}
  |T_nf_n|^2\Big)^{1/2}\Big\|_{L^p(\R)} & = \|\chi_{[0,N]}\|_{L^p(\R)}
  = N^{1/p},\\
  \Big\| \Big( \sum_{n=0}^{N-1}
  | f_n|^2\Big)^{1/2}\Big\|_{L^p(\R)} & = N^{1/2}\|\chi_{[0,1]}\|_{L^p(\R)}
  = N^{1/2}.
  \end{align*}
 For  $1\le p<2$, we use the fact that  $\frac{N^{1/p}}{N^{1/2}}\to\infty$ for
  $N\to\infty$. The proof for $p>2$ is similar.
\end{example}

\begin{lemma}
  \label{3.13}
a) Let $G\subset\R^n$ be open and  $1\le p<\infty$. For $\phi\in L^\infty(G)$, define $m_\phi\in L(L^p(G;X))$ by
$(m_\phi f)(x) := \phi(x)f(x)$. Then for $r>0$ one obtains
\[ \RR_p\Big(\{ m_\phi: \phi\in L^\infty(G),
\,\|\phi\|_\infty\le r\}\Big) \le 2r.\]

b) Let $1\le p<\infty$, $G\subset\R^n$ be open, and $\cal T\subset L(L^p(G;X),L^p(G;Y))$
be $\RR$-bounded. Then
\[ \RR_p\Big(\{ m_\phi Tm_\psi: T\in\cal T, \,\phi,\psi\in
L^\infty(G), \|\phi\|_\infty \le r ,\, \|\psi\|_\infty\le
s\}\Big) \le 4 rs \mathcal R_p(\mathcal T).\]
\end{lemma}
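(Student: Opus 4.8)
The plan is to reduce both statements to Kahane's contraction principle (Lemma~\ref{3.7}) via a Fubini-type rearrangement, and then to obtain b) by chaining a) with the $\RR$-boundedness of $\mathcal T$.

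For part a), fix $\phi_1,\dots,\phi_N\in L^\infty(G)$ with $\|\phi_j\|_\infty\le r$ and $f_1,\dots,f_N\in L^p(G;X)$. Using Tonelli/Fubini I would identify $L^p([0,1];L^p(G;X))$ with $L^p(G;L^p([0,1];X))$, so that
\[
\Big\| \sum_{j=1}^N r_j\, m_{\phi_j} f_j \Big\|_{L^p([0,1];L^p(G;X))}^p = \int_G \Big\| \sum_{j=1}^N \phi_j(x)\, r_j\, f_j(x) \Big\|_{L^p([0,1];X)}^p \, dx .
\]
For almost every fixed $x\in G$ the scalars $a_j := \phi_j(x)$ satisfy $|a_j|\le r$, so Kahane's contraction principle applied with $b_j := r$ gives $\big\| \sum_j \phi_j(x)\, r_j\, f_j(x)\big\|_{L^p([0,1];X)} \le 2r\,\big\| \sum_j r_j\, f_j(x)\big\|_{L^p([0,1];X)}$. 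Raising to the $p$-th power, integrating over $G$, and undoing the Fubini step yields $\RR_p(\{m_\phi:\phi\in L^\infty(G),\|\phi\|_\infty\le r\})\le 2r$.

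For part b), I would simply compose the three families. Given $T_1,\dots,T_N\in\mathcal T$, $\phi_j,\psi_j\in L^\infty(G)$ with $\|\phi_j\|_\infty\le r$ and $\|\psi_j\|_\infty\le s$, and $f_j\in L^p(G;X)$, set $h_j := m_{\psi_j} f_j\in L^p(G;X)$. Applying a) to the multipliers $m_{\phi_j}$ (with the vectors $T_j h_j\in L^p(G;Y)$), then the $\RR_p$-boundedness of $\mathcal T$ (with the vectors $h_j$), and finally a) once more to the multipliers $m_{\psi_j}$, I get, with all norms taken in $L^p([0,1];L^p(G;\cdot))$,
\[
\Big\| \sum_{j=1}^N r_j\, m_{\phi_j} T_j m_{\psi_j} f_j \Big\| \le 2r \Big\| \sum_{j=1}^N r_j\, T_j h_j \Big\| \le 2r\,\RR_p(\mathcal T) \Big\| \sum_{j=1}^N r_j\, h_j \Big\| \le 4rs\,\RR_p(\mathcal T) \Big\| \sum_{j=1}^N r_j\, f_j \Big\| ,
\]
which is exactly the asserted bound. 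One can also phrase this via Remark~\ref{3.10}, noting that $\{m_\phi T m_\psi\}$ is contained in the product family $\{m_\phi\}\cdot\mathcal T\cdot\{m_\psi\}$.

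I do not expect a genuine obstacle here: the only point needing a little care is the measure-theoretic identification $L^p([0,1];L^p(G;X))\cong L^p(G;L^p([0,1];X))$ together with the fact that Kahane's contraction principle is being applied for almost every $x$, with the exceptional null set depending on the (finitely many, fixed) functions $\phi_j,\psi_j,f_j$; since these are fixed this is routine, and the factor $2$ in a) (hence the $4$ in b)) is inherited from the complex version of the contraction principle.
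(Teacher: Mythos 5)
Your proof is correct and follows essentially the same route as the paper: part a) via the Fubini identification $L^p([0,1];L^p(G;X))\cong L^p(G;L^p([0,1];X))$ and a pointwise application of Kahane's contraction principle, and part b) by composing with the $\RR$-bound of $\mathcal T$ (the paper invokes Remark~\ref{3.10}, which is the same chaining argument you wrote out).
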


\begin{proof}
  a) By the theorem of Fubini and Kahane's contraction principle,
  \begin{align*}
    & \Big\| \sum_{k=1}^N r_k m_{\phi_k}
    f_k\Big\|_{L^p([0,1];L^p(G;X)} =
     \Big\| \sum_{k=1}^N r_k \phi_k
    f_k\Big\|_{L^p(G; L^p([0,1];X)} \\
    & \qquad\le 2r     \Big\| \sum_{k=1}^N r_k
    f_k\Big\|_{L^p(G; L^p([0,1];X)}  = 2r  \Big\| \sum_{k=1}^N r_k
    f_k\Big\|_{L^p([0,1];L^p(G;X)}.
  \end{align*}

  b) follows from a) and Remark~\ref{3.10}.
\end{proof}

In the following corollary, we consider strongly measurable function. Note that a function $N\colon G\to L(X,Y)$ is
called strongly measurable if there exists a $\mu$-zero set $A\in\mathscr A$ such that $N|_{G\setminus A}$ is
measurable and $N(G\setminus A)$ is separable.

\begin{corollary}
  \label{3.15} Let $(G,\mathscr A,\mu)$ be a $\sigma$-finite measure space and
  $\cal T\subset L(X,Y)$ be $\RR$-bounded. Let
  \[ \cal N := \{ N\colon G\to L(X,Y)\,|\, N\text{ strongly measurable with }N(G)\subset \cal T\}.\]
  For $h\in L^1(G,\mu)$ and $N\in\cal N$ define
  \[ T_{N,h} x := \int_{G} h(\omega)N(\omega)x
  d\mu(\omega)\quad (x\in X).\]
  Then
  \[ \RR\Big(\{ T_{N,h}: \|h\|_{L^1(G,\mu)}\le 1,\; N\in\cal N\} \Big)\le
  2\RR(\cal T).\]
\end{corollary}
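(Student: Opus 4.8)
The plan is to reduce the statement about operator-valued integrals $T_{N,h}$ to the definition of $\RR$-boundedness of $\mc T$ by approximating the integral with Riemann-type sums and applying Kahane's contraction principle together with the convex-hull stability from Theorem~\ref{3.14}. First I would reduce to the case $h\ge 0$: splitting $h$ into positive and negative (and real and imaginary) parts only costs a factor that is absorbed, or — more cleanly — I would note that $T_{N,h}=T_{\tilde N,|h|}$ where $\tilde N(\omega):=\sgn h(\omega)\,N(\omega)$ takes values in $\mc T_0:=\{\lambda T:T\in\mc T,\,|\lambda|\le 1\}$, whose $\RR$-bound is at most $2\RR(\mc T)$ by Kahane's contraction principle (as in part b) of the proof of Theorem~\ref{3.14}). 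So it suffices to prove: if $h\ge 0$ with $\|h\|_{L^1}\le 1$ and $N(G)\subset\mc S$ for an $\RR$-bounded family $\mc S$, then $\RR(\{T_{N,h}\})\le \RR(\mc S)$; applying this with $\mc S=\mc T_0$ then yields the factor $2\RR(\mc T)$.

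Next I would identify $T_{N,h}$ (for fixed $N$, and $h\ge 0$ with $\int h\,d\mu\le 1$) as lying in the strongly-closed convex hull of $\mc S$. The measure $h\,d\mu$ is a sub-probability measure, and $T_{N,h}x=\int_G N(\omega)x\,h(\omega)\,d\mu(\omega)$ is a ``generalized convex combination'' of the operators $N(\omega)\in\mc S$ (we may append the zero operator to $\mc S$, which does not change $\RR$-boundedness, to handle $\int h<1$). To make this rigorous I would use strong measurability of $N$: there is a $\mu$-null set off which $N$ has separable range, so by a standard approximation one can write $T_{N,h}x=\lim_k\sum_{j} \lambda_j^{(k)} N(\omega_j^{(k)})x$ for suitable sample points $\omega_j^{(k)}\in G$ and weights $\lambda_j^{(k)}\ge 0$ with $\sum_j\lambda_j^{(k)}\le 1$, the convergence being in $Y$ for each fixed $x$ (dominated convergence, using $h\in L^1$ and boundedness of $N(G)\subset\mc S$, which is uniformly bounded by Remark~\ref{3.9}a)). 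Hence $T_{N,h}\in\overline{\co(\mc S\cup\{0\})}^{\,s}$, and Theorem~\ref{3.14} gives $\RR(\{T_{N,h}:N\in\mc N,\,\|h\|_{L^1}\le1,\,h\ge0\})\le\RR(\mc S\cup\{0\})=\RR(\mc S)$, since adjoining $0$ leaves the $\RR$-bound unchanged.

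Alternatively — and this may be cleaner to write — I would bypass the measure-theoretic approximation and argue directly at the level of the Rademacher sums. Given $T_{N_1,h_1},\dots,T_{N_M,h_M}$ with $\|h_m\|_{L^1}\le1$ and $x_1,\dots,x_M\in X$, I would compute, using that $\bigl\|\sum_m r_m T_{N_m,h_m}x_m\bigr\|_{L^p([0,1];Y)}=\bigl\|\int_G \sum_m r_m h_m(\omega)N_m(\omega)x_m\,d\mu(\omega)\bigr\|_{L^p([0,1];Y)}$ and Minkowski's integral inequality, the bound $\le\int_G\bigl\|\sum_m r_m h_m(\omega)N_m(\omega)x_m\bigr\|_{L^p([0,1];Y)}\,d\mu(\omega)$. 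For each fixed $\omega$, since $N_m(\omega)\in\mc T$, applying the definition of $\RR$-boundedness gives $\bigl\|\sum_m r_m h_m(\omega)N_m(\omega)x_m\bigr\|_{L^p([0,1];Y)}\le\RR(\mc T)\bigl\|\sum_m r_m h_m(\omega)x_m\bigr\|_{L^p([0,1];X)}$, and then Kahane's contraction principle (Lemma~\ref{3.7}), noting $|h_m(\omega)|\le\|h_m\|_{L^\infty}$ — wait, $h_m$ need not be bounded, so here I must be slightly careful: I would instead peel off the scalars $h_m(\omega)$ \emph{before} integrating by writing them as part of the operator and normalizing, or apply the contraction principle with $b_m:=\max_m|h_m(\omega)|$ and pull that scalar out of the norm, getting $\le 2\max_m|h_m(\omega)|\cdot\bigl\|\sum_m r_m x_m\bigr\|_{L^p([0,1];X)}$; but $\int_G\max_m|h_m(\omega)|\,d\mu\le\sum_m\|h_m\|_{L^1}\le M$, which is not uniform in $M$. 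This is exactly the subtlety, so the convex-hull route of the previous paragraph — where each $T_{N_m,h_m}$ is individually placed in $\overline{\co}^s(\mc T_0)$ and then Theorem~\ref{3.14} is applied once to the whole collection — is the safe path, and that is what I would ultimately present.

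The main obstacle, therefore, is not any deep inequality but the bookkeeping in the reduction: correctly handling the sign/phase of $h$ via $\mc T_0$ (costing the stated factor $2$), correctly invoking strong measurability to approximate the operator-valued integral by finite convex combinations in the strong operator topology, and applying Theorem~\ref{3.14} to conclude — being careful that one applies the convex-hull theorem \emph{after} all the $T_{N,h}$ have been exhibited as members of one fixed strongly-closed absolutely convex hull, rather than trying to estimate a joint Rademacher sum with non-uniform scalar weights.
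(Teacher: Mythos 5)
Your proposal is correct and takes essentially the same route as the paper: the paper's proof also uses strong measurability to approximate $T_{N,h}$ in the strong operator topology by finite sums $\sum_j\big(\int_{G_j}h\,d\mu\big)N(\omega_j)$, concludes $T_{N,h}\in\overline{\aco\,\cal T}^{\,s}$, and then applies Theorem~\ref{3.14}. Your reduction to $h\ge 0$ by absorbing $\sgn h$ into the operators via Kahane's contraction principle is just a repackaging of the paper's direct use of the absolutely convex hull (the factor $2$ enters in the same way), and your diagnosis of why the direct joint Rademacher-sum estimate fails is accurate.
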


\begin{proof}
  Let $\epsilon>0$. For $x_1,\dots,x_N\in X$, $h\in L^1(G,\mu)$ and $N\in\cal N$
  we consider the measurable map
  \[ M\colon G\to Y^N,\; M(\omega) := \big( N(\omega)x_j
  \big)_{j=1,\dots,N}.\]
  Then $M\in L^\infty(G;Y^N)$ is strongly measurable, and therefore there exist a measurable partition
  $G = \bigcup_{j=1}^\infty
  G_j$, $G_i\cap G_j=\emptyset$ for $i\not=j$, and $\omega_j\in G_j$ with
  \[ \| N(\omega) x_k - N(\omega_j) x_k\|_{Y} <\epsilon \quad\text{
  for almost all $\omega\in G_j$ and all $k=1,\dots,N$}.\]
  Define
  \[ S:= \sum_{j=1}^\infty \Big(\int_{G_j} h(\omega)
  d\mu(\omega)\Big) N(\omega_j).\]
  Then $\| T_{N,h}x_k - S
  x_k\|_Y<\epsilon $ for all $k=1,\dots,N$. Therefore,  $T_{N,h}$
  is a subset of the neighbourhood of $S$ given by  $x_1,\dots,x_N$ and $\epsilon$
  with respect to the strong operator topology. Because of  $S\in
  \bar{\aco\cal T}^s$, we obtain $T_{N,h}\in \bar{\aco\cal T}^s$. Now the statement follows from Theorem~\ref{3.14}.
\end{proof}

\begin{corollary}
  \label{3.16} Let $N\colon\Sigma_{\theta'} \to L(X,Y)$ be holomorphic and bounded, and let
   $N(\partial\Sigma_\theta\setminus\{0\})$ be $\RR$-bounded for some
   $\theta<\theta'$. Then $N(\Sigma_\theta)$ is
  $\RR$-bounded, and for every $\theta_1<\theta$ the family
   $\{\lambda\frac\partial{\partial\lambda}
  N(\lambda):\lambda\in\Sigma_{\theta_1}\}$ is $\RR$-bounded.
\end{corollary}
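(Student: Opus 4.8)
The statement concerns a bounded holomorphic function $N$ on a sector $\Sigma_{\theta'}$ whose boundary values on $\partial\Sigma_\theta$ (minus the origin) form an $\RR$-bounded family, and it asserts $\RR$-boundedness of $N(\Sigma_\theta)$ together with $\RR$-boundedness of the derivative family $\{\lambda N'(\lambda):\lambda\in\Sigma_{\theta_1}\}$ for $\theta_1<\theta$. The natural tool is the Cauchy integral representation: for $\lambda$ strictly inside $\Sigma_\theta$ one writes $N(\lambda)$ (and $\lambda N'(\lambda)$) as a contour integral of $N$ over $\partial\Sigma_\theta$ against an explicit scalar kernel, then invokes Corollary~\ref{3.15} to transfer $\RR$-boundedness of $N(\partial\Sigma_\theta\setminus\{0\})$ through the integral, provided the kernel is in $L^1$ with a uniformly bounded $L^1$-norm.

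First I would set up the contour. Parametrize $\partial\Sigma_\theta\setminus\{0\}$ by the two rays $\mu = s e^{\pm i\theta}$, $s\in(0,\infty)$, oriented so that Cauchy's formula gives, for $\lambda\in\Sigma_\theta$,
\[
 N(\lambda) = \frac{1}{2\pi i}\int_{\partial\Sigma_\theta} \frac{N(\mu)}{\mu-\lambda}\,d\mu .
\]
To apply Corollary~\ref{3.15} I must exhibit the measure space $(G,\mathscr A,\mu)$ (here $G=(0,\infty)$ with Lebesgue measure, doubled for the two rays), the strongly measurable $L(X,Y)$-valued function $s\mapsto N(se^{\pm i\theta})$ taking values in the $\RR$-bounded set $\cal T:=N(\partial\Sigma_\theta\setminus\{0\})$, and the scalar weight $h_\lambda(s) := \frac{1}{2\pi i}\cdot\frac{e^{\pm i\theta}}{se^{\pm i\theta}-\lambda}$. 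The key estimate is the uniform bound
\[
 \sup_{\lambda\in\Sigma_{\theta_1}} \int_0^\infty \frac{1}{|se^{i\theta}-\lambda|}\,ds < \infty \quad (\theta_1<\theta),
\]
which follows from the angular separation: for $\lambda = |\lambda|e^{i\phi}$ with $|\phi|\le\theta_1<\theta$ one has $|se^{i\theta}-\lambda|\ge c(se^{i\theta'}... )$ — more precisely $|se^{i\theta}-\lambda|\ge \sin(\theta-\theta_1)\max(s,|\lambda|)$ by an elementary planar geometry argument — so the integral is dominated by $\frac{1}{\sin(\theta-\theta_1)}\int_0^\infty \frac{ds}{\max(s,|\lambda|)}$, which diverges logarithmically. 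This is the main obstacle: the naive kernel is \emph{not} in $L^1$, exactly as one expects since a bounded holomorphic function need not be recovered by an absolutely convergent Cauchy integral.

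The standard fix is to differentiate first and integrate back. One has $N'(\lambda) = \frac{1}{2\pi i}\int_{\partial\Sigma_\theta}\frac{N(\mu)}{(\mu-\lambda)^2}\,d\mu$, and now the kernel $\frac{\lambda}{(\mu-\lambda)^2}$ \emph{is} in $L^1$ with $L^1$-norm uniformly bounded for $\lambda\in\Sigma_{\theta_1}$: using $|se^{i\theta}-\lambda|\ge \sin(\theta-\theta_1)\max(s,|\lambda|)$ and $|se^{i\theta}-\lambda| \ge c\,s$ one gets $\int_0^\infty \frac{|\lambda|}{|se^{i\theta}-\lambda|^2}\,ds \le \frac{|\lambda|}{\sin^2(\theta-\theta_1)}\int_0^\infty\frac{ds}{\max(s,|\lambda|)^2} = \frac{2}{\sin^2(\theta-\theta_1)}$, independent of $\lambda$. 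Hence Corollary~\ref{3.15} (with the rescaled weight $h/\|h\|_{L^1}$ absorbing the bound) gives $\RR$-boundedness of $\{\lambda N'(\lambda):\lambda\in\Sigma_{\theta_1}\}$ with $\RR$-bound $\le 2 C_{\theta,\theta_1}\RR(\cal T)$, where $C_{\theta,\theta_1}$ collects the kernel $L^1$-bound; this is the second assertion. For the first assertion, I would recover $N$ on $\Sigma_\theta$ by fixing a base point $\lambda_0$ on the bisecting ray and writing $N(\lambda) = N(\lambda_0) + \int_{\lambda_0}^{\lambda} N'(\zeta)\,d\zeta$ along a path staying in a subsector $\Sigma_{\theta_1}$ with $\theta_1$ chosen between the argument of $\lambda$ and $\theta$; reparametrizing $\zeta = r\lambda/|\lambda|$ turns this into $N(\lambda) = N(\lambda_0) + \int \zeta N'(\zeta)\,\frac{d|\zeta|}{|\zeta|}$ over a ray segment, an average (against an $L^1$ weight after a logarithmic substitution, or directly against the $\RR$-bounded family $\{\zeta N'(\zeta)\}$ just established) of the $\RR$-bounded family — plus the single operator $N(\lambda_0)$, which is harmless. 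Applying Remark~\ref{3.10} and Theorem~\ref{3.14} (or again Corollary~\ref{3.15}) then yields $\RR$-boundedness of $N(\Sigma_\theta)$. The only care needed is that the subsector $\Sigma_{\theta_1}$ used in the path integral must be allowed to approach $\partial\Sigma_\theta$; since the derivative bound $\RR(\{\zeta N'(\zeta):\zeta\in\Sigma_{\theta_1}\}) \le 2C_{\theta,\theta_1}\RR(\cal T)$ blows up like $1/\sin^2(\theta-\theta_1)$ as $\theta_1\uparrow\theta$, one instead takes $\theta_1$ slightly larger than $\sup|\arg\lambda|$ over the points $\lambda$ under consideration, which for any $\lambda\in\Sigma_\theta$ is $<\theta$, giving a finite bound for each such $\lambda$; a standard limiting/union argument then covers all of $\Sigma_\theta$ (alternatively, one runs the argument on each closed subsector $\overline{\Sigma_{\theta_2}}$, $\theta_2<\theta$, using $\partial\Sigma_\theta$ throughout, which already has the desired angular gap).
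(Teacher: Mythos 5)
Your treatment of the second assertion coincides with the paper's: Cauchy's formula for $N'$ with the kernel $h_\lambda(\mu)=\frac{1}{2\pi i}\,\frac{\lambda}{(\mu-\lambda)^2}$ on $\partial\Sigma_\theta$ has uniformly bounded $L^1$-norm for $\lambda\in\Sigma_{\theta_1}$, and Corollary~\ref{3.15} applies; that half is fine. You also correctly diagnose that the plain Cauchy kernel $\frac{1}{\mu-\lambda}$ is not integrable on $\partial\Sigma_\theta$. The gap is in your route to the first assertion. When you reconstruct $N(\lambda)=N(\lambda_0)+\int_{\lambda_0}^{\lambda}N'(\zeta)\,d\zeta$ and rewrite the radial piece as an integral of $\zeta N'(\zeta)$ against $\frac{d|\zeta|}{|\zeta|}$, the total mass of that weight is $\bigl|\log(|\lambda|/|\lambda_0|)\bigr|$, which is not bounded uniformly over $\lambda\in\Sigma_\theta$ (it diverges as $|\lambda|\to 0$ or $|\lambda|\to\infty$); the logarithmic substitution merely turns it into Lebesgue measure on an interval of the same unbounded length, and restricting to closed subsectors $\overline{\Sigma}_{\theta_2}$ cures the angular blow-up but not this radial one. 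Hence neither Corollary~\ref{3.15} (which needs $\|h_\lambda\|_{L^1}$ uniformly bounded) nor Theorem~\ref{3.14} (absolute convex hull, i.e.\ total mass $\le 1$) yields a uniform $\RR$-bound: you have reintroduced exactly the logarithmic divergence you set out to avoid. Moreover this cannot be repaired by a cleverer path or weight: any representation of $N(\lambda)-N(\lambda_0)$ as an integral of $\{\zeta N'(\zeta):\zeta\in\Sigma_{\theta_1}\}$ against uniformly $L^1$-bounded scalar weights would already imply the operator-norm estimate $\sup_{\lambda}\|N(\lambda)-N(\lambda_0)\|\le C\,\sup_{\zeta}\|\zeta N'(\zeta)\|$, which is false even for scalars: $N(\lambda)=\lambda^{ic}$ is bounded on $\Sigma_{\theta'}$, satisfies $\sup_\zeta|\zeta N'(\zeta)|\le c\,e^{c\theta'}\to 0$ as $c\to 0$, yet $\sup_\lambda|N(\lambda)-N(\lambda_0)|$ stays of order $1$ because $\lambda^{ic}$ oscillates along each ray.

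The missing idea is the one the paper uses for the first assertion: do not go through the derivative at all, but use the boundedness of $N$ (which your argument never exploits) to get an absolutely convergent reproducing formula with kernel mass exactly one. After the substitution $M(\lambda):=N(\lambda^{2\theta/\pi})$ one may assume $\theta=\frac\pi2$, and then the Poisson formula for the half-plane, $N(\alpha+i\beta)=\frac1\pi\int_{\R}\frac{\alpha}{\alpha^2+(s-\beta)^2}\,N(is)\,ds$ for $\alpha>0$, together with $\bigl\|\frac1\pi\frac{\alpha}{\alpha^2+(\cdot-\beta)^2}\bigr\|_{L^1(\R)}=1$, lets Corollary~\ref{3.15} give $\RR(N(\Sigma_\theta))\le 2\RR\bigl(N(\partial\Sigma_\theta\setminus\{0\})\bigr)$ directly. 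Replace your path-integral step by this Poisson-kernel step; your argument for $\{\lambda N'(\lambda):\lambda\in\Sigma_{\theta_1}\}$ can stay as it is.
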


\begin{proof}
  Considering $M(\lambda) := N(\lambda^{2\theta/\pi})$,
  we may assume $\theta=\frac\pi 2$. Now we use Poisson's formula
  \[ N(\alpha+i\beta) = \frac 1\pi \int_{-\infty}^\infty
  \frac{\alpha}{\alpha^2+(s-\beta)^2} N(is) ds\quad (\alpha>0).\]
  Because of $\|\frac1\pi
  \frac{\alpha}{\alpha^2+(\cdot-\beta)^2}\|_{L^1(\R)} =1$, the first assertion follows from Corollary~\ref{3.15}.

  By Cauchy's integral formula, we have
  \[ \lambda\frac{\partial}{\partial\lambda} N(\lambda) =
  \int_{\partial\Sigma_\theta} h_\lambda(\mu) N(\mu)d\mu\quad
  (\lambda\in \Sigma_{\theta_1})\]
  for $h(\lambda) := \frac1{2\pi i}
  \frac{\lambda}{(\mu-\lambda)^2}$. Because of
  $\sup_{\lambda\in\Sigma_{\theta_1}}\|h_\lambda\|_{L^1(\partial\Sigma_\theta)}
  <\infty$, the second assertion follows from Corollary~\ref{3.15}, too.
\end{proof}

\begin{lemma}
  \label{3.17} Let $G\subset\C$ be open, $K\subset G$ be compact, and
  $H\colon G\to L(X,Y)$ be holomorphic. Then $H(K)$ is $\RR$-bounded.
\end{lemma}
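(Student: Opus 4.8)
The plan is to reduce the statement about a compact set $K$ to the local statements already available, namely Corollary~\ref{3.16} (or rather its first assertion) together with the union-of-finitely-many-$\RR$-bounded-sets principle. First I would exploit compactness: for each point $\lambda_0\in K$, choose a small closed disc $\bar D(\lambda_0, 2r_{\lambda_0})\subset G$, so that $H$ is holomorphic on a neighbourhood of $\bar D(\lambda_0,2r_{\lambda_0})$. Cover $K$ by the open discs $D(\lambda_0, r_{\lambda_0})$ and extract a finite subcover $D(\lambda_1,r_1),\dots,D(\lambda_m,r_m)$. Since a finite union of $\RR$-bounded families is $\RR$-bounded (this follows from the triangle-inequality argument used in Remark~\ref{3.10} for sums, or directly from the definition by splitting the index set), it suffices to show that $H\big(\bar D(\lambda_k,r_k)\big)$ is $\RR$-bounded for each $k$.

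For a single disc, I would invoke the Poisson/Cauchy representation already used in the proof of Corollary~\ref{3.16}. On the disc $D(\lambda_k,2r_k)$ we may represent $H$ by its boundary values via the Poisson kernel: for $\lambda\in \bar D(\lambda_k,r_k)$,
\[ H(\lambda) = \frac1{2\pi}\int_0^{2\pi} P_{r_k}\!\big(\lambda-\lambda_k,\, 2r_k e^{i\vartheta}\big)\, H(\lambda_k + 2r_k e^{i\vartheta})\, d\vartheta, \]
where $P_{r_k}$ is the Poisson kernel of the disc of radius $2r_k$, which is nonnegative and has $L^1$-norm (in $\vartheta$) equal to $1$ uniformly for $\lambda$ in the smaller disc. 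The family of boundary values $\{H(\lambda_k+2r_k e^{i\vartheta}):\vartheta\in[0,2\pi]\}$ is the continuous image of a compact set under a continuous (indeed holomorphic) map into $L(X,Y)$, hence it is norm-bounded; but boundedness is not enough — one needs $\RR$-boundedness of this boundary family before Corollary~\ref{3.15} can be applied.

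This is exactly where the main obstacle lies, and it is the point at which the proof must be genuinely inductive rather than circular: the boundary circle is again a compact set, so proving it $\RR$-bounded is a special case of the very statement we are trying to prove. To break the circularity I would argue by a dimension-reduction / shrinking argument, as follows. Fix one disc $\bar D(\lambda_0, 2r)\subset G$. For $0<\rho\le 2r$ let $C_\rho$ denote the circle of radius $\rho$ centred at $\lambda_0$, and let $\phi(\rho) := \RR\big(H(C_\rho)\big)$, which a priori could be $+\infty$; I would first note that each single point gives a singleton family which is trivially $\RR$-bounded with $\RR$-bound equal to its operator norm. Using the Poisson representation with the circle $C_\rho$ as boundary and evaluation on the slightly smaller concentric circle $C_{\rho'}$ with $\rho'<\rho$, Corollary~\ref{3.15} would give $\phi(\rho')\le \RR\big(H(C_\rho)\big) = \phi(\rho)$ once $\phi(\rho)<\infty$ — but that is the wrong direction. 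The clean way out is instead to represent $H$ on the full disc $D(\lambda_0,2r)$ directly in terms of a single interior circle via the mean-value/Cauchy integral and iterate inward: pick the radius $\rho_0$ small enough that $H(C_{\rho_0})$ is $\RR$-bounded because $C_{\rho_0}$ lies in a sub-disc on which we already know the result by an inductive step on the radius — more robustly, one observes that it suffices to prove $\RR$-boundedness of $H(C_\rho)$ for arbitrarily small $\rho$ (since those circles still cover nothing), so this reformulation does not by itself close the loop. The correct and standard resolution is the Cauchy-integral one: write, for $\lambda$ in the closed disc $\bar D(\lambda_0,r)$ and with the contour $\Gamma = \partial D(\lambda_0, 2r)$,
\[ H(\lambda) = \frac1{2\pi i}\int_\Gamma \frac{H(\mu)}{\mu-\lambda}\, d\mu = \int_0^{2\pi} g_\lambda(\vartheta)\, H\big(\lambda_0+2re^{i\vartheta}\big)\, d\vartheta, \qquad g_\lambda(\vartheta) = \frac{2re^{i\vartheta}}{2\pi(\lambda_0+2re^{i\vartheta}-\lambda)}, \]
and now I would apply Corollary~\ref{3.15} \emph{not} to $H(\Gamma)$ as an $\RR$-bounded family (which we do not yet have) but to the manifestly $\RR$-bounded scalar-type family obtained by factoring: the operators $H(\mu)$ for $\mu$ on the compact contour $\Gamma$ form a \emph{norm-bounded} set, and one upgrades this to $\RR$-boundedness by Kahane's contraction principle applied after a holomorphic change of variables that expresses $H$ on $\bar D(\lambda_0,r)$ as an average of \emph{a single, fixed} holomorphic arc — in effect, using Corollary~\ref{3.16} with $X,Y$ playing their roles and the sectors replaced by discs via a conformal map $\Sigma_{\theta}\to D$. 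Concretely: map a sector onto the disc conformally, transport $H$ to a function $\tilde H$ on an over-sector, and note that $\tilde H$ restricted to the boundary ray is a \emph{curve}, whose image under the reformulation of Corollary~\ref{3.16} (which only requires $\RR$-boundedness on the \emph{boundary}) is $\RR$-bounded precisely because on the boundary ray we are again in the one-real-parameter situation handled by the contraction principle for a single holomorphic curve. Thus the hard part — and the only subtle point — is setting up this conformal transport carefully so that Corollary~\ref{3.16}'s hypothesis (\,$\RR$-boundedness on $\partial\Sigma_\theta\setminus\{0\}$\,) is verified using only the norm-boundedness of $H$ on compacta together with Kahane's contraction principle; once that is done, the finite-cover step completes the proof.
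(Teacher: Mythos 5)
Your reduction to finitely many discs via compactness is fine (and coincides with the last step of the paper's proof), but the core of the argument -- $\RR$-boundedness of $H$ on a single closed disc -- is never actually established. You correctly identify the circularity: both the Poisson representation and Corollary~\ref{3.15} need the boundary family $\{H(\lambda_0+2re^{i\vartheta}):\vartheta\in[0,2\pi]\}$ to be $\RR$-bounded, which is again an instance of the statement to be proved. Your proposed escape does not break this circle. Corollary~\ref{3.16} has exactly the same kind of hypothesis ($\RR$-boundedness of $N$ on $\partial\Sigma_\theta\setminus\{0\}$), so transporting $H$ to a sector by a conformal map merely moves the problem to a boundary ray; and the claim that the ``one-real-parameter situation'' is ``handled by the contraction principle for a single holomorphic curve'' is unsupported. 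Kahane's contraction principle (Lemma~\ref{3.7}) only lets you multiply \emph{fixed} operators by scalars of modulus at most one; it does not show that the range of a norm-bounded holomorphic curve $t\mapsto H(\gamma(t))$ in $L(X,Y)$ is $\RR$-bounded. That is precisely the content of the lemma, so invoking it there is circular.

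The missing idea is the local Taylor expansion, which is how the paper argues: for $z_0\in K$ choose $r>0$ with $H(z)=\sum_{k\ge 0}H^{(k)}(z_0)\frac{(z-z_0)^k}{k!}$ for $|z-z_0|\le r$ and $\rho_0:=\sum_k\|H^{(k)}(z_0)\|\frac{r^k}{k!}<\infty$. Each singleton $\{H^{(k)}(z_0)\}$ is $\RR$-bounded with bound $\|H^{(k)}(z_0)\|$, so by the contraction principle the family $\{H^{(k)}(z_0)\frac{(z-z_0)^k}{k!}:|z-z_0|\le r\}$ is $\RR$-bounded with bound $2\frac{r^k}{k!}\|H^{(k)}(z_0)\|$; summing (Remark~\ref{3.10}) gives the bound $2\rho_0$ for all finite partial sums, and passing to the closure in the strong operator topology (Theorem~\ref{3.14}) handles the full series. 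Your finite covering of $K$ then finishes the proof exactly as in the paper. In short: the contraction principle must be applied coefficientwise to a power series, not to the curve itself; without that step your argument has a genuine gap.
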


\begin{proof}
  Let $z_0\in K$. Then there exists an $r>0$ with
  \[ H(z) = \sum_{k=0}^\infty H^{(k)}(z_0) \frac{(z-z_0)^k}{k!}\quad
  (|z-z_0|\le r).\]
  Here the series converges in $L(X,Y)$  and
  \[\rho_0 := \sum_{k=0}^\infty \| H^{(k)}(z_0)\|_{L(X,Y)}
  \frac{r^k}{k!}<\infty.\]
  As a set with one element,  $\{ H^{(k)}(z_0)\}$ is $\mathcal R$-bounded with $\mathcal R$-bound
  $\|H^{(k)}(z_0)\|_{L(X,Y)}$. By Kahane's contraction principle, the family  $\{ H^{(k)}(z_0)
  \frac{(z-z_0)^k}{k!}: z\in B(z_0,r)\}$ is $\mathcal R$-bounded, too, with $\RR$-bound not greater than  $2\frac{r^k}{k!}
  \|H^{(k)}(z_0)\|_{L(X,Y)}$. Therefore, we obtain for all finite partial sums the $\RR$-bound  $2\rho_0$.
  Taking the closure with respect to the strong operator topology, the same holds for the infinite sum.
  By a finite covering of $K$, we obtain the statement of the lemma.
\end{proof}

\begin{theorem}
  \label{3.18} Let $G\subset\R^n$ be open and $1<p<\infty$.
Let $\Lambda$ be a set and $\{k_\lambda:\lambda\in\Lambda\}$ be a family of measurable kernels
$k_\lambda\colon G\times G\to L(X,Y)$
with
\[ \RR_p\Big(\big\{ k_\lambda(z,z'):\lambda\in\Lambda\big\}\Big)\le
k_0(z,z')\quad(z,z'\in G).\] Assume that for the corresponding scalar integral operator
\[ (K_0f)(z) = \int_G k_0(z,z')f(z')dz'\quad (f\in L^p(G))\]
one has $K_0\in L(L^p(G))$. Define
\[ (K_\lambda f)(z) = \int_G k_\lambda(z,z')f(z')dz'\quad
 (f\in L^p(G;X)).\]
Then $K_\lambda\in L(L^p(G;X),L^p(G;Y))$ with
\[\RR_p \big(\{ K_\lambda:\lambda\in\Lambda\}\big) \le
\|K_0\|_{L(L^p(G))}.\]
\end{theorem}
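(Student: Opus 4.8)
The plan is to verify the $\RR_p$-bound estimate of Definition~\ref{3.1} directly, transferring the vector-valued kernel bound to the scalar operator $K_0$ by a pointwise (in the kernel variables) application of the hypothesis combined with Minkowski's integral inequality. Fix $N\in\N$, $\lambda_1,\dots,\lambda_N\in\Lambda$ and $f_1,\dots,f_N\in L^p(G;X)$, and set
\[ g(z') := \Big\| \sum_{j=1}^N r_j f_j(z')\Big\|_{L^p([0,1];X)}\qquad(z'\in G),\]
so that, by Fubini's theorem, $g\in L^p(G)$ with $\|g\|_{L^p(G)}=\big\|\sum_{j=1}^N r_j f_j\big\|_{L^p([0,1];L^p(G;X))}$. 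I would first note that, by Remark~\ref{3.9}~a), $\|k_\lambda(z,z')\|_{L(X,Y)}\le k_0(z,z')$ for every $\lambda$, so that for a.e.\ $z\in G$ the integrand of $(K_{\lambda_j}f_j)(z)=\int_G k_{\lambda_j}(z,z')f_j(z')\,dz'$ is dominated by $k_0(z,\cdot)\,\|f_j(\cdot)\|_X\in L^1(G)$ (since $\|f_j\|_X\in L^p(G)$ and $K_0$ is bounded), and hence the Bochner integral exists.

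The heart of the argument is a pointwise estimate in $z$. For a.e.\ $z\in G$, Minkowski's integral inequality for the $L^p([0,1];Y)$-norm gives
\begin{align*}
\Big\| \sum_{j=1}^N r_j (K_{\lambda_j}f_j)(z)\Big\|_{L^p([0,1];Y)}
&= \Big\| \int_G \sum_{j=1}^N r_j\,k_{\lambda_j}(z,z')f_j(z')\,dz'\Big\|_{L^p([0,1];Y)}\\
&\le \int_G \Big\| \sum_{j=1}^N r_j\,k_{\lambda_j}(z,z')f_j(z')\Big\|_{L^p([0,1];Y)}\,dz'.
\end{align*}
For fixed $z,z'$ the family $\{k_\lambda(z,z'):\lambda\in\Lambda\}$ is $\RR_p$-bounded with $\RR_p$-bound at most $k_0(z,z')$; applying \eqref{3-1} to the operators $k_{\lambda_j}(z,z')$ and the vectors $f_j(z')$ bounds the integrand by $k_0(z,z')\,\big\|\sum_{j=1}^N r_j f_j(z')\big\|_{L^p([0,1];X)}=k_0(z,z')\,g(z')$. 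Since $k_0\ge0$, the last integral therefore does not exceed $\int_G k_0(z,z')\,g(z')\,dz'=(K_0g)(z)$.

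It remains to integrate in $z$. Raising the pointwise bound to the power $p$, integrating over $G$, using Fubini's theorem once more and the boundedness of $K_0$ on $L^p(G)$,
\[ \Big\| \sum_{j=1}^N r_j K_{\lambda_j}f_j\Big\|_{L^p([0,1];L^p(G;Y))}^p = \int_G \Big\| \sum_{j=1}^N r_j (K_{\lambda_j}f_j)(z)\Big\|_{L^p([0,1];Y)}^p\,dz \le \|K_0g\|_{L^p(G)}^p \le \|K_0\|_{L(L^p(G))}^p\,\|g\|_{L^p(G)}^p,\]
and the right-hand side equals $\|K_0\|_{L(L^p(G))}^p\,\big\|\sum_{j=1}^N r_j f_j\big\|_{L^p([0,1];L^p(G;X))}^p$. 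Taking $p$-th roots, and since $N$ and the $\lambda_j,f_j$ were arbitrary, this yields $\RR_p(\{K_\lambda:\lambda\in\Lambda\})\le\|K_0\|_{L(L^p(G))}$; the special case $N=1$ shows in particular $K_\lambda\in L(L^p(G;X),L^p(G;Y))$ with $\|K_\lambda\|\le\|K_0\|_{L(L^p(G))}$.

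I expect the main obstacle to be the measurability and integrability bookkeeping rather than the inequalities themselves: one must verify that $(t,z,z')\mapsto\sum_{j=1}^N r_j(t)\,k_{\lambda_j}(z,z')f_j(z')$ is strongly measurable, so that the two applications of Fubini/Minkowski above are licit and the integral of this map over $z'$ indeed represents $\sum_j r_j(\cdot)(K_{\lambda_j}f_j)(z)$, and that $(K_{\lambda_j}f_j)(z)$ as well as $(K_0g)(z)$ are well-defined for a.e.\ $z$. This follows from the measurability of the kernels $k_\lambda$, the strong measurability of the $f_j$, and the domination by $k_0$ noted at the outset, which in particular forces $K_0g<\infty$ a.e.\ because $g\in L^p(G)$ and $K_0$ is bounded. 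Everything else is a direct consequence of the definition of $\RR_p$-boundedness, Minkowski's integral inequality, and Fubini's theorem.
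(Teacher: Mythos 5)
Your proof is correct and follows essentially the same route as the paper: Minkowski's integral inequality for the Bochner integral, the pointwise $\RR_p$-bound $k_0(z,z')$ applied to the kernel values and the vectors $f_j(z')$, and then recognition of $K_0 g$ with $g(z')=\big\|\sum_j r_j f_j(z')\big\|_{L^p([0,1];X)}$ together with Fubini. Your additional remarks on measurability and on the a.e.\ existence of the Bochner integrals only make explicit what the paper leaves implicit.
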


\begin{proof}
  We use the definition of  $\RR$-boundedness and  get
  \begin{align*}
   & \Big\| \sum_{j=1}^Nr_j K_{\lambda_j}
    f_j\Big\|_{L^p([0,1];L^p(G;Y))}  = \Big( \int_0^1 \Big\| \sum_{j=1}^N r_j(t) \int_G
    k_{\lambda_j} (\cdot,z')f_j(z')dz'\Big\|_{L^p(G;Y)}^p
    dt\Big)^{1/p}\\
    & \qquad = \Big( \int_0^1 \Big\| \int_G \sum_{j=1}^N r_j(t)
    k_{\lambda_j}(\cdot,z')f_j(z')
    dz'\Big\|_{L^p(G;Y)}^pdt\Big)^{1/p}\\
    &\qquad= \Big( \int_0^1 \int_G \Big\| \int_G \sum_{j=1}^N r_j(t)
    k_{\lambda_j}(z,z')f_j(z')dz'\Big\|_Y^p dz\;dt\Big)^{1/p}\\
   & \qquad= \Big( \int_G \int_0^1 \Big\| \int_G \sum_{j=1}^N r_j(t)
    k_{\lambda_j}(z,z')f_j(z')dz'\Big\|_Y^p dt\;dz\Big)^{1/p}.
  \end{align*}
  Setting $\phi(t,z,z'):= \sum_{j=1}^N r_j(t)
    k_{\lambda_j}(z,z')f_j(z')$, the integral with respect to $t$ in the last term equals
     $\| \int_G
    \phi(\cdot,z,z')dz'\|_{L^p([0,1])}^p$. Now we apply the inequality
    \[ \Big\| \int_G \phi(\cdot,z,z') dz'\Big\|_{L^p([0,1])} \le
    \int_G \| \phi(\cdot,z,z')\|_{L^p([0,1])} dz'\]
   for Bochner integrals and obtain, using the assumption of $\RR$-boundedness,
  \begin{align*}
   & \Big\| \sum_{j=1}^N r_j K_{\lambda_j}
    f_j\Big\|_{L^p([0,1];L^p(G;Y))} \\
    & \qquad\le \Big( \int_G \Big[ \int_G \Big\|
    \sum_{j=1}^N r_j(\cdot)
    k_{\lambda_j}(z,z')f_j(z')\Big\|_{L^p([0,1];Y)}
    dz'\Big]^p dz \Big)^{1/p}\\
    &\qquad\le \Big( \int_G \Big[ \int_G k_0(z,z') \Big\|
    \sum_{j=1}^N r_j(\cdot)
   f_j(z')\Big\|_{L^p([0,1];X)}
    dz'\Big]^p dz \Big)^{1/p}\\
    & \qquad= \Big\| K_0\Big(  \Big\|
    \sum_{j=1}^N r_j
   f_j(\cdot)\Big\|_{L^p([0,1];X)}\Big) \Big\|_{L^p(G)}\\
   & \qquad\le \|K_0\|_{L(L^p(G))} \Big\|\Big( \Big\| \sum_{j=1}^N r_j
   f_j(\cdot)\Big\|_{L^p([0,1];X)}\Big)\Big\|_{L^p(G)}\\
   & \qquad= \|K_0\|_{L(L^p(G))} \Big\|  \sum_{j=1}^N r_j
   f_j\Big\|_{L^p([0,1];L^p(G;X))}.
 \end{align*}

\vspace*{-2em}
\end{proof}

\subsection{Fourier multipliers and Mikhlin's theorem}

We have already seen in Theorem~\ref{2.7} that maximal regularity is equivalent to the $L^p(\R;X)$-boundedness of the
operator  $\mathscr F_t^{-1} i\tau (i\tau-A)^{-1} \mathscr F_t$. This is a typical example of a (vector-valued) Fourier
multiplier. In the analysis of partial differential equations and boundary value problems in $L^p$-spaces, the question
of Fourier multipliers play a central role. The answer is given by the classical theorem of Mikhlin and by its Banach
space valued variants.

In the following, we use the standard notation $D := -i (\partial_{x_1},\dots,\partial_{x_n})$ as well as the standard
multi-index  notation $D^\alpha = (-i)^{|\alpha|} \partial_{x_1}^{\alpha_1} \ldots \partial_{x_n}^{\alpha_n}$. We start
with a simple example.

\begin{example}
  \label{3.19}
Consider the Laplacian $\Delta$ in  $L^p(\R^n)$ with maximal domain
$D(\Delta) := \{ u\in L^p(\R^n): \Delta u\in L^p(\R^n)\}$. Obviously we have
$D(\Delta)\supset W^2_p(\R^n)$. To show that we even have equality, we consider
 $u\in D(\Delta)$ and $f:=u-\Delta u\in L^p(\R^n)$. Let $|\alpha|\le 2$. Then
\[ D^\alpha u = \mathscr F^{-1} \xi^\alpha\mathscr Fu = -\mathscr
F^{-1} \frac{\xi^\alpha}{1+|\xi|^2} \mathscr F f \] holds as equality in  $\mathscr S'(\R^n)$, where $\mathscr F$
stands for the $n$-dimensional Fourier transform (see below).  To obtain $D^\alpha u\in L^p(\R^n)$, we have to show
$\mathscr F^{-1} m_\alpha \mathscr Ff\in L^p(\R^n)$, where  $m_\alpha (\xi) := \frac{\xi^\alpha}{1+|\xi|^2}$. So we
have to prove that
\[ f\mapsto \mathscr F^{-1} m_\alpha(\xi) \mathscr Ff\]
defines a bounded linear operator on $L^p(\R^n)$. This is in fact the case, as we will see from the   classical version
of Mikhlin's theorem,  Theorem~\ref{3.22} below.
\end{example}

In contrast to the above example, we will also need vector-valued versions of Mikhlin's theorem. For this, we need some
preparation,  starting with the vector-valued Fourier transform. Let $X$ be a Banach space. Then the Schwartz space
$\mathscr S(\R^n;X)$ is defined as the space of all infinitely smooth functions $\phi\colon \R^n\to X$ for which
\[ p_N (\phi) := \sup_{x\in\R^n} \max_{|\alpha|\le N} (1+|x|)^N \|\partial^\alpha \phi(x)\|_X < \infty\]
for all $N\in\N$. With the family of seminorms $\{p_N\colon N\in\N\}$, the Schwartz space becomes a Fr\'echet space.
The space of all $X$-valued tempered distributions is defined by
\[ \mathscr S'(\R^n;X) := L(\mathscr S(\R^n), X).\]
On $\mathscr S'(\R^n;X)$, we consider the family of seminorms
\[ \pi_\phi\colon \mathscr S'(\R^n;X)\to [0,\infty),\; u
\mapsto \|u(\phi)\|_X\quad (\phi\in\mathscr S(\R^n)).\] Then the family $\{\pi_\phi:\phi\in\mathscr S(\R^n)\}$ defines
a locally convex topology on $\mathscr S'(\R^n;X)$. Note that in the scalar case $X=\C$, this is the
weak-$\ast$-topology. One can see as in the scalar case that the Fourier transform, defined for $\phi\in\mathscr
S(\R^n;X)$ by
\[ (\mathscr F \phi)(\xi) := (2\pi)^{-n/2} \int _{\R^n} e^{-ix\cdot\xi} \phi(x)d x\quad
 (\xi\in\R^n,\; \phi\in\mathscr S(\R^n;X)),\]
can be extended by duality to an isomorphism $\mathscr F\colon \mathscr S'(\R^n;X) \to \mathscr S'(\R^n;X)$.

\begin{definition}\label{3.20}
Let $X,Y$ be Banach spaces, $1\le p<\infty$, and let $m\colon \R^n\to L(X,Y)$ be a bounded and strongly measurable
function. Because of $\mathscr F^{-1}\in L(L^1(\R^n;X), L^\infty(\R^n;Y))$, the function $m$ induces a map
$T_m\colon\mathscr S(\R^n;X)\to L^\infty(\R^n;Y)$ by
\[ T_m f := \mathscr F^{-1} m \mathscr F f\quad (f\in\mathscr
S(\R^n;X)).\]
The function  $m$ is called a Fourier multiplier (more precisely, an $L^p$-Fourier multiplier) if
\[ \| T_mf\|_{L^p(\R^n;Y)} \le C \| f\|_{L^p(\R^n;X)}\quad
(f\in\mathscr S(\R^n;X)).\] As $\mathscr S(\R^n;X)$ is dense in $L^p(\R^n;X)$ for $p\in [1,\infty)$, this implies that
 $T_m$ has a unique extension to a bounded linear operator $T_m\in
L(L^p(\R^n;X),L^p(\R^n;Y))$. In this case,  $m$ is called the symbol of the operator  $T_m$, and we write $\op [m]:=
 \mathscr F m\mathscr F^{-1}:= T_m$
  and $\symb[T_m] := m$.
\end{definition}

We start with the scalar case $X=Y=\C$.
\begin{remark}
  \label{3.21}
In the Hilbert space case  $p=2$, one can apply Plancherel's theorem.
Therefore, we have
  $\op[m]\in L(L^2(\R^n )$ if and only if the multiplication operator
  $g\mapsto mg$ is a bounded operator in $L^2(\R^n)$. This is equivalent to the condition
   $m\in L^\infty(\R^n)$.

   In fact, if $m\in L^\infty(\R^n)$, then  $\|mg\|_{L^2(\R^n )} \le \|m\|_{L^\infty(\R^n )}
  \|g\|_{L^2(\R^n )}$. On the other hand, if  $m\not\in L^\infty(\R^n )$,
  then there exists a sequence  $(A_k)_{k\in\N}$ of measurable subsets of $\R^n$ such that
  $0<\lambda(A_k)<\infty$ and $|m(x) | \ge k$ for $x\in A_k$. For the characteristic function $g_k
  :=\chi_{A_k}$ we obtain $g_k\in L^2(\R^n)$ and
  \[ \|mg_k\|_{L^2(\R^n)}^2 = \int
  |m(\xi)g_k(\xi)|^2d\xi \ge   k^2\lambda(A_k)=  k^2
  \|g_k\|_{L^2(\R^n)}^2.\]
  Therefore, $\op[m]$ cannot be a bounded operator in  $L^2(\R^n)$.
\end{remark}

The following classical theorem gives a sufficient condition for a function to be a (scalar) Fourier multiplier and has
many applications  in the theory of partial differential equations. In the following, $[\frac n2]$ denotes the largest
integer not greater than $\frac n2$. We state this result in two variants.

\begin{theorem}[Mikhlin's multiplier theorem]
  \label{3.22} Let  $1<p<\infty$ and $m\colon \R^n\setminus\{0\}\to\C$.
  If one of the two conditions
  \begin{enumerate}[\upshape (i)]
  \item $m\in C^{[\frac n2]+1}(\R^n\setminus\{0\})$ and
  \[ |\xi|^{|\beta|} |\partial^\beta m(\xi)| \le C_M \quad
  (\xi\in\R^n\setminus\{0\},\, |\beta|\le \textstyle{[\frac n2]
  +1}),\]
  \item $m\in C^n(\R^n\setminus\{0\})$ and
  \[ \big| \xi^\beta  \partial^\beta m(\xi)\big| \le C_M \quad
  (\xi\in\R^n\setminus\{0\},\, \beta\in \{0,1\}^n)\]
  \end{enumerate}
  holds with a constant $C_M>0$, then $m$ is an
  $L^p$-Fourier multiplier with
   \[\|\op[m]\|_{L(L^p(\R^n))}\le c(n,p)
  C_M,\] with a constant $c(n,p)$  depending only on $n$ and $p$.
\end{theorem}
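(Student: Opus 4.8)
The plan is to prove Mikhlin's theorem by a Littlewood–Paley / dyadic decomposition argument, reducing the $L^p$-estimate to the Hilbert-space estimate on $L^2$ (which follows from Plancherel and the uniform boundedness of $m$, as in Remark~\ref{3.21}) together with a Calder\'on–Zygmund-type kernel estimate. Concretely, fix a smooth radial partition of unity $\sum_{j\in\Z}\psi(2^{-j}\xi)=1$ on $\R^n\sm\{0\}$ with $\supp\psi\subset\{2^{-1}\le|\xi|\le 2\}$, set $m_j(\xi):=m(\xi)\psi(2^{-j}\xi)$ and let $K_j:=\mathscr F^{-1}m_j$. Because each $m_j$ is compactly supported away from the origin and, by the hypothesis (i) or (ii), satisfies the scaling-invariant bounds $\||\xi|^{|\beta|}\partial^\beta m_j\|_\infty\le C C_M$ uniformly in $j$, one gets from integration by parts the pointwise kernel bounds
\begin{equation*}
  |K_j(x)|\le C\,C_M\, 2^{jn}(1+2^j|x|)^{-N}, \qquad
  |\nabla K_j(x)|\le C\,C_M\, 2^{j(n+1)}(1+2^j|x|)^{-N}
\end{equation*}
for any $N\le[\tfrac n2]+1$; summing over $j$ gives the classical H\"ormander condition $\int_{|x|\ge 2|y|}|K(x-y)-K(x)|\,dx\le C\,C_M$ for the full kernel $K:=\sum_j K_j$, which is the decisive estimate. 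The reason $[\tfrac n2]+1$ derivatives suffice is that we only need $N>n/2$ to make $2^{jn}(1+2^j|x|)^{-N}$ integrable against $|x|^{n/2}$ in the $L^2$-based estimate of the kernel tails (Cauchy–Schwarz applied on dyadic annuli), not $N>n$.

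The main steps, in order, are: (1) record that $\op[m]\in L(L^2(\R^n))$ with norm $\le\|m\|_\infty\le C_M$ by Plancherel (Remark~\ref{3.21}); (2) establish the uniform-in-$j$ symbol bounds for $m_j$ from the chain and Leibniz rules applied to $m(\xi)\psi(2^{-j}\xi)$, noting that each derivative of $\psi(2^{-j}\cdot)$ gains a factor $2^{-j}$ matched by the restriction $|\xi|\sim 2^j$ on the support; (3) convert these into the pointwise kernel decay bounds above via $x^\gamma K_j(x)=\mathscr F^{-1}((-i)^{|\gamma|}\partial^\gamma m_j)(x)$ and the trivial $L^1$-bound of the right-hand side on the annulus $|\xi|\sim 2^j$; (4) sum the dyadic pieces to verify the H\"ormander integral condition for $K$; (5) invoke the standard Calder\'on–Zygmund theorem: an operator bounded on $L^2$ whose kernel satisfies the H\"ormander condition is of weak type $(1,1)$ and hence, by Marcinkiewicz interpolation and duality, bounded on $L^p$ for all $1<p<\infty$, with $\|\op[m]\|_{L(L^p)}\le c(n,p)\,C_M$. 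Variant (ii) is handled identically: the product-type bounds $|\xi^\beta\partial^\beta m|\le C_M$ for $\beta\in\{0,1\}^n$ still yield, after localization to $|\xi|\sim 2^j$, the same scaling-invariant control $\||\xi|^{|\beta|}\partial^\beta m_j\|_\infty\le CC_M$ for $|\beta|\le n$, which is more than enough; alternatively one runs the Hörmander estimate coordinatewise using iterated integration by parts in each variable separately.

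The step I expect to be the real work is (4)–(5): carrying out the H\"ormander cancellation estimate cleanly — splitting $\int_{|x|\ge2|y|}$ into the regimes $2^j|y|\le 1$ (where one uses the gradient bound and the mean value theorem, $|K_j(x-y)-K_j(x)|\le|y|\sup|\nabla K_j|$) and $2^j|y|>1$ (where one simply uses $|K_j(x-y)|+|K_j(x)|$ and the tail decay), and then summing the resulting geometric series in $j$ — together with quoting the Calder\'on–Zygmund machinery, which itself is nontrivial but standard and can simply be cited. Everything else is bookkeeping with the dyadic decomposition. One subtlety worth flagging is the low-frequency behaviour: since $m$ is only assumed defined on $\R^n\sm\{0\}$ and merely bounded near $0$, the sum $\sum_{j\le 0}m_j$ must be treated as part of the symbol without any extra smoothness at the origin — but this is harmless because the corresponding piece of the kernel, $\sum_{j\le 0}K_j$, is smooth and rapidly decaying away from $x=0$ by the same annulus estimates with $N$ large, so it contributes a bounded, rapidly decaying (in fact $L^1$ away from the origin) kernel whose H\"ormander norm is easily controlled.
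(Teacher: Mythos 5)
The paper does not prove Theorem~\ref{3.22} at all; it only refers to \cite{Grafakos14}, Section~6.2.3, so there is no in-text argument to compare with. Your Littlewood--Paley/Calder\'on--Zygmund strategy is the standard route (essentially the cited one), but as written it has two genuine gaps.

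First, for condition (i) your steps (3)--(5) do not close. With only $[\frac n2]+1$ derivatives the pointwise bounds you state are limited to $N\le[\frac n2]+1\le n$, and then both regimes of your H\"ormander estimate diverge: $\int_{|x|\ge 2|y|}\bigl(|K_j(x-y)|+|K_j(x)|\bigr)\,dx$ and $|y|\,\|\nabla K_j\|_{L^1}$ require decay of order $N>n$ to be finite, which $2^{jn}(1+2^j|x|)^{-N}$ with $N\le n$ does not provide. Your parenthetical remark about Cauchy--Schwarz on dyadic annuli is precisely the correct fix, but it must replace, not supplement, the pointwise route: one uses Plancherel to get $\bigl\||x|^{\gamma}K_j\bigr\|_{L^2}\le C\,C_M\,2^{j(n/2-\gamma)}$ and $\bigl\||x|^{\gamma}\nabla K_j\bigr\|_{L^2}\le C\,C_M\,2^{j(1+n/2-\gamma)}$ with $\gamma=[\frac n2]+1$ (so $2\gamma>n$), and then Cauchy--Schwarz gives $\int_{|x|\ge R}|K_j|\,dx\le C\,C_M(2^jR)^{n/2-\gamma}$ and $\|\nabla K_j\|_{L^1}\le C\,C_M 2^j$, which is what actually makes the two regimes $2^j|y|\le 1$ and $2^j|y|>1$ summable in $j$. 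So the idea is present, but the proof as structured (pointwise decay, then direct tail integration) is internally inconsistent and would fail.

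Second, the reduction of case (ii) to case (i) is false. The Lizorkin hypothesis controls only $\beta\in\{0,1\}^n$ (no repeated derivatives such as $\partial_1^2m$), and since $|\xi^\beta|$ can be much smaller than $|\xi|^{|\beta|}$ near the coordinate hyperplanes, localizing to $|\xi|\sim 2^j$ does \emph{not} yield $\||\xi|^{|\beta|}\partial^\beta m_j\|_\infty\le C\,C_M$: for example in $n=2$ the bound $|\xi_1\partial_1 m|\le C_M$ allows $\partial_1 m$ to blow up like $|\xi_1|^{-1}$ on the unit annulus. Condition (ii) is of Marcinkiewicz type and requires a genuinely different argument (a product/rectangular dyadic decomposition, handling one variable at a time), which your closing clause ``run the H\"ormander estimate coordinatewise'' only gestures at; as stated, case (ii) is not proved. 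A minor further point: your low-frequency remark is misleading, since $\sum_{j\le 0}K_j$ is not rapidly decaying under the stated hypotheses; no special treatment of small frequencies is needed because the estimates are scale-invariant and uniform over all $j\in\Z$.
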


A proof of this theorem (which is also called Mikhlin-H\"ormander theorem) can be found, e.g., in \cite{Grafakos14},
Section~6.2.3. Condition (i) is sometimes called the Mikhlin condition, whereas condition (ii) is called the Lizorkin
condition. For the $L^p$-continuity of singular integral operators, we also refer to  \cite{Stein93}, Section~6.5.

For the following result, note that a function $m\colon \R^n\setminus\{0\}\to\C$ is called (positively) homogeneous
with respect to  $\xi$ of degree $d\in\R$ if
\[ m(\rho\xi) = \rho^d m(\xi)\quad (\xi\in\R^n\setminus\{0\},\, \rho>0).\]

\begin{lemma}
  \label{3.24}
  Let $m\in C^{[\frac n2]+1}(\R^n\setminus\{0\})$ be homogeneous of degree $0$. Then $m$
satisfies the Mikhlin condition.
\end{lemma}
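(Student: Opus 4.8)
The plan is to exploit the fact that a function homogeneous of degree $0$ is completely determined by its restriction to the unit sphere $S^{n-1}$, which is compact, and that differentiation lowers the degree of homogeneity by one.

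First I would record the differentiation rule: if $g\in C^1(\R^n\setminus\{0\})$ is homogeneous of degree $d$, then each partial derivative $\partial_j g$ is homogeneous of degree $d-1$. This is immediate from differentiating the identity $g(\rho\xi)=\rho^d g(\xi)$ with respect to $\xi_j$, which gives $\rho\,(\partial_j g)(\rho\xi)=\rho^d(\partial_j g)(\xi)$, hence $(\partial_j g)(\rho\xi)=\rho^{d-1}(\partial_j g)(\xi)$ for all $\rho>0$. Iterating this $|\beta|$ times — which is legitimate because $m\in C^{[\frac n2]+1}(\R^n\setminus\{0\})$, so all derivatives $\partial^\beta m$ with $|\beta|\le[\frac n2]+1$ exist and are continuous on $\R^n\setminus\{0\}$ — shows that $\partial^\beta m$ is homogeneous of degree $-|\beta|$.

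Consequently, for each such $\beta$ the function $\xi\mapsto|\xi|^{|\beta|}\partial^\beta m(\xi)$ is homogeneous of degree $0$ and continuous on $\R^n\setminus\{0\}$; in particular, writing $\xi=|\xi|\,\omega$ with $\omega:=\xi/|\xi|\in S^{n-1}$, homogeneity gives $|\xi|^{|\beta|}\partial^\beta m(\xi)=\partial^\beta m(\omega)$. Since $S^{n-1}$ is compact and $\partial^\beta m$ is continuous on it, we obtain
\[ |\xi|^{|\beta|}\,|\partial^\beta m(\xi)| \le \max_{\omega\in S^{n-1}}|\partial^\beta m(\omega)| =: C_\beta < \infty \qquad (\xi\in\R^n\setminus\{0\}). \]
Taking $C_M:=\max_{|\beta|\le[\frac n2]+1}C_\beta$ then yields exactly condition (i) of Theorem~\ref{3.22}, i.e. the Mikhlin condition.

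There is essentially no hard step here; the only points requiring a little care are to keep the iteration of the differentiation rule within the range of differentiability guaranteed by the hypothesis, and to invoke compactness of $S^{n-1}$ to ensure each $C_\beta$ is finite. The same argument shows, more generally, that a function in $C^k(\R^n\setminus\{0\})$ homogeneous of degree $d$ satisfies $|\xi|^{|\beta|-d}\,|\partial^\beta m(\xi)|\le C$ for all $|\beta|\le k$.
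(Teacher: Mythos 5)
Your proof is correct and follows essentially the same route as the paper: differentiate the homogeneity identity to see that $\partial^\beta m$ is homogeneous of degree $-|\beta|$, so $|\xi|^{|\beta|}\partial^\beta m(\xi)$ is homogeneous of degree $0$ and continuous, hence bounded by its maximum on the unit sphere. No gaps.
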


\begin{proof}
If a function  $m\in C^k(\R^n\setminus\{0\})$ is homogeneous of degree $d$, then its derivative $\partial^\beta m(\xi)$
is homogeneous  of degree $d-|\beta|$ for all $|\beta|\le k$. This follows from the identities $\partial^\beta[
m(\rho\xi)] = \rho^{|\beta|} (\partial^\beta m)(\rho\xi)$ and $\partial^\beta [\rho^d m(\xi)] = \rho^d (\partial^\beta
m)(\xi)$.

Now let $m\in C^{[\frac n2]+1}(\R^n\setminus\{0\})$ be homogeneous of degree 0, and let
 $|\beta|\le [\frac n2]+1$. Then   $m_\beta(\xi) :=
|\xi|^{|\beta|} \partial_\beta m(\xi)$ is homogeneous of degree 0 and continuous. Therefore,
\[ |m_\beta (\xi)| = \Big|m_\beta\Big(\frac \xi{|\xi|}\Big)\Big| \le
\max_{|\eta|=1} |m_\beta(\eta)| <\infty\quad (\xi\in\R^n\setminus\{0\}).\] \vspace*{-2em}

\end{proof}

As a first application of Mikhlin's theorem, we can now answer the question from Example~\ref{3.19}.

\begin{corollary}
  \label{3.25} Let $1<p<\infty$. Then  $\{u\in L^p(\R^n): \Delta
  u\in L^p(\R^n)\} = W^2_p(\R^n)$.
\end{corollary}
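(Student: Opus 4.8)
The plan is to prove the two inclusions separately. The inclusion $W^2_p(\R^n)\subset\{u\in L^p(\R^n):\Delta u\in L^p(\R^n)\}$ is immediate: if $u\in W^2_p(\R^n)$, then $\Delta u=\sum_{j=1}^n\partial_j^2 u\in L^p(\R^n)$ by the definition of the Sobolev space, as was already noted in Example~\ref{3.19}. It therefore remains to show that $u\in L^p(\R^n)$ together with $\Delta u\in L^p(\R^n)$ forces $D^\alpha u\in L^p(\R^n)$ for every multi-index $\alpha$ with $|\alpha|\le 2$, and hence $u\in W^2_p(\R^n)$.

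Following Example~\ref{3.19}, I would put $f:=u-\Delta u\in L^p(\R^n)\subset\mathscr S'(\R^n)$. Taking the Fourier transform gives $\mathscr F f=(1+|\xi|^2)\mathscr F u$ in $\mathscr S'(\R^n)$; since $\xi\mapsto(1+|\xi|^2)^{-1}$ is smooth with all derivatives bounded, multiplication by it is a bijection of $\mathscr S'(\R^n)$, and one obtains $\mathscr F u=(1+|\xi|^2)^{-1}\mathscr F f$ and therefore, for $|\alpha|\le 2$,
\[
D^\alpha u=\mathscr F^{-1}\big(\xi^\alpha\mathscr F u\big)=\mathscr F^{-1}\big(m_\alpha\mathscr F f\big)=\op[m_\alpha]f,
\qquad m_\alpha(\xi):=\frac{\xi^\alpha}{1+|\xi|^2}.
\]
Thus it suffices to show that each $m_\alpha$ with $|\alpha|\le 2$ is an $L^p$-Fourier multiplier; then $D^\alpha u=\op[m_\alpha]f\in L^p(\R^n)$ for all such $\alpha$, i.e.\ $u\in W^2_p(\R^n)$.

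To this end I would verify the Mikhlin condition~(i) of Theorem~\ref{3.22} for $m_\alpha$. Since the denominator $1+|\xi|^2$ never vanishes, $m_\alpha\in C^\infty(\R^n)$, in particular $m_\alpha\in C^{[\frac n2]+1}(\R^n\setminus\{0\})$. A straightforward induction on $|\beta|$ shows that $\partial^\beta m_\alpha(\xi)=Q_{\alpha,\beta}(\xi)(1+|\xi|^2)^{-1-|\beta|}$ for a polynomial $Q_{\alpha,\beta}$ of degree at most $|\alpha|+|\beta|\le 2+|\beta|$, so that $|\partial^\beta m_\alpha(\xi)|\le C_{\alpha,\beta}(1+|\xi|)^{|\alpha|-2-|\beta|}\le C_{\alpha,\beta}(1+|\xi|)^{-|\beta|}$, and consequently $|\xi|^{|\beta|}|\partial^\beta m_\alpha(\xi)|\le C_{\alpha,\beta}$ for all $\xi\ne 0$ and all $|\beta|\le[\frac n2]+1$. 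By Theorem~\ref{3.22}, $m_\alpha$ is an $L^p$-Fourier multiplier, which completes the proof of the set equality; tracking the resulting constant also gives the equivalence of $\|u\|_{W^2_p(\R^n)}$ with $\|u\|_{L^p(\R^n)}+\|\Delta u\|_{L^p(\R^n)}$.

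There is essentially no obstacle here: the statement reduces entirely to the multiplier estimate above, and because $m_\alpha$ is in fact smooth across the origin (unlike the homogeneous symbols of Lemma~\ref{3.24}), the verification of Mikhlin's condition is particularly clean — one could equally well check the Lizorkin condition~(ii). The only points needing a little care are the justification that $(1+|\xi|^2)^{-1}$ acts as a multiplier on tempered distributions, used when solving $\mathscr F f=(1+|\xi|^2)\mathscr F u$ for $\mathscr F u$, and the bookkeeping in the induction for the derivatives $\partial^\beta m_\alpha$.
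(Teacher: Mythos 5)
Your proof is correct and follows the same overall strategy as the paper: reduce, as in Example~\ref{3.19}, to showing that $m_\alpha(\xi)=\xi^\alpha(1+|\xi|^2)^{-1}$ is an $L^p$-Fourier multiplier for $|\alpha|\le 2$, and then invoke Theorem~\ref{3.22}. The only difference lies in how the Mikhlin bounds are verified: the paper avoids any derivative bookkeeping by writing $m_\alpha(\xi)=\tilde m_\alpha(\xi,1)$ with $\tilde m_\alpha(\xi,\mu)=\xi^\alpha\mu^{2-|\alpha|}(\mu^2+|\xi|^2)^{-1}$, which is smooth and homogeneous of degree $0$ on $\R^{n+1}\setminus\{0\}$, so Lemma~\ref{3.24} applies and setting $\mu=1$ gives the estimates; you instead prove them directly by induction, showing $\partial^\beta m_\alpha(\xi)=Q_{\alpha,\beta}(\xi)(1+|\xi|^2)^{-1-|\beta|}$ with $\deg Q_{\alpha,\beta}\le|\alpha|+|\beta|$, which (using $|\alpha|\le 2$) indeed yields $|\xi|^{|\beta|}|\partial^\beta m_\alpha(\xi)|\le C$. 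Both verifications are valid; the homogenization trick is slicker and generalizes immediately to parameter-dependent symbols (as used later in Theorem~\ref{5.4}), while your computation is self-contained, exploits the smoothness of $m_\alpha$ across $\xi=0$, and makes the constants explicit. Your preliminary step solving $\mathscr F f=(1+|\xi|^2)\mathscr F u$ in $\mathscr S'(\R^n)$ is also justified correctly, since multiplication by $(1+|\xi|^2)^{-1}$ and by $(1+|\xi|^2)$ are mutually inverse maps of $\mathscr S'(\R^n)$.
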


\begin{proof}
  As we have seen in Example~\ref{3.19}, we have to show that the function  $m_\alpha(\xi )
  := \frac{\xi^\alpha}{1+|\xi|^2}$ satisfies the Mikhlin condition for all $|\alpha|\le 2$. For this,
  we write $m_\alpha(\xi) = \tilde m_\alpha(\xi,1)$  where the function $\tilde m_\alpha\colon \R^{n+1}
  \setminus\{0\}\to\C$ is defined by
  \[\tilde m_\alpha(\xi,\mu) := \frac{\xi^\alpha\mu^{2-|\alpha|}}{\mu^2+|\xi|^2}.\]
  As the function $\tilde m_\alpha$ is smooth and homogeneous of degree 0, it satisfies the Mikhlin condition by
  Lemma~\ref{3.24}.
   Setting $\mu=1$, we see that also $m_\alpha$ satisfies the Mikhlin condition.
\end{proof}

As mentioned above, we also need vector-valued variants of Mikhlin's theorem. The following results assume some
geometric conditions  on the Banach space $X$. For a detailed discussion of these properties, see, e.g.,
\cite{Hytonen-vanNeerven-Veraar-Weis16}, Chapter~4.

\begin{definition}
  \label{3.26}
  a) A Banach space $X$ is called a UMD space or a space of class $\HT$ if the symbol $m(\xi):= -i\sgn(\xi)\id_X$ yields
  a bounded  operator $\op[m] \in L(L^p(\R;X))$. The operator $\op[m]$ is called the Hilbert transform.

  b) A Banach space $X$ is said to have property $(\alpha)$ if there exists a constant $C>0$ such that for all $N\in\N$,
   all i.i.d. symmetric $\{-1,1\}$-valued random variables $\epsilon_1,\dots,\epsilon_N$ on $\Omega$ and
   $ \epsilon_1',\dots, \epsilon_N'$ on  $\Omega'$, all $\alpha_{ij}\in\C$ with $|\alpha_{ij}|\le 1$, and all $x_{ij}\in X$ we
   have
  \[ \Big\| \sum_{i,j=1}^N \alpha_{ij} \epsilon_i\epsilon_j'x_{ij}\Big\|_{L^2(\Omega\times\Omega';X)}\le
  C \Big\| \sum_{i,j=1}^N  \epsilon_i\epsilon_j'x_{ij}\Big\|_{L^2(\Omega\times\Omega';X)}.\]
\end{definition}

\begin{remark}
  \label{3.27}
  a) Every UMD space is reflexive. In particular, $L^1(G)$ and $L^\infty(G)$ are no UMD spaces. However, $L^1(G)$ has
  property $(\alpha)$.

  b) Every Hilbert space is a UMD space with property $(\alpha)$. If $E$ is a UMD space with property $(\alpha)$
  and if $(S,\sigma,\mu)$ is a $\sigma$-finite measure space, then also $L^p(S;E)$ is a UMD space with property
  $(\alpha)$ for all $p\in (1,\infty)$.

  c) More generally, if $G\subset \R^n$ is a domain,  $E$ is a UMD space with property $(\alpha)$ and $p,q\in(1,\infty)$,
  then the vector-valued Besov space $B_{pq}^s(G;E)$ and the vector-valued Triebel-Lizorkin space $F_{pq}^s(G;E)$
  are again UMD spaces with property $(\alpha)$. In particular, this holds in the scalar case $E=\C$.
\end{remark}

The following result is the vector-valued analog of Mikhlin's theorem and was central in the development of the theory
and application of maximal $L^p$-regularity.

\begin{theorem}
  \label{3.29} Let $X$ and $Y$ be UMD Banach spaces, and let
   $1<p<\infty$. Assume  $m\in C^n(\R^n\setminus\{0\}; L(X,Y))$ with
  \[ \RR\Big(\big\{ |\xi|^{|\alpha|} \partial^\alpha m(\xi):
  \xi\in\R^n\setminus\{0\}, \; \alpha\in\{0,1\}^n\big\}\Big)
  =:\kappa <\infty.\]
  Then $m$ is a vector-valued Fourier multiplier with
  \[ \| \op[m]\|_{L(L^p(\R^n;X),L^p(\R^n;Y))} \le C \kappa,\]
  where the constant  $C$ depends only on  $n,p,X$, and $Y$.
\end{theorem}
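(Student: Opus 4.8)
The plan is to establish, by induction on the dimension $n$, the formally stronger statement: for every index set $\Lambda$ and every family $\{m_\lambda:\lambda\in\Lambda\}\subset C^n(\R^n\setminus\{0\};L(X,Y))$ with
\[
\RR\Big(\big\{\,|\xi|^{|\alpha|}\partial^\alpha m_\lambda(\xi):\lambda\in\Lambda,\ \xi\in\R^n\setminus\{0\},\ \alpha\in\{0,1\}^n\big\}\Big)\le\kappa,
\]
the family $\{\op[m_\lambda]:\lambda\in\Lambda\}$ is $\RR$-bounded in $L(L^p(\R^n;X),L^p(\R^n;Y))$ with $\RR$-bound at most $C\kappa$, $C=C(n,p,X,Y)$; the theorem is the case $\Lambda=\{*\}$. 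Carrying the $\RR$-bounded-\emph{family} version through the induction is what lets the dimension be peeled off one variable at a time, and it is also the reason no property $(\alpha)$ is needed here.

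\emph{Dimension reduction.} Assume the statement in dimension $n-1$. For a family as above in dimension $n$, I would freeze the last variable and put $\widehat{m_\lambda}(\xi_n):=\op_{\xi'}[m_\lambda(\cdot,\xi_n)]$, a candidate Fourier multiplier between $L^p(\R^{n-1};X)$ and $L^p(\R^{n-1};Y)$ — both again UMD spaces by Remark~\ref{3.27}. Because $|\xi'|\le|\xi|$ and $|\xi_n|\le|\xi|$, Kahane's contraction principle (Lemma~\ref{3.7}) shows that the $(n-1)$-dimensional symbol family $\{m_\lambda(\cdot,\xi_n),\ \xi_n\partial_n m_\lambda(\cdot,\xi_n):\lambda,\xi_n\}$ satisfies the $\{0,1\}^{n-1}$-hypothesis with constant $\lesssim\kappa$; hence by the induction hypothesis the operator families $\{\widehat{m_\lambda}(\xi_n)\}$ and $\{\xi_n\partial_{\xi_n}\widehat{m_\lambda}(\xi_n)\}=\{\op_{\xi'}[\xi_n\partial_n m_\lambda(\cdot,\xi_n)]\}$ are $\RR$-bounded with $\RR$-bound $\lesssim\kappa$. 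Applying the one-dimensional case in the $\xi_n$ variable, with coefficient spaces $L^p(\R^{n-1};X)$ and $L^p(\R^{n-1};Y)$, to the family $\{\widehat{m_\lambda}\}$ then gives $\RR$-boundedness of $\{\op[m_\lambda]\}$ on $L^p(\R;L^p(\R^{n-1};X))=L^p(\R^n;X)$. The $\{0,1\}^n$ shape of the hypothesis is exactly what makes this iteration close.

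\emph{The one-dimensional case.} Splitting $m_\lambda$ into its restrictions to the two half-lines, I would decompose each into the dyadic blocks $I_k:=(2^k,2^{k+1}]$, $k\in\Z$, so that $\op[m_\lambda]=\sum_k\op[m_\lambda\chi_{I_k}]$, and write on each block, by the fundamental theorem of calculus,
\[
m_\lambda\chi_{I_k}=m_\lambda(2^k)\,\chi_{I_k}+\chi_{I_k}\int_{2^k}^{\,\cdot}m_\lambda'(s)\,ds .
\]
The first term is $m_\lambda(2^k)\,\op[\chi_{I_k}]$, a composition of the $\RR$-bounded set $\{m_\lambda(\xi)\}$ with the dyadic frequency projections $\{\op[\chi_{I_k}]:k\in\Z\}$; for the second, writing $m_\lambda'(s)=s^{-1}\,(s\,m_\lambda'(s))$ and using that $ds/s$ has mass $\log 2$ on $I_k$, one obtains an average (Corollary~\ref{3.15}) of elements of the $\RR$-bounded set $\{\xi\,m_\lambda'(\xi)\}$ composed with subinterval projections $\op[\chi_E]$, $E\subset I_k$. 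Since for a UMD space both the dyadic projections and the family of all interval projections are $\RR$-bounded, Remark~\ref{3.10}, Lemma~\ref{3.7} and Corollary~\ref{3.15} give
\[
\RR\big(\{\op[m_\lambda\chi_{I_k}]:\lambda\in\Lambda,\ k\in\Z\}\big)\le C\kappa .
\]
I would then sum the blocks with the randomized Littlewood--Paley estimate valid in UMD spaces: with a fattened rough projection $\tilde\Delta_k$ (onto $I_{k-1}\cup I_k\cup I_{k+1}$) so that $\op[m_\lambda\chi_{I_k}]=\op[m_\lambda\chi_{I_k}]\tilde\Delta_k$,
\[
\begin{aligned}
\|\op[m_\lambda]f\|_{L^p(\R;Y)}
&\lesssim\Big\|\sum_k r_k\,\op[m_\lambda\chi_{I_k}]\tilde\Delta_k f\Big\|_{L^p([0,1];L^p(\R;Y))}\\
&\lesssim\kappa\,\Big\|\sum_k r_k\,\tilde\Delta_k f\Big\|_{L^p([0,1];L^p(\R;X))}
\lesssim\kappa\,\|f\|_{L^p(\R;X)},
\end{aligned}
\]
where the outer estimates are the two halves of the Littlewood--Paley equivalence (for $Y$, resp.\ $X$) and the middle one uses the bound for $\{\op[m_\lambda\chi_{I_k}]\}_k$ and the independence of the $r_k$. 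Repeating this computation with the double sign family $(\epsilon_j r_k)_{j,k}$ in place of $(r_k)_k$, and using that $L^p([0,1];X)$, $L^p([0,1];Y)$ are again UMD, yields the $\RR$-bounded-family version and closes the induction.

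\emph{Main obstacle.} The handling of $\RR$-bounds above is routine bookkeeping with the tools of this section. The real difficulty is the Littlewood--Paley / square-function theory in UMD spaces behind both the $\RR$-boundedness of the rough dyadic and interval frequency projections and the equivalence $\|f\|_{L^p(\R;X)}\approx\|\sum_k r_k\Delta_k f\|_{L^p([0,1]\times\R;X)}$; this rests ultimately on Bourgain's vector-valued Littlewood--Paley theorem, and is the step I expect to be hardest to supply in self-contained form (cf.\ \cite{Hytonen-vanNeerven-Veraar-Weis16}, \cite{Denk-Hieber-Pruess03}).
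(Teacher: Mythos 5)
The paper itself does not prove Theorem~\ref{3.29}; it refers to \cite{Kunstmann-Weis04}, Theorem~4.6, and \cite{Hytonen-vanNeerven-Veraar-Weis16}, Theorem~5.3.18, so your proposal has to be measured against the standard literature proof. Your one-dimensional, single-symbol argument is essentially that standard argument: dyadic blocks, fundamental theorem of calculus on each block, $\RR$-boundedness of the modulated interval projections (Hilbert transform plus Kahane contraction) and of pointwise extensions of the symbol values, and assembly via the vector-valued Littlewood--Paley theorem, which you correctly identify as the deep ingredient to be quoted.

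The gap lies in the promotion to $\RR$-bounded \emph{families}, and hence in the induction on the dimension. In your closing step you propose to ``repeat the computation with the double sign family $(\epsilon_j r_k)_{j,k}$''. At that point the operators $T_{m_{\lambda_j}\chi_{I_k}}$ depend on \emph{both} indices, while the signs $\epsilon_j r_k$ are products of independent Rademachers and are not themselves an independent family; the definition of $\RR$-boundedness therefore does not apply to this double sum, and the passage from $\RR$-boundedness with respect to a single Rademacher sequence to the doubly indexed product system is exactly what Pisier's property $(\alpha)$ provides. This is precisely why Theorem~\ref{3.30} (the family version, due to Girardi--Weis \cite{Girardi-Weis03}) carries the property $(\alpha)$ hypothesis while Theorem~\ref{3.29} does not; the strengthened family statement you claim ``needs no property $(\alpha)$'' is not available for general UMD spaces, and your argument for it breaks at exactly this step. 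Since your dimension reduction obtains the $\RR$-boundedness of $\{\widehat m(\xi_n),\,\xi_n\partial_n\widehat m(\xi_n)\}$ from the $(n-1)$-dimensional \emph{family} version, the induction does not close under the stated hypotheses: as written, the proposal proves the theorem for $n=1$, and for $n\ge 2$ only under the additional assumption that $X$ and $Y$ have property $(\alpha)$ (i.e., it would reprove Theorem~\ref{3.30} rather than Theorem~\ref{3.29}). The UMD-only $n$-dimensional theorem is true, but its known proofs treat the single symbol directly (e.g., via $\RR$-bounded variation over dyadic rectangles, as behind \cite{Kunstmann-Weis04}, Theorem~4.6, and \cite{Hytonen-vanNeerven-Veraar-Weis16}, Theorem~5.3.18) instead of peeling off variables through an $\RR$-bounded-family statement; some such replacement for your induction step would be needed to repair the argument.
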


The proof of Theorem~\ref{3.29} uses Paley-Littlewood decompositions, see \cite{Kunstmann-Weis04}, Theorem~4.6, or
\cite{Hytonen-vanNeerven-Veraar-Weis16}, Theorem 5.3.18.

In the last result, we had one symbol $m$ and the related operator $\op[m]$. The following theorem shows that for a
family of symbols  satisfying uniform Mikhlin type estimates, also the related operator family is $\mathcal R$-bounded.

\begin{theorem}
  \label{3.30} Let  $X$ and $Y$ be UMD Banach spaces with property  $(\alpha)$. Let $\cal T\subset L(X,Y)$
  be $\RR$-bounded. Consider the set
  \[ M := \Big\{ m\in C^n(\R^n\setminus\{0\}; L(X,Y)): \xi^\alpha
  D^\alpha m(\xi)\in\cal T \quad (\xi\in\R^n\setminus\{0\},\,
  \alpha\in\{0,1\}^n)\Big\}.\]
  Then $\{\op[m]:m\in M\}\subset L(L^p(\R^n;X),L^p(\R^n;Y))$
  is $\RR$-bounded with $\RR_p(\{\op[m]:m\in M\})\le C \RR_p(\cal T)$,
  where the constant $C$ depends only on  $p,m,X$, and $Y$.
\end{theorem}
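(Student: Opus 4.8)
The plan is to reduce the $\RR$-boundedness of $\{\op[m]:m\in M\}$ to the boundedness of a single operator-valued Fourier multiplier on the vector-valued space $L^p(\R^n;\rad_p(X))$, and then to invoke the operator-valued Mikhlin theorem. Fix finitely many symbols $m_1,\dots,m_N\in M$ and Schwartz functions $f_1,\dots,f_N\in\mathscr S(\R^n;X)$. The quantity $\big\|\sum_{k=1}^N r_k\op[m_k]f_k\big\|_{L^p([0,1];L^p(\R^n;Y))}$, whose comparison with $\big\|\sum_k r_k f_k\big\|_{L^p([0,1];L^p(\R^n;X))}$ is exactly the defining inequality of $\RR$-boundedness, equals $\big\|\op[\mathbf m]\mathbf f\big\|_{L^p(\R^n;\rad_p(Y))}$, where $\mathbf f(x):=\sum_k r_k f_k(x)$ is viewed in $\rad_p(X)$ and $\mathbf m\colon\R^n\setminus\{0\}\to L(\rad_p(X),\rad_p(Y))$ is the diagonal symbol $\mathbf m(\xi)\big(\sum_k r_k g_k\big):=\sum_k r_k m_k(\xi)g_k$; here one uses that $\mathscr F$ commutes with the Rademacher structure and that $L^p([0,1];L^p(\R^n;Z))=L^p(\R^n;L^p([0,1];Z))$. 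Hence it suffices to bound $\|\op[\mathbf m]\|$ by $C\,\RR_p(\cal T)$ with $C$ independent of $N$ and of the chosen $m_k$.

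To apply the operator-valued Mikhlin theorem to $\mathbf m$, one needs that $\rad_p(X)$ and $\rad_p(Y)$ are again UMD; this is standard (for instance, the Rademacher projection is bounded on $L^p([0,1];X)$ when $X$ is UMD, exhibiting $\rad_p(X)$ as a complemented subspace of the UMD space $L^p([0,1];X)$, cf.\ Remark~\ref{3.27}~b)). Moreover $\mathbf m\in C^n(\R^n\setminus\{0\};L(\rad_p(X),\rad_p(Y)))$, being diagonal with $C^n$ entries. The decisive hypothesis is the Mikhlin/Lizorkin condition: since $\partial^\alpha=i^{|\alpha|}D^\alpha$, for $\alpha\in\{0,1\}^n$ each entry $\xi^\alpha\partial^\alpha m_k(\xi)$ lies in $i^{|\alpha|}\cal T$, hence in the $\RR$-bounded set $\cal T_0:=\{\lambda T:T\in\cal T,\ |\lambda|\le1\}$, for which $\RR_p(\cal T_0)\le 2\RR_p(\cal T)$ by Kahane's contraction principle. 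Therefore $\xi^\alpha\partial^\alpha\mathbf m(\xi)$ is, for every $\xi\neq0$ and every $\alpha\in\{0,1\}^n$, a diagonal operator on $\rad_p(X)$ whose entries all belong to $\cal T_0$, so the required multiplier estimate for $\mathbf m$ follows once we know that the family
\[
\cal D(\cal T_0):=\big\{\diag(S_1,\dots,S_N):N\in\N,\ S_j\in\cal T_0\big\}\subset L(\rad_p(X),\rad_p(Y))
\]
is $\RR$-bounded with $\RR_p(\cal D(\cal T_0))\le C_{X,Y}\,\RR_p(\cal T_0)$. (One invokes here the Lizorkin-type form of Theorem~\ref{3.29}, i.e.\ with the derivative condition phrased through $\xi^\alpha D^\alpha m$ rather than $|\xi|^{|\alpha|}\partial^\alpha m$, which is the version established in the references cited there.)

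This last claim is the heart of the matter, and it is precisely where property~$(\alpha)$ enters. By Kahane's inequality it suffices to estimate $\RR_2$-bounds. For diagonal operators $\mathbf D_k=\diag(S_{k1},\dots,S_{kN})$ with $S_{kj}\in\cal T_0$ and vectors $\mathbf x_k=(x_{kj})_{j=1}^N$, one expands
\[
\E_{\epsilon'}\E_r\Big\|\sum_{k}\epsilon'_k\sum_{j}r_j S_{kj}x_{kj}\Big\|_Y^2=\E_{\epsilon'}\E_r\Big\|\sum_{j,k}r_j\epsilon'_k S_{kj}x_{kj}\Big\|_Y^2 .
\]
Property~$(\alpha)$ of $Y$ — equivalently, the two-index Rademacher sum $\sum_{j,k}r_j\epsilon'_k y_{jk}$ is comparable in $L^2(Y)$ to $\sum_{j,k}\rho_{jk}y_{jk}$ for a genuine Rademacher family $(\rho_{jk})$ indexed by the pairs $(j,k)$ (see \cite{Hytonen-vanNeerven-Veraar-Weis16}) — replaces this by $\E_\rho\big\|\sum_{j,k}\rho_{jk}S_{kj}x_{kj}\big\|_Y^2$; now the $\rho_{jk}$ are independent, so $\RR_2$-boundedness of $\cal T_0$ gives the bound $\RR_2(\cal T_0)^2\,\E_\rho\big\|\sum_{j,k}\rho_{jk}x_{kj}\big\|_X^2$; and property~$(\alpha)$ of $X$ turns this back into $\E_{\epsilon'}\E_r\big\|\sum_{j,k}r_j\epsilon'_k x_{kj}\big\|_X^2=\E_{\epsilon'}\big\|\sum_k\epsilon'_k\mathbf x_k\big\|_{\rad_2(X)}^2$. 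Hence $\RR_2(\cal D(\cal T_0))\le C_XC_Y\,\RR_2(\cal T_0)$, and assembling the pieces yields the theorem. The main obstacle is exactly this step: $\RR$-boundedness of $\cal T$ by itself only yields uniform boundedness of the operators $\op[m_k]$ on $\rad_p$ (Lemma~\ref{3.8}~b)), and it is property~$(\alpha)$ — through the passage between the Rademacher systems $(r_j\epsilon'_k)_{j,k}$ and $(\rho_{jk})_{j,k}$ — that upgrades this to $\RR$-boundedness.
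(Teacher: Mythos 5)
The paper gives no proof of Theorem~\ref{3.30} at all --- it simply refers to \cite{Girardi-Weis03} --- and your argument is essentially the proof from that reference: you encode an arbitrary finite selection $m_1,\dots,m_N\in M$ as one diagonal operator-valued symbol acting between $\rad_p(X)$ and $\rad_p(Y)$, use that these spaces inherit the UMD property, and use property $(\alpha)$ (through the equivalence of the doubly indexed system $(r_j\epsilon_k')$ with a genuinely independent Rademacher family) to show that diagonal operators with entries in an $\RR$-bounded set are themselves $\RR$-bounded, which is exactly where property $(\alpha)$ must enter; this is correct. The only point needing care is the one you already flag: you must invoke the Lizorkin-type form of the operator-valued multiplier theorem (hypothesis on $\xi^\alpha D^\alpha m$ for $\alpha\in\{0,1\}^n$, as in the cited references), since Theorem~\ref{3.29} as stated in this paper carries the slightly different Mikhlin-form hypothesis on $|\xi|^{|\alpha|}\partial^\alpha m$ --- with that reading your proof is complete.
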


For a proof of this result, we refer to \cite{Girardi-Weis03}, Theorem~3.2. Theorem~\ref{3.30} is also the basis for an
iteration process:  $\RR$-bounded symbol families yield $\RR$-bounded operator families. For an application to
pseudodifferential operators with $\RR$-bounded symbols, we also refer to \cite{Denk-Krainer07}.

Note that Theorem~\ref{3.30} also gives a strong result in the scalar case $X=\C$. As $\C$ is a Hilbert space,
boundedness in $\C$ equals $\mathcal R$-boundedness. Therefore, boundedness of a family of scalar symbols implies
$\RR$-boundedness of the corresponding operator family. The same holds if $X$ is a general Hilbert space. We give a
simple but useful example.

\begin{corollary}
  \label{3.31} Let $\{m_\lambda:\lambda\in\Lambda\}$ be a family of matrix valued functions
   $m_\lambda\in C^n(\R^n\setminus\{0\};\C^{N\times
  N})$ with
  \[ |\xi^\alpha D^\alpha m_\lambda(\xi)|_{\C^{N\times N}} \le
  C_0\quad (\xi\in\R^n\setminus\{0\},\, \alpha\in\{0,1\}^n,\,
  \lambda\in\Lambda).\]
  Then  $\{ \op[m_\lambda]:\lambda\in\Lambda\}\subset
  L(L^p(\R^n;\C^N))$ is $\RR$-bounded with $\RR$-bound  $C\cdot C_0$,
  where  $C$ only depends on  $p$ and $N$.
\end{corollary}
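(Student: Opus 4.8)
The plan is to reduce this scalar/finite-dimensional statement directly to Theorem~\ref{3.30}. The target space here is $X=Y=\C^N$, which is a finite-dimensional Hilbert space; hence it is a UMD space and has property $(\alpha)$ (Remark~\ref{3.27} b), since every Hilbert space does). So Theorem~\ref{3.30} is applicable once we exhibit an appropriate $\RR$-bounded family $\cal T\subset L(\C^N)$ containing all the matrices $\xi^\alpha D^\alpha m_\lambda(\xi)$.

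First I would set
\[ \cal T := \big\{ S\in L(\C^N) : |S|_{\C^{N\times N}}\le C_0\big\},\]
i.e.\ the closed operator-norm ball of radius $C_0$ in $L(\C^N)=\C^{N\times N}$. By hypothesis, $\xi^\alpha D^\alpha m_\lambda(\xi)\in\cal T$ for all $\xi\in\R^n\setminus\{0\}$, all $\alpha\in\{0,1\}^n$, and all $\lambda\in\Lambda$, so each $m_\lambda$ belongs to the set $M$ of Theorem~\ref{3.30} (with this choice of $\cal T$). The only thing to check is that $\cal T$ is $\RR$-bounded, with an $\RR$-bound controlled by $C_0$ and $N$. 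Since $\C^N$ is a Hilbert space, Remark~\ref{3.9} b) tells us that $\RR$-boundedness of a subset of $L(\C^N)$ is equivalent to uniform operator-norm boundedness, and the $\RR$-bound is comparable to the uniform bound; more precisely, from the computation in Remark~\ref{3.9} b) one gets $\RR_2(\cal T)\le \sup_{S\in\cal T}\|S\| = C_0$. (Strictly one should note that the equivalence of $\RR$-bounds for different $p$ from Lemma~\ref{3.8} a) introduces only constants depending on $p$.) Hence $\RR(\cal T)\le C_0$ up to a $p$-dependent constant.

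Then Theorem~\ref{3.30} yields immediately that $\{\op[m_\lambda]:\lambda\in\Lambda\}\subset L(L^p(\R^n;\C^N))$ is $\RR$-bounded with
\[ \RR_p\big(\{\op[m_\lambda]:\lambda\in\Lambda\}\big) \le C\,\RR_p(\cal T) \le C\cdot C_0,\]
where the constant $C$ from Theorem~\ref{3.30} depends only on $p$, $X=\C^N$, and $Y=\C^N$ — and since the space is determined by $N$, this is a dependence on $p$ and $N$ alone. (The dependence on ``$m$'' recorded in the statement of Theorem~\ref{3.30} is a typo for the dimension $n$, which is fixed here.) This completes the argument.

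There is essentially no hard step: the whole content is packaging the data so that Theorem~\ref{3.30} applies, together with the Hilbert-space fact that uniform boundedness implies $\RR$-boundedness. The only place demanding a little care is bookkeeping the constants — tracking how the $p$-dependence in Kahane's inequality (via Lemma~\ref{3.8} a)) and in Theorem~\ref{3.30} combine, so that the final constant genuinely depends only on $p$ and $N$. That is routine and I would not belabor it.
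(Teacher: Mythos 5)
Your proposal is correct and follows essentially the same route as the paper: choose $\cal T$ to be the norm ball of radius $C_0$ in $\C^{N\times N}$, use that uniform boundedness implies $\RR$-boundedness in the Hilbert space $\C^N$, and apply Theorem~\ref{3.30}. The constant bookkeeping you add (via Lemma~\ref{3.8} and Remark~\ref{3.9}) is a welcome refinement but does not change the argument.
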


\begin{proof}
  As a Hilbert space, $X=\C^N$ is a UMD space with property  $(\alpha)$. By assumption, we know that
  \[ \big\{ \xi^\alpha D_\xi^\alpha m_\lambda(\xi):
  \xi\in\R^n\setminus\{0\},\,
  \alpha\in\{0,1\}^n,\,\lambda\in\Lambda\big\}\subset L(X)\]
  is norm bounded and consequently, as  $X$ is a Hilbert space, also
  $\RR$-bounded. Choosing $\cal T := \{
  A\in\C^{N\times N}: |A|\le C_0\}$ in Theorem~\ref{3.30}, we obtain the $\RR$-boundedness of  $\{\op[m_\lambda]:
  \lambda\in\Lambda\}\subset L(L^p(\R^n;\C^N))$.
\end{proof}

\subsection{$\RR$-sectorial operators}

Now we come back to the question of maximal $L^p$-regularity. As we have seen in Theorem~\ref{2.11}, maximal regularity
holds if and  only if the operator-valued symbol  $m(\lambda) := \lambda(\lambda-A)^{-1}$ for $\lambda\in i\R$ is a
bounded operator in $L^p(\R;X)$. So we can apply the one-dimensional case of Theorem~\ref{3.29}. We start with a notion
from operator theory.

In the following, let
    \[\Sigma_{\phi}   :=   \Bigl\{ z \in \C \setminus \{0\}:  |\arg (z)| < \phi
    \Bigr\}
\]
for $\phi\in (0,\pi]$. We denote the spectrum and the resolvent set of an operator $A$ by  $\sigma(A)$ and $\rho(A)$,
respectively.

\begin{definition}
  \label{3.32}
  Let $A\colon D(A)\to X$ be a linear and densely defined operator. Then  $A$ is called sectorial if there exists
  an angle $\phi>0$ such
  that
  $\rho(A)\supset \Sigma_\phi$ and
  \[ \sup_{\lambda\in\Sigma_\phi} \|\lambda
  (\lambda-A)^{-1}\|_{L(X)} <\infty.\]
  If this is the case, we call
  \[\phi_A := \sup\{ \phi: \; \rho(A)\supset \Sigma_\phi,\;
  \sup_{\lambda\in\Sigma_\phi} \|\lambda
  (\lambda-A)^{-1}\|_{L(X)} <\infty\}\]
  the spectral angle of  $A$.
\end{definition}

The following theorem is an important result from the theory of semigroups of operators (see, e.g.,
\cite{Engel-Nagel00} , Theorem~II.4.6).

\begin{theorem} \label{3.33}
Let  $A\colon  D(A) \mapsto X$ be linear and densely defined. Then the following statements are equivalent:
\begin{itemize}
\item [\rm(i)] $A$ generates a bounded holomorphic
$C_{0}$-semigroup on $X$ with angle  $\vartheta \in (0,
\frac\pi 2]$.
\item [\rm(ii)]  $A$ is sectorial with spectral angle $\phi_A\ge
\vartheta+\frac \pi 2$.
\end{itemize}
\end{theorem}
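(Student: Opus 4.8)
The plan is to prove the equivalence of Theorem~\ref{3.33} by translating the analytic characterization of bounded holomorphic $C_0$-semigroups into a resolvent estimate on a sector, which is precisely the definition of sectoriality with the stated angle. Since this is a classical result from semigroup theory, I would either cite it directly (as the paper in fact does) or else reconstruct the standard proof, which goes through the Laplace transform and a contour deformation argument.

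First I would recall that $A$ generates a bounded holomorphic $C_0$-semigroup of angle $\vartheta$ means precisely that the semigroup $T(\cdot)$ extends to a holomorphic function $z\mapsto T(z)$ on the sector $\Sigma_\vartheta$ with $\sup_{z\in\Sigma_{\vartheta'}}\|T(z)\|_{L(X)}<\infty$ for every $\vartheta'<\vartheta$. For the implication (i)$\Rightarrow$(ii): using the holomorphic extension and the representation $T(z)=\frac{1}{2\pi i}\int_\Gamma e^{z\lambda}(\lambda-A)^{-1}\,d\lambda$, or more elementarily the Laplace transform formula $(\mu-A)^{-1}=\int_0^\infty e^{-\mu t}T(t)\,dt$, one extends the resolvent to complex $\mu$ lying in a sector of opening $\vartheta+\frac\pi2$ about the positive real axis by rotating the ray of integration: for $\mu$ with $|\arg\mu|<\frac\pi2+\vartheta'$ one integrates $e^{-\mu z}T(z)$ along a ray $z=re^{i\psi}$ chosen so that $\Re(\mu e^{i\psi})>0$, which is possible exactly when $|\arg\mu|<\frac\pi2+\vartheta'$; the uniform bound on $\|T\|$ on $\Sigma_{\vartheta'}$ then yields $\|\mu(\mu-A)^{-1}\|\le C$ on that sector. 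Letting $\vartheta'\uparrow\vartheta$ gives $\phi_A\ge\frac\pi2+\vartheta$.

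For the converse (ii)$\Rightarrow$(i): given sectoriality with $\phi_A\ge\frac\pi2+\vartheta$, fix $\vartheta'<\vartheta$ and define, for $z\in\Sigma_{\vartheta'}$,
\[
T(z):=\frac{1}{2\pi i}\int_\Gamma e^{z\lambda}(\lambda-A)^{-1}\,d\lambda,
\]
where $\Gamma$ is the boundary of a sector $\Sigma_\phi$ with $\frac\pi2<\phi<\phi_A-\vartheta'$, oriented suitably and indented near the origin. The resolvent bound $\|(\lambda-A)^{-1}\|\le C/|\lambda|$ on $\Sigma_\phi$ together with $\Re(z\lambda)\le -c|z||\lambda|$ on the relevant parts of $\Gamma$ (valid precisely because $\phi+\vartheta'<\phi_A$... equivalently $|\arg z|+|\arg\lambda|>\frac\pi2$ on the tails) makes the integral absolutely convergent and holomorphic in $z$, and a standard contour-shift argument shows it is independent of the admissible choice of $\Gamma$, defines a semigroup via the resolvent identity and Cauchy's theorem, and satisfies the uniform bound $\sup_{z\in\Sigma_{\vartheta'}}\|T(z)\|<\infty$ by estimating $\int_\Gamma e^{-c|z||\lambda|}\frac{C}{|\lambda|}|d\lambda|$, which is $O(1)$ after rescaling $|\lambda|\mapsto|\lambda|/|z|$. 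Strong continuity at $0$ follows from density of $D(A)$ together with the resolvent being the Laplace transform of $T$. Since $\vartheta'<\vartheta$ was arbitrary, the generated semigroup is holomorphic of angle $\vartheta$.

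The main obstacle, and the only genuinely technical point, is the bookkeeping in the contour-shift/convergence estimates: one must check carefully that the chosen opening angle $\phi$ of the integration contour can simultaneously be taken $>\frac\pi2$ (so that $\Gamma$ wraps around the spectrum and the semigroup law works) and small enough that $\phi+|\arg z|<\phi_A$ for all $z\in\Sigma_{\vartheta'}$ (so that the exponential $e^{z\lambda}$ decays along the tails of $\Gamma$), which is exactly where the numerical relation $\phi_A\ge\vartheta+\frac\pi2$ is used in a sharp way. Given that the paper explicitly cites \cite{Engel-Nagel00}, Theorem~II.4.6, the cleanest route in this survey is simply to invoke that reference rather than reproduce these estimates, which is what the statement here does.
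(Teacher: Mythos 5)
Your proposal is correct and matches the paper's treatment: the paper offers no proof of Theorem~\ref{3.33} but simply cites \cite{Engel-Nagel00}, Theorem~II.4.6, which you also identify as the cleanest route, and your sketch of the classical Laplace-transform/contour-integral argument is exactly the standard proof behind that reference. One small bookkeeping remark: with a fixed symmetric contour $\Gamma=\partial\Sigma_\phi$, decay of $e^{z\lambda}$ along both tails for all $z\in\Sigma_{\vartheta'}$ requires $\frac\pi2+\vartheta'<\phi<\phi_A$ rather than $\frac\pi2<\phi<\phi_A-\vartheta'$; your stated window is the correct one only if you tilt $\Gamma$ with $\arg z$, which the contour-independence you invoke does permit.
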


It turns out that a similar condition characterizes operators with maximal $L^p$-regularity. For the following result,
cf. \cite{Denk-Hieber-Pruess03}, Theorem 4.4, \cite{Weis01}, Theorem~4.2,  and  \cite{Kunstmann-Weis04}, Theorem~1.11.

\begin{theorem}[Theorem of Weis]\label{3.34}
Let  $X$ be a UMD Banach space,
$1<p<\infty$, and  $A$ be a sectorial operator with spectral angle
$\phi_A>\frac\pi 2$. Then $A\in \mreg((0,\infty);X)$ if the family
\[ \big\{\lambda
  (\lambda-A)^{-1}: \lambda\in\Sigma_\phi \}\subset L(X) \]
is $\RR$-bounded for some  $\phi>\frac\pi 2$.
\end{theorem}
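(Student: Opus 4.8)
The plan is to reduce the statement of Theorem~\ref{3.34} to the one-dimensional vector-valued Mikhlin theorem (Theorem~\ref{3.29}) applied to the symbol $m(\tau) := i\tau(i\tau - A)^{-1}$ on the line $\tau\in\R$. By Theorem~\ref{2.7}, we already know that $A\in\MRO((0,\infty);X)$ (which equals $\mreg((0,\infty);X)$ once we verify invertibility-type estimates, cf. Remark~\ref{2.5} c) together with the sectoriality hypothesis $\phi_A>\frac\pi2$, which in particular gives $0\in\rho(A)$ via a resolvent expansion, or else one works on finite intervals where $\MRO=\mreg$ unconditionally) if and only if $\mathscr F_t^{-1} m \mathscr F_t$ defines a bounded operator on $L^p(\R;X)$. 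So the whole task is to check that $m$ is an $L^p$-Fourier multiplier in the sense of Definition~\ref{3.20}, and for this I would invoke the $n=1$ case of Theorem~\ref{3.29}: it suffices to show that the family
\[ \big\{ m(\tau),\ \tau\, m'(\tau) : \tau\in\R\setminus\{0\} \big\}\subset L(X) \]
is $\RR$-bounded.

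First I would verify that $m$ extends holomorphically to a sector around the imaginary axis and is bounded there. Since $\phi_A>\frac\pi2$, there is some $\phi>\frac\pi2$ with $\rho(A)\supset\Sigma_\phi$; writing $\lambda = i\tau$ with $\tau\in\R\setminus\{0\}$ places $\lambda$ on the boundary rays $\arg\lambda=\pm\frac\pi2$, which lie strictly inside $\Sigma_\phi$. So $i\tau\in\rho(A)$ for all real $\tau\neq0$ and $m(\tau)=i\tau(i\tau-A)^{-1}$ is well defined. The hypothesis gives that $\{\lambda(\lambda-A)^{-1}:\lambda\in\Sigma_\phi\}$ is $\RR$-bounded for some $\phi>\frac\pi2$; in particular its restriction to the two rays $\{i\tau:\tau\in\R\setminus\{0\}\}$ is $\RR$-bounded, which handles the $\alpha=0$ part, i.e. the family $\{m(\tau):\tau\neq0\}$.

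For the $\alpha=1$ part I would compute the derivative: from $m(\tau) = \lambda(\lambda-A)^{-1}$ with $\lambda=i\tau$ one gets $\frac{d}{d\tau}m(\tau) = i\frac{d}{d\lambda}\big[\lambda(\lambda-A)^{-1}\big]$, and using the resolvent identity $\frac{d}{d\lambda}(\lambda-A)^{-1} = -(\lambda-A)^{-2}$ one finds
\[ \lambda\frac{d}{d\lambda}\big[\lambda(\lambda-A)^{-1}\big] = \lambda(\lambda-A)^{-1} - \lambda^2(\lambda-A)^{-2} = \big(\lambda(\lambda-A)^{-1}\big) - \big(\lambda(\lambda-A)^{-1}\big)^2. \]
Hence $\tau\, m'(\tau) = \lambda\frac{d}{d\lambda}[\lambda(\lambda-A)^{-1}]$ lies in the set $\TT - \TT\cdot\TT$ where $\TT := \{\lambda(\lambda-A)^{-1}:\lambda\in i\R\setminus\{0\}\}$. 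By Remark~\ref{3.10}, $\RR$-bounded families are stable under sums and products, so $\{\tau\,m'(\tau):\tau\neq0\}$ is $\RR$-bounded with bound controlled by $\RR(\TT) + \RR(\TT)^2$. (Alternatively one can appeal directly to Corollary~\ref{3.16}, which says precisely that $\RR$-boundedness of $N$ on the boundary of a sector yields $\RR$-boundedness of $\{\lambda\,\partial_\lambda N(\lambda)\}$ on a smaller sector; taking $N(\lambda)=\lambda(\lambda-A)^{-1}$ gives both claims at once.)

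With both the $\alpha=0$ and $\alpha=1$ families $\RR$-bounded, the hypothesis of Theorem~\ref{3.29} (in dimension $n=1$, with $X=Y$ the given UMD space) is satisfied, so $m$ is an $L^p$-Fourier multiplier and $\op[m]=\mathscr F_t^{-1} m\mathscr F_t\in L(L^p(\R;X))$. By Theorem~\ref{2.7} this gives $A\in\MRO((0,\infty);X)$, and then Remark~\ref{2.5} c) (using that sectoriality with angle $>\frac\pi2$ forces $A$ to be invertible, so that the Poincaré-type estimate $\|u\|_{L^p}\le C\|Au\|_{L^p}$ applies) upgrades this to $A\in\mreg((0,\infty);X)$. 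The main obstacle I anticipate is not any single computation but rather being careful about the passage $\MRO\to\mreg$ on the half-line: one must genuinely use the spectral angle being strictly greater than $\frac\pi2$ to get $0\in\rho(A)$ (a short Neumann-series argument around a point on $\Sigma_\phi$ near the origin, or a homogeneity/scaling observation), since on $(0,\infty)$ — unlike on finite intervals — $\MRO$ and $\mreg$ are not automatically the same.
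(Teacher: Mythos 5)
Your core argument is sound and is exactly the route the paper intends (the paper does not prove Theorem~\ref{3.34} itself, but sketches precisely this reduction before Definition~\ref{3.32} and otherwise refers to \cite{Weis01}, \cite{Denk-Hieber-Pruess03}, \cite{Kunstmann-Weis04}): via Theorem~\ref{2.7} one must show that $m(\tau)=i\tau(i\tau-A)^{-1}$ is an $L^p(\R;X)$-multiplier, and your verification of the hypotheses of the one-dimensional Theorem~\ref{3.29} is correct -- the $\alpha=0$ family is a subfamily of the assumed $\RR$-bounded set since the punctured imaginary axis lies in $\Sigma_\phi$ for $\phi>\frac\pi2$, and the identity $\tau m'(\tau)=\lambda(\lambda-A)^{-1}-\bigl(\lambda(\lambda-A)^{-1}\bigr)^2$ with $\lambda=i\tau$ together with Remark~\ref{3.10} (or, alternatively, Corollary~\ref{3.16} applied on a sector of half-angle strictly between $\frac\pi2$ and $\min(\phi,\phi_A)$) handles $\alpha=1$.

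The genuine gap is your final step. Sectoriality with spectral angle $\phi_A>\frac\pi2$ does \emph{not} imply $0\in\rho(A)$: the sector $\Sigma_\phi$ excludes the origin, and the Neumann-series ball around $\lambda_0\in\Sigma_\phi$ has radius $\|(\lambda_0-A)^{-1}\|^{-1}\le M^{-1}|\lambda_0|$, where necessarily $M\ge 1$ whenever $0\in\sigma(A)$ (since $\|(\lambda-A)^{-1}\|\ge \operatorname{dist}(\lambda,\sigma(A))^{-1}\ge|\lambda|^{-1}$), so these balls never reach the origin; no scaling or homogeneity argument can help, because $A=\Delta$ on $L^p(\R^n)$ and even $A=0$ are ($\RR$-)sectorial of angle $\pi$ yet not invertible. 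For $A=0$ the solution $u(t)=\int_0^t f(s)\,ds$ satisfies $\partial_t u, Au\in L^p$ but in general $u\notin L^p((0,\infty);X)$, so membership in $\mreg((0,\infty);X)$ in the strict sense of Definition~\ref{2.4} really can fail without invertibility. What your multiplier argument actually proves is $A\in\MRO((0,\infty);X)$, which by Remark~\ref{2.5}~c) is equivalent to $A\in\mreg((0,T);X)$ for every finite $T$, and which upgrades to $\mreg((0,\infty);X)$ only under the additional hypothesis $0\in\rho(A)$. That is how the conclusion of Theorem~\ref{3.34} should be read (and how it appears in the cited references); the invertibility must be assumed, not derived, so your proof is complete once you either add $0\in\rho(A)$ as a hypothesis or state the conclusion in the $\MRO$/finite-interval form.
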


With respect to the last theorem, one defines $\RR$-sectorial operators:

\begin{definition}
  \label{3.35}
 Let $A\colon D(A)\to X$ be a linear and densely defined operator.
  Then  $A$ is called $\RR$-sectorial if there exists an angle   $\phi>0$ with
  $\rho(A)\supset \Sigma_\phi$ and
  \[ \RR\big\{\lambda
  (\lambda-A)^{-1}: \lambda\in\Sigma_\phi \} <\infty.\]
  The $\RR$-angle of $A$ is defined as the supremum of all angles for which the above $\RR$-bound is finite.
\end{definition}

By Theorem~\ref{3.34}, a sectorial operator has maximal regularity if  it is $\RR$-sectorial with $\RR$-angle larger
than  $\frac\pi 2$. In fact, one has the following equivalences.

\begin{theorem}
  \label{3.36} Let $A$ be the generator of a bounded holomorphic $C_0$-semigroup. Then the following statements are equivalent:
\begin{enumerate}[\upshape (i)]
  \item There exists a $\delta>0$ such that $A$ is $\RR$-sectorial with $\RR$-angle $\phi_{\RR}= \frac\pi2+\delta$.
  \item There exists an $n\in\N$  such that $\{t^n (it-A)^{-n}:
  t\in\R\setminus\{0\}\}$ is $\RR$-bounded.
  \item There exists a  $\delta>0$ such that the family  $\{T_z:z\in\Sigma_\delta\}$
  is $\RR$-bounded.
  \item The family  $\{T_t, tAT_t: t>0\}$ is $\RR$-bounded.
\end{enumerate}
\end{theorem}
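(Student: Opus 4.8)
The plan is to prove the cycle of implications (i)$\Rightarrow$(ii)$\Rightarrow$(iii)$\Rightarrow$(iv)$\Rightarrow$(i), using the machinery of $\RR$-boundedness developed in Sections~3.1--3.2 — in particular Kahane's contraction principle (Lemma~\ref{3.7}), the permanence of $\RR$-boundedness under products and closed absolutely convex hulls (Remark~\ref{3.10}, Theorem~\ref{3.14}), the averaging result Corollary~\ref{3.15}, and the Cauchy/Poisson-integral consequences in Corollaries~\ref{3.16}--\ref{3.17}. Throughout, $T_t = e^{tA}$ denotes the bounded holomorphic semigroup generated by $A$, extended holomorphically to $z\in\Sigma_\vartheta$ for the appropriate angle $\vartheta$, and one repeatedly uses the elementary fact (Remark~\ref{3.9}) that $\RR$-boundedness implies uniform boundedness, so all the norm-bounds needed to justify the integral representations below are automatic from the hypotheses.

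For (i)$\Rightarrow$(ii): if $A$ is $\RR$-sectorial on $\Sigma_{\pi/2+\delta}$, then in particular $it-A$ is invertible for $t\in\R\setminus\{0\}$ and $\{it(it-A)^{-1}:t\neq 0\}$ is $\RR$-bounded; since $(it-A)^{-n} = \big((it-A)^{-1}\big)^n$ up to harmless scalar factors, one first notes that $\{t^n(it-A)^{-n}\} = \{(it(it-A)^{-1})^n/i^n\}$, and $\RR$-boundedness of the product of $n$ copies of an $\RR$-bounded family is again $\RR$-bounded by Remark~\ref{3.10}, so one may even take $n=1$. For (ii)$\Rightarrow$(iii): use the representation $T_z = \tfrac{n!}{2\pi i}\int_\Gamma e^{z\lambda}(\lambda-A)^{-n-1}\,d\lambda$ along a suitable contour (or, equivalently, iterate the identity $T_t = \tfrac{1}{(n-1)!}\int_0^\infty s^{n-1}(\cdots)$-type formulas); parametrizing so that the integrand has the form $h_z(\lambda)\,\lambda^{n}(\lambda-A)^{-n}$ with $\sup_{z\in\Sigma_\delta}\|h_z\|_{L^1}<\infty$, one invokes Corollary~\ref{3.15} to conclude $\{T_z:z\in\Sigma_\delta\}$ is $\RR$-bounded. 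For (iii)$\Rightarrow$(iv): $\{T_t:t>0\}$ is $\RR$-bounded as a subfamily of $\{T_z:z\in\Sigma_\delta\}$; for $\{tAT_t:t>0\}$ one writes $tAT_t = tA e^{tA} = t\,\tfrac{d}{dt}T_t$, and by Cauchy's formula $tAT_t = \tfrac{t}{2\pi i}\int_{|z-t|=t\sin\delta/2} \tfrac{1}{(z-t)^2}\, z\, \tfrac{1}{z}T_z\,dz$ — more cleanly, $tAT_t = -\tfrac{1}{2\pi i}\int_{\partial B(t,ct)} \tfrac{t}{(z-t)^2}T_z\,dz$, an average of $T_z$ over $z\in\Sigma_\delta$ against an $L^1$-normalized kernel, so Corollary~\ref{3.15} applies again. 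Finally (iv)$\Rightarrow$(i): from $\RR$-boundedness of $\{T_t,tAT_t:t>0\}$ and the moment formula $\lambda(\lambda-A)^{-1} = \lambda\int_0^\infty e^{-\lambda t}T_t\,dt$ valid for $\Re\lambda>0$, together with $A(\lambda-A)^{-1} = -I + \lambda(\lambda-A)^{-1}$, one obtains $\RR$-boundedness of $\{\lambda(\lambda-A)^{-1}:\Re\lambda>0\}$ via Corollary~\ref{3.15} (the kernel $t\mapsto \lambda e^{-\lambda t}$ has $L^1$-norm $1$ when $\Re\lambda\ge\varepsilon|\lambda|$); then Corollary~\ref{3.16}, applied to the holomorphic family $\lambda\mapsto\lambda(\lambda-A)^{-1}$ on a sector slightly larger than the right half-plane, upgrades this to $\RR$-boundedness on some $\Sigma_{\pi/2+\delta}$.

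The main obstacle, and the step deserving the most care, is (iv)$\Rightarrow$(i): controlling $\{\lambda(\lambda-A)^{-1}\}$ on a sector of opening strictly greater than $\pi$ rather than merely on the open right half-plane. On the boundary rays $\arg\lambda=\pm\pi/2$ the naive Laplace-transform kernel $e^{-\lambda t}$ is only in $L^1$ in an oscillatory/conditional sense, so one must either push the contour in the Laplace representation (rotating $t$ into the complex plane, which is legitimate precisely because $T_z$ extends holomorphically and is bounded on $\Sigma_\vartheta$ with $\vartheta>0$) or first establish $\RR$-boundedness on a strictly smaller sector $\{|\arg\lambda|\le\pi/2-\eta\}$ and then use the holomorphic-extension Corollary~\ref{3.16} to recover an honest neighborhood of the imaginary axis. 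Either route requires that $A$ generate a \emph{holomorphic} (not merely bounded $C_0$) semigroup — which is exactly the standing hypothesis — so that the contour shifts are justified; I would present the argument via Corollary~\ref{3.16} to keep the $L^1$-kernel bookkeeping minimal.
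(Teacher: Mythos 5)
Your outline gets (i)$\Rightarrow$(ii) and (iii)$\Rightarrow$(iv) right (the latter via the Cauchy-formula average over circles $|z-t|=ct$, which is exactly the mechanism behind Corollary~\ref{3.16}), but the two implications that carry the real content of the theorem have genuine gaps. For (ii)$\Rightarrow$(iii): the only family you know to be $\RR$-bounded is $\{t^n(it-A)^{-n}\}$ on the \emph{imaginary axis}, so in your representation $T_z=\frac{(n-1)!}{2\pi i\,z^{n-1}}\int_\Gamma e^{z\lambda}(\lambda-A)^{-n}\,d\lambda$ the contour $\Gamma$ is forced to be $i\R$; but there the scalar kernel $h_z(\lambda)=c\,z^{-(n-1)}e^{z\lambda}\lambda^{-n}$ is not in $L^1$ (for $z>0$ real, $|e^{z\lambda}|\equiv 1$ on $i\R$ and $|\lambda|^{-n}$ is non-integrable at $0$), so Corollary~\ref{3.15} cannot be invoked. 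Any ``suitable contour'' bending into the left half-plane or along $\partial\Sigma_{\pi/2+\delta}$ presupposes $\RR$-boundedness of $\lambda^n(\lambda-A)^{-n}$ there, which is essentially (i). The paper's route supplies exactly the missing steps: first reduce $n$ to $1$ via the averaging identity $(it)^{n-1}(it-A)^{-(n-1)}=\int_0^\infty h_t(s)\,(is)^n(is-A)^{-n}\,ds$ with $\|h_t\|_{L^1}=1$ and Corollary~\ref{3.15}; then apply Corollary~\ref{3.16} (Poisson formula) to get $\RR$-boundedness on $\Sigma_{\pi/2}$; then enlarge the sector beyond $\pi/2$ by the resolvent power series $(\lambda-A)^{-1}=\sum_k(it-\lambda)^k(it-A)^{-k-1}$ together with Kahane's contraction principle and Remark~\ref{3.10}. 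Only after that does a contour representation with an honest $L^1$ kernel become available.

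The same misuse of Corollary~\ref{3.16} undermines your (iv)$\Rightarrow$(i). That corollary assumes $\RR$-boundedness on the \emph{boundary} $\partial\Sigma_\theta$ and concludes it on the same sector $\Sigma_\theta$; it never pushes the region of $\RR$-boundedness outward. From the Laplace transform you only obtain $\RR$-bounds on proper subsectors $\{\Re\lambda\ge\epsilon|\lambda|\}$, with constant of order $1/\epsilon$, so you reach neither the imaginary axis nor, a fortiori, a sector of angle $>\frac\pi2$; your fallback route (b) therefore does not work. Route (a), rotating the Laplace contour, is the right idea but requires $\RR$-boundedness of $\{T_z\}$ along complex rays, i.e.\ statement (iii), which you cannot assume inside the implication (iv)$\Rightarrow$(i) of a cyclic proof. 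The standard repair — and the step the paper compresses into ``by considering power series expansion'' — is a power-series argument: from $\RR(\{tAT_t:t>0\})<\infty$ and $A^kT_t=(AT_{t/k})^k$ one gets $\RR$-boundedness of $T_z=\sum_k\frac{(z-t)^k}{k!}(AT_{t/k})^k$ on a small sector $\Sigma_\delta$ (Kahane contraction plus Remark~\ref{3.10}, with the factorial absorbing the product of $\RR$-bounds for $|z-t|\le ct$ and $c$ small), which yields (iv)$\Rightarrow$(iii); then the rotated Laplace integral plus Corollary~\ref{3.15} gives $\RR$-bounds up to and slightly beyond the imaginary axis, or equivalently one finishes with the resolvent power series as above. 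These two power-series extensions are precisely the nontrivial content of the equivalences, and they are absent from your proposal.
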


\begin{proof}
We only give a sketch of proof, for the full version see \cite{Kunstmann-Weis04}, Theorem~1.11.

(i)$\Longrightarrow$(ii) is trivial.

(ii)$\Longrightarrow$(i). We write
\[ (it-A)^{-n+1} = (n-1)i\int_t^\infty (is-A)^{-n} ds \]
and obtain
\[ (it)^{n-1} (it-A)^{-n+1} = \int_0^\infty h_t(s) \big[ (is)^n
(is-A)^{-n}\big]ds\] for the function $h_t(s) := (n-1) t^{n-1}
s^{-n} \chi_{[t,\infty)}$. We have  $\int_0^\infty h_t(s)ds=1$, and
 Corollary~\ref{3.15} yields (ii) for $n-1$ instead of
 $n$. Iteratively, we see that  (ii) holds for  $n=1$. Now we use
Corollary~\ref{3.16} to show the  $\RR$-boundedness of
$\{\lambda(\lambda-A)^{-1}:\lambda\in\Sigma_{\pi/2}\}$. By considering power series expansion, one can show that
 $\lambda(\lambda-A)^{-1}$ is in fact $\RR$-bounded on  some larger sector.

(iii)$\Longrightarrow$(i). This follows from
Corollary~\ref{3.15}, too, with help of the representation
\[ (\lambda-A)^{-1} = \int_0^\infty e^{-\lambda t} T_t dt.\]

(i)$\Longrightarrow$(iii) follows similarly by
\[ T_z = \frac1{2\pi i}\int_{\Gamma_t} e^{\lambda z}
(\lambda-A)^{-1} d\lambda.\]

(iii)$\Longleftrightarrow$(iv) can be shown using Corollary~\ref{3.16}.
\end{proof}

\section{$L^p$-Sobolev spaces}

In the definition of maximal regularity, the vector-valued Sobolev space $W_p^1(J;X)$ appears. In many cases,
$X=L^p(G)$ for some  domain $G\subset \R^n$, and it would be desirable to obtain a more explicit description of the
space $\gamma_t\E$ of time traces in this situation. Note that Lemma~\ref{2.4a} tells us that this is connected with
real interpolation.  A similar question arises if the operator $A$ is a differential operator in some domain
$G\subset\R^n$. In this case, the domain $D(A)$ is described by boundary operators, and the spaces for the boundary
traces will be non-integer Sobolev spaces. For $p\not=2$, there
 are  different scales of non-integer Sobolev spaces: Besov spaces, Triebel-Lizorkin spaces, and Bessel potential spaces.
A modern definition of these scales is based on dyadic
 decomposition and on the Fourier transform. A classical reference for this is the book by Triebel  (\cite{Triebel95},
 Section~2.3),
 where the scalar case is discussed. For a modern presentation, including the vector-valued situation, we mention the
monograph by Amann (\cite{Amann19}, Chapter~VII). Note that in the vector-valued situation, the related integrals are
Bochner integrals, and we
 refer to \cite{Hytonen-vanNeerven-Veraar-Weis16}, Section~1, and \cite{Arendt-Batty-Hieber-Neubrander11}, Section 1.1,
for an introduction to vector-valued integration.

\begin{definition}
  \label{4.1} A sequence $(\phi_k)_{k\in\N_0}$ of $C^\infty$-functions $(\phi_k)_{k\in\N_0}$ is called a dyadic
  decomposition if
  \begin{enumerate}
    [(i)]
    \item  $\phi_k\ge 0$, $\supp\phi_0\subset B(0,2)$ and $\supp\phi_k\subset\{ \xi\in\R^n: 2^{k-1} <|\xi|<2^{k+1}\}$
        for all $k\in\N$,
    \item $\sum_{k\in\N_0} \phi_k(\xi) =1 $ for all $\xi\in\R^n$,
    \item for all $\alpha\in\N_0^n$ there exists a $c_\alpha>0$ with
    \[ |\xi|^{|\alpha|} \big| \partial^\alpha \phi_k(\xi)\big| \le c_\alpha\quad (\xi\in\R^n,\, k\in\N_0).\]
  \end{enumerate}
\end{definition}

It is easy to define a dyadic decomposition by scaling a fixed function $\phi_1$ (see \cite{Triebel95}, Section~2.3.1).
By the theorem of Paley-Wiener, for every $u\in\mathscr S'(\R^n)$ the distribution $\op[\phi_k]u$ is a regular
distribution and even a smooth function. Therefore, $(\op[\phi_k]u)(x)$ is well-defined.
In the following, let $X$ be a
Banach space.

\begin{definition}
  \label{4.2}
  a) For  $s\in\R$, $p,q\in[1,\infty)$, the Besov space $B_{pq}^s(\R^n;X)$ is defined by $B_{pq}^s(\R^n;X):=
  \{u\in\mathscr S'(\R^n;X): \|u\|_{B_{pq}^s(\R^n;X)}<\infty\}$, where
  \[     \|u\|_{B_{pq}^s(\R^n;X)}  = \Big[ \sum_{k\in\N_0} 2^{skq} \Big( \int_{\R^n} \|(\op[\phi_k]u)(x)\|_X^p dx
  \Big)^{q/p}\Big]^{1/q}.\]

   b) For $s\in\R$ and $p,q\in [1,\infty)$ the Triebel-Lizorkin space $F_{pq}^s(\R^n;X)$ is defined by
  $F_{pq}^s(\R^n;X):=\{ u\in\mathscr S'(\R^n;X): \|u\|_{F_{pq}^s(\R^n;X)}<\infty\}$, where
 \[ \|u\|_{F_{pq}^s(\R^n;X)}  = \Big[ \int_{\R^n} \Big( \sum_{k\in\N_0} 2^{skq} \|(\op[\phi_k]u)(x)\|_X^q\Big)^{p/q}
 dx\Big]^{1/p}.\]

 c) If $p=\infty$ or $q=\infty$, the above definitions hold with the standard modification.
\end{definition}

By an application of Fubini's theorem, we immediately see that for $p=q$ the definitions of Besov spaces and
Triebel-Lizorkin spaces coincide, but in general we have two different scales of Sobolev space type. For the third
scale, the Bessel potential spaces, we consider the function $\langle \,\cdot\,\rangle\colon \R^n\to\R,\, \xi\mapsto
\langle\xi\rangle := (1+|\xi|^2)^{1/2}$. For the following definition, we refer to
\cite{Hytonen-vanNeerven-Veraar-Weis16}, Definition 5.6.2.

\begin{definition}
  Let $s\in\R$ and $p\in [1,\infty]$. Then the Bessel potential space $H_p^s(\R^n;X)$ is defined as the space of all
  $u\in \mathscr S'(\R^n;X)$ for which $\op[\langle\,\cdot\,\rangle^s]u\in L^p(\R^n;X)$. The corresponding norm is defined
  as
  \[ \|u\|_{H^s_p(\R^n;X)} := \|\op[\langle\,\cdot\,\rangle^s]u\|_{L^p(\R^n;X)}.\]
\end{definition}

\begin{remark}\label{4.3}
  a) Many classical Sobolev spaces can be found as special cases of the above definition.
  \begin{itemize}
    \item Let $X$ be a UMD space, $k\in\N$, and $p\in (1,\infty)$, and let  $W_p^k(\R^n;X)$ denote the classical
        Sobolev space,
\[ W_p^k(\R^n;X) := \big\{ u\in L^p(\R^n;X): \,\forall\, |\alpha|\le k:\, \partial^\alpha u\in L^p(\R^n;X)\big\}.\]
         Then $W_p^k(\R^n;X) = H_p^k(\R^n;X)$ with equivalent norms (\cite{Hytonen-vanNeerven-Veraar-Weis16},
        Theorem~5.6.11).
    \item Let $p\in (1,\infty)$ and $s\in\R$. Then the equality  $H_p^s(\R^n;X) = F_{p2}^s(\R^n;X)$ holds if and only
        if $X$ is isomorphic to a Hilbert space (\cite{Han-Meyer96}, Theorem~1.2).
    \item Let $p\in [1,\infty)$ and $s\in (0,\infty)\setminus \N$. Then the Sobolev-Slobodeckii space $W_p^s(\R^n;X)$
        is given as $W_p^s(\R^n;X) = B_{pp}^s(\R^n;X)$ (\cite{Amann19}, Remark~3.6.4).
     \item Let $s\in (0,\infty)\setminus\N$. Then the classical H\"older space is given as $C^s(\R^n;X) =
         B_{\infty,\infty}^s(\R^n;X)$ (\cite{Amann19}, Remark~3.6.4).
  \end{itemize}

  b) Let $G\subset\R^n$ be a domain. Then the space $B_{pq}^s(G;X)$ is defined by restriction, i.e.
  \[B_{pq}^s(G;X) := \{ u\in \mathscr D'(G;X): \,\exists\, \tilde u\in B_{pq}^s(\R^n;X): u = \tilde u|_G\}\]
  with canonical norm
  \[ \|u\|_{B_{pq}^s(G;X)} := \inf\big\{ \|\tilde u\|_{B_{pq}^s(\R^n;X)}: u = \tilde u|_G \big\}.\]
  Note here that the restriction of a distribution is defined as $\tilde u|_G := \tilde u|_{\mathscr D(G)}$. In the
  same way, the other scales are defined on domains.
\end{remark}

The following result can be shown with the theory of interpolation spaces and is the basis for the description of the
 trace spaces. We refer to \cite{Amann19}, Theorem~2.7.4, for a proof (with $G=\R^n$, the case of a domain can be
 handled by a retraction-coretraction argument if the domain is smooth enough).

\begin{theorem}
  \label{4.4}
 Let $G\subset \R^n$ be a sufficiently smooth domain, and let $p,q\in(1,\infty)$,  $k\in\N$, and $s\in (0,k)$. Then
  \[ B_{pq}^{s}(G;X) = (L^p(G;X), W_p^k(G;X))_{s/k,q}.\]
\end{theorem}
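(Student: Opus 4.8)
The plan is to first reduce to the case $G=\R^n$ by a retraction--coretraction argument, and then to prove the identity on $\R^n$ by squeezing $(L^p,W_p^k)_{\theta,q}$ between two Besov spaces having the same fine index $q$ and invoking a classical interpolation lemma for weighted sequence spaces.

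\textbf{Reduction to $G=\R^n$.} For a sufficiently smooth domain $G$ there is a universal extension operator $E$ (of Stein or Seeley type) that is bounded simultaneously as $E\colon W_p^m(G;X)\to W_p^m(\R^n;X)$ for $m=0$ and $m=k$, and as $E\colon B_{pq}^t(G;X)\to B_{pq}^t(\R^n;X)$ for every $t$, and that satisfies $R_GE=\id$, where $R_G$ denotes restriction to $G$ (bounded on all spaces over $G$ by their very definition, cf.\ Remark~\ref{4.3} b)). Applying the functor $(\,\cdot\,,\,\cdot\,)_{\theta,q}$ with $\theta:=s/k$ to the pair $(E,R_G)$ exhibits $(L^p(G;X),W_p^k(G;X))_{\theta,q}$ as a retract of $(L^p(\R^n;X),W_p^k(\R^n;X))_{\theta,q}$ and, via the same pair, $B_{pq}^s(G;X)$ as a retract of $B_{pq}^s(\R^n;X)$. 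Hence, once the identity is known for $\R^n$, it transfers to $G$ with equivalent norms, and it remains to treat $G=\R^n$.

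\textbf{Two elementary embeddings on $\R^n$.} Fix a dyadic decomposition $(\phi_j)_{j\in\N_0}$ as in Definition~\ref{4.1}. The key observation is that for every multi-index $\alpha$ with $|\alpha|\le k$ the block $\op[2^{-j|\alpha|}\xi^\alpha\phi_j]$ acts on $L^p(\R^n;X)$ as convolution with a kernel whose $L^1(\R^n)$-norm is bounded uniformly in $j$; this follows by rescaling a fixed bump function, and by Young's inequality for Bochner integrals it holds for \emph{every} Banach space $X$, with no additional assumption. Using this, together with the support conditions of Definition~\ref{4.1}, one verifies in a routine way the continuous embeddings
\[ B_{p,1}^m(\R^n;X)\hookrightarrow W_p^m(\R^n;X)\hookrightarrow B_{p,\infty}^m(\R^n;X)\qquad (m=0,k).\]
For the left embedding one writes $\partial^\alpha u=\sum_j\op[(i\xi)^\alpha\phi_j]u$ for $|\alpha|\le m$ and estimates block by block; for the right one, one uses a smooth splitting $\phi_j=\sum_{l=1}^n\phi_j^{(l)}$ with $|\xi_l|$ comparable to $2^j$ on $\supp\phi_j^{(l)}$ and extracts $m$ factors $\partial_{x_l}$.

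\textbf{The sandwich, conclusion, and main obstacle.} By monotonicity of the real interpolation functor the previous embeddings give, for all spaces over $\R^n$ with values in $X$,
\[ (B_{p,1}^0,B_{p,1}^k)_{\theta,q}\hookrightarrow (L^p,W_p^k)_{\theta,q}\hookrightarrow (B_{p,\infty}^0,B_{p,\infty}^k)_{\theta,q}.\]
Next, $Ju:=(\op[\phi_j]u)_j$ together with the reconstruction $P\bigl((f_j)_j\bigr):=\sum_j\op[\widetilde\phi_j]f_j$, where $\widetilde\phi_j:=\phi_{j-1}+\phi_j+\phi_{j+1}$ (so that $\widetilde\phi_j\equiv 1$ on $\supp\phi_j$ and $PJ=\id$), realizes $B_{p,q_i}^{s_i}(\R^n;X)$ as a retract of the weighted sequence space $\ell^{q_i}\bigl((2^{s_i j})_j;L^p(\R^n;X)\bigr)$; boundedness of $P$ again uses the uniform $L^1$-kernel bounds and the fact that $\op[\phi_l]Pf$ involves only the finitely many indices $j$ with $|j-l|\le 2$. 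Combining the retraction theorem with the classical interpolation formula
\[ \bigl(\ell^{q_0}\bigl((2^{s_0 j})_j;A\bigr),\ell^{q_1}\bigl((2^{s_1 j})_j;A\bigr)\bigr)_{\theta,q}=\ell^{q}\bigl((2^{sj})_j;A\bigr),\qquad s=(1-\theta)s_0+\theta s_1,\ s_0\ne s_1,\]
valid for an arbitrary Banach space $A$, one gets $(B_{p,q_0}^{s_0}(\R^n;X),B_{p,q_1}^{s_1}(\R^n;X))_{\theta,q}=B_{pq}^s(\R^n;X)$ whenever $s_0\ne s_1$. Taking $s_0=0$, $s_1=k$, and $q_0=q_1=1$, respectively $q_0=q_1=\infty$, identifies both ends of the sandwich with $B_{pq}^s(\R^n;X)$; since all inclusions above are the identity map, the squeeze yields $(L^p(\R^n;X),W_p^k(\R^n;X))_{\theta,q}=B_{pq}^s(\R^n;X)$ with equivalent norms, which with the first step proves the theorem. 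The technical heart is this last paragraph: constructing the Littlewood--Paley retraction onto the weighted $\ell^q(L^p)$-sequence spaces and proving the interpolation formula for those spaces, which is a $K$-functional computation exploiting the lacunary separation $2^{s_0 j}/2^{s_1 j}=2^{-kj}\to 0$. The reduction to $\R^n$ is standard once a universal extension operator is available---this is exactly where the smoothness of $G$ enters---and the embeddings of the second step are Bernstein-type estimates that use nothing about $X$ beyond being a Banach space, so that no UMD hypothesis is needed.
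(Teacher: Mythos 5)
Your proposal is correct and is essentially the argument the paper points to: the paper gives no proof of its own but cites \cite{Amann19}, Theorem~2.7.4 for the case $G=\R^n$ and explicitly mentions the retraction--coretraction reduction for smooth domains, which is exactly your first step. Your whole-space argument (sandwiching $(L^p,W_p^k)_{\theta,q}$ between Besov endpoints via Bernstein-type embeddings and computing $(B^{s_0}_{p,q_0},B^{s_1}_{p,q_1})_{\theta,q}$ through the Littlewood--Paley retract onto weighted $\ell^q(L^p)$ sequence spaces) is the standard textbook proof underlying that citation, and it is valid for arbitrary Banach spaces $X$ as you note.
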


From this theorem and the description of the trace spaces as real interpolation space, one can easily obtain $\gamma_0
W_p^k(G;X) = B_{pp}^{k-1/p}(G;X)$, where $\gamma_0 u := u|_{\partial G}$ stands for the trace on the boundary of the
domain. This typical loss of derivatives of order $1/p$ leads to non-integer Sobolev spaces for inhomogeneous boundary
data. For parabolic equations, we also have to consider time and boundary traces of the solution space:

\begin{corollary}
  \label{4.5}
  Let $G\subset\R^n$ be a sufficiently smooth domain, $J=(0,T)$ with $T\in (0,\infty]$,  $k\in\N$,
  and let $\X = W_p^1(J;L^p(G)\cap L^p(J;W_p^k(G))$
   (the typical parabolic solution space).

  a) For the time trace $\gamma_t\colon u\mapsto u|_{t=0}$, we obtain the trace space
  \[ \gamma_t \X = B_{pp}^{k-k/p}(G).\]

  b) For the boundary trace $\gamma_0\colon u\mapsto u|_{\partial G}$, we obtain the trace space
 \[ \gamma_0\X = B_{pp}^{1-1/(kp)}(J;L^p(\partial G))\cap L^p(J; B_{pp}^{k-1/p}(\partial G)).\]
\end{corollary}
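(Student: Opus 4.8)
Part a). The plan is to recognise $\X$ as an instance of the solution space $\E$ in \eqref{2-5a} and read off the time trace from Lemma~\ref{2.4a}. Put $X:=L^p(G)$ and let $A$ be any closed, densely defined operator in $X$ whose domain, with an equivalent graph norm, is $W_p^k(G)$; then $\E=W_p^1(J;X)\cap L^p(J;D(A))=\X$, and Lemma~\ref{2.4a}~a) gives $\gamma_t\X=(L^p(G),W_p^k(G))_{1-1/p,p}$ (this identity depends only on the interpolation couple, not on the particular $A$). It remains to identify this interpolation space: applying Theorem~\ref{4.4} in the scalar case with $p=q$ and $s:=k-k/p$ (legitimate since $p\in(1,\infty)$ forces $s/k=1-1/p\in(0,1)$, hence $s\in(0,k)$) yields $(L^p(G),W_p^k(G))_{1-1/p,p}=B_{pp}^{k-k/p}(G)$, which is the assertion.

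Part b), reduction to a half-space. Since $\gamma_0$ acts only in the spatial variable, I would first flatten the boundary: using the smoothness of $\partial G$, pick a finite atlas near $\partial G$ with the boundary-straightening diffeomorphisms and a subordinate partition of unity. Each such spatial diffeomorphism is an isomorphism of $W_p^k(G)$ with $W_p^k(\R^n_+)$ and, acting pointwise in $t$, of $L^p(J;W_p^k(G))$ and of $W_p^1(J;L^p(G))$, hence of $\X$, and it intertwines $\gamma_0$ with restriction to $\{x_n=0\}$ up to lower-order terms; the interior patch contributes nothing to the trace. A retraction--coretraction argument thus reduces the claim to computing $\gamma_0\X_+$ for $\X_+:=W_p^1(J;L^p(\R^n_+))\cap L^p(J;W_p^k(\R^n_+))$ with $\gamma_0u=u|_{x_n=0}$; and since Stein's extension operator in $x_n$ is bounded on all Sobolev orders simultaneously, it maps $\X_+$ boundedly into the full-space space $\X_0:=W_p^1(J;L^p(\R^n))\cap L^p(J;W_p^k(\R^n))$, so that (restriction to $\R^n_+$ not changing the trace at $x_n=0$) one has $\gamma_0\X_+=\gamma_0\X_0$.

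Part b), the half-space trace. This is the crux, and I would treat it by Fourier analysis in $(t,x)$ using the parabolic scaling in which $\tau$ carries weight $k$. With $\Lambda(\tau,\xi):=(1+|\tau|^{2/k}+|\xi|^2)^{1/2}$, the first step is to show via the scalar Mikhlin theorem (Theorem~\ref{3.22}) that $\X_0$ coincides with the anisotropic Bessel potential space $\{u:\op[\Lambda^k]u\in L^p(\R^{1+n})\}$; indeed $(1+i\tau)\Lambda^{-k}$ and $\xi^\alpha\Lambda^{-|\alpha|}$ for $|\alpha|\le k$, together with the symbols representing the converse inequalities, are parabolically homogeneous of degree $0$ off the origin and a short computation shows they satisfy the Lizorkin condition on $\R^{1+n}$. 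The second step computes the trace at $\{x_n=0\}$ of this space exactly as in the classical identity $\gamma_0W_p^k(\R^n)=B_{pp}^{k-1/p}(\R^{n-1})$: one represents a function by a Poisson-type extension in the normal variable governed by the tangential part of $\Lambda$, rewrites membership in $\X_0$ as a weighted $L^p((0,\infty)_{x_n})$ bound on that extension, and integrates in $x_n$, obtaining that the trace space is the anisotropic Besov space over $J\times\R^{n-1}$ of parabolic smoothness $k-1/p$; surjectivity comes from a bounded right inverse, a parabolic extension operator. The third step splits that anisotropic Besov space as $B_{pp}^{(k-1/p)/k}(J;L^p(\R^{n-1}))\cap L^p(J;B_{pp}^{k-1/p}(\R^{n-1}))$ by a Fubini-type argument, valid precisely because the dyadic-summation index and the integrability index both equal $p$. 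Since $(k-1/p)/k=1-1/(kp)$, transporting this back through the extension and the localisation of the previous paragraph gives the stated formula for $\gamma_0\X$. I expect the main obstacle to be exactly this half-space step --- the trace theorem for the parabolic Bessel potential space and the decomposition of the anisotropic Besov space --- whereas the localisation, though technical, is routine; its one delicate point is that the boundary-flattening maps act purely spatially, so they leave the time-regularity untouched and commute with $\gamma_0$ up to controllable errors.
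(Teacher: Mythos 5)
Your part a) is exactly the paper's argument: Lemma~\ref{2.4a} identifies $\gamma_t\X$ with the real interpolation space $(L^p(G),W_p^k(G))_{1-1/p,p}$, and Theorem~\ref{4.4} with $p=q$ converts this into $B_{pp}^{k-k/p}(G)$; your extra remark about choosing an operator $A$ with $D(A)=W_p^k(G)$ is just a way of fitting the couple into the wording of the lemma and changes nothing. For part b) you take a genuinely different route in the decisive half-space step. The paper (deferring to \cite{Denk-Hieber-Pruess07}, Section~3, for details) applies the abstract trace result of Lemma~\ref{2.4a} a second time, now with $x_n$ in the role of the time variable, and then computes the real interpolation space of the resulting intersection; that interpolation of intersections is where the Besov regularity in both $t$ and $x'$ comes from. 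You instead extend to the whole space, identify $W_p^1\cap L^p(W_p^k)$ with an anisotropic Bessel potential space via scalar Lizorkin--Mikhlin multipliers (Theorem~\ref{3.22}), invoke the parabolic trace theorem at $\{x_n=0\}$, and then split the resulting anisotropic Besov space of smoothness $k-1/p$ by a Fubini-type argument, correctly noting that this splitting works only because $p=q$ (consistent with Remark~\ref{4.6}, where $p\neq q$ produces Triebel--Lizorkin spaces). Your route is more concretely Fourier-analytic and yields an explicit Poisson-type right inverse of the trace, hence surjectivity, directly; the paper's route avoids anisotropic multiplier estimates but needs the nontrivial theory of interpolation of intersections. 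Both rest on an external ``hard'' ingredient of comparable depth, so your level of detail is on par with the paper's, which itself only sketches and cites \cite{Denk-Hieber-Pruess07}.

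Two points to tidy up. First, before taking Fourier transforms in $t$ you must extend from $J=(0,T)$ to $\R$; a reflection-type extension bounded simultaneously on $W_p^1(J;L^p)$ and $L^p(J;W_p^k)$ does this and does not affect the trace at $\{x_n=0\}$, the spaces over $J$ being defined by restriction (Remark~\ref{4.3} b)). Second, the anisotropic trace theorem and the Fubini splitting, which you rightly call the crux, are precisely the statements you would still have to prove or cite (they can be found in \cite{Denk-Hieber-Pruess07} and \cite{Meyries-Veraar14}); as presented they are assertions, not proofs, but no more so than the corresponding step in the paper.
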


\begin{proof}
  We only give the main ideas for a proof and refer to \cite{Denk-Hieber-Pruess07}, Section~3, for a complete version.

  a) By Lemma~\ref{2.4a} a), we have $\gamma_t\X = (L^p(G), W_p^k(G))_{1-1/p,p}$ which equals $B_{pp}^{k-k/p}(G)$
   due to Theorem~\ref{4.4} with $p=q$.

  b) Locally, we can choose a coordinate system such that the inner normal vector is the $x_n$-variable. Then we have
   to take the trace with respect to $x_n$ instead of $t$ which gives by Lemma~\ref{2.4a} a real interpolation space
   again. Computing the real interpolation space of the intersection then gives a Besov space both with respect to
    time and with respect to the other space variables.
\end{proof}

\begin{remark}
  \label{4.6}
  In the above corollary, we have considered functions which are $L^p$ in time and $L^p$ in space. If one considers
  functions which are $L^p$ in time and $L^q$ in space with $p\not=q$, a result similar to Corollary~\ref{4.5} holds,
   but now also Triebel-Lizorkin spaces appear. More precisely, for $\X := W_p^1(J;L^q(G))\cap L^p(J;W_q^k(G))$ we
   obtain (see \cite{Denk-Hieber-Pruess07}, Section~6, and \cite{Meyries-Veraar14}, Section~4)
  \begin{align*}
    \gamma_t \X & = B_{qp}^{k-k/p}(G),\\
    \gamma_0\X & = F_{pq}^{1-1/(kq)}(J;L^p(\partial G))\cap L^q(J; B_{qq}^{k-1/q}(\partial G)).
  \end{align*}
\end{remark}

\section{Parabolic PDE systems in the whole space}

As a first application of the previous results, we now consider parabolic systems of partial differential equations in
the whole space $\R^n$. In the following, let $1<p<\infty$ and $\C_+ := \{ z\in\C: \Re z >0\} = \Sigma_{\pi/2}$. We
assume that we have a linear differential operator $A= A(x,D)$ of the form
\[ A(x,D) = \sum_{|\alpha|\le 2m} a_\alpha(x) D^\alpha\]
with $m\in\N$ and matrix-valued coefficients $a_\alpha\colon\R^n\to \C^{N\times N}$. Recall that $D := -i\partial$. The
definition of parabolicity below is based on
 the concept of parameter-ellipticity which was developed by Agmon \cite{Agmon62} and Agranovich-Vishik
 \cite{Agranovich-Vishik64}.

For the formal differential operator  $A= A(x,D)$, we define its symbol
\[ a(x,\xi) := \sum_{|\alpha|\le 2m} a_\alpha(x)\xi^\alpha\]
and the principal symbol
\[ a_0(x,\xi):=\sum_{|\alpha|=2m} a_\alpha(x)\xi^\alpha.\]
Both symbols map $\R^n\times \R^n$ into
$\C^{N\times N}$. The $L^p$-realization
 $A_p$ of $A(x,D)$ is defined as the unbounded linear operator $A_p\colon
L^p(\R^n;\C^N)\supset D(A_p)\to L^p(\R^n;\C^N)$ with
\[ D(A_p) := W_p^{2m}(\R^n;\C^N),\; A_p u := A(x,D) u \quad (u\in
W_p^{2m}(\R^n;\C^N)).\]

\begin{definition}
  \label{5.1} The operator $A(x,D)$ is called parameter-elliptic with angle $\phi\in (0,\pi]$ if
  \begin{equation}\label{5-1}
 \big| \det(a_0(x,\xi)-\lambda)\big| \ge C_P \big( |\xi|^{2m} +
 |\lambda|\big)^N\quad \big( x\in\R^n,\, (\xi,\lambda)\in
 (\R^n\times \bar\Sigma_\phi)\setminus\{0\}\big).
 \end{equation}
 If this holds for $\phi = \frac\pi 2$ (i.e., $\Sigma_\phi = \C_+$), then $\partial_t - A$ is called parabolic.
\end{definition}

\begin{remark}
  \label{5.2}
  a) For every fixed  $x\in\R^n$, the map
  $(\xi,\lambda)\mapsto p(x,\xi,\lambda) := \det(a_0(x,\xi)-\lambda)$
  is quasi-homogeneous in the sense that
  \[ p(x,r\xi,r^{2m}\lambda) = r^{2mN} p(x,\xi,\lambda)\quad \big( r>0,\,
  (\xi,\lambda)\in(\R^n\times \bar\Sigma_\phi)\setminus\{0\}\big).\]
  Therefore, it is sufficient to consider the compact set $\{(\xi,\lambda):
  |\xi|^{2m} + |\lambda|=1\}$. The operator $A(x,D)$ is parameter-elliptic if and only if
  \[ \inf\big\{ |\det (a_0(x,\xi)-\lambda)|: x\in\R^n,\,
  (\xi,\lambda) \in\R^n\times\bar \Sigma_\phi\,\text{ with }|\xi|^{2m} +
  |\lambda| = 1\big\} >0.\]

  b) If  $a_\alpha\in L^\infty(\R^n)$ for all $|\alpha|<2m$, then the lower-order terms of the symbol can be estimated
  uniformly in $x$. Therefore, $A(x,D)$ is parameter-elliptic if and only if
  there exist constants   $C,R>0$ with
  \[ |\det (a(x,\xi)-\lambda)|\ge C(|\xi|^{2m}+|\lambda|)^N\quad
  (x\in\R^n,\, \lambda\in\bar\Sigma_\phi,\, |\xi|\ge R).\]
  This is one possible definition of parameter-elliticity and parabolicity for pseudodifferential operators.
  We remark that  the principal
   symbol of a pseudodifferential operator is defined only for so-called classical symbols.
\end{remark}

\begin{remark}
  \label{5.3} If $\partial_t - A(x,D)$ is parabolic in the sense of parameter-ellipticity in the closed sector
  $\bar\C_+$, then  $A(x,D)$ is  also parameter-elliptic in some larger sector  $\bar\Sigma_\theta$
  with $\theta>\frac\pi 2$. In fact, it is easily seen that the set of all angles of rays with respect to $\lambda$,
   in which condition \eqref{5-1}
   holds, is open.
\end{remark}

Following a standard approach in elliptic theory, we first consider the so-called model problem and then use
perturbation results for variable coefficients. The remainder of this section is based on \cite{Denk-Hieber-Pruess03},
Sections 5 and 6, and \cite{Kunstmann-Weis04}, Sections 6 and 7.

\begin{theorem}\label{5.4}
Let $A(D) = \sum_{|\alpha|=2m} a_\alpha D^\alpha$ with constant coefficients $a_\alpha\in\C^{N\times N}\quad
(|\alpha|=2m)$ and without lower-order terms. If $\partial_t  - A(D)$ is parabolic with parabolicity constant $C_P$ in
\eqref{5-1}, then $\rho(A_p)\supset \bar\C_+\setminus\{0\}$, and the set
\[ \big\{ \lambda (\lambda-A_p)^{-1}: \lambda\in
\bar\C_+\setminus\{0\} \big\}\] is $\RR$-bounded. Here, the
$\RR$-bound only depends on  $p, n,m, N, C_P$ and  \[M
:=\sum_{|\alpha|=2m} |a_\alpha|_{\C^{N\times N}}.\] In particular,
 $A_p$ is $\RR$-sectorial with $\RR$-angle larger than $\frac\pi 2$,
and $A_p$ has maximal  $L^q$-regularity for all $q\in (1,\infty)$.
\end{theorem}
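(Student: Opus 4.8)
The plan is to use that $A(D)$ has constant coefficients and no lower-order terms, so that the whole problem diagonalises under the Fourier transform and is reduced to estimates on the operator-valued symbol
\[ m_\lambda(\xi) := \lambda(\lambda - a(\xi))^{-1}, \qquad a(\xi) = a_0(\xi) = \sum_{|\alpha| = 2m} a_\alpha \xi^\alpha, \]
which is homogeneous of degree $0$ in the quasi-homogeneous sense $m_{r^{2m}\lambda}(r\xi) = m_\lambda(\xi)$. Since $\C^N$ is a Hilbert space, it is a UMD space with property $(\alpha)$, so the passage from uniform Mikhlin--Lizorkin bounds on the family $\{m_\lambda\}$ to $\RR$-boundedness of the operator family $\{\op[m_\lambda]\}$ in $L^p(\R^n;\C^N)$ is furnished by Corollary~\ref{3.31}. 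Thus the proof comes down to verifying those symbol estimates, with all constants controlled by $C_P$ and $M$ only.

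First I would note that parabolicity makes $\lambda - a(\xi)$ invertible for every $\xi \in \R^n$ and every $\lambda \in \bar\C_+ \setminus \{0\}$: in this range $(\xi,\lambda) \neq 0$, so \eqref{5-1} gives $|\det(\lambda - a(\xi))| \geq C_P(|\xi|^{2m} + |\lambda|)^N > 0$. For such a fixed $\lambda$, the symbols $(\lambda - a(\xi))^{-1}$ and $\xi^\alpha (\lambda - a(\xi))^{-1}$, $|\alpha| \leq 2m$, are smooth on $\R^n$ and, by Cramer's rule together with this determinant bound and the homogeneity of $a$, satisfy the Lizorkin estimates of Theorem~\ref{3.22}(ii); hence $R_\lambda := \op[(\lambda - a)^{-1}]$ is bounded from $L^p(\R^n;\C^N)$ into $W_p^{2m}(\R^n;\C^N) = D(A_p)$, and reading off the Fourier side one gets $(\lambda - A_p) R_\lambda = I = R_\lambda(\lambda - A_p)$. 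This shows $\rho(A_p) \supset \bar\C_+ \setminus \{0\}$ and identifies $\lambda(\lambda - A_p)^{-1} = \op[m_\lambda]$.

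The core of the argument is the uniform bound
\[ \sup_{\lambda \in \bar\C_+ \setminus \{0\}} \ \sup_{\xi \in \R^n \setminus \{0\}} \ \big|\xi^\alpha D^\alpha m_\lambda(\xi)\big|_{\C^{N \times N}} \leq C_0 \qquad (\alpha \in \{0,1\}^n), \]
with $C_0 = C_0(n,m,N,C_P,M)$. There are two ingredients. First, Cramer's rule gives $(\lambda - a(\xi))^{-1} = \det(\lambda - a(\xi))^{-1} \operatorname{adj}(\lambda - a(\xi))$, and since the entries of the adjugate are polynomials of degree $N-1$ in the entries of $\lambda - a(\xi)$, one has $\|\operatorname{adj}(\lambda - a(\xi))\| \leq C(N,M)(|\xi|^{2m} + |\lambda|)^{N-1}$, whence $\|(\lambda - a(\xi))^{-1}\| \leq C(|\xi|^{2m} + |\lambda|)^{-1}$ by the determinant bound. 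Second, iterating $\partial_{\xi_j}(\lambda - a)^{-1} = (\lambda - a)^{-1}(\partial_{\xi_j} a)(\lambda - a)^{-1}$ shows that $\xi^\alpha \partial^\alpha (\lambda - a)^{-1}$ is a finite sum, with universal combinatorial coefficients, of products
\[ (\lambda - a)^{-1}(\xi^{\gamma_1}\partial^{\gamma_1}a)(\lambda - a)^{-1} \cdots (\xi^{\gamma_k}\partial^{\gamma_k}a)(\lambda - a)^{-1}, \qquad \gamma_1 + \dots + \gamma_k = \alpha, \ \gamma_i \neq 0, \]
where the factorisation $\xi^\alpha = \xi^{\gamma_1} \cdots \xi^{\gamma_k}$ uses that the $\gamma_i$ have disjoint supports because $\alpha \in \{0,1\}^n$. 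Since $a$ is homogeneous of degree $2m$, each factor $\xi^{\gamma_i}\partial^{\gamma_i}a$ is homogeneous of degree $2m$ with $\|\xi^{\gamma_i}\partial^{\gamma_i}a(\xi)\| \leq C(n,m)M|\xi|^{2m} \leq C(n,m)M(|\xi|^{2m} + |\lambda|)$. Combining the two ingredients and using $|\lambda| \leq |\xi|^{2m} + |\lambda|$, each such product is bounded by $|\lambda|(|\xi|^{2m} + |\lambda|)^{-1}$ times a constant depending only on $n, m, N, C_P, M$ (there are at most $k \leq n$ intermediate factors), hence by $C_0$. I expect this step --- the parameter-dependent symbol calculus, and in particular keeping every constant visibly dependent only on $C_P$ and $M$ --- to be the main obstacle; everything else is bookkeeping. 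With the bound in hand, Corollary~\ref{3.31} (taken with $X = \C^N$) yields that $\{\lambda(\lambda - A_p)^{-1} : \lambda \in \bar\C_+ \setminus \{0\}\}$ is $\RR$-bounded in $L(L^p(\R^n;\C^N))$ with $\RR$-bound at most $C(p,N)\,C_0$, which depends only on $p, n, m, N, C_P, M$.

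It remains to obtain $\RR$-sectoriality with $\RR$-angle larger than $\frac{\pi}{2}$ and then maximal regularity. By Remark~\ref{5.3}, parabolicity in $\bar\C_+$ implies parameter-ellipticity in some larger sector $\bar\Sigma_\theta$ with $\theta > \frac{\pi}{2}$, with some constant $C_P' > 0$; running the argument of the previous two paragraphs verbatim with $\bar\Sigma_\theta$ in place of $\bar\C_+$ gives $\rho(A_p) \supset \Sigma_\theta \setminus \{0\}$ and $\RR$-boundedness of $\{\lambda(\lambda - A_p)^{-1} : \lambda \in \Sigma_\theta\}$. Hence $A_p$ is $\RR$-sectorial with $\RR$-angle $\geq \theta > \frac{\pi}{2}$, and, since $\RR$-boundedness implies uniform boundedness (Remark~\ref{3.9}\,a)), $A_p$ is sectorial with spectral angle $\geq \theta > \frac{\pi}{2}$. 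As $L^p(\R^n;\C^N)$ is a UMD space for $1 < p < \infty$, the Theorem of Weis (Theorem~\ref{3.34}) gives $A_p \in \mreg((0,\infty);L^p(\R^n;\C^N))$; recalling that $\mreg$ denotes maximal $L^q$-regularity for every $q \in (1,\infty)$ (by the $p$-independence of maximal regularity), this is precisely the last assertion.
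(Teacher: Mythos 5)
Your proposal is correct and follows essentially the same route as the paper: reduce to the quasi-homogeneous symbol $m_\lambda(\xi)=\lambda(\lambda-a_0(\xi))^{-1}$, verify uniform Mikhlin--Lizorkin bounds with constants controlled by $C_P$ and $M$ via Cramer's rule and the determinant estimate \eqref{5-1}, identify $\op[m_\lambda]=\lambda(\lambda-A_p)^{-1}$, apply Corollary~\ref{3.31}, and conclude maximal regularity from Theorem~\ref{3.34} after enlarging the sector via Remark~\ref{5.3}. The only cosmetic difference is that you bound $\xi^\alpha\partial^\alpha(\lambda-a_0)^{-1}$ by iterating the resolvent-derivative identity, whereas the paper estimates the derivatives of the adjugate and uses the Leibniz rule on the quotient; your explicit treatment of the sector enlargement is, if anything, slightly more careful than the paper's.
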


\begin{proof}
  Note that because of \eqref{5-1}, for  $\lambda\in\bar\C_+\setminus\{0\}$
  and $\xi\in\R^n$ the symbol $(\lambda-a_0(\xi))^{-1}$ is well defined.
  We show that the family $\{m_\lambda:
  \lambda\in\bar\C_+\setminus\{0\}\}$ with $m_\lambda(\xi) := \lambda
  (\lambda-a_0(\xi))^{-1}$ satisfies the assumptions of Corollary~\ref{3.31}.

  For any $r>0$ we have $r^{2m}\lambda-a_0(r\xi) =
  r^{2m}(\lambda-a_0(\xi))$. Therefore, the map   $(\xi,\lambda)\mapsto
  \frac1\lambda(\lambda-a_0(\xi))$ is quasi-homogeneous in $(\xi,\lambda)$
  of degree  0, and the same holds for its inverse
  $(\xi,\lambda)\mapsto \lambda(\lambda-a_0(\xi))^{-1}$. By Lemma~\ref{3.24}, $m_\lambda$ satisfies the Mikhlin condition
  uniformly with respect to $\lambda$. Now we can apply Corollary~\ref{3.31} to obtain the
  $\RR$-boundedness of $\{\op[m_\lambda]:\lambda\in\bar
  \C_+\setminus\{0\}\}\subset L(L^p(\R^n))$. Because of  $\frac 1\lambda
  \op[m_\lambda](\lambda-A_p) = \id_{W_p^{2m}(\R^n)}$ and $\frac
  1\lambda (\lambda-A_p) \op[m_\lambda]= \id_{L^p(\R^n)}$, we see that
  $\op[m_\lambda] = \lambda (\lambda-A_p)^{-1}$. By Corollary~\ref{3.31},
   $A_p$ is $\RR$-sectorial with angle larger than    $\frac\pi 2$, and
  Theorem~\ref{3.34} implies that  $A_p$ has maximal $L^q$-regularity for
  all $q\in (1,\infty)$. To show the statement on the  $\RR$-bound, we have to quantify the Mikhlin constant.

 For this, we write
  \[ (\lambda-a_0(\xi))^{-1} = \frac1{\det(\lambda-a_0(\xi))}
  b(\xi,\lambda)\]
  with the adjunct matrix $b(\xi,\lambda)$. The coefficients of
  $b(\xi,\lambda)$ are determinants of  $(N-1)\times
  (N-1)$-matrices which are constructed by omitting one row and one column of the matrix
 $\lambda-a_0(\xi)$. Therefore, we obtain
  \[ \|b(\xi,\lambda)\|_{\C^{N\times N}} \le C(m,n,M,N) (|\xi|^{2m} +
  |\lambda|)^{N-1}.\]
  Due to \eqref{5-1}, we get
  \[ \|\lambda(\lambda-a_0(\xi))^{-1}\|_{\C^{N\times N}} \le C(m,n,M,N,C_p)
  \frac{|\lambda|}{|\xi|^{2m}+|\lambda|} \le C(m,n,M,N,C_p).\]
  For the derivatives, we note that
  \begin{align*}
    \Big\| \xi_k\frac\partial{\partial \xi_k} a_0(\xi)\Big\|_{\C^{N\times N}} & = \Big\|
    \xi_k\frac\partial{\partial \xi_k} \sum_{|\alpha|=2m} a_\alpha
    \xi^\alpha \Big\|_{\C^{N\times N}}  \le \sum_{|\alpha|=2m} \|a_\alpha\|_{\C^{N\times N}} \Big| \xi_k
    \frac\partial{\partial \xi_k} \xi^\alpha \Big|\\
    & \le 2mM |\xi|^{2m}.
  \end{align*}
  Iteratively, we obtain $\|\xi^\alpha \partial_\xi^\alpha a_0(\xi) \|_{\C^{N\times N}} \le C(m,n,M,N)
  |\xi|^{2m}$ for all $\alpha\in\{0,1\}^n$. In the same way, the derivative of  $b(\xi,\lambda)$ can be estimated. This yields
  \[ \|\xi^\alpha \partial_\xi^\alpha b(\xi,\lambda)\|_{\C^{N\times N}} \le
  C(m,n,M,N)(|\xi|^{2m}+|\lambda|)^{N-1}.\]
  With the product rule (Leibniz rule) we have for the inverse matrix the inequality
  \[ \|\xi^\alpha D_\xi^\alpha (\lambda-a_0(\xi))^{-1} \|_{\C^{N\times N}} \le
  C(m,n,M,N,C_P) (|\xi|^{2m} + |\lambda|)^{-1},\]
 and therefore $\|\xi^\alpha D_\xi^\alpha m_\lambda(\xi)\|_{\C^{N\times N}} \le C(m,n,M,N,C_P)$
  for all $\alpha\in\{0,1\}^n, \lambda\in\bar\C_+\setminus\{0\}$
  and all $\xi\in\R^n$. By Corollary~\ref{3.31}, the $\RR$-bound of
  $\{\lambda(\lambda-A_p)^{-1}:\lambda\in\bar\C_+\setminus\{0\}\}$
  only depends on $m,n,M,N,C_P$, and $p$.
\end{proof}

To generalize the above result to operators with variable coefficients, we need perturbation results for
$\RR$-boundedness. For this, we  define for an $\RR$-sectorial operator $A$ with $\RR$-angle $\phi_{\RR}(A)$ and for
$\theta\in (0,\phi_{\RR}(A))$:
\begin{align*}
  M_\theta(A) &:= \sup\big(\{ \|\lambda (\lambda-A)^{-1}\|: \lambda
  \in\Sigma_\theta\}\big),\\
  \tilde M_\theta(A) &:= \sup\big(\{ \|A (\lambda-A)^{-1}\|: \lambda
  \in\Sigma_\theta\}\big),\\
  R_\theta(A) &:= \RR\big(\{ \lambda (\lambda-A)^{-1}: \lambda
  \in\Sigma_\theta\}\big),\\
  \tilde R_\theta(A) &:= \RR\big(\{ A (\lambda-A)^{-1}: \lambda
  \in\Sigma_\theta\}\big).
\end{align*}
Note that  $\tilde M_\theta(A)$ is finite because of
$A(\lambda-A)^{-1} = \lambda(\lambda-A)^{-1} - 1$, and the same holds for
$\tilde R_\theta(A)$.

\begin{theorem}
  \label{5.5} Let $X$ be a Banach space and  $A$ be an $\RR$-sectorial
  operator in $X$  with angle $\phi_{\RR}(A)>0$. Further, let   $\theta\in
  (0,\phi_{\RR}(A))$, and let  $B$ be a linear operator  in $X$ with $D(B)\supset
  D(A)$ and
  \begin{equation}\label{5-2}
  \| Bx\| \le a\|Ax\|\quad (x\in D(A)).
  \end{equation}
  If $a<\frac1{\tilde R_\theta(A)}$, then $A+B$ is
  $\RR$-sectorial, too, with angle larger or equal to  $\theta$ and
  \[ R_\theta(A+B) \le \frac{R_\theta(A)}{1-a\tilde
  R_\theta(A)}\,.\]
\end{theorem}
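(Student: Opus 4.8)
The plan is to run a Neumann series argument for the resolvent of $A+B$, carrying $\RR$-bounds (not merely norms) through the series with the help of the sum and product rules of Remark~\ref{3.10}.

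First I would factor the perturbed resolvent. Fix $\lambda\in\Sigma_\theta\subset\rho(A)$. For $x\in D(A)$ one has $(\lambda-(A+B))x=(I-B(\lambda-A)^{-1})(\lambda-A)x$, i.e. $\lambda-(A+B)=(I-B(\lambda-A)^{-1})(\lambda-A)$ as operators $D(A)\to X$. Since $\|B(\lambda-A)^{-1}x\|\le a\|A(\lambda-A)^{-1}x\|\le a\tilde M_\theta(A)\|x\|\le a\tilde R_\theta(A)\|x\|<\|x\|$ by hypothesis \eqref{5-2} and Remark~\ref{3.9}~a), the factor $I-B(\lambda-A)^{-1}$ is invertible in $L(X)$ with Neumann series $\sum_{k\ge0}(B(\lambda-A)^{-1})^k$, which moreover converges in $L(X)$ uniformly for $\lambda\in\Sigma_\theta$. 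Hence $\Sigma_\theta\subset\rho(A+B)$ (so $A+B$ is closed, and densely defined since $D(A+B)=D(A)$), and
\[ \lambda(\lambda-(A+B))^{-1}=\big[\lambda(\lambda-A)^{-1}\big]\sum_{k=0}^\infty\big(B(\lambda-A)^{-1}\big)^k .\]

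The crucial observation is that $\{B(\lambda-A)^{-1}:\lambda\in\Sigma_\theta\}$ is $\RR$-bounded with $\RR$-bound $\le a\tilde R_\theta(A)$. This is where the single (unbounded) operator $B$ is handled: for $\lambda_1,\dots,\lambda_N\in\Sigma_\theta$, $x_1,\dots,x_N\in X$ and each fixed value of the signs $\epsilon_j$, the vector $\sum_j\epsilon_j(\lambda_j-A)^{-1}x_j$ lies in $D(A)$, so by linearity of $B$ and \eqref{5-2},
\[ \Big\|\sum_{j=1}^N\epsilon_j B(\lambda_j-A)^{-1}x_j\Big\| = \Big\|B\sum_{j=1}^N\epsilon_j(\lambda_j-A)^{-1}x_j\Big\| \le a\Big\|\sum_{j=1}^N\epsilon_j A(\lambda_j-A)^{-1}x_j\Big\| ;\]
raising to the $p$-th power, integrating over the probability space and invoking the $\RR$-boundedness of $\{A(\lambda-A)^{-1}:\lambda\in\Sigma_\theta\}$ gives the claim. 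Now Remark~\ref{3.10} yields that $\{(B(\lambda-A)^{-1})^k:\lambda\in\Sigma_\theta\}$ has $\RR$-bound $\le(a\tilde R_\theta(A))^k$ (a diagonal subfamily of the $k$-fold product of $\RR$-bounded families), hence $\{\lambda(\lambda-A)^{-1}(B(\lambda-A)^{-1})^k:\lambda\in\Sigma_\theta\}$ has $\RR$-bound $\le R_\theta(A)(a\tilde R_\theta(A))^k$; summing the geometric series (again via Remark~\ref{3.10}, observing that the diagonal family $\{\sum_{k=0}^K\lambda(\lambda-A)^{-1}(B(\lambda-A)^{-1})^k:\lambda\in\Sigma_\theta\}$ sits inside the corresponding $\RR$-bounded sum of families) gives, uniformly in $K$, the bound $R_\theta(A)/(1-a\tilde R_\theta(A))$ for the partial sums. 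Letting $K\to\infty$, and using that the partial sums converge to $\lambda(\lambda-(A+B))^{-1}$ in $L(X)$ uniformly on $\Sigma_\theta$, the defining inequality \eqref{3-1} passes to the limit for every finite family; therefore $A+B$ is $\RR$-sectorial with $\RR$-angle $\ge\theta$ and $R_\theta(A+B)\le R_\theta(A)/(1-a\tilde R_\theta(A))$.

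The step needing care is more bookkeeping than genuine difficulty: transferring the $K$-uniform $\RR$-bounds of the partial sums to the limit operator family. I would do this by hand — since the Neumann series converges in $L(X)$ uniformly in $\lambda\in\Sigma_\theta$, in any fixed finite Rademacher sum $\sum_{j=1}^N\epsilon_j\lambda_j(\lambda_j-(A+B))^{-1}x_j$ one may replace each $\lambda_j(\lambda_j-(A+B))^{-1}$ by its $K$-th partial sum, pass to the limit $K\to\infty$ inside the finite sum, and inherit the bound — rather than invoking a strong-closure statement such as Theorem~\ref{3.14}~c) directly.
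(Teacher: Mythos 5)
Your proof is correct and follows essentially the same route as the paper: factor $\lambda-(A+B)$ through $I-B(\lambda-A)^{-1}$, expand the Neumann series, transfer the $\RR$-bound $\RR\{B(\lambda-A)^{-1}\}\le a\tilde R_\theta(A)$ through products and sums, and obtain $R_\theta(A+B)\le R_\theta(A)/(1-a\tilde R_\theta(A))$. The only difference is that you spell out two steps the paper leaves implicit (pulling $B$ out of the Rademacher sum by linearity, and passing the uniform partial-sum $\RR$-bounds to the limit via norm convergence), both of which you handle correctly.
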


\begin{proof}
  For $\lambda\in\bar{\Sigma}_\theta\setminus\{0\}$ one obtains
  \[ \|B(\lambda-A)^{-1} x\| \le a\| A(\lambda-A)^{-1} x\| \le a
  \tilde M_\theta(A) \|x\|\quad (x\in X).\]
Because of $a<\frac1{\tilde R_\theta(A)}$, the operator   $1+B(\lambda-A)^{-1}$ is invertible, and we get
\begin{align*}
 (\lambda-(A+B))^{-1} & = (\lambda-A)^{-1} \big[
1+B(\lambda-A)^{-1}\big]^{-1} \\
&= (\lambda-A)^{-1}
\sum_{n=0}^\infty (-B(\lambda-A)^{-1})^n.
\end{align*}
In particular,  $\rho(A+B)\supset\Sigma_\theta$. By definition of  $\RR$-boundedness and due to the assumption, we get
\[ \RR(\{ B (\lambda-A)^{-1}: \lambda\in \Sigma_\theta\}) \le a \RR ( \{ A(\lambda-A)^{-1}: \lambda\in
\Sigma_\theta\})= a\tilde R_\theta(A).\] Inserting this into the above series yields
\[ R_\theta(A+B) \le \frac{R_\theta(A)}{1-a\tilde R_\theta(A)}\,.\]
This shows that also $A+B$ is $\RR$-sectorial with $\RR$-angle  $\ge \theta$.
\end{proof}

The second perturbation results deals with the case where we have an additional term $\|x\|$ on the right-hand side of
\eqref{5-2}. However, now the $\RR$-sectoriality of the operator holds only with an additional shift in the operator.

\begin{theorem}
  \label{5.6} Let $A$ be $\RR$-sectorial with angle $\phi_{\RR}(A)>0$,
  and let $\theta\in
  (0,\phi_{\RR}(A))$. Let $B$ be a linear operator satisfying
  $D(B)\supset D(A)$ and
  \[\|Bx\|\le a\|Ax\| +b\|x\|\quad (x\in D(A))\]
  with constants  $b\ge 0 $ and $0\le a <\big[ \tilde
  M_\theta(A)\tilde R_\theta(A)\big]^{-1}$. Then $A+B-\mu$
  is $\RR$-sectorial for
  \[ \mu > \frac{bM_\theta(A)\tilde R_\theta(A)}{1-a\tilde
  M_\theta(A)\tilde R_\theta(A)}.\]
  For the $\RR$-angle, we have  $\phi_{\RR}(A+B-\mu)\ge \theta$.
\end{theorem}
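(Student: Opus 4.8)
The plan is to repeat the Neumann series perturbation argument from the proof of Theorem~\ref{5.5}, but expanded around the \emph{shifted} operator $A-\mu$: the role played there by $\lambda-A$ is now taken by $\lambda+\mu-A$ for $\lambda\in\Sigma_\theta$. Since $\theta<\phi_{\RR}(A)$, we have $\Sigma_\theta\subset\rho(A)$ with $M_\theta(A),\tilde M_\theta(A),R_\theta(A),\tilde R_\theta(A)$ all finite. The elementary geometric fact that $\Sigma_\theta+\mu\subset\Sigma_\theta$ for every $\mu>0$ --- adding a positive real number to a point of the sector does not increase the absolute value of its argument --- gives $\lambda+\mu\in\rho(A)$ for all $\lambda\in\Sigma_\theta$, together with $|\lambda+\mu|\ge\mu$ (for $\theta\le\pi/2$; more generally $|\lambda+\mu|\ge\mu\sin\theta$), whence $\|(\lambda+\mu-A)^{-1}\|\le M_\theta(A)/\mu$.

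The point of the shift is that it converts the non-uniform lower-order term into a uniformly small one: from $\|Bx\|\le a\|Ax\|+b\|x\|$ one obtains, for all $\lambda\in\Sigma_\theta$,
\[
\|B(\lambda+\mu-A)^{-1}\|_{L(X)}\le a\,\tilde M_\theta(A)+\frac{b\,M_\theta(A)}{\mu},
\]
and the threshold imposed on $\mu$ is exactly the one making $\tilde R_\theta(A)\bigl(a\,\tilde M_\theta(A)+b\,M_\theta(A)/\mu\bigr)<1$; in particular this operator norm is $<1$, so $1-B(\lambda+\mu-A)^{-1}$ is boundedly invertible, and from $\lambda-(A+B-\mu)=\bigl(1-B(\lambda+\mu-A)^{-1}\bigr)(\lambda+\mu-A)$ the Neumann series
\[
\bigl(\lambda-(A+B-\mu)\bigr)^{-1}=(\lambda+\mu-A)^{-1}\sum_{n\ge0}\bigl(B(\lambda+\mu-A)^{-1}\bigr)^{n}
\]
converges in $L(X)$ for each $\lambda\in\Sigma_\theta$; thus $\Sigma_\theta\subset\rho(A+B-\mu)$.

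To get $\RR$-sectoriality I would, exactly as in the proof of Theorem~\ref{5.5}, combine three facts: products of $\RR$-bounded families multiply $\RR$-bounds (Remark~\ref{3.10}), finite sums add them, and the strong-operator closure preserves them (Theorem~\ref{3.14}). This bounds the $\RR$-bound of $\{\lambda(\lambda-(A+B-\mu))^{-1}:\lambda\in\Sigma_\theta\}$ by $\RR\bigl(\{\lambda(\lambda+\mu-A)^{-1}\}\bigr)\sum_{n\ge0}\RR\bigl(\{B(\lambda+\mu-A)^{-1}\}\bigr)^{n}$. Since $\lambda+\mu$ ranges over $\Sigma_\theta$, the family $\{(\lambda+\mu)(\lambda+\mu-A)^{-1}\}$ has $\RR$-bound $\le R_\theta(A)$, and $\lambda(\lambda+\mu-A)^{-1}$ and $(\lambda+\mu-A)^{-1}$ are the scalar multiples of it by $\lambda/(\lambda+\mu)$ (modulus $\le1$ for $\theta\le\pi/2$) and $1/(\lambda+\mu)$ (modulus $\le1/\mu$), so Kahane's contraction principle (Lemma~\ref{3.7}) bounds their $\RR$-bounds by multiples of $R_\theta(A)$ and $R_\theta(A)/\mu$. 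The decisive estimate on the series factors uses that $B$ is \emph{linear}: for any signs $\varepsilon_j$ and vectors $y_j$, $\sum_j\varepsilon_j B(\lambda_j+\mu-A)^{-1}y_j=B\bigl(\sum_j\varepsilon_j(\lambda_j+\mu-A)^{-1}y_j\bigr)$, so applying $\|B\cdot\|\le a\|A\cdot\|+b\|\cdot\|$ pointwise on $\Omega$ and then taking $L^p(\Omega;X)$-norms yields
\[
\RR\bigl(\{B(\lambda+\mu-A)^{-1}\}\bigr)\le a\,\tilde R_\theta(A)+b\,\RR\bigl(\{(\lambda+\mu-A)^{-1}\}\bigr)\le a\,\tilde R_\theta(A)+\frac{C\,b\,R_\theta(A)}{\mu},
\]
where $\{A(\lambda+\mu-A)^{-1}\}\subset\{A(\zeta-A)^{-1}:\zeta\in\Sigma_\theta\}$ gives the first term. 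For $\mu$ beyond the stated threshold this quantity is $<1$, the geometric series converges, and one reads off that $A+B-\mu$ is $\RR$-sectorial with $\RR$-angle $\ge\theta$.

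I expect the only real work to be the bookkeeping of constants through the geometric series, so that the admissibility bound on $a$ and the threshold on $\mu$ emerge in exactly the form stated, together with the routine but necessary check that the operator-norm smallness (for convergence of the Neumann series in $L(X)$) and the $\RR$-bound smallness (for $\RR$-sectoriality) are secured by one and the same choice of $\mu$. A minor side issue is the geometry of sectors with $\theta>\pi/2$, where $|\lambda+\mu|\ge\mu$ must be weakened to $|\lambda+\mu|\ge\mu\sin\theta$; since in the intended application $\theta$ exceeds $\pi/2$ only slightly, this is immaterial.
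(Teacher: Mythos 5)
Your overall strategy---shift by $\mu$, observe $\Sigma_\theta+\mu\subset\Sigma_\theta$, and run the Neumann-series argument around $\lambda+\mu-A$---is essentially the paper's proof, which simply applies Theorem~\ref{5.5} with $A$ replaced by $A-\mu$; your factorization $\lambda-(A+B-\mu)=\big(1-B(\lambda+\mu-A)^{-1}\big)(\lambda+\mu-A)$ and the uniform estimate $\|B(\lambda+\mu-A)^{-1}\|\le a\tilde M_\theta(A)+bM_\theta(A)/\mu$ are correct. The gap is at the decisive $\RR$-bound step: you estimate $\RR\{B(\lambda+\mu-A)^{-1}\}\le a\,\tilde R_\theta(A)+b\,\RR\{(\lambda+\mu-A)^{-1}\}\le a\,\tilde R_\theta(A)+C\,b\,R_\theta(A)/\mu$ and then assert that the stated threshold on $\mu$ makes this quantity $<1$. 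That does not follow: the stated threshold makes $\tilde R_\theta(A)\big(a\tilde M_\theta(A)+bM_\theta(A)/\mu\big)<1$, which is a different quantity. With your splitting you would already need $a\tilde R_\theta(A)<1$, which is not implied by the hypothesis $a<[\tilde M_\theta(A)\tilde R_\theta(A)]^{-1}$ (unless $\tilde M_\theta(A)\ge 1$), and even when it holds you are forced to take $\mu>C\,b\,R_\theta(A)/(1-a\tilde R_\theta(A))$, which in general exceeds the stated threshold (note $R_\theta(A)\ge M_\theta(A)$, plus the Kahane factor $C$). So, as written, your argument yields only a qualitative version (``$\RR$-sectorial for all sufficiently large $\mu$'', under a possibly stronger restriction on $a$), not the theorem with its constants.

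The missing move---and the whole content of the paper's short proof---is to convert the mixed bound into a \emph{pure} relative bound with respect to the shifted operator, using operator norms at the single point $\mu\in\Sigma_\theta$: for $x\in D(A)$, writing $z:=(A-\mu)x$, one has $\|Bx\|\le a\|A(A-\mu)^{-1}z\|+b\|(A-\mu)^{-1}z\|\le c(\mu)\,\|(A-\mu)x\|$ with $c(\mu)=a\tilde M_\theta(A)+\tfrac b\mu M_\theta(A)$. Then apply Theorem~\ref{5.5} to $A-\mu$ and $B$; equivalently, in your Neumann series use, by the same linearity argument you already employ, $\RR\{B(\lambda+\mu-A)^{-1}\}\le c(\mu)\,\RR\{(A-\mu)(\lambda+\mu-A)^{-1}\}$, so that the $\RR$-constant enters only once, multiplied by the scalar $c(\mu)$ (the paper tacitly writes $\tilde R_\theta(A)$ for this $\RR$-constant of the shifted operator). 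With this, the admissibility condition on $a$ and the threshold on $\mu$ come out exactly in the stated form; your present bookkeeping cannot produce them.
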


\begin{proof}
  For $\mu>0$, the following inequalities hold
  \begin{align*}
    \|B(A-\mu)^{-1} x \| & \le a\|A(A-\mu)^{-1}x\| +
    b\|(A-\mu)^{-1}x\|\\
    & \le\big( a\tilde M_\theta(A) + \tfrac b\mu M_\theta(A)\big)
    \|x\|\quad (x\in X).
  \end{align*}
  Therefore, $B$ satisfies the assumption of Theorem~\ref{5.5} with $A$ being replaced by $A-\mu$.
   In Theorem~\ref{5.5}, the condition  for the constants is given by
    $c(\mu)\tilde R_\theta(A)<1$, where
  $c:= a\tilde M_\theta(A) + \frac b\mu M_\theta(A)$. Because of $a\tilde
  M_\theta(A)<1$, this is the case if
  \[ \mu > \frac{bM_\theta(A)\tilde R_\theta(A)}{1-a\tilde
  M_\theta(A)\tilde R_\theta(A)}.\]

\vspace*{-2em}
\end{proof}

The above perturbation results allow us to treat small perturbations in the principal part of the differential operator.
\begin{lemma}
  \label{5.7} Let $A(x,D) =\sum_{|\alpha|=2m} a_\alpha D^\alpha$ with $a_\alpha\in\C^{N\times N}$ for $|\alpha|=2m$,
   and assume $\partial_t - A(x,D)$ to be parabolic with
  constant  $C_P$. Then there exists some
  $\theta>\frac\pi 2$ such that $A(x,D)$ is parameter-elliptic in
  $\bar\Sigma_\theta$, and there exist $\epsilon>0$ and
  $K>0$ such that for all operators $B(x,D)=\sum_{|\alpha|=2m}
  b_\alpha(x)D^\alpha$ with $b_\alpha\in L^\infty(\R^n;\C^{N\times
  N})$ and
  \[ \sum_{|\alpha|=2m} \|b_\alpha\|_\infty< \epsilon\]
  the inequality
  \[ \RR\Big(\big\{ \lambda(\lambda-(A_p+B_p))^{-1}:
  \lambda\in\bar\Sigma_\theta\setminus\{0\}\big\}\Big)\le K\]
  holds. Here, $\epsilon$ and $K$ only depend on  $n,p,m,N,C_P$.
\end{lemma}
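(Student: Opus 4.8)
The plan is to realise the result as a small perturbation of the constant-coefficient model operator, combining Theorem~\ref{5.4} ($\RR$-sectoriality of $A_p$ on a sector strictly larger than $\bar\C_+$) with the abstract perturbation Theorem~\ref{5.5}. Write $M:=\sum_{|\alpha|=2m}|a_\alpha|_{\C^{N\times N}}$; all constants occurring will depend only on $n,p,m,N,C_P$ and $M$ (the $M$-dependence being already present, implicitly, in Theorem~\ref{5.4}). The one analytic step beyond quoting these two theorems is an \emph{a priori} estimate $\|B_pu\|_{L^p}\le a\,\|A_pu\|_{L^p}$ with $a$ proportional to $\sum_{|\alpha|=2m}\|b_\alpha\|_\infty$; crucially there is \emph{no} additive $\|u\|_{L^p}$ term, so Theorem~\ref{5.5} (and not Theorem~\ref{5.6}) applies and no spectral shift appears, matching the shape of the assertion.

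First I would fix the angles. By Remark~\ref{5.3}, parabolicity of $\partial_t-A(x,D)$ forces parameter-ellipticity of $A(x,D)$ in some $\bar\Sigma_{\theta_0}$ with $\theta_0>\tfrac\pi2$, the enlargement $\theta_0-\tfrac\pi2$ being quantitatively controlled --- via uniform continuity of $(\xi,\lambda)\mapsto\det(a_0(\xi)-\lambda)$ on the compact set of Remark~\ref{5.2} --- by $n,m,N,C_P,M$. Running the Mikhlin argument of the proof of Theorem~\ref{5.4} on $\bar\Sigma_{\theta_1}$ for any $\theta_1\in(\tfrac\pi2,\theta_0)$ shows $\rho(A_p)\supset\bar\Sigma_{\theta_1}\setminus\{0\}$ and yields a finite $\RR$-bound $R:=R_{\theta_1}(A_p)$ depending only on the data, whence also $\tilde R_{\theta_1}(A_p)\le R+1$ because $A_p(\lambda-A_p)^{-1}=\lambda(\lambda-A_p)^{-1}-\id$. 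Fix such a $\theta_1$ and pick $\theta\in(\tfrac\pi2,\theta_1)$, which will be the $\theta$ of the statement; note $\bar\Sigma_\theta\setminus\{0\}\subset\Sigma_{\theta_1}$.

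Next I would establish the relative bound. Putting $\lambda=0$ in \eqref{5-1} gives $\det a_0(\xi)\ne0$ for $\xi\ne0$, so $a_0(\xi)^{-1}$ exists and is smooth on $\R^n\setminus\{0\}$, with $\|a_0(\xi)^{-1}\|_{\C^{N\times N}}\le C(m,n,N,M)C_P^{-1}|\xi|^{-2m}$ by the adjunct-matrix estimate already used in the proof of Theorem~\ref{5.4}. For $|\alpha|=2m$ the matrix symbol $\xi\mapsto\xi^\alpha a_0(\xi)^{-1}$ is therefore $C^\infty$ on $\R^n\setminus\{0\}$ and homogeneous of degree $0$, hence by Lemma~\ref{3.24} it satisfies the Mikhlin condition, and by Mikhlin's theorem (componentwise, or by Corollary~\ref{3.31}) $\op[\xi^\alpha a_0(\xi)^{-1}]\in L(L^p(\R^n;\C^N))$ with norm $\le C_1$ depending only on the data --- this is exactly the mechanism used in Corollary~\ref{3.25}. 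Since $D^\alpha u=\op[\xi^\alpha a_0(\xi)^{-1}]\,A_pu$ for $u\in W_p^{2m}(\R^n;\C^N)=D(A_p)$, we obtain $\|D^\alpha u\|_{L^p}\le C_1\|A_pu\|_{L^p}$, so for every $B(x,D)=\sum_{|\alpha|=2m}b_\alpha(x)D^\alpha$ with $b_\alpha\in L^\infty(\R^n;\C^{N\times N})$ one has $D(B_p)\supset D(A_p)$ and
\[ \|B_pu\|_{L^p}\le\Big(\sum_{|\alpha|=2m}\|b_\alpha\|_\infty\Big)C_1\,\|A_pu\|_{L^p}\qquad(u\in D(A_p)),\]
i.e.\ \eqref{5-2} holds with $a=C_1\sum_{|\alpha|=2m}\|b_\alpha\|_\infty$.

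Finally I would conclude. Set $\epsilon:=\big(2C_1(R+1)\big)^{-1}$, which depends only on the data. If $\sum_{|\alpha|=2m}\|b_\alpha\|_\infty<\epsilon$, then $a<\tfrac12(R+1)^{-1}\le\big(2\tilde R_{\theta_1}(A_p)\big)^{-1}<\tilde R_{\theta_1}(A_p)^{-1}$, so Theorem~\ref{5.5}, applied at angle $\theta_1$ to $A=A_p$ and $B=B_p$, gives that $A_p+B_p$ is $\RR$-sectorial with $\RR$-angle $\ge\theta_1>\tfrac\pi2$ and
\[ \RR\Big(\big\{\lambda(\lambda-(A_p+B_p))^{-1}:\lambda\in\bar\Sigma_\theta\setminus\{0\}\big\}\Big)\le R_{\theta_1}(A_p+B_p)\le\frac{R_{\theta_1}(A_p)}{1-a\tilde R_{\theta_1}(A_p)}\le 2R=:K,\]
with $K$ depending only on $n,p,m,N,C_P$ (and $M$). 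The step that really needs care --- and that is easy to miss, because $A_p$ itself is \emph{not} invertible (e.g.\ $0\in\sigma(A_p)$ for $A=\Delta$) --- is the relative bound: it works only because the principal part is elliptic of order $2m$, which is exactly what makes $\xi^\alpha a_0(\xi)^{-1}$ homogeneous of degree $0$ and hence a bounded Fourier multiplier with data-controlled norm; everything else is bookkeeping of the constants furnished by Theorems~\ref{5.4} and~\ref{5.5}.
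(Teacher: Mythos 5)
Your proposal is correct and follows essentially the same route as the paper's proof: the relative bound $\|B_pu\|_{L^p}\le a\|A_pu\|_{L^p}$ via the degree-zero homogeneous multiplier $\xi^\alpha a_0(\xi)^{-1}$ (Lemma~\ref{3.24} and Corollary~\ref{3.31}), combined with the model result Theorem~\ref{5.4} and the perturbation Theorem~\ref{5.5} at an angle $>\frac\pi2$. The only differences are bookkeeping: your intermediate angle $\theta_1>\theta$ cleanly covers the closed sector $\bar\Sigma_\theta\setminus\{0\}$, and your explicit tracking of the dependence on $M$ (already implicit in Theorem~\ref{5.4}) is accurate.
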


\begin{proof}
  Let $\epsilon>0$ and $f\in W_p^{2m}(\R^n;\C^N)$. Then the inequality
  \[ \|Bf\|_{L^p(\R^n;\C^N)} \le \sum_{|\alpha|=2m}
  \|b_\alpha\|_\infty \|D^\alpha f\|_{L^p(\R^n;\C^N)}\le \epsilon
  \max_{|\alpha|=2m}\|D^\alpha f\|_{L^p(\R^n;\C^N)},\]
   holds if $B$ satisfies the above condition. We write
  \[ D^\alpha f = (\mathscr F^{-1} m_\alpha \mathscr F) A(D)f \]
  with
  \[ m_\alpha(\xi) := \xi^\alpha \Big(\sum_{|\beta|=2m} a_\beta
  \xi^\beta\Big)^{-1}.\]
  Then $m_\alpha \in C^\infty(\R^n\setminus\{0\};\C^{N\times
  N})$, and $m_\alpha$ is homogeneous of degree $0$ and therefore satisfies the Mikhlin condition.
  Consequently, there exists some $C_1>0$ such that we have
  \[ \| \op[m_\alpha]\|_{L(L^p(\R^n;\C^N))} \le C_1\quad
  (|\alpha|=2m).\]
  Choose  $\epsilon < \big[ C_1 (\tilde R_\theta(A)+1)\big]^{-1}$.
  Then
  \[ \|Bf\|_{L^p(\R^n;\C^N)} \le \epsilon C_1
  \|Af\|_{L^p(\R^n;\C^N)} \le a \|Af\|_{L^p(\R^n;\C^N)}\]
  with $a=\frac1{\tilde R_\theta(A)+1}$. By Theorem~\ref{5.5}, the operator
  $A_p+B_p$ is $\RR$-sectorial with angle    $\ge \theta$, and
  \[ R_\theta(A+B) \le \frac{R_\theta(A)}{1-a\tilde R_\theta(A)}
  =:K.\]

  \vspace*{-2em}
\end{proof}

In the next step, we consider an operator $A$ whose coefficients in the principal part are bounded and uniformly
continuous. We can reduce  this situation to the small perturbation from the last lemma by introducing an infinite
partition of unity. This is done in the following lemma.

\begin{lemma}
  \label{5.8}
  For every $r>0$ there exists  $\phi\in\mathscr D(\R^n)$ with $0\le \phi\le
  1$, $\supp\phi\subset(-r,r)^n$ and
  \[ \sum_{\ell\in r\Z^n} \phi_\ell^2(x) =1 \quad (x\in\R^n).\]
  Here, $\phi_\ell(x) := \phi(x-\ell)$.
\end{lemma}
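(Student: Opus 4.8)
The plan is to construct $\phi$ explicitly by smoothing the characteristic function of a suitable cube, and then to renormalize so that the shifted squares form a partition of unity. First I would fix a standard mollifier $\rho\in\mathscr D(\R^n)$ with $\rho\ge 0$, $\supp\rho\subset B(0,1)$ and $\int\rho=1$, and set $\rho_\delta(x):=\delta^{-n}\rho(x/\delta)$ for a small parameter $\delta>0$ to be chosen. Consider the function $\psi:=\chi_{[-r/2,r/2]^n}*\rho_\delta$. Then $\psi\in C^\infty(\R^n)$, $0\le\psi\le 1$, $\supp\psi\subset(-r/2-\delta,r/2+\delta)^n$, and $\psi(x)=1$ whenever $x\in[-r/2+\delta,r/2-\delta]^n$. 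Choosing $\delta<r/2$ (say $\delta=r/4$) guarantees $\supp\psi\subset(-r,r)^n$. The key observation is that, because the translates $[-r/2,r/2]^n+\ell$ for $\ell\in r\Z^n$ tile $\R^n$ up to a null set, for every $x\in\R^n$ exactly the $\ell$'s with $\ell\in x+(-r/2-\delta,r/2+\delta)^n$ contribute, and in particular
\[
\Sigma(x):=\sum_{\ell\in r\Z^n}\psi(x-\ell)^2
\]
is a finite sum that is locally constant-structured, smooth, $r\Z^n$-periodic, and strictly positive: indeed $\Sigma(x)\ge \psi(x-\ell_0)^2$ where $\ell_0\in r\Z^n$ is chosen with $x-\ell_0\in[-r/2,r/2]^n$, and for such $\ell_0$ one has $\psi(x-\ell_0)\ge$ some fixed constant $c_0>0$ only after shrinking; more safely one notes $\Sigma$ is continuous, positive (no point lies outside all the enlarged support cubes), and periodic, hence $\inf_x\Sigma(x)>0$.

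Having established $c_1:=\inf_{x\in\R^n}\Sigma(x)>0$ and that $\Sigma\in C^\infty(\R^n)$ is $r\Z^n$-periodic, I would then define
\[
\phi(x):=\frac{\psi(x)}{\sqrt{\Sigma(x)}}.
\]
Because $\Sigma$ is smooth, periodic and bounded below by $c_1>0$, the function $\Sigma^{-1/2}$ is smooth and bounded on all of $\R^n$, so $\phi\in\mathscr D(\R^n)$ with $\supp\phi=\supp\psi\subset(-r,r)^n$ and $0\le\phi\le 1/\sqrt{c_1}$; if one wants the bound $\phi\le 1$ one notes $\psi(x)\le\sqrt{\Sigma(x)}$ pointwise (since $\psi(x)^2$ is one term of the sum $\Sigma(x)$, all of whose terms are nonnegative), hence $0\le\phi\le 1$ automatically. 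Finally, writing $\phi_\ell(x)=\phi(x-\ell)$ and using the periodicity $\Sigma(x-\ell)=\Sigma(x)$ for $\ell\in r\Z^n$, we compute
\[
\sum_{\ell\in r\Z^n}\phi_\ell(x)^2=\sum_{\ell\in r\Z^n}\frac{\psi(x-\ell)^2}{\Sigma(x-\ell)}=\frac{1}{\Sigma(x)}\sum_{\ell\in r\Z^n}\psi(x-\ell)^2=\frac{\Sigma(x)}{\Sigma(x)}=1,
\]
which is the desired identity.

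The main obstacle, and the only point requiring genuine care, is verifying that $\Sigma(x)=\sum_{\ell\in r\Z^n}\psi(x-\ell)^2$ is everywhere strictly positive and bounded away from zero; everything else is routine mollification bookkeeping. This is handled by the tiling remark above: since the closed cubes $[-r/2,r/2]^n+\ell$ cover $\R^n$, every $x$ lies in at least one of them, so $\psi(x-\ell)=1$ for the corresponding $\ell$ (provided $x-\ell$ is in the interior region $[-r/2+\delta,r/2-\delta]^n$), giving $\Sigma(x)\ge 1$ on a full-measure open set; on the remaining boundary sliver one still has $\Sigma>0$ by continuity and the fact that the enlarged open cubes $(-r/2-\delta,r/2+\delta)^n+\ell$ genuinely cover $\R^n$. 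Combined with $r\Z^n$-periodicity and continuity of $\Sigma$, compactness of the fundamental domain $[0,r]^n$ yields $\inf_x\Sigma(x)>0$, completing the argument. (A slightly cleaner route that avoids the boundary sliver: arrange $\delta$ small enough that the interior cubes $[-r/2+\delta,\,r/2-\delta]^n+\ell$ \emph{still} cover $\R^n$ — impossible, they shrink — so instead simply keep $\psi\equiv 1$ on a full cube by taking $\psi:=\chi_{[-r/2-\delta,r/2+\delta]^n}*\rho_\delta$, which equals $1$ on $[-r/2,r/2]^n$; then the interior cubes literally tile $\R^n$ and $\Sigma\ge 1$ everywhere, at the cost of $\supp\psi\subset(-r/2-2\delta,r/2+2\delta)^n$, still inside $(-r,r)^n$ for $\delta<r/4$.)
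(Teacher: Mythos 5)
Your argument is correct, but it proceeds differently from the paper. The paper works first in one dimension with $r=1$: starting from a symmetric bump $\phi_1$ supported in $[-\tfrac34,\tfrac34]$, it defines $\phi(x):=\bigl(\phi_1^2(x)/(\phi_1^2(x)+\phi_1^2(1-x))\bigr)^{1/2}$ on $[0,\tfrac34]$ (extended by zero and by evenness), so that only the two neighbouring translates overlap and the identity $\phi^2(x)+\phi^2(x-1)=1$ on $[0,1]$ is built in by hand; periodicity then gives the identity on $\R$, and the $n$-dimensional, general-$r$ statement follows by taking the tensor product $\prod_{j=1}^n\phi(x_j/r)$. You instead mollify the indicator of a cube and renormalize by the square root of the periodization $\Sigma(x)=\sum_{\ell\in r\Z^n}\psi(x-\ell)^2$, using $r\Z^n$-periodicity of $\Sigma$ to get the partition of unity directly in $\R^n$; this is the standard ``divide by the periodic sum'' device, more flexible (no tensorization, works for any covering by translates) at the price of having to verify that $\Sigma$ is smooth, locally a finite sum, and bounded below. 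One small caveat in your first positivity argument: a point of the \emph{open} enlarged cube $(-r/2-\delta,r/2+\delta)^n$ near a corner can have Euclidean distance greater than $\delta$ from the cube $[-r/2,r/2]^n$, so membership in the enlarged cube alone does not force $\psi>0$ there; what does work is that every $x$ lies in some \emph{closed} cube $[-r/2,r/2]^n+\ell_0$, where $\operatorname{dist}(x-\ell_0,[-r/2,r/2]^n)=0<\delta$ gives $\psi(x-\ell_0)>0$, and then periodicity plus compactness of $[0,r]^n$ yields $\inf\Sigma>0$. Your final variant, taking $\psi=\chi_{[-r/2-\delta,r/2+\delta]^n}*\rho_\delta$ so that $\psi\equiv1$ on $[-r/2,r/2]^n$ and hence $\Sigma\ge1$ everywhere, removes this issue entirely and makes the proof completely clean. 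By contrast, the paper's construction avoids any lower-bound discussion because the normalization is explicit, but it is tied to the product structure of the cube lattice.
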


\begin{proof}
  a) We first consider the case $r=1$ and $n=1$. Choose some
  $\phi_1\in\mathscr D(\R)$ with $\phi_1>0$ in $(-\frac34,\frac34)$,
  $\supp\phi_1=[-\frac34,\frac34]$, and $\phi_1(x) = \phi_1(-x)$ for all
  $x\in \R$. We set
  \[\phi(x) :=\begin{cases}
    \sqrt{\frac{\phi_1^2(x)}{\phi_1^2(x)+\phi_1^2(1-x)}} & \text{ if } x\in [0,\frac34],\\
    0, & \text{ if } x\in (\frac34,\infty),
  \end{cases}\]
  and $\phi(x) := \phi(-x)$ for $x< 0$.
  Then $\supp\phi\subset(-1,1)$, and for $x\in [0,1]$ we obtain
  \begin{align*}
    \sum_{\ell\in\Z} \phi_\ell^2(x) &= \phi^2(x) +\phi^2(x-1) =
    \phi^2(x)+\phi^2(1-x)\\
    &  = \frac{\phi_1^2(x)}{\phi_1^2(x)+\phi_1^2(1-x)} +
    \frac{\phi_1^2(1-x)}{\phi_1^2(1-x)+\phi_1^2(x)} = 1.
  \end{align*}
 As $\sum_{\ell\in\Z}\phi_\ell^2$ is periodic with period $1$, we have
  $\sum_{\ell\in\Z}\phi_\ell^2=1$ in $\R$.

  b) In the general case, define $\phi^{(n)}(x) :=\prod_{j=1}^n
  \phi(\frac{x_j}r)$ with $\phi$ from part a). Then
  \begin{align*}
    \sum_{\ell\in r\Z^n} (\phi^{(n)})^2(x-\ell) & = \sum_{\ell\in r\Z^n}
    \prod_{j=1}^n \phi^2\Big(\frac{x_j-\ell_j}r\Big) = \sum_{\ell\in\Z^n}
    \prod_{j=1}^n\phi^2(y_j-\ell_j)\\
    & = \prod_{j=1}^n \sum_{\ell_j\in\Z} \phi^2(y_j-\ell_j) = 1
  \end{align*}
  for $y:= \frac xr$.
\end{proof}

We now come to the main result of this section. Here, $\buc(\R^n)$ stands for the space of all bounded and uniformly
continuous functions.

\begin{theorem}
  \label{5.9}
  Let $A(x,D) = \sum_{|\alpha|\le 2m} a_\alpha(x) D^\alpha$ with
  \begin{align*}
    a_\alpha & \in \buc(\R^n;\C^{N\times N})\quad (|\alpha|=2m),\\
    a_\alpha & \in L^\infty(\R^n;\C^{N\times N}) \quad
    (|\alpha|<2m).
  \end{align*}
  Let $1<p<\infty$. If $\partial_t - A(x,D)$ is parabolic, then there exist
  $\theta>\frac\pi 2$ and $\mu >0$ such that $A_p-\mu$ is
  $\RR$-sectorial with angle $\theta $. In particular,
  $A_p-\mu$ has maximal $L^q$-regularity for all $1<q<\infty$.
\end{theorem}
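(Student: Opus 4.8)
The plan is to run the classical coefficient-freezing and patching argument, reducing the theorem to the constant-coefficient result of Theorem~\ref{5.4}, the small-perturbation statement of Lemma~\ref{5.7}, and the localization device of Lemma~\ref{5.8}; this is the route of \cite{Denk-Hieber-Pruess03}, Sections~5--6. First I would dispose of the lower-order terms. Write $A(x,D) = A_{2m}(x,D) + A_{<2m}(x,D)$ with $A_{2m} := \sum_{|\alpha|=2m} a_\alpha(x) D^\alpha$. Once $A_{2m,p}$ is known to be $\RR$-sectorial with some angle $\theta > \frac\pi2$, the elliptic a priori estimate $\|u\|_{W_p^{2m}} \le C(\|u\|_{L^p} + \|A_{2m,p}u\|_{L^p})$ is available, and the interpolation inequality $\|D^\alpha u\|_{L^p} \le \eta\|u\|_{W_p^{2m}} + C_\eta\|u\|_{L^p}$ for $|\alpha| < 2m$ (with $\eta > 0$ arbitrary) then shows that $A_{<2m,p}$ is $A_{2m,p}$-bounded with relative bound as small as we wish; Theorem~\ref{5.6} therefore yields a number $\mu > 0$ for which $A_p - \mu = A_{2m,p} + A_{<2m,p} - \mu$ is $\RR$-sectorial with angle $\ge \theta$. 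So it suffices to treat $A = A_{2m}$, $a_\alpha \in \buc(\R^n;\C^{N\times N})$. For $\ell \in \R^n$ let $A^\ell(D) := \sum_{|\alpha|=2m} a_\alpha(\ell) D^\alpha$ be the operator frozen at $\ell$. Since $\partial_t - A(x,D)$ is parabolic with some constant $C_P$, each $\partial_t - A^\ell(D)$ is parabolic with the same $C_P$; by Remark~\ref{5.3}, applied with these uniform data, there is an angle $\theta > \frac\pi2$ and a constant $C_P' > 0$ (both independent of $\ell$) such that each $A^\ell(D)$ is parameter-elliptic in $\overline{\Sigma}_\theta$ with constant $C_P'$. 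As $M := \sum_{|\alpha|=2m}\|a_\alpha\|_\infty < \infty$, Theorem~\ref{5.4} (whose proof applies verbatim on the sector $\overline{\Sigma}_\theta$) gives that
\[ \big\{ \lambda(\lambda - A^\ell_p)^{-1} : \lambda \in \overline{\Sigma}_\theta \setminus \{0\},\ \ell \in \R^n \big\} \]
is $\RR$-bounded, with $\RR$-bound depending only on $n, p, m, N, C_P', M$, and in particular uniform in $\ell$.

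Now I would localize. Let $\epsilon > 0$ be the threshold of Lemma~\ref{5.7} associated with the data $n, p, m, N, C_P'$, and use the uniform continuity of the $a_\alpha$ to choose $r > 0$ so small that $\sum_{|\alpha|=2m} \sup\{|a_\alpha(x) - a_\alpha(\ell)| : |x - \ell| \le r\sqrt n\} < \epsilon$ for every $\ell$. Take the partition of unity $(\phi_\ell)_{\ell \in r\Z^n}$ of Lemma~\ref{5.8}, with $\sum_{\ell} \phi_\ell^2 \equiv 1$ and $\supp\phi_\ell \subset \ell + (-r,r)^n$, and for $\lambda \in \overline{\Sigma}_\theta \setminus \{0\}$ define the parametrix
\[ R_\lambda := \sum_{\ell \in r\Z^n} \phi_\ell\,(\lambda - A^\ell_p)^{-1}\,\phi_\ell . \]
Using $(\lambda - A_p)\phi_\ell = \phi_\ell(\lambda - A_p) - [A_p,\phi_\ell]$, where the commutator $[A_p,\phi_\ell]$ is a differential operator of order $\le 2m-1$ with coefficients bounded uniformly in $\ell$, together with $\phi_\ell(\lambda - A_p) = \phi_\ell(\lambda - A^\ell_p) - \phi_\ell(A_p - A^\ell_p)$, a direct computation gives $(\lambda - A_p) R_\lambda = I + S_\lambda + S'_\lambda$ with
\[ S_\lambda := -\sum_{\ell} \phi_\ell(A_p - A^\ell_p)(\lambda - A^\ell_p)^{-1}\phi_\ell, \qquad S'_\lambda := -\sum_{\ell} [A_p,\phi_\ell](\lambda - A^\ell_p)^{-1}\phi_\ell . \]
On $\supp\phi_\ell$ the principal coefficients of $A_p - A^\ell_p$ have sup-norm less than $\epsilon$; writing $D^\alpha(\lambda - A^\ell_p)^{-1}$ through its symbol $\xi^\alpha(\lambda - a_0(\ell,\xi))^{-1}$, which satisfies uniform Mikhlin bounds by the parameter-ellipticity of $a_0(\ell,\cdot)$, and invoking Corollary~\ref{3.31} together with Lemma~\ref{3.13}, one finds that each summand of $S_\lambda$ is $\RR$-bounded with $\RR$-bound $\le C\epsilon$, uniformly in $\ell$ and $\lambda$. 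For $S'_\lambda$, the symbols $\xi^\beta(\lambda - a_0(\ell,\xi))^{-1}$ with $|\beta| \le 2m-1$ obey uniform Mikhlin bounds after multiplication by $|\lambda|^{1/2m}$, so each summand of $S'_\lambda$ is $\RR$-bounded with $\RR$-bound $\le C|\lambda|^{-1/2m}$, uniformly in $\ell$.

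It remains to patch and conclude. The supports $\{\supp\phi_\ell\}_{\ell \in r\Z^n}$ have finite overlap, so $r\Z^n$ splits into $N_0 = N_0(n)$ classes within each of which the supports are pairwise disjoint; for a sum of operators with pairwise disjoint in- and out-supports, the $\RR$-bound is, up to the factor $N_0$, the supremum of the summands' $\RR$-bounds (this uses $0 \le \phi_\ell \le 1$ and disjointness). Consequently $\lambda R_\lambda$, $S_\lambda$ and $S'_\lambda$ are all $\RR$-bounded, and $\RR(S_\lambda) + \RR(S'_\lambda) < \frac12$ once $r$ is small and $|\lambda| \ge \Lambda_0$. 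A Neumann series argument — using that sums, products, and strong-operator closures of $\RR$-bounded families are again $\RR$-bounded (Theorem~\ref{3.14} and Remark~\ref{3.10}) — then makes $I + S_\lambda + S'_\lambda$ invertible with $\RR$-bounded inverse, whence $(\lambda - A_p)^{-1} = R_\lambda(I + S_\lambda + S'_\lambda)^{-1}$ and $\{\lambda(\lambda - A_p)^{-1} : \lambda \in \overline{\Sigma}_\theta,\ |\lambda| \ge \Lambda_0\}$ is $\RR$-bounded; a symmetrically constructed left parametrix supplies the left inverse, so indeed $\lambda \in \rho(A_p)$ for such $\lambda$. Choosing $\mu > 0$ so large that, for some $\theta' \in (\frac\pi2, \theta]$, the shifted sector $\Sigma_{\theta'} - \mu$ lies in $\{\lambda \in \Sigma_\theta : |\lambda| \ge \Lambda_0\}$, we conclude that $A_p - \mu$ is $\RR$-sectorial with $\RR$-angle $\ge \theta' > \frac\pi2$. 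Since $L^p(\R^n;\C^N)$ is a UMD space, Theorem~\ref{3.34} together with the $p$-independence of maximal $L^p$-regularity gives $A_p - \mu \in \mreg((0,\infty);L^p(\R^n;\C^N))$, i.e.\ $A_p - \mu$ has maximal $L^q$-regularity for every $q \in (1,\infty)$.

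The step I expect to be the main obstacle is the passage from the uniform $\RR$-bounds of the individual frozen resolvents to the $\RR$-boundedness of the infinite sums $\lambda R_\lambda$, $S_\lambda$, $S'_\lambda$: $\RR$-boundedness is not preserved under countable sums, so one has to genuinely exploit the finite-overlap geometry of $(\phi_\ell)_\ell$ — via the colouring argument indicated above, or, equivalently, by transferring the estimates through the bounded maps $f \mapsto (\phi_\ell f)_\ell$ and $(g_\ell)_\ell \mapsto \sum_\ell \phi_\ell g_\ell$ into and out of a sequence-valued $L^p$-space. Carrying out this bookkeeping, together with the quantitative commutator and perturbation estimates, is the technical heart of the argument; the full details are given in \cite{Denk-Hieber-Pruess03}, Sections~5--6.
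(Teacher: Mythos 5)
Your argument is correct, but it follows a genuinely different route from the paper. You build a parametrix directly in $L^p(\R^n;\C^N)$: freeze the (principal) coefficients completely at the grid points, set $R_\lambda=\sum_\ell\phi_\ell(\lambda-A^\ell_p)^{-1}\phi_\ell$, push the coefficient variation into the error term $S_\lambda$ (small by uniform continuity) and the commutators into $S'_\lambda$ (decaying like $|\lambda|^{-1/2m}$), and invert $I+S_\lambda+S'_\lambda$ by a Neumann series in the $\RR$-bound, using the finite-overlap/colouring argument to pass from the uniform $\RR$-bounds of the frozen resolvents (Theorem~\ref{5.4} via Corollary~\ref{3.31}) to the patched sums; lower-order terms are removed at the outset by relative boundedness and Theorem~\ref{5.6}. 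The paper instead keeps the variable coefficients inside each cube (the operators $A^\ell$ of its proof are only frozen outside $Q_\ell$, handled by Lemma~\ref{5.7}), transfers everything to the sequence space $X_0=\ell_p(\Gamma;L^p(\R^n;\C^N))$ via the localization and patching maps $J$ and $P$, proves $\RR$-sectoriality of the diagonal operator $(A^\ell)_\ell$ there (this is the same $\ell_p$-bookkeeping your colouring argument performs), writes $JA_p=AJ+BJ$ and $A_pP=PA+PD$ with lower-order remainders $B,D$, absorbs these by the interpolation inequality and the abstract shift-perturbation Theorem~\ref{5.6}, and reads off $(\lambda+\mu-A_p)^{-1}=P(\lambda+\mu-(A+D))^{-1}J$. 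What your route buys is a very explicit resolvent formula and error structure, at the price of carrying the $|\lambda|\ge\Lambda_0$ restriction and the left-parametrix/injectivity step by hand; the paper's route buys a cleaner separation between the hard analysis (uniform $\RR$-bounds for the localized operators) and soft abstract perturbation in a fixed Banach space, and avoids Neumann series for operator families. Two small points of hygiene in your write-up: the principal part $A_{2m,p}$ is in general only $\RR$-sectorial after a shift (your own localization only covers $|\lambda|\ge\Lambda_0$), so the preliminary reduction should be phrased for $A_{2m,p}-\mu_1$ — Theorem~\ref{5.6} absorbs the resulting $b\|x\|$ term anyway; and Lemma~\ref{5.7} is not really what fixes your $\epsilon$ (your smallness threshold comes from the Neumann-series requirement $\RR(S_\lambda)+\RR(S'_\lambda)<\tfrac12$), whereas in the paper's proof Lemma~\ref{5.7} is an essential ingredient.
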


\begin{proof}
  As $A(x,D)$ is parameter-elliptic in $\bar\C_+$ by assumption, there exists a $\theta>\frac\pi 2$ such that $A(x,D)$
   is still  parameter-elliptic in  $\bar\Sigma_\theta$ (Remark~\ref{5.3}). The proof of the theorem uses localization
   and is done   in several steps. We first explain the ideas.

  \begin{enumerate}
    [(1)]
\item We fix the coefficients of $A$ at some point $\ell\in\Gamma$, where the grid
  $\Gamma\subset\R^n$ is chosen fine enough such that in each cube the localized operator  $A^\ell$ is a small
  perturbation of the model problem with frozen coefficient. Here, we apply Lemma~\ref{5.7}
to see that  $A^\ell$ is still $\RR$-sectorial.

\item We consider the sequence $A:= (A^\ell)_{\ell\in\Gamma}$ of all localized operators and show that this defines
 an $\RR$-sectorial operator in some suitably chosen sequence space  $X_0$.

\item  The $L^p$-realization $A_p$ and the operator $A$ have the same properties up to lower-order perturbations.
More precisely, we have   $JA_p=AJ$ and $A_pP=PA$ modulo lower order operators, where $J$ and $P$ are the localization
and the patching operator, respectively.

\item With the help of the interpolation inequality for Sobolev spaces, the lower-order operators can be seen as a
small perturbation, and therefore the $\RR$-sectoriality of $A$ implies the $\RR$-sectoriality of $A_p$.
  \end{enumerate}

In detail, these steps can be done in the following way.

  \textbf{(1)} Choose $\epsilon = \epsilon(n,p,m,N,C_p)$ as in Lemma~\ref{5.7} for the operator $\sum_{|\alpha|=2m}
  a_\alpha(\ell)D^\alpha$ with $\ell\in\Z^n$. As $a_\alpha\in \buc(\R^n;\C^{N\times N})$,
  there exists a $\delta>0$ with
  \[ \sum_{|\alpha|=2m} |a_\alpha(x)-a_\alpha(y)| <\epsilon\quad
  (|x-y|\le \delta).\]
  Now choose $r\in (0,\delta)$ and $\phi\in\mathscr D(\R^n)$ as in Lemma~\ref{5.8}.
We write $Q:=(-r,r)^n$ and $Q_\ell := Q+\ell $ for $\ell\in
  r\Z^n =:\Gamma$. Choose $\psi\in\mathscr D(\R^n)$ with
  $\supp\psi\subset Q$, $0\le \psi\le 1$, $\psi=1$ on $\supp\phi$,
  and set $\psi_\ell(x) := \psi(x-\ell) \;(\ell\in\Z)$. Define the coefficients
  \[
    a_\alpha^\ell(x) := \begin{cases}
      a_\alpha(x),& x\in Q_\ell,\\
      a_\alpha(\ell),& x\not\in Q_\ell
    \end{cases}\quad (\ell\in\Gamma,\,|\alpha|=2m)
\]
and the operator $A^\ell(x,D) := \sum_{|\alpha|=2m} a_\alpha^\ell(x) D^\alpha$. For the principal part, we obtain
$A_0(x,D) = A^\ell(x,D)$ $(x\in Q_\ell)$ and therefore $A_0(x,D)u = A^\ell(x,D)u$ for all $u\in W_p^{2m}(\R^n;\C^N)$
with $\supp u\subset Q_\ell$.

\textbf{(2)} Define $X_k := \ell_p(\Gamma;W_p^k(\R^n;\C^N))$ for $k\in\N_0$ and the operator $A\colon X_0\supset D(A)\to X_0$
by $D(A) := X_{2m} $ and
\[ A(u_\ell)_{\ell\in\Gamma} := (A^\ell u_\ell)_{\ell\in\Gamma}.\]

By Lemma~\ref{5.7}, the operator $A^\ell$ is $\RR$-sectorial with $R_\theta(A^\ell)\le K$, where $K$ does not depend on
$\ell$. We show that the same holds for
 $A$. For this, let $T_j = \lambda_j
(A-\lambda_j)^{-1}$ with $\lambda_j\in\Sigma_\theta$ and $x_j = (f_\ell^{(j)})_{\ell\in\Gamma}\in X_0$ for
$j=1,\dots,J$. Then we obtain
\begin{align*}
  \Big\| & \sum_{j=1}^J r_j T_j x_j \Big\|_{L^p([0,1];X_0)} = \\
  & = \Big( \int_0^1 \Big\| \sum_{j=1}^J r_j(t) T_j x_j\Big\|_{X_0}^p
  dt\Big)^{1/p} \\
  & = \Big( \int_0^1 \sum_{\ell\in\Gamma} \Big\| \sum_{j=1}^J r_j(t)
  \lambda_j (A^\ell -\lambda_j)^{-1} f_\ell^{(j)}
  \Big\|_{L^p(\R^n;\C^N)}^p dt\Big)^{1/p}\\
  & = \Big(\sum_{\ell\in\Gamma} \int_0^1 \Big\| \sum_{j=1}^J r_j(t)
  \lambda_j (A^\ell-\lambda_j)^{-1}
  f_\ell^{(j)}\Big\|_{L^p(\R^n;\C^N)}^p dt\Big)^{1/p}\\
  & = \Big( \sum_{\ell\in\Gamma} \Big\| \sum_{j=1}^J r_j \lambda_j
  (A^\ell -\lambda_j)^{-1} f_\ell^{(j)}
  \Big\|^p_{L^p([0,1];L^p(\R^n;\C^N))}\Big)^{1/p} \\
  & \le \Big(\sum_{\ell\in\Gamma} \big[ R_\theta(A^\ell)\big]^p \Big\|
  \sum_{j=1}^J r_j f_\ell^{(j)}
  \Big\|_{L^p([0,1];L^p(\R^n;\C^N))}^p\Big)^{1/p} \\
  &\le K \Big\| \sum_{j=1}^J r_j x_j\Big\|_{L^p([0,1];X_0)},
\end{align*}
i.e. $R_\theta(A)\le K$.

Now we consider the localization operator $J\colon L^p(\R^n;\C^N)\to
X_0,\; f\mapsto (\phi_\ell f)_\ell$. As we have
\[ \sum_{\ell\in\Gamma} \|\phi_\ell f\|_{L^p(\R^n;\C^N)}^p \le
\sum_{\ell\in\Gamma} \|\chi_{Q_\ell} f\|_{L^p(\R^n;\C^N)}^p =
2^N\|f\|_{L^p(\R^n;\C^N)}^p,\] the operator  $J$ is continuous. In the same way, one sees that
$J\in L(W_p^{2m}(\R^n;\C^N), X_{2m})$.

Analogously, the patching operator $P$ is defined by
\[P\colon X_0\to L^p(\R^n;\C^N),\, (f_\ell)_{\ell\in\Gamma}\mapsto
\sum_{\ell\in\Gamma} \phi_\ell f_\ell.\] Note here that the sum is locally finite.
We obtain  $P\in L(X_0,L^p(\R^n;\C^N))$ and
$PJ=\id_{L^p(\R^n;\C^N)}$ because of $PJf =
\sum_{\ell\in\Gamma}\phi_\ell^2 f= f$.

\textbf{(3)} Now let $A_p$ be the  $L^p(\R^n;\C^N)$-realization of  $A(x,D)$ and $A_{p,0}$ the
$L^p(\R^n;\C^N)$-realization of $A_0(x,D)$. Then for $u\in W_p^{2m}(\R^n;\C^N)$ and $\ell\in\Gamma$ the following
equality holds:
\begin{align*}
  \phi_\ell A_p u &= A_p(\phi_\ell u) + (\phi_\ell A_p - A_p\phi_\ell) u
  \\
  & = A^\ell(\phi_\ell u) + (A_p - A_{p,0}) \psi_\ell \phi_\ell u +
  \sum_{k:Q_k\cap Q_\ell\not=\emptyset} (\phi_\ell
  A_p-A_p\phi_\ell)\phi_k^2 u.
\end{align*}
Thus, $JA_p = AJ + BJ$ with
\[ B\big( (u_\ell)_{\ell\in\Gamma}\big) := \Big(
(A_p-A_{p,0})\psi_\ell u_\ell + \sum_{k:Q_k\cap Q_\ell\not=\emptyset}
(\phi_\ell A_p-A_p\phi_\ell)\phi_k u_k\Big)_{\ell\in\Gamma}.\]
Writing $B((u_\ell)_\ell) = ( \sum_{k\in\Gamma} B_{k\ell}
u_\ell)_{k\in\Gamma}$, we see that  $B_{k\ell}$ is a differential operator of order not greater than $\le 2m-1$, and
the number of elements in each row of the infinite matrix  $(B_{k\ell})_{k,\ell}$ is bounded. As
$a_\alpha\in L^\infty(\R^n;\C^N)$, this yields $B\in L(X_{2m-1}, X_0)$.

Analogously, we obtain for $(u_\ell)_{\ell\in\Gamma}\in X_{2m}$ the equality
\begin{align*}
  (A_p P-PA)& (u_\ell)_{\ell\in\Gamma} =
  A_p\Big(\sum_{\ell\in\Gamma} \phi_\ell u_\ell\Big) -
  \sum_{\ell\in\Gamma} \phi_\ell A^\ell u_\ell \\
  & = \sum_{\ell\in\Gamma} \phi_\ell (A_p-A_{p,0}) u_\ell +
  \sum_{k\in\Gamma} (A_p\phi_k - \phi_k A_p) u_k \\
  & = \sum_{\ell\in\Gamma} \phi_\ell (A_p-A_{p,0}) u_\ell +
  \sum_{k\in\Gamma} \sum_{\ell: Q_k\cap Q_\ell\not=\emptyset}
  \phi_\ell^2 (A_p\phi_k -\phi_k A_p) u_k\\
  & = \sum_{\ell\in\Gamma} \phi_ \ell\Big[ (A_p-A_{p,0}) u_\ell +
  \sum_{k:Q_k\cap Q_\ell\not=\emptyset} \phi_\ell (A_p\phi_k -\phi_k
  A_p) u_k\Big]\\
  &= PD(u_\ell)_{\ell\in\Gamma}
\end{align*}
with
\[ D(u_\ell)_{\ell\in\Gamma} := \Big( (A_p-A_{p,0}) u_\ell +
\sum_{k:Q_k\cap Q_\ell\not=\emptyset} (A_p\phi_k - \phi_k A_p)
u_k\Big)_{\ell\in\Gamma}.\]
In the same way as before, we see that $D\in L(X_{2m-1},X_0)$.

\textbf{(4)} We apply the interpolation inequality for Sobolev spaces and obtain for every
  $\epsilon>0$ the inequality
\begin{align*}
  \|B(u_\ell)_{\ell\in\Gamma}\|_{X_0} & + \|
  D(u_\ell)_{\ell\in\Gamma}\|_{X_0} \le C
  \|(u_\ell)_{\ell\in\Gamma}\|_{X_{2m-1}} \\
  & \le \epsilon \|(u_\ell)_{\ell\in\Gamma}\|_{X_{2m}} + C_\epsilon
  \| (u_\ell)_{\ell\in\Gamma}\|_{X_0} \quad (u\in X_{2m}).
\end{align*}
Due to Theorem~\ref{5.6}, there exists a $\mu>0$ such that $A+B-\mu$ and
$A+D-\mu$ are both $\RR$-sectorial with angle $\ge\theta$.

Let $u\in W_p^{2m}(\R^n;\C^N)$ and $f := (\lambda+\mu-A_p) u \in
L^p(\R^n;\C^N)$. Then
\[ Jf = J(\lambda+\mu-A_p) u = (\lambda+\mu - (A+B)) Ju,\]
and therefore
\[ u = PJu = P(\lambda+\mu-(A+B))^{-1} Jf.\]
In particular,  $\lambda+\mu-A_p$ is injective.

On the other hand, for  $f\in L^p(\R^n;\C^N)$ we get
\begin{align*}
  f & = PJf = P(\lambda+\mu-(A+D)) (\lambda+\mu -(A+D))^{-1} Jf\\
  & = (\lambda+\mu-A_p) P (\lambda+\mu-(A+D))^{-1} Jf \in
  R(\lambda+\mu-A_p),
\end{align*}
i.e., $\lambda+\mu-A_p$ is surjective, too. Therefore, $\lambda+\mu\in
\rho(A_p)$ and
\[ (\lambda+\mu-A_p)^{-1} = P (\lambda+\mu-(A+D))^{-1} J.\]
Because of $P\in L(X_0,L^p(\R^n;\C^N))$, $J\in L(L^p(\R^n;\C^N), X_{2m})$, and $R_\theta(A+D-\mu)<\infty$, it follows
that  $R_\theta(A_p-\mu)<\infty$, and  $A_p-\mu$ is $\RR$-sectorial with angle greater or equal to $ \theta$.
\end{proof}

\section{Parabolic boundary value problems}

In the last section, we considered parabolic systems in the whole space. Now we want to show that similar results also
hold for boundary value problems in sufficiently smooth domains. In addition to the parameter-ellipticity of the
operator $A$, we now have to impose a condition on the boundary operators called Shapiro-Lopatinskii condition. For a
reference for this condition, we mention,e.g., \cite{Wloka87}, \S 11.

\subsection{The Shapiro-Lopatinksii condition}

In the following, let $p\in (1,\infty)$, and let $G\subset\R^n$ be a bounded domain. We consider a linear partial
differential operator  $A=A(x,D)$ of the form
\[ A(x,D) = \sum_{|\alpha|\le 2m} a_\alpha(x) D^\alpha\]
with $m\in \N$, $a_\alpha\colon\bar G\to\C$ and boundary operators
$B_1,\dots, B_m$ of the form
\[ B_j(x',D) = \sum_{|\beta|\le m_j} b_{j\beta}(x') \gamma_0 D^\beta\]
with $m_j<2m$, $b_{j\beta}\colon \partial G\to \C$. Here,
$\gamma_0$ stands for the boundary trace  $u\mapsto u|_{\partial G}$, which is
a bounded linear map
\[ \gamma_0\colon W_p^k(\Omega)\to W_p^{k-1/p}(\partial \Omega), \]
 $k=1,\dots,2m$ if $G$ is a $C^{2m}$-domain. Note here that $W_p^{k-1/p}(\partial\Omega) = B_{pp}^{k-1/p}(\partial
 \Omega)$ is the Sobolev-Slobodeckii space (see Section~4).

The  $L^p$-realization $A_{B,p}$ of the boundary value problem $(A,B)=(A,B_1,\dots,B_m)$ is defined by \[D(A_{B,p})
:=\{ u\in W_p^{2m}(G): B_1(x,D) u = \dots = B_m(x,D) u =0\}\] and $A_{B,p} u := A(x,D) u\quad (u\in D(A_{B,p}))$. We
will assume the following smoothness:
\begin{enumerate}[(i)]
  \item The domain $\Omega$ is bounded and of class $C^{2m}$.
  \item For the coefficients $a_\alpha$ of $A(x,D)$ we have
  \begin{align*}
    a_\alpha& \in C(\bar G)\quad (|\alpha|=2m),\\
    a_\alpha & \in L^\infty(G)\quad (|\alpha|<2m).
  \end{align*}
  \item For the coefficients  $b_{j\beta}$ of $B_j(x',D)$ we have
  \[ b_{j\beta}\in C^{2m-m_j}(\partial G)\quad (|\beta|\le m_j,\,
  j=1,\dots,m). \]
\end{enumerate}
By trace results on Sobolev spaces, we immediately see the following continuity:

\begin{lemma}
  \label{6.1} The operator
  \[ (A,B)\colon W_p^{2m}(G) \to L^p(G) \times \prod_{j=1}^m
  W_p^{2m-m_j-1/p}(\partial G)\]
  is continuous.
\end{lemma}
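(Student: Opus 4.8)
The plan is to decompose the map $(A,B)$ into its finitely many elementary building blocks and bound each one separately, using only the mapping properties of constant-coefficient differential operators, of the boundary trace, of Sobolev embeddings on $\partial G$, and of pointwise multiplication on Sobolev--Slobodeckii spaces.

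First I would handle the interior part $A\colon W_p^{2m}(G)\to L^p(G)$. For each multi-index $\alpha$ with $|\alpha|\le 2m$, differentiation $u\mapsto D^\alpha u$ is bounded from $W_p^{2m}(G)$ to $L^p(G)$ by the very definition of the Sobolev space, and multiplication by $a_\alpha\in L^\infty(G)$ is bounded on $L^p(G)$ with norm $\|a_\alpha\|_\infty$. Summing over the finitely many $\alpha$ gives $\|A(x,D)u\|_{L^p(G)}\le C\|u\|_{W_p^{2m}(G)}$.

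Next I would treat each boundary operator $B_j$. Fix $j$ and $\beta$ with $|\beta|\le m_j$. Then $u\mapsto D^\beta u$ is bounded from $W_p^{2m}(G)$ into $W_p^{2m-|\beta|}(G)$, and since $m_j<2m$ we have $2m-|\beta|\ge 2m-m_j\ge 1$, so the trace $\gamma_0\colon W_p^{k}(G)\to W_p^{k-1/p}(\partial G)$ is bounded for $k=2m-|\beta|$ by the smoothness assumption on $G$. Because $|\beta|\le m_j$, the index satisfies $2m-|\beta|-1/p\ge 2m-m_j-1/p$, so there is a continuous embedding $W_p^{2m-|\beta|-1/p}(\partial G)\hookrightarrow W_p^{2m-m_j-1/p}(\partial G)$. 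Finally, since $1/p\in(0,1)$ the number $s:=2m-m_j-1/p$ is a non-integer lying in $(0,2m-m_j)$, so $W_p^{s}(\partial G)=B_{pp}^{s}(\partial G)$ is a genuine Sobolev--Slobodeckii space, and pointwise multiplication by $b_{j\beta}\in C^{2m-m_j}(\partial G)$ maps this space boundedly into itself (the standard multiplier estimate for function spaces of positive smoothness strictly below that of the multiplier). Chaining these estimates and summing over the finitely many $\beta$ with $|\beta|\le m_j$ yields $\|B_j(x',D)u\|_{W_p^{2m-m_j-1/p}(\partial G)}\le C\|u\|_{W_p^{2m}(G)}$.

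Combining the interior estimate with the $m$ boundary estimates gives the continuity of $(A,B)$. The only ingredient that is not entirely formal is the pointwise multiplication estimate on the fractional-order Sobolev--Slobodeckii space on the manifold $\partial G$; this is where the function space theory of Section~4 (and the references given there) enters, but it is classical, so I expect no real obstacle --- the proof is essentially a bookkeeping exercise assembling the continuity of differentiation, trace, embedding, and multiplication.
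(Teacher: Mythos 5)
Your argument is correct and is essentially the paper's: the paper simply remarks that the continuity follows immediately from trace results on Sobolev spaces, and your decomposition (differentiation into $W_p^{2m-|\beta|}(G)$, boundedness of $\gamma_0$ on the $C^{2m}$-domain, embedding down to $W_p^{2m-m_j-1/p}(\partial G)$, then multiplication by the $C^{2m-m_j}$ coefficients) just spells out those details. In particular you handle the one delicate point correctly by embedding into the lower-order Slobodeckii space \emph{before} multiplying, so that the multiplier's smoothness $2m-m_j$ strictly exceeds the order $2m-m_j-1/p$ of the space.
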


As usual, we define the principal symbols $a_0(x,\xi) := \sum_{|\alpha|=2m} a_\alpha(x)
\xi^\alpha$ and $b_{j0}(x',\xi) := \sum_{|\beta|=m_j} b_{j\beta}(x')
\xi^\beta$.

\begin{definition}
  \label{6.2} The boundary value problem  $(A,B)$ is called parameter-elliptic
   in the sector $\overline{\Sigma}_\phi$ if:
   \begin{enumerate}
     [(a)]
\item We have $a_0(x,\xi)-\lambda \not=0$ for all $x\in \bar G$ and all $(\xi,\lambda)\in(\R^n\times
    \overline{\Sigma}_\phi)\setminus\{0\}$.

\item The following Shapiro-Lopatinskii condition is satisfied: for all $x'\in\partial G$ and all $(\xi',\lambda)\in
    (\R^{n-1}\times\overline{\Sigma}_\phi)\setminus\{0\}$ the
  ordinary differential equation
  \begin{equation}\label{6-1}
  \begin{aligned}
    (a_0(x',\xi',D_n)-\lambda) v(x_n) & = 0 \quad (x_n>0),\\
    b_{j0}(x',\xi',D_n) v(x_n)\big|_{x_n=0} & = 0\quad
    (j=1,\dots,m),\\
    v(x_n) & \to 0 \quad (x_n\to\infty)
  \end{aligned}
  \end{equation}
  has only the trivial solution. Here, the boundary value problem is written in coordinates corresponding to $x'$.
 These coordinates arise from the original ones by translation and rotation in such a way that the $x_n$-direction
 in the new coordinates is the direction of the inner normal at the point $x'$.
\end{enumerate}

If this holds for the sector $\bar{\Sigma}_{\pi/2} =\{ \lambda\in\C:\Re\lambda\ge 0\}$, the instationary problem
$(\partial_t - A, B)$ is called parabolic.
\end{definition}

Note that (a) implies inequality \eqref{5-1} from Definition~\ref{5.1}, as $\bar G$ is compact and $a_0$ is continuous
in $x$ and homogeneous in $\xi$.

\begin{definition}
  \label{6.3}
  Assume that in the situation of Definition~\ref{6.2}, (a) holds. Then  $A(x,D)-\lambda$ is called proper
    parameter-elliptic  if for all $(x',\xi',\lambda)\in \partial G\times (\R^{n-1}\setminus\{0\})\times\bar  \C_+$,
     the polynomial $a_0(x',\xi',\cdot)-\lambda$ has exactly $m$ roots (including multiplicities)  $\tau_j =
  \tau_j(x',\xi',\lambda)$, $j=1,\dots,m$  with
  positive imaginary part. In this case, define
  \[ a_+(\tau) := a_+(x',\xi',\lambda,\tau) := \prod_{j=1}^m
  (\tau-\tau_j(x',\xi',\lambda)) \in \C[\tau].\]
  We consider the equivalence class $\bar b_{j0} = \bar
  b_{j0}(x',\xi',\lambda,\cdot)\in \C[\tau]/(a_+)$ of $b_{j0}$
  modulo $a_+$, and write  $\bar b_{j0}$ with respect to the canonical basis
    $\bar 1, \bar\tau,\dots,\bar{\tau^{m-1}}\in \C[\tau]/(a_+)$, i.e.
  \[ \begin{pmatrix}
    \bar b_{10}\\ \vdots \\ \bar b_{m0}
  \end{pmatrix} = L\begin{pmatrix} \bar 1\\ \vdots\\
  \bar{\tau^{m-1}}\end{pmatrix} \quad \text{ with }
  L=L(x',\xi',\lambda)\in\C^{m\times m}.\]
  Then $L$ is called the Lopatinskii matrix of $(A,B)$ at the point $x$.
\end{definition}

\begin{lemma}
  \label{6.4}
  Let $A$ be properly parameter-elliptic in $\bar G$. Then the
  Shapiro-Lopatinskii holds if and only if
  \[ \det L(x',\xi',\lambda) \not=0\quad (x'\in \partial G,\,
  (\xi',\lambda)\in (\R^{n-1}\times \bar \C_+)\setminus\{0\}).\]
\end{lemma}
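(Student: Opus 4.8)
The plan is to fix a boundary point $x'\in\partial G$ and a parameter $(\xi',\lambda)\in(\R^{n-1}\times\bar\C_+)\setminus\{0\}$ and to translate the Shapiro-Lopatinskii condition at this point into a statement of linear algebra in the quotient ring $\C[\tau]/(a_+)$. First I would analyse the solution space of the ODE in \eqref{6-1}. The polynomial $\tau\mapsto a_0(x',\xi',\tau)-\lambda$ has degree $2m$ with non-vanishing leading coefficient and no real zeros (both facts follow from condition (a) of Definition~\ref{6.2} applied with $\xi=(0,\dots,0,\tau)$, respectively with $\xi=(\xi',\tau)$, $\tau\in\R$), and by proper parameter-ellipticity it has exactly $m$ zeros $\tau_1,\dots,\tau_m$ in the open upper half-plane (the count persists down to $\xi'=0$ by continuity, the parameter set being connected and free of real roots). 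The general solution of $(a_0(x',\xi',D_n)-\lambda)v=0$ on $[0,\infty)$ is a combination of the functions $x_n^k e^{i\tau x_n}$ over all roots $\tau$ and $0\le k<\mathrm{mult}(\tau)$, and such a solution tends to $0$ as $x_n\to\infty$ exactly when only the upper roots appear; hence the space $M$ of ``stable'' solutions coincides with $\ker a_+(D_n)$ and has $\dim_\C M=m$. Consequently \eqref{6-1} has only the trivial solution if and only if the only $v\in M$ with $(b_{j0}(x',\xi',D_n)v)(0)=0$ for $j=1,\dots,m$ is $v=0$.

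Next I would set up the relevant pairing. For $b\in\C[\tau]$ let $\ell_b\colon M\to\C$, $\ell_b(v):=(b(D_n)v)(0)$. Since $a_+(D_n)$ annihilates $M$, the functional $\ell_b$ depends only on the class $\bar b\in\C[\tau]/(a_+)$, so $\bar b\mapsto\ell_b$ is a well-defined linear map $\C[\tau]/(a_+)\to M^*$. The key computation is that this map is an isomorphism: from the identity $b(D_n)(f\,e^{i\tau x_n})=e^{i\tau x_n}\,b(\tau+D_n)f$, evaluating at $x_n=0$ with $f(x_n)=x_n^k$ gives $\ell_b(x_n^k e^{i\tau_j x_n})=(-i)^k b^{(k)}(\tau_j)$, so $\ell_b=0$ on $M$ precisely when every $\tau_j$ is a zero of $b$ of order at least its multiplicity in $a_+$, that is, when $a_+\mid b$. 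Thus the map is injective, and as both spaces have dimension $m$ it is an isomorphism.

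Finally I would assemble the equivalences. The Shapiro-Lopatinskii condition at $(x',\xi',\lambda)$ asserts that $\ell_{b_{10}},\dots,\ell_{b_{m0}}$ have trivial common kernel in $M$; since $\dim M=m$, this is equivalent to these $m$ functionals being linearly independent in $M^*$, hence, through the isomorphism above, to the classes $\bar b_{10},\dots,\bar b_{m0}$ being linearly independent in $\C[\tau]/(a_+)$. Expressing the latter in the basis $\bar 1,\bar\tau,\dots,\overline{\tau^{m-1}}$ is precisely the definition of the Lopatinskii matrix $L(x',\xi',\lambda)$, so linear independence amounts to $\det L(x',\xi',\lambda)\neq 0$. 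Letting $x'$ range over $\partial G$ and $(\xi',\lambda)$ over $(\R^{n-1}\times\bar\C_+)\setminus\{0\}$ yields the lemma. The only step requiring genuine care is the pairing computation in the presence of multiple roots $\tau_j$ — the simple-root case reduces to $\ell_b(e^{i\tau_j x_n})=b(\tau_j)$ and is transparent, while the general case needs the Leibniz-type identity above — together with the observation that decay at infinity selects exactly the upper roots, which is what makes $M=\ker a_+(D_n)$ have the correct dimension; the remainder is bookkeeping.
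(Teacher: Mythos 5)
Your proof is correct. It rests on the same underlying mechanism as the paper's argument -- on the stable solution space $\mathscr M_+=\ker a_+(D_n)$ of dimension $m$, the boundary operators only matter modulo $a_+$, so the Shapiro--Lopatinskii condition becomes linear independence of $\bar b_{10},\dots,\bar b_{m0}$ in $\C[\tau]/(a_+)$, i.e.\ $\det L\neq 0$ -- but the bookkeeping is genuinely different. The paper constructs the concrete basis $v_1,\dots,v_m$ of $\mathscr M_+$ normalized by $D_n^{k-1}v_j|_{x_n=0}=\delta_{kj}$ and checks directly that, in this basis, the map $v\mapsto (b_{j0}(D_n)v|_{x_n=0})_j$ has matrix $L$; existence of that basis (solvability of the Cauchy problem for $a_+(D_n)$ within $\mathscr M_+$) is taken for granted. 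You instead establish the duality isomorphism $\C[\tau]/(a_+)\to \mathscr M_+^{\,*}$, $\bar b\mapsto \ell_b$ with $\ell_b(v)=(b(D_n)v)(0)$, proving injectivity via $\ell_b(x_n^k e^{i\tau_j x_n})=(-i)^k b^{(k)}(\tau_j)$ and concluding by dimension count; the $\delta_{jk}$-basis never appears. What your route buys is an explicit treatment of multiple roots $\tau_j$ (which the paper's normalized-basis device silently absorbs) and no appeal to a particular fundamental system; what the paper's route buys is brevity, since the identification of the boundary map with $L$ is immediate once the $v_j$ are in hand. You also supply two details the paper leaves implicit: that the degree of $a_0(x',\xi',\cdot)-\lambda$ is exactly $2m$ and the polynomial has no real roots by condition (a), and that the count of $m$ upper roots extends by continuity/connectedness to $\xi'=0$, $\lambda\neq 0$, which is needed since the lemma's statement ranges over all $(\xi',\lambda)\in(\R^{n-1}\times\bar\C_+)\setminus\{0\}$ while Definition~\ref{6.3} only postulates the root splitting for $\xi'\neq 0$.
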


\begin{proof}
  Let $v_j\;(j=1,\dots,m)$ be the solution of
  \begin{align*}
    \big( a_+(x',\xi,D_n)-\lambda\big) v(x_n) & = 0 \quad (x_n> 0),\\
    D_n^{k-1} v(x_n)\big|_{x_n=0} & =\delta_{kj}\quad (k=1,\dots,m).
  \end{align*}
  Then $\{v_1,\dots,v_m\}$ is a basis of the space  $\mathscr
  M_+$ of all stable solutions of the ordinary differential equation
   $\big( a_+(x',\xi,D_n)-\lambda\big) v(x_n)  =
  0$. Therefore, for all $v\in \mathscr M_+$ we have the representation
  $v=\sum_{j=1}^m\lambda_j v_j$ and
  \begin{align*}
    \begin{pmatrix}
      b_{10}(D_n) \\ \vdots \\ b_{m0}(D_n)
    \end{pmatrix} v(x_n)\big|_{x_n=0} & =
        \begin{pmatrix}
      b_{10}(D_n) \\ \vdots \\ b_{m0}(D_n)
    \end{pmatrix} \big(
    v_1(x_n),\dots,v_m(x_n)\big)\big|_{x_n=0} \begin{pmatrix}
      \lambda_1\\ \vdots \\ \lambda_m
    \end{pmatrix}\\
    & =         \begin{pmatrix}
      \bar b_{10}(D_n) \\ \vdots \\ \bar b_{m0}(D_n)
    \end{pmatrix}  \big(
    v_1(x_n),\dots,v_m(x_n)\big)\big|_{x_n=0} \begin{pmatrix}
      \lambda_1\\ \vdots \\ \lambda_m
    \end{pmatrix}\\
&= L \begin{pmatrix}
      D_n^0 \\ \vdots \\ D_n^{m-1}
    \end{pmatrix} \big(
    v_1(x_n),\dots,v_m(x_n)\big)\big|_{x_n=0} \begin{pmatrix}
      \lambda_1\\ \vdots \\ \lambda_m
    \end{pmatrix}\\
    & = L \begin{pmatrix}
      \lambda_1\\ \vdots \\ \lambda_m
    \end{pmatrix}.
  \end{align*}
  Note that $b_{k0}(D_n)v_j(x_n)\big|_{x_n=0} = \bar
  b_{k0}(D_n)v_j(x_n)\big|_{x_n=0}$ holds because $a_+(D_n)v_j(x_n)=0$.
  Therefore, \eqref{6-1} has only the trivial solution if and only if
  $\det L\not=0$.
\end{proof}

\begin{remark}
  \label{6.5}
  a) The condition of Lemma~\ref{6.4} can be formulated in the following way: The boundary conditions are linearly
  independent modulo
  $a_+$, i.e., $\bar b_{10},\dots,\bar b_{m0}$ are linearly independent in
    $\C[\tau]/(a_+)$.

  b) The boundary conditions $B_1,\dots,B_m$ are called completely elliptic if for every proper parameter-elliptic
  $A$ the boundary value problem $(A,B)$ is parameter-elliptic. This is the case for
 \begin{enumerate}[(i)]
    \item $B_j(x',D) = \gamma_0 (\frac{\partial}{\partial x_n})^{j-1}
    \; (j=1,\dots,m)$ (general Dirichlet boundary conditions),
    \item $B_j(x',D) = \gamma_0 (\frac{\partial}{\partial x_n})^{m+j-1}
    \; (j=1,\dots,m)$ (general Neumann boundary conditions).
  \end{enumerate}
  More general, this holds for all boundary conditions of the form
  \[ B_j(x',D) = \gamma_0 \Big(\frac{\partial}{\partial x_n}\Big)^{s+j-1}
    + \text{ lower order terms }\; (j=1,\dots,m),\]
    where $s\in\{0,\dots,m\}$ is fixed. To see this, we have to show that
    $\{\bar{\tau^{s+j-1}}:j=1,\dots,m\}$
    is linearly independent in  $\C[\tau]/(a_+)$. If this is not the case, there exist
      $c_j\in\C$ and $p\in\C[\tau]$ with
    \[ \sum_{j=1}^m c_j \tau^{s+j-1} = p(\tau)a_+(\tau).\]
   Because of $a_+(0)\not=0$, it follows that $\tau^s$ is a divisor of $p(\tau)$. Therefore,   $\sum_{j=1}^m
    c_j\tau^{j-1} = \tilde p(\tau) a_+(\tau)$ with some polynomial $\tilde p$, in contradiction to  $\deg a_+=m$.

    c) If the domain and the coefficients of   $(A,B)$ are infinitely smooth, then for every fixed
     $\lambda\in\bar\C_+$, the coefficients of
    $L(x',\xi',\lambda)$ are symbols of pseudodifferential operators on the closed
     $(n-1)$-dimensional manifold $\partial G$.
\end{remark}

\subsection{The main result on parameter-elliptic boundary value problems}

Under the condition of parameter-ellipticity, one can construct the solution operators for boundary value problems. We
follow the exposition in \cite{Agranovich-Denk-Faierman97}, Section~2, and \cite{Denk-Hieber-Pruess03}, Sections~6 and
7. We start with a remark on ordinary differential equations.

\begin{theorem}
  \label{6.6}
  Let $(A,B)$ be parameter-elliptic in some sector $\bar\Sigma_\phi$, and let $(x',\xi',\lambda)\in\partial G\times
 ((\R^{n-1}\times\bar\Sigma_\phi)\setminus\{0\})$. Choose a closed curve
   $\gamma = \gamma(x',\xi',\lambda)$ in $\{z\in \C:\Im z>0\}$,
  enclosing all roots $\tau_1,\dots,\tau_m$ of $a_+$.
We define $p_\ell$ by
  \[ a_+(x',\xi',\lambda,\tau) = \sum_{\ell=0}^m
  p_\ell(x',\xi',\lambda)\tau^{m-\ell},\]
 and set  $N_k(\tau) := N_k(x',\xi',\lambda,\tau) :=
  \sum_{\ell=0}^{m-k} p_\ell(x',\xi',\lambda)\tau^{m-k-\ell}$ and
  \[ (M_1(\tau),\dots,M_m(\tau)) := \big(
  N_1(\tau),\dots,N_m(\tau)\big) L^{-1}.\]
  Let $w_k(x_n) = w_k(x',\xi',\lambda,x_n)\;(x_n>0)$ be defined by
  \[ w_k(x_n) := \frac1{2\pi i}\int_\gamma
  \frac{M_k(\tau)}{a_+(\tau)} e^{ix_n\tau} d\tau\quad
  (k=1,\dots,m).\]
  Then  $\{w_1,\dots,w_m\}$ is a basis of the stable solution space of
  $a_0(D_n)w = 0$, $w(x_n)\to 0\quad (x_n\to\infty)$ and satisfies the initial conditions
  \[ b_{j0}(x',\xi',\lambda,D_n)w_k(x_n)\big|_{x_n=0} =
  \delta_{jk}\quad (j,k=1,\dots,m).\]
\end{theorem}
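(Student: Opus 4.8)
The plan is to fix the parameters $(x',\xi',\lambda)$ (which I suppress below) and to verify in turn: that each $w_k$ is a solution of the ordinary differential equation in \eqref{6-1} that decays as $x_n\to\infty$; that $b_{j0}(D_n)w_k(x_n)\big|_{x_n=0}=\delta_{jk}$; and that $w_1,\dots,w_m$ are linearly independent. Recall that proper parameter-ellipticity gives the factorization $a_0(\tau)-\lambda=a_+(\tau)a_-(\tau)$, where $a_+$ collects the $m$ roots in $\{\Im\tau>0\}$ and $a_-$ the $m$ roots in $\{\Im\tau<0\}$, so that the space of solutions of $(a_0(D_n)-\lambda)w=0$ that decay as $x_n\to\infty$ — the stable solution space — is exactly $\ker a_+(D_n)$ and has dimension $m$. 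Since $D_n e^{ix_n\tau}=\tau e^{ix_n\tau}$, one has $a_+(D_n)e^{ix_n\tau}=a_+(\tau)e^{ix_n\tau}$; differentiating under the integral sign therefore gives $a_+(D_n)w_k(x_n)=\frac1{2\pi i}\int_\gamma M_k(\tau)e^{ix_n\tau}\,d\tau=0$, because $\tau\mapsto M_k(\tau)e^{ix_n\tau}$ is entire and $\gamma$ is closed. Hence $w_k\in\ker a_+(D_n)$; moreover $w_k$ decays, since on the compact curve $\gamma$ one has $\Im\tau\ge c>0$ and $|M_k/a_+|$ is bounded (the zeros of $a_+$ lie inside $\gamma$), whence $|w_k(x_n)|\le Ce^{-cx_n}$.

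Next, applying $b_{j0}(D_n)$ under the integral and setting $x_n=0$ yields
\[ b_{j0}(D_n)w_k(x_n)\big|_{x_n=0}=\frac1{2\pi i}\int_\gamma \frac{b_{j0}(\tau)M_k(\tau)}{a_+(\tau)}\,d\tau.\]
By the definition of the Lopatinskii matrix, $b_{j0}(\tau)=\sum_{i=1}^m L_{ji}\tau^{i-1}+a_+(\tau)c_j(\tau)$ with $c_j$ a polynomial, and the term $a_+(\tau)c_j(\tau)$ cancels the denominator, leaving a polynomial whose integral over the closed curve $\gamma$ vanishes. Inserting also $M_k=\sum_{l=1}^m(L^{-1})_{lk}N_l$, the whole assertion reduces to the residue identity
\[ \frac1{2\pi i}\int_\gamma \frac{\tau^{i-1}N_l(\tau)}{a_+(\tau)}\,d\tau=\delta_{il}\qquad(i,l=1,\dots,m),\]
because then $b_{j0}(D_n)w_k\big|_{x_n=0}=\sum_{i,l=1}^m L_{ji}(L^{-1})_{lk}\delta_{il}=(LL^{-1})_{jk}=\delta_{jk}$.

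To prove this identity, note that its integrand is a rational function whose poles are among the zeros of $a_+$, all of which lie inside $\gamma$; hence the integral equals minus the residue at $\infty$, that is, the coefficient of $\tau^{-1}$ in the Laurent expansion of the integrand at $\infty$. Comparing $N_l(\tau)=\sum_{\ell=0}^{m-l}p_\ell\tau^{m-l-\ell}$ with $a_+(\tau)=\sum_{\ell=0}^m p_\ell\tau^{m-\ell}$ gives the elementary identity $\tau^l N_l(\tau)=a_+(\tau)-Q_l(\tau)$, where $Q_l(\tau)=\sum_{s=1}^l p_{m-l+s}\tau^{l-s}$ has degree at most $l-1$; consequently
\[ \frac{\tau^{i-1}N_l(\tau)}{a_+(\tau)}=\tau^{i-1-l}-\frac{\tau^{i-1-l}Q_l(\tau)}{a_+(\tau)}.\]
The first term contains $\tau^{-1}$ exactly when $i=l$, with coefficient $1$; the second has numerator of degree at most $i-2$ and denominator of degree $m$, so it is $O(\tau^{i-2-m})=O(\tau^{-2})$ (using $i\le m$) and contributes nothing. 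Hence the coefficient of $\tau^{-1}$ is $\delta_{il}$, which proves the identity.

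Finally, the linear map sending a stable solution $w$ to the vector $(b_{j0}(D_n)w\big|_{x_n=0})_{j=1}^m\in\C^m$ carries $w_k$ to the $k$-th standard basis vector; so $w_1,\dots,w_m$ are linearly independent, and since the stable solution space is $m$-dimensional they form a basis of it. The one genuinely delicate point is the residue identity: one must keep careful track of the degrees so that the remainder is truly $O(\tau^{-2})$ at $\infty$, and use that the zeros of $a_+$ — and only those — are the poles enclosed by $\gamma$. Everything else is routine manipulation with the residue theorem together with the definitions of $N_l$, $M_k$, and $L$.
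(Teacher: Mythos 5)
Your proof is correct and follows essentially the same route as the paper: the Cauchy-integral definition of $w_k$, the reduction via $M_k=\sum_l N_l(L^{-1})_{lk}$ and $b_{j0}\equiv\sum_i L_{ji}\tau^{i-1}$ modulo $a_+$ to the key identity $\frac1{2\pi i}\int_\gamma \tau^{i-1}N_l/a_+\,d\tau=\delta_{il}$, and the observation that multiples of $a_+$ in the numerator integrate to zero. Your unified residue-at-infinity computation (via $\tau^l N_l=a_+-Q_l$) replaces the paper's case distinction $j<k$, $j=k$, $j>k$ but is the same algebra, and your explicit verification that $a_+(D_n)w_k=0$ plus the dimension count cleanly settles the basis claim that the paper leaves implicit.
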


\begin{proof}
  (i) We first show that
  \[ \frac{1}{2\pi i}\int_\gamma
  \frac{N_k(\tau)\tau^{j-1}}{a_+(\tau)} d\tau = \delta_{kj}\quad
  (j,k=1,\dots,m).\]
  For this, we replace $\gamma$ by a large ball $\{\tau\in\C: |\tau|=R\}$.
  For $j<k$
  we have $\deg\big( N_k(\tau)\tau^{j-1}\big) = m-k+j-1\le m-2$. Therefore,
  the integrand is of order $O(R^{-2})$ for $R\to\infty$ which shows that the integral vanishes.

  For $j=k$, the integrand equals
  $\frac{\tau^{m-1}+O(\tau^{m-2})}{(\tau-\tau_1)\cdot\ldots\cdot
  (\tau-\tau_m)}$. By the residue's theorem, the integral has the value $1$.

  For $j>k$ we consider
  \begin{align*}
    Q(\tau) & := -a_+(\tau)\tau^{j-k-1} + N_k(\tau)\tau^{j-1} \\
    & =
  -\sum_{\ell=0}^m p_\ell \tau^{m-\ell+j-k-1} +
  \sum_{\ell=0}^{m-k}p_\ell\tau^{m-\ell+j-k-1}.
  \end{align*}
  We obtain $\deg Q = j-2\le m-2$, and therefore
  \[ \int_{B(0,R)} \frac{N_k(\tau)\tau^{j-1}}{a_+(\tau)}\;d\tau =
  \int_{B(0,R)} \frac{a_+(\tau) \tau^{j-k-1} + Q(\tau)}{a_+(\tau)}
  \; d\tau = \int_{B(0,R)} \frac{Q(\tau)}{a_+(\tau)}\; d\tau =0.\]

  (ii) We have modulo $a_+$, i.e., as equality in  $\R[\tau]/(a_+)$:
  \begin{align*}
  \begin{pmatrix}
  \bar b_{10}(\tau) \\ \vdots \\ \bar b_{m0}(\tau)
  \end{pmatrix}
  (\bar M_1(\tau), & \dots,\bar M_m(\tau)) \\
  & =   \begin{pmatrix}
  \bar b_{10}(\tau) \\ \vdots \\ \bar b_{m0}(\tau)
  \end{pmatrix}\big( \bar N_1(\tau),\dots, \bar N_m(\tau)\big) L^{-1}\\
  & = L  \begin{pmatrix}
  \bar 1\\ \vdots \\ \bar{\tau^{m-1}}
  \end{pmatrix}
  \big( \bar N_1(\tau),\dots, \bar N_m(\tau)\big) L^{-1}.
  \end{align*}
  Therefore,
  \begin{align*}
  \Big( \frac1{2\pi i}\int_\gamma \frac{b_{j0}(\tau) M_k(\tau)}{a_+(\tau)} \; d\tau \Big)_{j,k=1,\dots,m} & = L\cdot
  \Big( \frac1{2\pi i}\int_\gamma \frac{\tau^{j-1} N_k(\tau)}{a_+(\tau)} \; d\tau \Big)_{j,k=1,\dots,m}\cdot L^{-1}\\
  & = L\cdot I_m\cdot L^{-1} = I_m.
  \end{align*}
  This yields
  \[ \frac1{2\pi i} \int_\gamma \frac{b_{j0}(\tau) M_k(\tau)}{a_+(\tau)}\; d\tau= \delta_{jk}\quad(j,k=1,\dots,m).\]

  (iii) Define $w_k$ as in the theorem. Because of $\gamma\subset\{z\in\C: \Im z>0\}$, we see that $w_k(x_n)\to 0$ for
  $x_n\to\infty$. Further,
  \[ a_0(D_n) w(x_n) = \frac1{2\pi i}\int_\gamma \frac{M_k(\tau)}{a_+(\tau)}\, a(\tau) e^{ix_n\tau} d\tau = 0,\]
  as the integrand is holomorphic. Finally,
  \[ b_{j0}(D_n)w(x_n)\big|_{x_n=0} = \frac1{2\pi i}\int_\gamma \frac{b_{j0}(\tau)M_k(\tau)}{a_+(\tau)}\; e^{ix_n\tau}
  \big|_{x_n=0} d\tau = \delta_{jk}\quad (j,k=1,\dots,m),\]
  which finishes the proof.
\end{proof}

\begin{remark}
\label{6.7}
a) With the above notation, the following expressions are quasi-homogeneous  in $(\xi',\lambda,\tau)$, more precisely,
positively homogeneous in  $(\xi',\lambda^{1/2m},\tau)$:
\begin{itemize}
\item $a_+(x',\xi',\lambda,\tau)$ of degree $m$,
\item $\tau_j(\xi',\lambda)$ of degree 1,
\item $p_\ell (x',\xi',\lambda)$ of degree $\ell$,
\item $N_k(x',\xi',\lambda,\tau)$ of degree $m-k$,
\item $b_{j0}(x',\xi',\tau)$ of degree $m_j\;(j=1,\dots,m)$,
\item $L_{ij}(x',\xi',\lambda)$ of degree $m_i-j+1$,
\item $M_k(x',\xi',\lambda,\tau)$ of degree $m-m_k-1$,
\item $\gamma(x',\xi',\lambda)$ of degree 1,
\item $\frac{M_k(\tau)}{a_+(\tau)}$ of degree $-m_k-1$.
\end{itemize}

b) In the following, let
\[\langle\xi'\rangle_\lambda := |\xi'|+|\lambda|^{1/2m}.\]
By a), the length of
 $\gamma(x',\xi',\lambda)$ can be estimated by $ C\langle\xi'\rangle_\lambda$. For  $\tau\in\gamma$, one gets
 \begin{align*}
 \Im \tau & \ge C \langle\xi'\rangle_\lambda,\\
 |\tau-\tau_j(x',\xi',\lambda)| & \ge C \langle\xi'\rangle_\lambda,\\
 |e^{i\tau x_n}| & \le \exp(-C\langle\xi'\rangle_\lambda \;x_n).
 \end{align*}
 For $\gamma'\in\N_0^{n-1}$ and $\alpha_n\in\N_0$, we obtain
 \[ \big| D_n^{\alpha_n} D_{\xi'}^{\gamma'} w_k(x',\xi',\lambda,x_n)\big| \le C \langle\xi'
 \rangle_\lambda^{-m_k+\alpha_n-|\gamma'|} e^{-C\langle\xi'\rangle\, x_n}.\]
 In the smooth situation, these estimates show that  $w_k$ is the symbol of a Poisson operator. Such operators belog to
 the pseudodifferential calculus of boundary value problems which is also known as the Boutet de Monvel calculus (see,
 e.g., \cite{Grubb96}).
\end{remark}

To show maximal regularity for parabolic boundary value problems, we again start with the model problem related to
 $(A,B)$ acting in   $\R^n_+:= \{x\in\R^n: x_n>0\}$  with boundary $\partial\R^n_+=\R^{n-1}$.
For this, we fix $x_0'\in\partial G$ and choose the coordinate system corresponding to  $x_0'$. We obtain the boundary
value problem
\begin{equation}
\label{6-2}
\begin{aligned}
(A_0(D)-\lambda) u & = f\quad\text{ in }\R^n_+,\\
B_{j0}(D) u & = 0\quad (j=1,\dots,m)\text{ on } \R^{n-1}.
\end{aligned}
\end{equation}
Here we have set
\begin{align*}
A_0(D) & := \sum_{|\alpha|=2m} a_\alpha(x_0')D^\alpha,\\
B_{j0}(D) & := \sum_{|\beta|=m_j} b_{j\beta}(x_0')\gamma_0 D^\beta.
\end{align*}

In the following result, we construct the solution operators for the model problem.

\begin{theorem}
\label{6.8} Let the boundary value problem $(A,B)$ be parameter-elliptic in the sector $\bar{\Sigma}_\phi$, and let
$x_0'\in\partial\Omega$ be fixed. Then the model problem   \eqref{6-2} has for every  $f\in L^p(\R^n_+)$ and
$\lambda\in\Sigma_\phi\setminus\{0\}$ a unique solution  $u\in W_p^{2m}(\R^n_+)$. This solution is given by
\begin{align*}
u & = R_+ R(\lambda) E_0 f - \sum_{j=1}^m T_j(\lambda) \Lambda_{2m-m_j}(\lambda) \tilde B_{j0}(D) R_+ R(\lambda) E_0 f \\
& - \sum_{j=1}^m \tilde T_j(\lambda)\Lambda_{2m-m_j-1}(\lambda) \partial_n \tilde B_{j0}(D) R_+R(\lambda) E_0 f.
\end{align*}
Here, the operators are defined in the following way:

a) $E_0\colon L^p(\R^n_+)\to L^p(\R^n),\, f\mapsto E_0f$ with
\[ E_0f := \begin{cases}
f, & \text{ for } x_n>0,\\ 0, & \text{ for } x_n\le 0
\end{cases}\]
(trivial extension by $0$).

b) $R(\lambda) := (A_p-\lambda)^{-1}\in L(L^p(\R^n))$, where $A_p$ is the $L^p(\R^n)$-
realization of  $A_0(D)$.

c) $R_+\colon L^p(\R^n)\to L^p(\R^n_+),\, u\mapsto u|_{\R^n_+}$, the restriction to $\R^n_+$.

d) $\tilde B_{j0}(D) := \sum_{|\beta|=m_j} b_{j0}(x_0')D^\beta$, the boundary operators without taking the trace
 $\gamma_0$ on the boundary.

e) $\Lambda_s(\lambda):= (\mathscr F')^{-1} (\lambda+|\xi'|^{2m})^{s/2m} \mathscr F'\in L(W_p^s(\R^n_+), L^p(\R^n_+))$
for $s\in\N_0$, where $\mathscr F'$ denotes the Fourier transform in the tangential variables $x'=(x_1,\dots,x_{n-1})$.

f) $T_j(\lambda)$ is given by
\[ (T_j(\lambda)\phi)(x',x_n) := \int_0^\infty (\mathscr F')^{-1} (\partial_n w_j)(x_0',\xi',\lambda,x_n+y_n)
\mathscr F'(\Lambda_{-2m+m_j}(\lambda)\phi)(\xi',y_n) d y_n\]
for $\phi\in L^p(\R^n_+)$.

g) $\tilde T_j(\lambda)$ is given by
\[ (\tilde T_j(\lambda)\phi)(x',x_n) := \int_0^\infty (\mathscr F')^{-1} w_j(x_0',\xi',\lambda,x_n+y_n)
\mathscr F'(\Lambda_{-2m+m_j+1}(\lambda)\phi)(\xi',y_n) d y_n\]
for $\phi\in L^p(\R^n_+)$.

The functions $w_j(x_0',\xi',\lambda,x_n)$ are defined in Theorem~\ref{6.6}.
\end{theorem}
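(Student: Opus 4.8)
The plan is to split the model problem \eqref{6-2} into a whole‑space resolvent problem, which produces a function solving the equation in $\R^n_+$ but not the boundary conditions, and a homogeneous half‑space problem with inhomogeneous boundary data, solved by a Poisson operator built from the functions $w_j$ of Theorem~\ref{6.6}; the asserted formula is then to be recognised as exactly this two‑step construction, the operators $T_j(\lambda),\tilde T_j(\lambda),\Lambda_s(\lambda)$ serving only to rewrite the Poisson operator so that it acts on interior values of $\tilde B_{j0}(D)v$ rather than on a fractional‑order boundary trace.

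For the interior part I would first note that condition (a) of Definition~\ref{6.2} gives $a_0(x_0',\xi)-\lambda\ne0$ for $(\xi,\lambda)\in(\R^n\times\bar\Sigma_\phi)\setminus\{0\}$; since $N=1$ here and $a_0(x_0',\cdot)$ is homogeneous of degree $2m$, the quasi‑homogeneity argument of Remark~\ref{5.2} upgrades this to the parabolicity estimate \eqref{5-1}, so the argument of Theorem~\ref{5.4} yields $R(\lambda)=(A_p-\lambda)^{-1}\in L(L^p(\R^n),W_p^{2m}(\R^n))$ for all $\lambda\in\bar\Sigma_\phi\setminus\{0\}$. Hence $v:=R_+R(\lambda)E_0f\in W_p^{2m}(\R^n_+)$, and $(A_0(D)-\lambda)v=f$ in $\R^n_+$ because a differential operator commutes with restriction to the open set $\R^n_+$. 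Setting $h_j:=\tilde B_{j0}(D)v\in W_p^{2m-m_j}(\R^n_+)$ (meaningful since $m_j<2m$) and $g_j:=\gamma_0h_j=B_{j0}(D)v\in W_p^{2m-m_j-1/p}(\R^{n-1})$, it remains to subtract from $v$ a function $w$ with $(A_0(D)-\lambda)w=0$ in $\R^n_+$ and $B_{j0}(D)w=g_j$ on $\R^{n-1}$.

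Applying the tangential Fourier transform $\mathscr F'$ turns this homogeneous half‑space problem into the ODE \eqref{6-1} with $\widehat{g_j}(\xi')$ as data, whose unique stable solution, by the normalisation $b_{k0}(x_0',\xi',D_n)w_j|_{x_n=0}=\delta_{kj}$ from Theorem~\ref{6.6}, is $\widehat w(\xi',x_n)=\sum_{j=1}^m\widehat{g_j}(\xi')\,w_j(x_0',\xi',\lambda,x_n)$. The key observation is the product‑rule identity
\[ (\partial_nw_j)(\xi',x_n+y_n)\,\widehat{h_j}(\xi',y_n)+w_j(\xi',x_n+y_n)\,\partial_{y_n}\widehat{h_j}(\xi',y_n)=\partial_{y_n}\big[w_j(\xi',x_n+y_n)\,\widehat{h_j}(\xi',y_n)\big], \]
which, since $\widehat{h_j}(\xi',\cdot)\to0$ and $\widehat{g_j}=\widehat{h_j}(\cdot,0)$, integrates over $y_n\in(0,\infty)$ to $-\,w_j(\xi',x_n)\widehat{g_j}(\xi')$; combined with the cancellations $\Lambda_{-2m+m_j}(\lambda)\Lambda_{2m-m_j}(\lambda)=\id$ and $\Lambda_{-2m+m_j-1}(\lambda)\Lambda_{2m-m_j-1}(\lambda)=\id$, this shows that the two integrals on the left are exactly $\mathscr F'$ applied to $T_j(\lambda)\Lambda_{2m-m_j}(\lambda)h_j$ and to $\tilde T_j(\lambda)\Lambda_{2m-m_j-1}(\lambda)\partial_nh_j$, so that, up to the explicit signs, the sum over $j$ of the two correction families in the asserted formula is the Poisson solution $w=(\mathscr F')^{-1}\sum_jw_j(x_0',\xi',\lambda,x_n)\widehat{g_j}(\xi')$. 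With this identification, verifying that $u$ solves \eqref{6-2} is routine: $(A_0(D)-\lambda)u=(A_0(D)-\lambda)v=f$ in $\R^n_+$ because each $w_j(\xi',\cdot)$, hence $\partial_nw_j(\xi',\cdot)$, lies in the kernel of $a_0(x_0',\xi',D_n)-\lambda$; and $B_{k0}(D)u|_{x_n=0}=g_k-g_k=0$, again by $b_{k0}(x_0',\xi',D_n)w_j|_{x_n=0}=\delta_{kj}$.

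The two steps that require real work are the regularity $u\in W_p^{2m}(\R^n_+)$ and uniqueness. For the former one must show $T_j(\lambda),\tilde T_j(\lambda)\in L(L^p(\R^n_+),W_p^{2m}(\R^n_+))$: using the symbol estimates $|D_n^{\alpha_n}D_{\xi'}^{\gamma'}w_j|\le C\langle\xi'\rangle_\lambda^{-m_j+\alpha_n-|\gamma'|}e^{-c\langle\xi'\rangle_\lambda x_n}$ of Remark~\ref{6.7}, one differentiates the kernels of $T_j(\lambda)$ and $\tilde T_j(\lambda)$ (after cancelling the $\Lambda$'s) in $x_n$ and in $x'$ up to total order $2m$, estimates the $y_n$‑integrals by Schur‑type bounds to extract tangential Fourier multipliers satisfying uniform Mikhlin estimates, and invokes the one‑dimensional Mikhlin theorem exactly as in the proof of Theorem~\ref{5.4}. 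For uniqueness, a solution $u\in W_p^{2m}(\R^n_+)$ of the homogeneous problem, transformed by $\mathscr F'$, has for a.e.\ $\xi'$ a slice $\widehat u(\xi',\cdot)$ solving \eqref{6-1} with zero data that, being in $W_p^{2m}(0,\infty)$, belongs to the stable solution space $\mathscr M_+$ (of dimension $m$, by proper parameter‑ellipticity), whence $\det L(x_0',\xi',\lambda)\ne0$ (Lemma~\ref{6.4}) forces $\widehat u(\xi',\cdot)\equiv0$, i.e.\ $u=0$. I expect the main obstacle to lie exactly in these two places — carrying out the tangential‑multiplier estimates for $T_j,\tilde T_j$ uniformly on the sector, and making the slicing argument for uniqueness rigorous when $p\ne2$ (for instance by a duality argument, or by reducing to $L^2$ via a cutoff and reflection); by contrast, the algebra showing that the formula solves the boundary value problem is comparatively routine once the product‑rule identity above is in place.
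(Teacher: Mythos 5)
Your proposal follows essentially the same route as the paper: split off the whole-space resolvent part $R_+R(\lambda)E_0f$, solve the remaining homogeneous half-space problem after tangential Fourier transform with the basis $w_j$ from Theorem~\ref{6.6}, and rewrite the resulting Poisson solution via the Volevich trick and the cancellation $\Lambda_{-s}(\lambda)\Lambda_s(\lambda)=\id$ so that it acts on the interior values $\tilde B_{j0}(D)R_+R(\lambda)E_0f$ — exactly the paper's derivation, which likewise defers the $W_p^{2m}(\R^n_+)$-regularity to the $\RR$-boundedness estimates of Theorems~\ref{6.10} and \ref{6.11} and obtains uniqueness from the unique solvability of the whole-space problem and of the ODE \eqref{6-3}. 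Only a small index slip: the cancellation in the $\tilde T_j$-term is $\Lambda_{-2m+m_j+1}(\lambda)\Lambda_{2m-m_j-1}(\lambda)=\id$, not $\Lambda_{-2m+m_j-1}(\lambda)\Lambda_{2m-m_j-1}(\lambda)=\id$.
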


\begin{proof}
Here we  only show the solution formula for $u$, as the property  $u\in W_p^{2m}(\R^n_+)$ will be included in the
 proof of the $\RR$-boundedness of the solution operators below.

Let $u_1\in W_p^{2m}(\R^n_+)$ be the unique solution of
\[ (A_0(D)-\lambda) u_1 = E_0f \quad\text{ in } \R^n,\]
which exists due to Theorem~\ref{5.4}. So we have
 $u_1 = R(\lambda)E_0f$. For $u$, we choose the ansatz    $u=u_1+u_2$. Then $u$ is a solution of  \eqref{6-2} if and
 only if $u_2$ is a solution of the boundary value problem
\begin{align*}
(A_0(D)-\lambda) u_2 & = 0\quad\text{ in }\R^n_+,\\
B_{j0}(D) u_2 & = g_j\quad (j=1,\dots,m) \text{ on }\R^{n-1}
\end{align*}
with
\[ g_j := -B_{j0}(D)R_+ u_1.\]
Taking partial Fourier transform $\mathscr F'$ with respect to  $x'$, we obtain
\begin{equation}\label{6-3}
\begin{aligned}
(a_0(x_0',\xi',D_n)-\lambda) v(x_n) & = 0 \quad (x_n>0),\\
b_{j0}(x_0',\xi',D_n) v(x_n)\big|_{x_n=0} & = h_j(\xi')\quad (j=1,\dots,m).
\end{aligned}
\end{equation}
Here, $v(x_n) := v(\xi',x_n) := (\mathscr F'u_2(\cdot,x_n))(\xi')$ and $h_j(\xi') := (\mathscr F' g_j)(\xi')$.
By Theorem~\ref{6.6}, the unique solution of \eqref{6-3} is given by
\[ v(\xi',x_n) = \sum_{j=1}^m w_j(x_0',\xi',\lambda,x_n) h_j(\xi').\]
Note that  $g_j$ is first defined only on the boundary $\R^{n-1}$. By
\[ \tilde g_j:= \sum_{|\beta|=m_j} b_{j\beta}(x_0') D^\beta u_1 = \tilde B_{j0}(D) u_1\]
we define an extension  $g_j$ to $\R^n_+$. Then $\tilde h_j := \mathscr F'\tilde g_j(\cdot,x_n)$ is an extension of $h_j$.

For $j=1,\dots,m$ we write (this is sometimes called the ``Volevich trick'')
\begin{align*}
w_j(x_0',&\xi',\lambda,x_n) h_j(\xi') \\
& = -\int_0^\infty \partial_n\big[ w_j(x_0',\xi',\lambda,x_n+y_n)\tilde h_j(\xi',y_n)\big] dy_n \\
& = -\int_0^\infty (\partial_n w_j)(x_0',\xi',\lambda,x_n+y_n)\tilde h_j(\xi',y_n)dy_n\\
& \quad - \int_0^\infty w_j(x_0',\xi',\lambda,x_n+y_n)(\partial_n \tilde h_j)(\xi',y_n) dy_n.
\end{align*}
For $\lambda\in\C_+\setminus\{0\}$ it holds that $\Lambda_{-s}(\lambda)\Lambda_s(\lambda) = \id_{L^p(\R^n)}$
for all $s\in\R$. Therefore, we can write  $\tilde g_j = \Lambda_{-2m+m_j}(\lambda)\Lambda_{2m-m_j}(\lambda)\tilde g_j$
and  $\partial_n \tilde g_j = \Lambda_{-2m+m_j+1}(\lambda)\Lambda_{2m-m_j+1}(\lambda)\partial_n \tilde g_j$,
respectively. This yields
\begin{align*}
u_2(x',x_n) & = \big( (\mathscr F')^{-1} v(\cdot,x_n)\big) (x') \\
& =\sum_{j=1}^m \big( T_j(\lambda)\Lambda_{2m-m_j}(\lambda)\tilde g_j + \tilde T_j(\lambda)\Lambda_{2m-m_j+1}(\lambda)
\partial_n\tilde g_j\big).
\end{align*}
Inserting  $\tilde g_j = \tilde B_{j0}(D)R_+ u_1$ and $u=u_1+u_2$ into this formula, the solution formula of the
 theorem follows. As both the whole space problem as well as \eqref{6-3} is uniquely solvable and as the Fourier
 transform is a bijection in  $\mathscr S'(\R^{n-1})$, we obtain unique solvability with the unique solution $u=u_1+u_2$.
\end{proof}

\begin{lemma}
\label{6.9}
The one-sided Hilbert transform
\[ (Hf)(x) := \int_0^\infty \frac{f(y)}{x+y}\; dy \]
defines a bounded linear operator $H\in L(L^p(\R_+))$.
\end{lemma}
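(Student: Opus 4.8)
The plan is to exploit the fact that the kernel $k(x,y):=\frac{1}{x+y}$ is positive and homogeneous of degree $-1$, so that $H$ falls under the classical theory of Hardy--Hilbert-type integral operators; no Fourier analysis or singular-integral machinery is needed because the kernel here is nonnegative. First I would perform the substitution $y=xt$ in the defining integral to obtain, for $f\ge 0$ and $x>0$,
\[ (Hf)(x) = \int_0^\infty \frac{f(xt)}{x+xt}\,x\,dt = \int_0^\infty \frac{1}{1+t}\,f(xt)\,dt, \]
which exhibits $Hf$ as a continuous superposition of dilates of $f$. For nonnegative $f$ this identity is justified by Tonelli's theorem.

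Next I would apply Minkowski's integral inequality in $L^p(\R_+)$ to this representation. Writing $(\tau_t f)(x):=f(tx)$ and noting $\|\tau_t f\|_{L^p(\R_+)} = t^{-1/p}\|f\|_{L^p(\R_+)}$ (by the substitution $u=tx$), one gets
\[ \|Hf\|_{L^p(\R_+)} \le \int_0^\infty \frac{1}{1+t}\,\|\tau_t f\|_{L^p(\R_+)}\,dt = \Big(\int_0^\infty \frac{t^{-1/p}}{1+t}\,dt\Big)\,\|f\|_{L^p(\R_+)}. \]
It then remains to check that $C_p:=\int_0^\infty \frac{t^{-1/p}}{1+t}\,dt$ is finite. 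This is the standard Beta-type integral $\int_0^\infty \frac{t^{s-1}}{1+t}\,dt=\frac{\pi}{\sin(\pi s)}$ with $s=1-\frac1p=\frac1{p'}\in(0,1)$: the integrand is $O(t^{-1/p})$ near $0$ and $O(t^{-1-1/p'})$ near infinity, both integrable precisely because $1<p<\infty$. Hence $C_p=\frac{\pi}{\sin(\pi/p)}<\infty$ and $\|H\|_{L(L^p(\R_+))}\le C_p$. For general complex-valued $f$ the estimate follows by applying the nonnegative case to $|f|$, which in particular shows that the defining integral converges absolutely for almost every $x$ and that $Hf\in L^p(\R_+)$.

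The argument is essentially routine; the only points deserving a line of justification are the use of Tonelli/Minkowski for the iterated integrals (immediate for $f\ge0$) and the convergence of the constant $C_p$, which is where the restriction $p\in(1,\infty)$ enters and which pins down the operator norm. I do not expect a genuine obstacle here.
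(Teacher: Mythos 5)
Your proof is correct, but it takes a genuinely different route from the one in the paper. You exploit the positivity and the $(-1)$-homogeneity of the kernel $\frac{1}{x+y}$: after the substitution $y=xt$ you write $Hf$ as an average of dilates $\tau_t f$ against the weight $\frac{1}{1+t}$, apply Minkowski's integral inequality, and use $\|\tau_t f\|_{L^p(\R_+)}=t^{-1/p}\|f\|_{L^p(\R_+)}$ together with the Beta integral to get $\|H\|\le \pi/\sin(\pi/p)$, which is in fact the sharp constant; the restriction $1<p<\infty$ enters exactly where the integral $\int_0^\infty t^{-1/p}(1+t)^{-1}\,dt$ must converge, and passing from $|f|$ to general $f$ is immediate since the kernel is nonnegative. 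The paper instead proves the lemma by Fourier analysis: it considers the regularized multipliers $m_\epsilon(\xi)=\sign(\xi)e^{-\epsilon|\xi|}$, checks the one-dimensional Mikhlin condition uniformly in $\epsilon$, computes the associated convolution kernel explicitly to identify the truncated operators $H_\epsilon$ with kernel $\frac{x+y}{(x+y)^2+\epsilon^2}$, and then lets $\epsilon\to 0$ by monotone convergence applied to $|f|$. Your argument is more elementary (no multiplier theorem needed), self-contained, and yields the precise operator norm; the paper's argument is less sharp in the constant but fits naturally into the surrounding framework of the text, where Mikhlin-type multiplier estimates are the running theme and the same technique is reused for the operator-valued kernels in Theorem~\ref{6.10}. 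Either proof is acceptable; there is no gap in yours.
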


\begin{proof}
For $\epsilon\in(0,1]$, let $m_\epsilon:= \sign(\xi)e^{-\epsilon\xi}\;(\xi\in\R)$. Then $|m_\epsilon(\xi)|\le 1$ and
$|\xi|\cdot |m_\epsilon'(\xi)| = \epsilon|\xi|e^{-\epsilon|\xi|}\le 1$, where we used the inequality $te^{-t}<1\;(t>0)$.
By Mikhlin's theorem,  $\|\mathscr F_1^{-1}m_\epsilon \mathscr F_1\|_{L(L^p(\R_+))}\le C$ with a constant $C>0$
independent of  $\epsilon$. Here $\mathscr F_1$ stands for the one-dimensional Fourier transform.

For $f\in\mathscr S(\R)$ we get
\begin{align*}
(\mathscr F_1^{-1} & m_\epsilon \mathscr Ff)(x) = \frac1{\sqrt{2\pi}} \int_{-\infty}^\infty e^{ix\xi} \sign(\xi)
e^{-\epsilon|\xi|}(\mathscr F_1)(\xi)d\xi\\
& = \frac1{\sqrt{2\pi}}\int_0^\infty\Big[ e^{ix\xi-\epsilon\xi}\mathscr F_1f(\xi) - e^{-ix\xi-\epsilon\xi}
\mathscr F_1f(-\xi)\Big] d\xi\\
& = \frac1{2\pi}\int_0^\infty \int_{-\infty}^\infty \big( e^{ix\xi-\epsilon\xi-iy\xi} - e^{-ix\xi-\epsilon\xi+iy\xi}
\big) f(y)dy\,d\xi\\ & = \frac1{2\pi}\int_{-\infty}^\infty \Big( \frac{e^{i(x-y)\xi-\epsilon\xi}}{i(x-y) -\epsilon}
\Big|_{\xi=0}^\infty - \frac{e^{-i(x-y)\xi-\epsilon\xi}}{-i(x-y)-\epsilon}\Big|_{\xi=0}^\infty\Big) f(y)dy\\
& = \frac1{2\pi}\int_{-\infty}^\infty \Big(-\frac1{i(x-y)-\epsilon} + \frac1{-i(x-y)-\epsilon}\Big) f(y)dy\\
& = \frac{i}{\pi} \int_{-\infty}^\infty \frac{x-y}{(x-y)^2+\epsilon^2}\; f(y)\,dy.
\end{align*}
Define for $\epsilon\in(0,1]$
\[ (H_\epsilon f)(x) := \int_0^\infty \frac{x+y}{(x+y)^2+\epsilon^2} \; f(y)\,dy\quad (f\in L^p(\R_+)).\]
Then  $H_\epsilon f(x) = (-\frac{\pi}{i}) (\mathscr F_1^{-1}m_\epsilon \mathscr F_1E_0f)(-x)$ for $x\ge 0$, where
$E_0\colon L^p(\R_+)\to L^p(\R)$ again stands for the trivial extension. We obtain
\[
\| H_\epsilon f\|_{L^p(\R_+)}  \le \pi \| \mathscr F_1^{-1} m_\epsilon \mathscr FE_0f\|_{L^p(\R)} \le C
\|E_0f\|_{L^p(\R)} \le C \|f\|_{L^p(\R_+)}.\]
The sequence $H_{1/n}(|f|)$ is monotonously increasing and converges pointwise to  $H(|f|)$. By monotone convergence,
we see that
\begin{align*}
 \|Hf\|_{L^p(\R_+)} & \le \| H(|f|)\|_{L^p(\R_+)} = \lim_{n\to\infty} \|H_{1/n}(|f|)\|_{L^p(\R_+)} \\
 &\le C \|\,|f|\,\|_{L^p(\R_+)} = C \|f\|_{L^p(\R_+)}.
 \end{align*}
Therefore, $H\in L(L^p(\R_+))$.
\end{proof}

The following result shows that the solution operators are indeed $\RR$-bounded.

\begin{theorem}
\label{6.10} Let $\delta>0$ be fixed. In the situation of Theorem~\ref{6.8}, the following operator families  in
$L(L^p(\R^n_+))$ are
 $\RR$-bounded:

a) $\{\Lambda_{2m-m_j}(\lambda)\tilde B_{j0}(D)R_+R(\lambda)E_0: j=1,\dots,m, \, \lambda\in\bar\C_+,\,|\lambda|\ge \delta\}$,

b) $\{\Lambda_{2m-m_j-1}(\lambda)\partial_n \tilde B_{j0}(D) R_+ R(\lambda) E_0: j=1,\dots,m, \, \lambda\in\bar\C_+,\,|
\lambda|\ge \delta\}$,

c) $\{\lambda T_j(\lambda): j=1,\dots,m, \, \lambda\in\bar\C_+,\,|\lambda|\ge \delta\}$,

d) $\{\lambda\tilde T_j(\lambda): j=1,\dots,m, \, \lambda\in\bar\C_+,\,|\lambda|\ge \delta\}$.
\end{theorem}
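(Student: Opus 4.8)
The plan is to treat the four families in two pairs, reducing parts a) and b) to the whole-space model problem of Theorem~\ref{5.4} and parts c) and d) to a kernel estimate combined with the one-sided Hilbert transform of Lemma~\ref{6.9}. For a), note that $R(\lambda)=(A_p-\lambda)^{-1}$, the differential operator $\tilde B_{j0}(D)$, and the tangential Fourier multiplier $\Lambda_s(\lambda)$ all act on the full space $\R^n$, and that differential operators and tangential multipliers commute with the restriction $R_+$; hence the operator in a) equals $R_+\,\op[\mu_{j,\lambda}]\,E_0$, where by the proof of Theorem~\ref{5.4} one has $(A_p-\lambda)^{-1}=\op[(a_0(x_0',\xi)-\lambda)^{-1}]$, so that
\[ \mu_{j,\lambda}(\xi)=(\lambda+|\xi'|^{2m})^{(2m-m_j)/2m}\,b_{j0}(x_0',\xi)\,(a_0(x_0',\xi)-\lambda)^{-1}\qquad(\xi\in\R^n); \]
for b) the symbol is the same with an additional factor $\xi_n$ and exponent $(2m-m_j-1)/2m$. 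Using the quasi-homogeneity degrees listed in Remark~\ref{6.7}~a) ($m_j$ for $b_{j0}$, $-2m$ for $(a_0-\lambda)^{-1}$, $2m-m_j$ for $\Lambda_{2m-m_j}(\lambda)$), one checks that $\mu_{j,\lambda}$ is positively homogeneous of degree $0$ in $(\xi,\lambda^{1/2m})$ and, by parameter-ellipticity, continuous on $(\R^n\times\bar\C_+)\setminus\{0\}$, so the argument of Lemma~\ref{3.24} gives $|\xi^\alpha D_\xi^\alpha\mu_{j,\lambda}(\xi)|\le C_0$ for all $\alpha\in\{0,1\}^n$, uniformly in $\lambda\in\bar\C_+\setminus\{0\}$ and $\xi\ne0$. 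Corollary~\ref{3.31} then gives $\RR$-boundedness of $\{\op[\mu_{j,\lambda}]\}$ in $L(L^p(\R^n))$, and composing with the single (hence $\RR$-bounded) operators $E_0$ and $R_+$ via Remark~\ref{3.10} yields a) and b).

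For c) and d) the plan is to view the operators as operator-valued integral operators in the normal variable and apply the kernel criterion of Theorem~\ref{3.18}. Writing $L^p(\R^n_+)=L^p(\R_+;L^p(\R^{n-1}))$ and inserting $\mathscr F'(\Lambda_s(\lambda)\phi)(\xi',y_n)=(\lambda+|\xi'|^{2m})^{s/2m}\mathscr F'\phi(\xi',y_n)$ into the solution formula of Theorem~\ref{6.8}, one obtains
\[ (\lambda T_j(\lambda)\phi)(\cdot,x_n)=\int_0^\infty\bigl(\lambda K_{j,\lambda}(x_n,y_n)\bigr)\phi(\cdot,y_n)\,dy_n, \]
where $K_{j,\lambda}(x_n,y_n)\in L(L^p(\R^{n-1}))$ is the tangential Fourier multiplier with symbol $\xi'\mapsto(\partial_n w_j)(x_0',\xi',\lambda,x_n+y_n)(\lambda+|\xi'|^{2m})^{(m_j-2m)/2m}$, with the analogous representation for $\lambda\tilde T_j(\lambda)$ carrying $w_j$ instead of $\partial_n w_j$ and exponent $(m_j+1-2m)/2m$. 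Using the Poisson-type bounds of Remark~\ref{6.7}~b), namely $|D_{\xi'}^{\gamma'}(\partial_n w_j)(x_0',\xi',\lambda,t)|\le C\langle\xi'\rangle_\lambda^{-m_j+1-|\gamma'|}e^{-c\langle\xi'\rangle_\lambda t}$, together with $|\lambda|\le\langle\xi'\rangle_\lambda^{2m}$ and $|\lambda+|\xi'|^{2m}|\ge c\langle\xi'\rangle_\lambda^{2m}$ for $\lambda\in\bar\C_+$, the Leibniz rule gives, for every $\gamma'\in\{0,1\}^{n-1}$,
\[ \Bigl|(\xi')^{\gamma'}D_{\xi'}^{\gamma'}\bigl(\lambda(\partial_n w_j)(x_0',\xi',\lambda,x_n+y_n)(\lambda+|\xi'|^{2m})^{(m_j-2m)/2m}\bigr)\Bigr|\le C\,\langle\xi'\rangle_\lambda\,e^{-c\langle\xi'\rangle_\lambda(x_n+y_n)}\le\frac{C'}{x_n+y_n}, \]
the last step being the elementary inequality $\sup_{s>0}se^{-cs}<\infty$. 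By Corollary~\ref{3.31} this yields $\RR_p\bigl(\{\lambda K_{j,\lambda}(x_n,y_n):\lambda\in\bar\C_+,\,|\lambda|\ge\delta\}\bigr)\le k_0(x_n,y_n):=C'(x_n+y_n)^{-1}$, and the scalar integral operator with kernel $k_0$ is a constant multiple of the one-sided Hilbert transform of Lemma~\ref{6.9}, hence bounded on $L^p(\R_+)$. Theorem~\ref{3.18} (with $G=\R_+$, $X=Y=L^p(\R^{n-1})$) then gives $\RR$-boundedness of $\{\lambda T_j(\lambda)\}$ in $L(L^p(\R_+;L^p(\R^{n-1})))=L(L^p(\R^n_+))$; part d) follows identically, the power of $\langle\xi'\rangle_\lambda$ again collapsing to $1$.

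The symbol bookkeeping in a), b) and the verification that the tangential symbols in c), d) are $C^{n-1}$ away from $\xi'=0$ (so that Corollary~\ref{3.31} applies) are routine. The main obstacle is the chain of estimates in c), d): one must see that the entire factor $\langle\xi'\rangle_\lambda^{2m}$ produced by multiplication with $\lambda$ is exactly absorbed by the negative-order factor $\Lambda_{-(2m-m_j)}(\lambda)$ together with one unit of the exponential decay of the Poisson symbol $w_j$, so that only a first-order kernel singularity $(x_n+y_n)^{-1}$ survives — precisely the kernel of the one-sided Hilbert transform, which is why Lemma~\ref{6.9} is the right tool. A minor point to keep track of is that the restriction $|\lambda|\ge\delta$ keeps $\langle\xi'\rangle_\lambda$ bounded away from $0$, so that $w_j$, the fractional powers $(\lambda+|\xi'|^{2m})^{s/2m}$, and all their $\xi'$-derivatives are controlled uniformly without degeneracy at the origin.
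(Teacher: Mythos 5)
Your proposal is correct and follows essentially the same route as the paper: for a) and b) you commute $\Lambda$ and $\tilde B_{j0}(D)$ past $R_+$, identify the whole-space symbol $(\lambda+|\xi'|^{2m})^{(2m-m_j)/2m}b_{j0}(x_0',\xi)(a_0(x_0',\xi)-\lambda)^{-1}$, and use its quasi-homogeneity of degree $0$ together with Corollary~\ref{3.31}; for c) and d) you use the same operator-valued kernel representation in the normal variable, the Poisson-symbol bounds of Remark~\ref{6.7}~b) to get the $\RR$-bound $C/(x_n+y_n)$ for the kernels, and then Theorem~\ref{3.18} combined with the one-sided Hilbert transform of Lemma~\ref{6.9}. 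This matches the paper's argument step for step, including the bookkeeping that the factor $\lambda$ is absorbed by $\Lambda_{-(2m-m_j)}(\lambda)$ and one power of the exponential decay.
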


\begin{proof}
a) We have $\Lambda_{2m-m_j}(\lambda)\tilde B_{j0}(D)R_+ = R_+\Lambda_{2m-m_j}(\lambda) \tilde B_{j0}(D)$.
As the operators $R_+\in L(L^p(\R^n), L^p(\R^n_+))$ and $E_0\in L(L^p(\R^n_+),L^p(\R^n))$ are bounded,
it suffices to show the $\RR$-boundedness of
\[ \{\Lambda_{2m-m_j}(\lambda) \tilde B_{j0}(\lambda) R(\lambda): j=1,\dots,m, \, \lambda\in\bar\C_+,\,
|\lambda|\ge \delta\}.\]
The corresponding family of symbols (with respect to the Fourier transform in $\R^n$) is given by
\[ m(\xi,\lambda) := (\lambda+|\xi'|^{2m})^{\frac{2m-m_j}{2m}} b_{j0}(x_0',\xi) \big(a_0(x_0',\xi)-\lambda\big)^{-1}.\]
As $m(\xi,\lambda)$ is quasi-homogeneous of degree 0 in $(\xi,\lambda)$ and bounded on $|\lambda|+|\xi|^{2m}=1$,
it follows that
\[ |D^\alpha m(\xi,\lambda)| \le C |\xi|^{-|\alpha|}\quad (\xi\in\R^n\setminus \{0\},\, \lambda\in\bar\C_+,\,
 |\lambda|\ge\delta).\]
By Corollary~\ref{3.31}, the operator family in a) is  $\RR$-bounded.

b) can be shown analogously.

c) For $\phi\in L^p(\R^n_+)$, we write
\[ \lambda T_j(\lambda)\phi = \int_0^\infty k_\lambda(x_n,y_n)\psi(y_n) dy_n\]
with $\psi\in L^p(\R_+;L^p(\R^{n-1})$, $\psi(y_n) := \phi(\cdot,y_n)$, and the operator valued integral kernel
\begin{align*}
k_\lambda(x_n,y_n) & := (\mathscr F')^{-1} \tilde m(\xi',\lambda, x_n+y_n) \mathscr F'\\
& := (\mathscr F')^{-1} \lambda \partial_n w_j(x_0',\xi',\lambda,x_n+y_n)(\lambda+|\xi'|^{2m})^{-\frac{2m-m_j}{2m}}
\mathscr F'.
\end{align*}
By Remark~\ref{6.7} b), the inequalities
\begin{align*}
| D_{\xi'}^{\gamma'} \tilde  m(\xi',\lambda,x_n+y_n) & \le C (|\xi'|+|\lambda|^{1/2m} \exp\big(-C(|\xi'|+|\lambda|^{1/2m})
(x_n+y_n)\big) |\xi'|^{-|\gamma'|}\\
& \le \frac C{x_n+y_n}\; |\xi'|^{-|\gamma'|}
\end{align*}
hold, where in the last step we again used the elementary estimate  $te^{-t}<1\;(t>0)$. Again by Corollary~\ref{3.31},
it follows that  $k_\lambda(x_n,y_n)\in L(L^p(\R^{n-1}))$ with
\[ \RR\big\{ k_\lambda(x_n,y_n): \lambda\in\bar\C_+,\, |\lambda|\ge \delta\big\} \le \frac C{x_n+y_n}\,.\]
The scalar integral operator with kernel $k_0(x_n,y_n):= \frac 1{x_n+y_n}$, given by
\[ (K_0g)(x_n) := \int_0^\infty \frac{g(y_n)}{x_n+y_n}\;dy_n\quad(g\in L^p(\R_+))\]
is the one-sided Hilbert transform in  $L^p(\R_+)$ and, due to Lemma~\ref{6.9}, a bounded linear operator $K_0\in
L(L^p(\R_+))$. By Theorem~\ref{3.18} we get
\[ \RR\big\{ \lambda T_j(\lambda): \lambda\in\bar\C_+,\,|\lambda|\ge \delta\big\} \le C\|K_0\|_{L(L^p(\R_+))} <\infty.\]

d) follows in the same way as c).
\end{proof}

Now maximal regularity for the model problem is an immediate consequence of the previous results.

\begin{theorem}
\label{6.11}
Let the boundary value problem $(\partial_t - A,B)$ be parabolic, and let  $x_0'\in\partial G$. Choose the coordinate system
 corresponding to  $x_0'$, and consider the   $L^p$-realization $A_B^{(0)}$ of the model problem $(A_0(x_0',D), B(x_0',D))$.
  Then  $\rho(A_B^{(0)})\supset \bar\C_+\setminus\{0\}$, and for every $\delta>0$ the operator family
\[ \big\{ \lambda(\lambda-A_B^{(0)})^{-1}: \lambda\in\bar\C_+,\, |\lambda|\ge \delta\big\}\subset L(L^p(\R^n_+))\]
is $\RR$-bounded. In particular, $A_B^{(0)}-\delta$ has for every  $\delta>0$ maximal $L^q$-regularity for all
 $1<q<\infty$ (and generates a bounded holomorphic $C_0$-semigroup).
\end{theorem}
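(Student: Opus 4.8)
The plan is to read the resolvent of $A_B^{(0)}$ off the explicit solution formula of Theorem~\ref{6.8} and to recognize $\lambda(\lambda-A_B^{(0)})^{-1}$, up to a sign, as a finite linear combination of composites of operator families whose $\RR$-boundedness was already established in Theorems~\ref{5.4} and~\ref{6.10}; from this, $\RR$-sectoriality with angle larger than $\tfrac{\pi}{2}$ follows, after which Theorems~\ref{3.34} and~\ref{3.33} conclude. The one preparatory point is an enlargement of the sector: since $(\partial_t-A,B)$ is parabolic, the model problem $(A_0(x_0',D),B(x_0',D))$ is parameter-elliptic in $\bar\Sigma_{\pi/2}$, and --- just as in Remark~\ref{5.3}, because by Lemma~\ref{6.4} and Remark~\ref{6.7} the conditions of Definition~\ref{6.2} amount to non-vanishing of the continuous functions $a_0(x_0',\xi)-\lambda$ and $\det L(x_0',\xi',\lambda)$ on compact sets --- parameter-ellipticity persists in some larger sector $\bar\Sigma_\theta$ with $\theta>\tfrac{\pi}{2}$. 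The proofs of Theorems~\ref{6.8} and~\ref{6.10} use nothing beyond parameter-ellipticity and the quasi-homogeneity of the occurring symbols, so they hold verbatim with $\bar\C_+$ replaced by $\bar\Sigma_\theta$; shrinking $\theta$ if needed we may also assume, by Theorem~\ref{5.4}, that $\{\lambda(\lambda-A_p)^{-1}:\lambda\in\bar\Sigma_\theta\setminus\{0\}\}$ is $\RR$-bounded.

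By Theorem~\ref{6.8}, every $\lambda\in\Sigma_\theta\setminus\{0\}$ and $f\in L^p(\R^n_+)$ yield a unique $u\in W_p^{2m}(\R^n_+)$ solving the model problem, so $\lambda-A_B^{(0)}$ is a bijection of $D(A_B^{(0)})$ onto $L^p(\R^n_+)$; since $A_B^{(0)}$ is closed, the closed graph theorem gives $\lambda\in\rho(A_B^{(0)})$, whence $\rho(A_B^{(0)})\supset\bar\Sigma_\theta\setminus\{0\}\supset\bar\C_+\setminus\{0\}$. Multiplying the solution formula of Theorem~\ref{6.8} by $\lambda$ and regrouping, $\lambda(\lambda-A_B^{(0)})^{-1}$ equals, up to sign, the sum of $R_+\bigl(\lambda R(\lambda)\bigr)E_0$ and, for $j=1,\dots,m$, the composites $\bigl(\lambda T_j(\lambda)\bigr)\bigl(\Lambda_{2m-m_j}(\lambda)\tilde B_{j0}(D)R_+R(\lambda)E_0\bigr)$ and $\bigl(\lambda\tilde T_j(\lambda)\bigr)\bigl(\Lambda_{2m-m_j-1}(\lambda)\partial_n\tilde B_{j0}(D)R_+R(\lambda)E_0\bigr)$. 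Here $\lambda R(\lambda)=-\lambda(\lambda-A_p)^{-1}$ is $\RR$-bounded by Theorem~\ref{5.4}; the families $\{\lambda T_j(\lambda)\}$, $\{\lambda\tilde T_j(\lambda)\}$ and $\{\Lambda_{2m-m_j}(\lambda)\tilde B_{j0}(D)R_+R(\lambda)E_0\}$, $\{\Lambda_{2m-m_j-1}(\lambda)\partial_n\tilde B_{j0}(D)R_+R(\lambda)E_0\}$ are $\RR$-bounded, uniformly for $\lambda\in\bar\Sigma_\theta$ with $|\lambda|\ge\delta$, by Theorem~\ref{6.10}(c),(d) and (a),(b); and $R_+$, $E_0$ are fixed bounded operators. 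By Remark~\ref{3.10} the family $\{\lambda(\lambda-A_B^{(0)})^{-1}:\lambda\in\bar\Sigma_\theta,\ |\lambda|\ge\delta\}$ is $\RR$-bounded, which in particular proves the asserted $\RR$-boundedness on $\bar\C_+$.

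To pass to maximal regularity I would fix $\theta'\in(\tfrac{\pi}{2},\theta)$ and note that for $\mu\in\Sigma_{\theta'}$ one has $\lambda:=\mu+\delta\in\Sigma_\theta$ with $|\lambda|\ge\delta\sin\theta'>0$ and with $|\mu/\lambda|$ bounded on $\Sigma_{\theta'}$ (elementary planar geometry). Hence $\mu\bigl(\mu-(A_B^{(0)}-\delta)\bigr)^{-1}=(\mu/\lambda)\,\lambda(\lambda-A_B^{(0)})^{-1}$ defines, by Kahane's contraction principle (Lemma~\ref{3.7}) together with the previous step, an $\RR$-bounded family over $\Sigma_{\theta'}$. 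Thus $A_B^{(0)}-\delta$ is $\RR$-sectorial with $\RR$-angle at least $\theta'>\tfrac{\pi}{2}$, in particular (by Remark~\ref{3.9}) sectorial with spectral angle larger than $\tfrac{\pi}{2}$.

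Since $A_B^{(0)}-\delta$ is closed and densely defined and $L^p(\R^n_+)$ is a UMD space, Theorem~\ref{3.34} now gives $A_B^{(0)}-\delta\in\mreg((0,\infty);L^p(\R^n_+))$, hence maximal $L^q$-regularity for all $q\in(1,\infty)$, while Theorem~\ref{3.33} shows that $A_B^{(0)}-\delta$ generates a bounded holomorphic $C_0$-semigroup. The only point not already contained in Theorems~\ref{6.8} and~\ref{6.10} is the passage to the slightly larger sector $\bar\Sigma_\theta$, which is exactly what forces the $\RR$-angle of $A_B^{(0)}-\delta$ to exceed $\tfrac{\pi}{2}$ strictly so that Theorem~\ref{3.34} becomes applicable; the remainder is bookkeeping with the product and sum rules for $\RR$-bounds.
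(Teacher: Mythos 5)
Your overall route is the paper's: read $(\lambda-A_B^{(0)})^{-1}$ (up to sign) off the solution formula of Theorem~\ref{6.8}, group the terms as composites of the families from Theorem~\ref{6.10} together with $R_+(\lambda R(\lambda))E_0$ from Theorem~\ref{5.4}, and conclude with the sum and product rules of Remark~\ref{3.10}. Your extra care about the sector is in fact more detailed than the paper: the paper's proof is silent on how the $\RR$-bound on $\bar\C_+\cap\{|\lambda|\ge\delta\}$ yields the ``in particular'' via Theorem~\ref{3.34}, and your enlargement of the sector of parameter-ellipticity (the boundary-value analogue of Remark~\ref{5.3}, justified through Lemma~\ref{6.4} and quasi-homogeneity) plus the shift $\mu\mapsto\mu+\delta$ with Kahane's contraction principle is a correct way to obtain $\RR$-sectoriality of $A_B^{(0)}-\delta$ with angle exceeding $\tfrac\pi2$. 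The only bookkeeping point there is that $|\mu+\delta|\ge\delta\sin\theta'$, which is smaller than $\delta$, so you must invoke the previously established $\RR$-bound with the smaller threshold $\delta\sin\theta'$; this is available because that bound was proved for every positive threshold.

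One point needs repair. You take from the statement of Theorem~\ref{6.8} that the formula produces a solution $u\in W_p^{2m}(\R^n_+)$, and use this to get bijectivity of $\lambda-A_B^{(0)}\colon D(A_B^{(0)})\to L^p(\R^n_+)$. But the paper's proof of Theorem~\ref{6.8} establishes only the solution formula and explicitly defers the membership $u\in W_p^{2m}(\R^n_+)$ to the proof of the present theorem; within the paper's logical structure, citing Theorem~\ref{6.8} for that membership is therefore circular. The missing step --- which is essentially the entire content of the paper's proof of Theorem~\ref{6.11} --- is to check that the operators obtained from $T_j(\lambda)$ and $\tilde T_j(\lambda)$ by replacing the factor $\lambda$ with $D^\alpha$, $|\alpha|\le 2m$, are bounded (indeed $\RR$-bounded) on $L^p(\R^n_+)$, by exactly the kernel estimates of Theorem~\ref{6.10}(c),(d) based on Remark~\ref{6.7} and Lemma~\ref{6.9}; combined with $R(\lambda)E_0f\in W_p^{2m}(\R^n)$ this shows that the right-hand side of the solution formula lies in $W_p^{2m}(\R^n_+)$ and hence in $D(A_B^{(0)})$, so that the solution coincides with the resolvent. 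With that supplement added, the rest of your argument goes through unchanged.
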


\begin{proof}
Replacing in the proof of Theorem~\ref{6.10} the operators $\lambda T_j(\lambda)$ by $D^\alpha T_j(\lambda)$
(and analogously for $\tilde T_j(\lambda)$) with $|\alpha|=2m$, we see that the solution operators in fact define
 a solution  $u\in W_p^{2m}(\R^n_+)$. Therefore, the solution coincides with the resolvent.
Now the  $\RR$-boundedness follows directly from the resolvent description in Theorem~\ref{6.8} and the statements
 on  $\RR$-boundedness from Theorem~\ref{6.10}.
\end{proof}

To deal with variable coefficients, we first study small perturbations in the principal part.

\begin{theorem}\label{6.12}
   Let  $A^0(x,D) = \sum_{|\alpha|=2m} a^0_\alpha D^\alpha$ and $B^0_j(x,D) = \sum_{|\beta|=m_j}
  b^0_{j\beta}\gamma_0 D^\beta$ with $a^0_\alpha\in \C$ and  $b^0_{j\beta}\in \C$. Assume the boundary value
   problem $(\partial_t - A,B)$ to be parabolic in the domain  $\R^n_+$. Then there exists an $\epsilon>0$
   such that the following statement holds: Let  $A(x,D) = A^0(x,D)+\tilde A(x,D)$ and  $B(x,D)=B^0(x,D)+\tilde B(x,D)$ with
  \begin{align*}
   \tilde A(x,D) & = \sum_{|\alpha|=2m} \tilde a_\alpha(x)D^\alpha,\\
   \tilde B_j(x,D) & = \sum_{|\beta|=m_j} \tilde b_{j\beta}(x) D^\beta\;(j=1,\dots,m).
  \end{align*}
  Here, $\tilde a_\alpha\in L^\infty(\R^n_+)$ and $\tilde b_{j\beta}\in \buc^{2m-m_j}(\R^{n-1})$.
Assume further that
  \begin{align*}
    \sum_{|\alpha|=2m} \|\tilde a_\alpha\|_{L^\infty(\R^n_+)} & \le \epsilon,\\
    \sum_{|\beta|=m_j} \|\tilde b_{j\beta}\|_{L^\infty(\R^{n-1})} & \le\epsilon\; (j=1,\dots,m).
  \end{align*}
  Let $A_{B,p}$ be the  $L^p$-realization of the boundary value problem $(A(x,D), B(x,D))$. Then there exists a
   $\mu>0$ such that the operator family
  \[ \big\{ \lambda (A_{B,p}-\lambda)^{-1}: \lambda\in\bar\C_+,\;|\lambda|\ge \mu\big\}\subset L(L^p(\R^n_+))\]
  is $\RR$-bounded. Here,  $\epsilon$ and the  $\mathcal R$-bound only depend on $(A^0(x,D),B^0(x,D))$, and $\mu$
   additionally depends on the norms  $\|b_{j\beta}\|_{\buc^{2m-m_j}}(\R^{n-1})$ for  $|\beta|=m_j$, $j=1,\dots,m$.
\end{theorem}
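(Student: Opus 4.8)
The plan is to solve the perturbed boundary value problem by a Neumann series based on the model solution operators of Theorem~\ref{6.8} and then to repeat the argument at the level of $\RR$-bounds. Since $(\partial_t-A^0,B^0)$ is parabolic, for $u\in W_p^{2m}(\R^n_+)$ the equations $(A(x,D)-\lambda)u=f$ in $\R^n_+$, $B_j(x,D)u=0$ on $\R^{n-1}$ are equivalent to $(A^0(D)-\lambda)u=f-\tilde A(x,D)u$ in $\R^n_+$ and $B^0_j(D)u=-\tilde B_j(x,D)u$ on $\R^{n-1}$. Let $R^0(\lambda)\colon L^p(\R^n_+)\to W_p^{2m}(\R^n_+)$ denote the model solution operator with homogeneous boundary conditions and $P^0(\lambda)$ the model Poisson operator (zero interior data, boundary data $g=(g_1,\dots,g_m)$); by Theorems~\ref{6.10} and~\ref{6.11} — in the ``full regularity'' form obtained there by inserting $D^\alpha T_j(\lambda)$, $D^\alpha\tilde T_j(\lambda)$ — both are $\RR$-bounded families for $|\lambda|\ge\delta$. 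The reduced problem then reads $u=R^0(\lambda)f-\MM(\lambda)u$ with $\MM(\lambda):=R^0(\lambda)\tilde A(x,D)+P^0(\lambda)\tilde B(x,D)$ on $W_p^{2m}(\R^n_+)$, so that as soon as $I+\MM(\lambda)$ is invertible we obtain $\lambda\in\rho(A_{B,p})$ and $(A_{B,p}-\lambda)^{-1}=(I+\MM(\lambda))^{-1}R^0(\lambda)$.

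The core step is to bound $\MM(\lambda)$. I would work with the parameter-dependent (Agmon--Agranovich--Vishik) norm $\|u\|_{2m,\lambda}:=\sum_{|\alpha|\le 2m}|\lambda|^{1-|\alpha|/(2m)}\|D^\alpha u\|_{L^p(\R^n_+)}$ and the corresponding parameter-dependent Sobolev--Slobodeckii norms $\|g_j\|_{j,\lambda}$ of order $2m-m_j-\tfrac1p$ on $\R^{n-1}$, in which Theorems~\ref{6.10}--\ref{6.11} give $\|R^0(\lambda)h\|_{2m,\lambda}\le C_0\|h\|_{L^p}$ and $\|P^0_j(\lambda)g_j\|_{2m,\lambda}\le C_0\|g_j\|_{j,\lambda}$ uniformly for $|\lambda|\ge\delta$. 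For the interior perturbation only $\tilde a_\alpha\in L^\infty$ is needed: since the top-order terms of $\|\cdot\|_{2m,\lambda}$ carry the weight $1$,
\begin{align*}
 \|R^0(\lambda)\tilde A(x,D)u\|_{2m,\lambda} &\le C_0\|\tilde A(x,D)u\|_{L^p}\le C_0\Big(\textstyle\sum_{|\alpha|=2m}\|\tilde a_\alpha\|_\infty\Big)\max_{|\alpha|=2m}\|D^\alpha u\|_{L^p}\\
 &\le C_0\,\epsilon\,\|u\|_{2m,\lambda},
\end{align*}
which is small for small $\epsilon$, uniformly in $\lambda$ (this is the boundary-value-problem analogue of Theorem~\ref{5.5}). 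For the boundary perturbation I would use a product estimate (paraproduct/Kato--Ponce type) in the parameter-dependent Sobolev--Slobodeckii norm which separates the sup-norm of the coefficient from its higher-order norm,
\[ \|\tilde b_{j\beta}\,\gamma_0 D^\beta u\|_{j,\lambda}\le C\big(\|\tilde b_{j\beta}\|_\infty\,\|\gamma_0 D^\beta u\|_{j,\lambda}+\|\tilde b_{j\beta}\|_{C^{2m-m_j}}\,|\lambda|^{-\eta}\,\|u\|_{2m,\lambda}\big)\qquad(\eta>0), \]
combined with the trace estimate $\|\gamma_0 D^\beta u\|_{j,\lambda}\le C\|u\|_{2m,\lambda}$ (valid because $|\beta|=m_j$, so the $\lambda$-weights line up); this yields $\|P^0_j(\lambda)\tilde B_j(x,D)u\|_{2m,\lambda}\le C(\epsilon+\|b_{j\beta}\|_{\buc^{2m-m_j}}|\lambda|^{-\eta})\|u\|_{2m,\lambda}$. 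Choosing first $\epsilon=\epsilon(A^0,B^0)$ small enough to absorb all the $\|\cdot\|_\infty$-terms, and then $\mu=\mu(\epsilon,\|b_{j\beta}\|_{\buc^{2m-m_j}})$ large enough to absorb the $|\lambda|^{-\eta}$-terms, we get $\|\MM(\lambda)\|_{2m,\lambda\to 2m,\lambda}\le\tfrac12$ for $|\lambda|\ge\mu$; since for fixed $\lambda$ this norm is equivalent to the $W_p^{2m}$-norm, $I+\MM(\lambda)$ is invertible and $(A_{B,p}-\lambda)^{-1}=(I+\MM(\lambda))^{-1}R^0(\lambda)$.

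It remains to pass to $\RR$-bounds. The same estimates, now read through the $\RR$-bounds in Theorems~\ref{6.10}--\ref{6.11} and the fact that multiplication by the fixed operators $\tilde a_\alpha,\tilde b_{j\beta},\gamma_0 D^\beta$ and by the scalar factors $|\lambda|^{-\eta}$ preserves $\RR$-boundedness (Lemma~\ref{3.13}, Remark~\ref{3.10}, and Kahane's contraction principle, Lemma~\ref{3.7}), give $\RR(\{\MM(\lambda):|\lambda|\ge\mu\})\le q<1$ after possibly enlarging $\mu$. By $\RR(\UU\TT)\le\RR(\UU)\RR(\TT)$ (Remark~\ref{3.10}) applied iteratively, $\RR(\{\MM(\lambda)^k:|\lambda|\ge\mu\})\le q^k$; by subadditivity of $\RR$-bounds over sums (Remark~\ref{3.10}) together with closedness under strong operator limits (Theorem~\ref{3.14}, the partial sums of the Neumann series converging), $\RR(\{(I+\MM(\lambda))^{-1}:|\lambda|\ge\mu\})\le\sum_{k\ge0}q^k=(1-q)^{-1}$. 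Composing with the $\RR$-bounded family $\{\lambda R^0(\lambda):|\lambda|\ge\mu\}\subset L(L^p(\R^n_+))$ from Theorem~\ref{6.11} (Remark~\ref{3.10} again) gives $\RR$-boundedness of $\{\lambda(A_{B,p}-\lambda)^{-1}:\lambda\in\bar\C_+,\,|\lambda|\ge\mu\}$, with $\RR$-bound at most $\RR(\{\lambda R^0(\lambda)\})/(1-q)$, hence depending only on $(A^0(x,D),B^0(x,D))$.

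The main obstacle is the boundary product estimate above: one must genuinely extract the factor $\|\tilde b_{j\beta}\|_{L^\infty}=O(\epsilon)$ from the top-order part of $\|\tilde b_{j\beta}\,\gamma_0 D^\beta u\|_{j,\lambda}$ while bounding the remainder by a higher-order norm of the coefficient times a strictly negative power of $|\lambda|$. This is exactly where the $\buc^{2m-m_j}$-regularity of the boundary coefficients is consumed (one differentiates the coefficient up to order $2m-m_j$ to match the order $2m-m_j-\tfrac1p$ of the boundary-data space, and the extra $\tfrac1p$ of smoothness is what produces the gain $|\lambda|^{-\eta}$), and it is the reason $\mu$ — unlike $\epsilon$ and the final $\RR$-bound — must depend on these norms. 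A secondary, but also technical, point is that all model estimates have to be used in their parameter-dependent, full-regularity form, so that the $\RR$-calculus for the Neumann series can be carried out uniformly in $\lambda$; this is available from the proof of Theorem~\ref{6.11}.
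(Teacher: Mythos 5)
Your overall strategy is the same as the paper's (reduce to the constant-coefficient model problem and absorb the perturbation; the paper's operator $S(\lambda)$ plays the role of your $\MM(\lambda)$), but the passage to $\RR$-bounds has a genuine gap. The contraction $\|\MM(\lambda)u\|\le\frac12\|u\|$ is available only in the $\lambda$-dependent norms; as a family of operators on the fixed space $W_p^{2m}(\R^n_+)$, $\{\MM(\lambda):|\lambda|\ge\mu\}$ is not even uniformly bounded: for a fixed $u$ with nonvanishing boundary data the Poisson term $P^0(\lambda)\tilde B_j(x,D)u$ blows up in $W_p^{2m}$ as $|\lambda|\to\infty$ (already for the Dirichlet Laplacian one has $\|P^0(\lambda)g\|_{W_p^2}\sim|\lambda|^{1-1/(2p)}\|g\|_{L^p(\R^{n-1})}$ for fixed $g$). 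Since $\RR$-boundedness is attached to one fixed pair of Banach spaces — a single Rademacher sum mixes operators belonging to different $\lambda_j$, so you cannot ``read the estimates through the parameter-dependent norms'' — the claims $\RR(\{\MM(\lambda)\})\le q<1$ and $\RR(\{\MM(\lambda)^k\})\le q^k$ are unjustified, and the Neumann series at the level of $\RR$-bounds collapses. The final composition also has a space mismatch: $\{\lambda R^0(\lambda)\}$ is $\RR$-bounded only as a family in $L(L^p(\R^n_+))$, not from $L^p$ into $W_p^{2m}$, while $(I+\MM(\lambda))^{-1}$ acts on $W_p^{2m}$, so Remark~\ref{3.10} does not assemble $\{\lambda(A_{B,p}-\lambda)^{-1}\}$ from your pieces. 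The paper avoids all of this by never $\RR$-bounding $S(\lambda)$ itself: starting from $(A_{B,p}-\lambda)^{-1}=(A_{B,p}^{0}-\lambda)^{-1}-S(\lambda)(A_{B,p}-\lambda)^{-1}$ it estimates, for every \emph{finite} subset $\mathcal T_0$ of the family $\mathcal T=\{\lambda^{(2m-|\alpha|)/(2m)}D^\alpha(A_{B,p}-\lambda)^{-1}:|\alpha|\le 2m,\,|\lambda|\ge\mu\}\subset L(L^p(\R^n_+))$, the self-referential inequality $\RR(\mathcal T_0)\le R_1+(C_1\epsilon+C_2(\mu))\RR(\mathcal T_0)$ and absorbs, which is legitimate because finite subsets have finite $\RR$-bound a priori; your Neumann-series picture has to be recast in this form, with all families acting between fixed $L^p$-spaces and the $\lambda$-weights carried inside the family, before it proves anything.

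A second, independent gap is the parameter-dependent paraproduct/trace estimate that you yourself flag as the main obstacle: it is asserted, not proved, and in your route it is precisely the hard technical core. The paper sidesteps fractional boundary norms entirely: the boundary perturbation is extended into the half-space (this is the Volevich trick already built into the solution formula of Theorem~\ref{6.8}), so it enters as $\Lambda_{2m-m_j}(\lambda)\tilde B_j(x,D)u$ measured in $L^p(\R^n_+)$ and is estimated with the ordinary Leibniz rule in integer-order Sobolev spaces, $\|\tilde b_{j\beta}D^\beta u\|_{W_p^{2m-m_j}(\R^n_+)}\le C_2\epsilon\|u\|_{W_p^{2m}}+C_3\|u\|_{W_p^{2m-1}}$ with $C_3$ carrying the $\buc^{2m-m_j}$-norms, followed by the interpolation inequality and largeness of $|\lambda|$ to absorb the lower-order term; this is exactly where the asymmetric dependence of $\epsilon$ versus $\mu$ comes from, with no Kato--Ponce-type lemma needed. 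If you insist on working with genuine boundary traces in parameter-dependent Sobolev--Slobodeckii norms, you must supply that product estimate (with the $|\lambda|^{-\eta}$ gain) yourself; as written, the proposal leaves both this lemma and the $\RR$-bound bookkeeping open.
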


\begin{proof}
We indicate the main steps of the proof, for a more elaborated version, see  \cite{Denk-Hieber-Pruess03},
Subsection~7.3.

Without loss of generality, we may assume that the coefficients of $\tilde B(x,D)$ are defined on all of $\R^n_+$.
 We
write the boundary value problem
  \begin{align*}
    (A(x,D)-\lambda) u & = f\quad \text{ in }\R^n_+,\\
    B_j(x,D) u & = 0 \quad (j=1,\dots,m)\; \text{ on } \R^{n-1}
  \end{align*}
 in the form
  \begin{align*}
    (A^0(x,D)-\lambda) u & = f-\tilde A(x,D) u \quad \text{ in } \R^n_+,\\
    B^0_j(x,D) u & = - \tilde B_j(x,D) u \quad (j=1,\dots,m)\;\text{ on } \R^{n-1}.
  \end{align*}
  Let $(A_{B,p}^{0}-\lambda)^{-1}$ be the resolvent of the  $L^p$-realization of $(A^0(x,D), B^0(x,D))$,
  which exists due to Theorem~\ref{6.11}. Applying the solution operators from Theorem~\ref{6.8}, we obtain
  \begin{align*}
    u & = (A_{B,p}^{0}-\lambda)^{-1} f - (A_{B,p}^{0}-\lambda)^{-1}\tilde A(x,D) u \\
    & - \sum_{j=1}^m T_j(\lambda) \Lambda_{2m-m_j}(\lambda) \tilde B_j(x,D) u  - \sum_{j=1}^m \tilde T_j(\lambda)
    \Lambda_{2m-m_j-1}(\lambda)
    \partial_n \tilde B_j(x,D) u \\
    & =: (A_{B,p}^{0}-\lambda)^{-1} f - S(\lambda) u.
  \end{align*}
We estimate the norm of $S(\lambda)u$. For the term  $(A_{B,p}^{0}-\lambda)^{-1}\tilde A(x,D) u$, we use
\[ \|(A_{B,p}^{0}-\lambda)^{-1}\|_{L(L^p(\R^n_+), W_p^{2m}(\R^n_+))} \le C_1\] and obtain
  \[
    \| (A_{B,p}^{0}-\lambda)^{-1}\tilde A(x,D)u\|_{W_p^{2m}(\R^n_+)} \le C_1 \|\tilde A(x,D)u\|_{L^p(\R^n_+)}
     \le C_1 \epsilon \|u\|_{W_p^{2m}(\R^n_+)}.
  \]
For the other terms, we use the fact that the operator families
\[ \{ \lambda^{(2m-|\alpha|)/2m} D^\alpha T(\lambda): |\alpha|\le 2m,\, \lambda\in\bar\C_+,\,
|\lambda|\ge \lambda_0\}\subset L(L^p(\R^n_+))\]
are $\mathcal R$-bounded and therefore bounded, which can be seen as in the proof of Theorem~\ref{6.10}. This yields
\begin{align*}
  \|T_j(\lambda) \Lambda_{2m-m_j}(\lambda) \tilde B_j(x,D) u \|_{W_p^{2m}(\R^n_+)} & \le C \| \Lambda_{2m-m_j}(\lambda)
  \tilde B_j(x,D) u \|_{L^p(\R^n_+)} \\
  & \le C \| \tilde B_j(x,D) u\|_{W_p^{2m-m_j}(\R^n_+)} .
  \end{align*}
 The terms of the form $\tilde b_{j\beta} D^\beta u$ can be estimated, using the Leibniz rule, by
  \begin{align*}
  \| \tilde b_{j\beta} D^\beta u \|_{W_p^{2m-m_j}(\R^n_+)} & \le C  \sum_{|\gamma|\le 2m-m_j} \sum_{\delta +\delta'
  =\gamma}\| (D^\delta\, \tilde b_{j\beta})( D^{\delta'+\beta} u)\|_{L^p(\R^n_+)}\\
  & \le C_2 \epsilon \|u\|_{W_p^{2m}(\R^n_+)} + C_3 \|u\|_{W_p^{2m-1}(\R^n_+)}.
\end{align*}
Here, the constant $C_3$ depends on the norm $\|b_{j\beta}\|_{\buc^{2m-m_j}(\R^{n-1})}$. With the interpolation
inequality, we see that for some constants $C_1,C_2$ we have
\[ \| S(\lambda)u \|_{W_p^{2m}(\R^n_+)} + |\lambda| \|S(\lambda)u\|_{L^p(\R^n_+)}
\le C_1 \epsilon \|u\|_{W_p^{2m}(\R^n_+)} + C_2 \|u\|_{L^p(\R^n_+)}.\]
Now we endow  $W_p^{2m}(\R^n_+)$ with the parameter-dependent norm  $\norm u\norm := \|u\|_{W_p^{2m}(\R^n_+)} +
|\lambda| \|u\|_{L^p(\R^n_+)}$. Note that for every fixed  $\lambda$, this norm is equivalent to the standard norm.
For         $|\lambda|\ge  2C_2$ and $C_1\epsilon \le \frac 12$, it follows that
\[ \norm S(\lambda) u\norm \le \tfrac 12 \norm u\norm.\]
Therefore,  $(1+S(\lambda))\in L(W_p^{2m}(\R^n_+))$ is invertible (with respect to the new norm, and therefore also
with respect to the standard norm). Thus, we have seen that the above boundary value problem is uniquely solvable and
that the resolvent   $(A_{B,p}-\lambda)^{-1}$ exists for all $\lambda\in\bar\C_+$ with $|\lambda|\ge 2C_2$.

To obtain an estimate on the $\mathcal R$-bounds, we can argue similarly. Starting from the identity
\[ (A_{B,p}-\lambda)^{-1} = (A_{B,p}^0-\lambda)^{-1} - S(\lambda) (A_{B,p}-\lambda)^{-1}, \]
one can show for sufficiently large $\mu>0$
  \begin{align*}
   \RR\big\{ \tilde A(x,D) & (A_{B,p}-\lambda)^{-1}:\lambda\in\bar\C_+,\,|\lambda|\ge\mu\big\} \\
   & \le \sum_{|\alpha|=2m} \|\tilde a_\alpha\|_{L^\infty(\R^n_+)} \RR \big\{ D^\alpha (A_{B,p}-\lambda)^{-1}:
   \lambda\in\bar\C_+,\,|\lambda|\ge\mu\big\}\\
   & \le C\epsilon  \RR \big\{ D^\alpha (A_{B,p}-\lambda)^{-1}:\lambda\in\bar\C_+,\,|\lambda|\ge\mu\big\}.
  \end{align*}
  Similarly, the other terms in  $S(\lambda)(A_{B,p}-\lambda)^{-1} $ can be estimated. Consider the operator
  family
  \[ \mathcal T := \big\{ \lambda^{2m-|\alpha|)/(2m)}D^\alpha (A_{B,p}-\lambda)^{-1}: |\alpha|\le 2m,\, \lambda\in
  \bar\C_+,\,|\lambda|\ge\mu\big\}.\]
  The above calculations show that for every finite subset $\mathcal T_0$ of $\mathcal T$, we get
  the inequality
  \[ \RR(\mathcal T_0)\le R_1 + (C_1\epsilon+ C_2(\mu)) \RR(\mathcal T_0).\]
  Here,
  \[ C_1 := \RR \big\{ \lambda^{2m-|\alpha|)/(2m)}D^\alpha (A_{B,p}^{0}-\lambda)^{-1}:|\alpha|\le 2m,\,\lambda\in
  \bar\C_+,\,|\lambda|\ge\mu\big\}<\infty\]
  and $C_2(\mu)\to 0$ for $\mu\to\infty$. Choosing $\epsilon$ small enough  and $\mu$ large enough, we have
  $C_1\epsilon+C_2(\mu)<\frac 12$, and therefore
  $\RR(\mathcal T_0)< 2 R_1<\infty$. As this holds for every finite subset $\mathcal T_0$ of $\mathcal T$, with $R_1$
  being independent of $\mathcal T_0$, we get the same estimate for $\mathcal T$, i.e.,  $\RR(\mathcal T)\le 2R_1$.
\end{proof}

The last result deals with small perturbations of the top-order coefficients. As before, lower-order terms of the
operators can be handled by the interpolation inequality. For a proof of maximal regularity in the situation of a
bounded domain and under the above smoothness assumptions, the method of localization can be used.
We mention some main ideas in the following remark.

\begin{remark}[Localization]
\label{6.13} Let $(\partial_t - A,B)$ be a parabolic boundary value problem in the bounded domain $G$, and assume
 the smoothness assumptions from the beginning of this section to hold. To prove $\RR$-sectoriality of the
 $L^p$-realization of $(A,B)$, one can use the following steps:

a) For every fixed $x_0\in\partial G$, by definition of a  $C^{2m}$-domain, there exists a neighbourhood
 $U(x_0)\subset\R^n$ and a $C^{2m}$-diffeomorphism $\Phi_{x_0}\colon U(x_0)\to V(x_0) := \Phi_{x_0}(U(x_0))\subset\R^n$ with
\[ \Phi_{x_0}(U(x_0)\cap G) = V(x_0)\cap \R^n_+.\]
We denote by $(\tilde A,\tilde B)$ the transformed boundary value problem in the domain $V(x_0)$. The coefficients
$\tilde a_\alpha$ of $\tilde A$ are defined in $V(x_0)\cap\bar{\R^n_+}$ and satisfy the same
 smoothness assumptions as  $a_\alpha$. In the same way, this holds for the transformed coefficients $\tilde b_{j\beta}$
 of $\tilde B_j$. Moreover, it is possible to show that the transformed problem is parabolic in  $V(x_0)\cap \R^n_+$.

The coefficients  $\tilde a_\alpha$ and $\tilde b_{j\beta}$ can be extended to the half space
 $\bar\R^n_+$ and  $\R^{n-1}$, respectively, in such a way that both the smoothness and the parabolicity is preserved.
  For  $\tilde a_\alpha$, we can choose an appropriate continuous extension. For the coefficients on the boundary
   $\tilde b_{j\beta}$, we have to preserve higher smoothness. For this, one can, e.g., define
\[ \tilde b_{j\beta} (y) := \tilde b_{j\beta}\Big( y_0 + \chi\Big(\frac{y-y_0}{r}\Big)(y-y_0)\Big)\quad (y\in\R^{n-1}),\]
where $\chi\in C^\infty(\R^{n-1})$ satisfies  $\chi(x)=1$ for $|x|\le 1$ and $\chi(x) = 0$ for $|x|\ge 2$.
Here, $y_0 := \Phi_{x_0}(x_0)$, and $r>0$ is chosen sufficiently small.

For an eventually even smaller  $r=r(x_0)$, the following inequalities hold true for a given $\epsilon>0$:
\begin{align*}
\sum_{|\alpha|=2m} \|\tilde a_\alpha(\,\cdot\,) -\tilde a_\alpha(y_0)\|_{L^\infty(\R^n_+)} & <\epsilon,\\
\sum_{|\beta|=m_j} \|\tilde b_{j\beta}(\,\cdot\,)-\tilde b_{j\beta}(y_0)\|_{L^\infty(\R^{n-1})} & <\epsilon \quad (j=1,\dots,m).
\end{align*}
Therefore, the localized boundary value problems satisfy the conditions of Theorem~\ref{6.12}.

For fixed $\epsilon>0$, this construction   yields an open cover  of the form
\[ \partial G \subset \bigcup_{x_0\in\partial G} \Phi_{x_0}^{-1}(B(y_0,r(x_0))).\]
By compactness of  $\partial G$, there exists a finite subcover $\partial G\subset \bigcup_{k=1}^N U_k$, where we have
set $U_k := \Phi_{x_k}^{-1}(B(y_k,r(x_k)))$.

b) In the same way, in the interior of the domain, we obtain for every $x_0\in G$ a small neighbourhood
$U(x_0)\subset\R^n$ and an extension $\tilde a_\alpha$ of $ a_\alpha|_{U(x_0)}$ such that
\[ \sum_{|\alpha|=2m} \|\tilde a_\alpha(\,\cdot\,) - \tilde
a_\alpha(x_0)\|_{L^\infty(\R^n)} <\epsilon\] holds.  In this way, we obtain an open cover
\[G\setminus\bigcup_{k=1}^N U_k \subset \bigcup_{x_0\in G} B(x_0,r(x_0)).\]
Note that no boundary operator and no diffeomorphism is involved.  As $G\setminus\bigcup_{k=1}^N U_k$ is compact, there
exists a finite subcover
\[ G\setminus\bigcup_{k=1}^N U_k \subset \bigcup_{k=N+1}^M U_k \] with  $U_k= B(x_k,r(x_k)))$.
 Altogether, this yields a finite open cover
 $\bar G \subset \bigcup_{k=1}^{M} U_k$.

c) With this construction, one obtains finitely many operators $(\tilde A^{(k)}, \tilde B^{(k)})$ for $k=1,\dots,N$
and $\tilde A^{(k)}$ for $k=N+1,\dots,M$, which satisfy the assumptions of Theorem~\ref{6.12} and Lemma~\ref{5.7},
respectively. Now we can use the resolvents of the $L^p$-realization of these operators to show $\mathcal R$-sectoriality
 of $A_{B,p}-\mu$ for large $\mu$. This can be done  similarly as in the proof of Theorem~\ref{5.9}, using a partition
 of unity and estimating the commutators with help of the interpolation inequality.
\end{remark}

With the above techniques, it is possible to show the following main theorem on parabolic boundary value problems:

\begin{theorem}
  \label{6.14}
  Assume the boundary value problem $(\partial_t - A,B)$ to be parabolic and to satisfy the smoothness assumptions
  above. Let  $1<p<\infty$. Then there exist $\theta>\frac\pi 2$ and $\mu>0$ such that
  $\rho(A_{B,p}-\mu)\supset \bar\C_+$ and the operator $A_{B,p}-\mu$ is $\RR$-sectorial with angle $\theta$.
  In particular, $A_{B,p}-\mu$ has maximal $L^q$-regularity for all $q\in (1,\infty)$.
\end{theorem}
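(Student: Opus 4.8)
The plan is to reduce the assertion to the half-space results of Theorems~\ref{6.11} and \ref{6.12} through the localization procedure of Remark~\ref{6.13}, and then to invoke the Theorem of Weis (Theorem~\ref{3.34}). First I would enlarge the angle: since $\bar G$ and $\partial G$ are compact and, after the quasi-homogeneous reduction to a compact set, conditions (a) and (b) of Definition~\ref{6.2} are open in the ray-angle of $\lambda$ (for (a) this is Remark~\ref{5.3}; for (b) it follows from continuity of the Lopatinskii determinant, Lemma~\ref{6.4}), parabolicity of $(\partial_t-A,B)$ yields some $\theta>\frac\pi2$ for which $(A,B)$ is parameter-elliptic in $\bar\Sigma_\theta$. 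I fix this $\theta$; the half-space Theorems~\ref{6.11} and \ref{6.12} then hold verbatim with $\bar\C_+$ replaced by $\bar\Sigma_\theta$ (the construction in Theorem~\ref{6.8} only uses invertibility of $a_0(\xi)-\lambda$ and avoidance of the negative real axis, both of which persist on $\bar\Sigma_\theta$).

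Next I would carry out the localization exactly as sketched in Remark~\ref{6.13}: a finite open cover $\bar G\subset\bigcup_{k=1}^M U_k$, $C^{2m}$-diffeomorphisms $\Phi_k$ flattening the boundary for $k\le N$, functions $\phi_k,\psi_k\in\mathscr D$ with $\psi_k\equiv 1$ on $\supp\phi_k$ and $\sum_k\phi_k^2\equiv1$ on $\bar G$, chosen so that every transported, frozen-coefficient boundary problem $(\tilde A^{(k)},\tilde B^{(k)})$ with $k\le N$ falls under Theorem~\ref{6.12} and every interior model operator $\tilde A^{(k)}$ with $k>N$ falls under Lemma~\ref{5.7}. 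This yields $\mu_0>0$ and a uniform constant $K$ such that each $L^p$-realization $A_B^{(k)}$ is $\RR$-sectorial in $\bar\Sigma_\theta$ with $\RR\{\lambda(\lambda-A_B^{(k)})^{-1}\}\le K$ for $\lambda\in\bar\Sigma_\theta$, $\Re\lambda\ge\mu_0$. I then assemble the block-diagonal operator $A:=\diag(A_B^{(1)},\dots,A_B^{(M)})$ on $X_0:=\bigoplus_{k=1}^M L^p(\Omega_k)$ (with $\Omega_k=\R^n_+$ for $k\le N$ and $\Omega_k=\R^n$ for $k>N$), and $X_j:=\bigoplus_{k=1}^M W_p^j(\Omega_k)$; by the Fubini computation in the proof of Theorem~\ref{5.9}, a finite direct sum of uniformly $\RR$-sectorial operators is $\RR$-sectorial with the same bound, so $A$ is $\RR$-sectorial in $\bar\Sigma_\theta$ with $\RR$-bound $\le K$ for $\Re\lambda\ge\mu_0$.

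Then I would connect $A_{B,p}$ to $A$ via the localization operator $Ju:=(\phi_k u)_k$ and the patching operator $P(u_k)_k:=\sum_k\phi_k u_k$, which satisfy $PJ=\id_{L^p(G)}$, $J\in L(L^p(G),X_{2m})$, $P\in L(X_0,L^p(G))$. A commutator computation as in Theorem~\ref{5.9} gives
\[ J A_{B,p}=A J+B J,\qquad A_{B,p}P=P A+P D, \]
where each component of $B$ and $D$ is a differential operator of order $\le 2m-1$, assembled from the transported lower-order coefficients and from the commutators $[\phi_k,\tilde A^{(k)}]$ and $[\phi_k,\tilde B^{(k)}]$; on a boundary chart $[\phi_k,\tilde B^{(k)}]u$ is an inhomogeneous boundary datum of order $\le 2m-m_j-1$, which one removes by the Poisson-type solution operators $T_j(\lambda),\tilde T_j(\lambda)$ of Theorem~\ref{6.8} (whose $\RR$-boundedness is Theorem~\ref{6.10}), the resulting lower-order terms being absorbed into $B$ and $D$ so that $B,D\in L(X_{2m-1},X_0)$. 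The interpolation inequality for Sobolev spaces gives, for each $\epsilon>0$, a $C_\epsilon$ with
\[ \|B(u_k)_k\|_{X_0}+\|D(u_k)_k\|_{X_0}\le\epsilon\,\|(u_k)_k\|_{X_{2m}}+C_\epsilon\,\|(u_k)_k\|_{X_0}, \]
so Theorem~\ref{5.6} produces $\mu\ge\mu_0$ for which $A+B-\mu$ and $A+D-\mu$ are $\RR$-sectorial in $\bar\Sigma_\theta$. Feeding $Jf=(\lambda+\mu-(A+B))Ju$ shows $\lambda+\mu-A_{B,p}$ is injective, and $f=PJf=(\lambda+\mu-A_{B,p})P(\lambda+\mu-(A+D))^{-1}Jf$ shows it is surjective, whence
\[ (\lambda+\mu-A_{B,p})^{-1}=P\,(\lambda+\mu-(A+D))^{-1}\,J\qquad(\lambda\in\bar\Sigma_\theta). \]
Since $P,J$ are bounded and $A+D-\mu$ is $\RR$-sectorial in $\bar\Sigma_\theta$, this identity gives $\RR$-sectoriality of $A_{B,p}-\mu$ with angle $\ge\theta>\frac\pi2$, hence $\rho(A_{B,p}-\mu)\supset\bar\C_+$. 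As $L^p(G)$ is a UMD space (Remark~\ref{3.27}), Theorem~\ref{3.34} yields $A_{B,p}-\mu\in\mreg((0,\infty);L^p(G))$, and the $p$-independence of maximal regularity gives maximal $L^q$-regularity for all $q\in(1,\infty)$.

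The step I expect to be the main obstacle is the bookkeeping at the boundary charts: verifying that each $\Phi_k$ transports the boundary problem to one that is still parabolic in $\R^n_+$ and whose transported coefficients keep the stated $C$/$\buc^{2m-m_j}$-regularity (so Theorem~\ref{6.12} really applies), and checking that the boundary commutators $[\phi_k,\tilde B^{(k)}]$ genuinely drop the order, so that after being solved away by the Poisson operators they enter only the lower-order perturbations $B,D$ and can be absorbed by the interpolation inequality together with the shift $\mu$. Everything else is a faithful repetition of the whole-space argument in the proof of Theorem~\ref{5.9}.
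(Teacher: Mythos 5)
Your proposal follows exactly the route the paper itself takes: it only sketches the argument via Remark~\ref{6.13}, namely angle enlargement by compactness, a finite cover with boundary charts reduced to Theorem~\ref{6.12} and interior charts reduced to Lemma~\ref{5.7}, and then a partition-of-unity/commutator argument modelled on the proof of Theorem~\ref{5.9} with the lower-order terms absorbed by the interpolation inequality and Theorem~\ref{5.6}, finishing with Theorem~\ref{3.34}. Your treatment is, if anything, slightly more explicit than the paper (e.g.\ the boundary commutators $[\phi_k,\tilde B^{(k)}]$ removed by the Poisson operators of Theorem~\ref{6.8}), and is correct at the same level of detail.
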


\section{Quasilinear parabolic evolution equations}

We have seen in the previous sections that, under appropriate parabolicity and smoothness assumptions, the
$L^p$-realization of linear boundary value problems have maximal regularity. This is the basis for the analysis of
nonlinear problems, which will be described in the present section.

\subsection{Well-posedness for quasilinear parabolic evolution equations}

We consider nonlinear evolution equations which can be written in the abstract form
\begin{equation}
  \label{4-1}
  \begin{aligned}
  \partial_t u(t) - A(t,u(t))u(t) & = F(t,u(t))\quad    \text{ in } (0,T_0),\\
  u(0) & = u_0.
  \end{aligned}
\end{equation}
Here, $T_0\in(0,\infty)$. We fix the following situation: Let $p\in (1,\infty)$, and let $X_1\subset X_0$ be Banach
spaces with
 $X_1$ being dense in $X_0$. With $T\in (0,T_0]$, the   spaces  for the right-hand side and the solution are
\[ \F := \F_T := L^p( (0,T);X_0)\quad\text{and}\quad  \E := \E_T := H_p^1((0,T);X_0)\cap L^p((0,T);X_1),\]
respectively. The time trace space, and therefore the space for the initial value $u_0$, is  given by $\gamma_t \E =
(X_0,X_1)_{1-1/p,p}$ (cf. Lemma~\ref{2.4a}). We again set $\leftidx{_{0}}{\E}{} := \{ u\in\E: \gamma_t u =0 \}$. Here
and in the following, we consider the operator $A$ as a map $A\colon (0,T_0)\times \gamma_t\E \to L(X_1,X_0)$. For each
$t\in (0,T_0)$ and $v\in\gamma_t\E$, the operator $A(t,v)\in L(X_1,X_0)$ is identified with the unbounded operator
$A(t,v)$ acting in $X_0$ with domain $X_1$, and $A(t,v)\in\mreg (X_0)$ has to be understood in this sense.

\begin{example}
  \label{4.1a}
We recall the example of the graphical mean curvature flow (Example~\ref{2.1}), which has the form
  \begin{equation}
    \label{4-2}
    \begin{aligned}
    \partial_t u - \Big(\Delta u -\sum_{i,j=1}^n\frac{\partial_i u \partial_j u}{1+|\nabla u|^2}\,\partial_i \partial_j
     u\Big)  & = 0 \quad \text{in } (0,T_0),\\
    u(0) & = u_0.
    \end{aligned}
  \end{equation}
This quasilinear equation can be written in the form \eqref{4-1}, where \[ A(t,u(t)) =
   \Delta -\sum_{i,j=1}^n\frac{\partial_i u(t) \partial_j u(t)}{1+|\nabla u(t)|^2}\,\partial_i \partial_j \]
   and $F=0$. Here we have $X_0=L^p(\R^n)$, $X_1=W_p^2(\R^n)$, and $\gamma_t\E = B_{pp}^{2-2/p}(\R^n)=W_p^{2-2/p}(\R^n)$.
\end{example}

For the nonlinearities $A$ and $F$ in \eqref{4-1}, we assume:
\begin{enumerate}
  [(\mbox{A}1)]
  \item We have $A\in C([0,T_0]\times \gamma_t\E, L(X_1,X_0))$, and for all $R>0$ there exists a  Lipschitz constant
      $L(R)>0$ with
      \[ \|A(t,w)v - A(t,\bar w)v\|_{X_0} \le L(R) \|w-\bar w\|_{\gamma_t\E} \|v\|_{X_1}\]
      for all $t\in [0,T_0]$, $v\in X_1$ and all $w,\bar w\in \gamma_t\E$ with $\|w\|_{\gamma_t\E}\le R$ and $\|\bar
      w\|_{\gamma_t\E}\le R$.
  \item For the mapping $F\colon[0,T_0]\times \gamma_t\E\to X_0$ we assume:
  \begin{enumerate}[(i)]
  \item $F(\cdot, w)$ is measurable for every $w\in\gamma_t\E$,
  \item $F(t,\cdot)\in C(\gamma_t\E,X_0)$ for almost all  $t\in [0,T_0]$,
  \item
  $f(\cdot):= F(\cdot,0)\in L^p((0,T_0);X_0)$,
  \item for every $R>0$, there exists a $\phi_R\in L^p((0,T_0))$ with
  \[ \|F(t,w)-F(t,\bar w)\|_{X_0} \le \phi_R(t) \|w-\bar w\|_{\gamma_t\E}\]
  for almost all $t\in [0,T_0]$ and all $w,\bar w\in\gamma_t\E$ with $\|w\|_{\gamma_t\E}\le R$, $\|\bar
  w\|_{\gamma_t\E}\le R$.
  \end{enumerate}
\end{enumerate}
Apart from standard conditions on measurability and continuity, the above conditions essentially mean that the
functions $A(t,\cdot)v$ and $F(t,\cdot)$ are locally Lipschitz, i.e., they are Lipschitz on bounded subsets of
 $\gamma_t\E$. The following result is based on \cite{Pruess02}, Section~3 (see also \cite{Clement-Li93}).

\begin{theorem}
  \label{4.8}
  Assume (A1) and (A2) as well as $A_0 := A(0,u_0)\in \mreg(X_0)$. Then there exists a $T\in (0,T_0]$ such that
  \eqref{4-1} has a unique solution  $u\in \E_T$ in the interval  $(0,T)$.
\end{theorem}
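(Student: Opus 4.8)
The plan is to solve \eqref{4-1} by a contraction mapping argument in the space $\E_T$ for sufficiently small $T$, following the linearization philosophy described in Section~2. The key point is that the leading operator $A_0 := A(0,u_0)$ is \emph{autonomous} and has maximal regularity, so by Remark~\ref{2.5}~a) the solution operator for the reference linear problem
\[
\partial_t v - A_0 v = g, \qquad v(0) = u_0,
\]
is an isomorphism $\F_T \times \gamma_t\E \to \E_T$. First I would reduce to the case $u_0 = 0$: pick a fixed reference extension $u_* \in \E_{T_0}$ with $\gamma_t u_* = u_0$ (available by definition of the trace space), write $u = u_* + w$, and reformulate \eqref{4-1} as an equation for $w \in {}_0\E_T$ with zero initial trace. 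This is where Lemma~\ref{2.4a}~c) becomes essential: on the subspace ${}_0\E_T$ the embedding constant into $C([0,T];\gamma_t\E)$ is independent of $T$, which is what will let the relevant norms be made small by shrinking $T$.

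Next I would set up the fixed-point map. Given $w \in {}_0\E_T$ (in a suitable closed ball), let $\Phi(w)$ be the unique solution in ${}_0\E_T$ of the linear problem
\[
\partial_t z - A_0 z = \bigl(A(t, u_*(t)+w(t)) - A_0\bigr)(u_*(t)+w(t)) + F(t, u_*(t)+w(t)) + A_0 u_* - \partial_t u_*, \qquad z(0)=0,
\]
where the right-hand side must be verified to lie in $\F_T = L^p((0,T);X_0)$. A fixed point of $\Phi$ is exactly a solution of \eqref{4-1}. The two things to check are: (a) $\Phi$ maps a small ball $B_\rho := \{w \in {}_0\E_T : \|w\|_{\E_T} \le \rho\}$ into itself, and (b) $\Phi$ is a contraction on $B_\rho$. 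Both follow from estimating the right-hand side using (A1) and (A2): the term $(A(t,\cdot)-A_0)(\cdot)$ is controlled by writing $A(t,v) - A_0 = (A(t,v) - A(t,u_0)) + (A(t,u_0) - A(0,u_0))$ and using the Lipschitz bound from (A1) plus continuity in $t$; the term $F$ is controlled by (A2)(iii)--(iv); and the commutator-type remainder $A_0 u_* - \partial_t u_* \in \F_{T_0}$ has $L^p((0,T);X_0)$-norm tending to $0$ as $T \to 0$ by absolute continuity of the integral. The uniform-in-$T$ constant $C_S$ for $\|\Phi(w)\|_{\E_T} \le C_S \|(\text{rhs})\|_{\F_T}$ on ${}_0\E_T$ comes from maximal regularity of $A_0$ together with Lemma~\ref{2.4a}~c); crucially one must check this solution operator norm can be taken independent of $T \in (0,T_0]$ (by extending data by a fixed reference and restricting, or directly from the $\MRO = \mreg$ identification on finite intervals in Remark~\ref{2.5}~c)).

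The main obstacle I expect is the bookkeeping that makes all the small factors genuinely small \emph{simultaneously} for one choice of $\rho$ and $T$. Concretely: choose $R > \|u_0\|_{\gamma_t\E}$, so that for $T$ small and $\|w\|_{\E_T} \le \rho$ small one has $\|u_*(t)+w(t)\|_{\gamma_t\E} \le R$ for all $t \in [0,T]$ (using the uniform embedding ${}_0\E_T \hookrightarrow C([0,T];\gamma_t\E)$ and continuity of $u_*$). Then the Lipschitz constants $L(R)$ and $\phi_R \in L^p$ are available on this range. Estimating $A(t,u_*+w) - A_0$ gives a factor bounded by $L(R)\|u_* - u_0\|_{C([0,t];\gamma_t\E)} + L(R)\|w\|_{C([0,t];\gamma_t\E)} + \omega(t)$, where $\omega$ is a modulus of continuity of $t \mapsto A(t,u_0)$; multiplied against $\|u_*+w\|_{L^p((0,T);X_1)}$ and against the $\phi_R$-term, the resulting bound on $\|(\text{rhs})\|_{\F_T}$ is of the form $o_{T\to 0}(1) + C(L(R)+\|\phi_R\|_{L^p(0,T)})\bigl(\rho + \|w\|_{\E_T}\bigr)$ plus a constant-times-$\rho$ contribution that can be absorbed. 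One then fixes $\rho$, then $T$ small enough that $C_S$ times all these factors is $\le \tfrac12$ and $C_S$ times the $T$-dependent part is $\le \tfrac{\rho}{2}$; the contraction estimate is entirely analogous, differencing the nonlinearities at $w_1, w_2$. Banach's fixed-point theorem then yields a unique $w \in B_\rho \subset {}_0\E_T$, hence a solution $u = u_* + w \in \E_T$; uniqueness in all of $\E_T$ (not just the ball) follows by a standard continuation/Gronwall-type argument on a possibly smaller interval, or is deferred as in \cite{Pruess02}.
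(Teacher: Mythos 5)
Your proposal is correct and follows essentially the same route as the paper: a Banach fixed-point argument in $\E_T$ based on maximal regularity of the frozen operator $A_0=A(0,u_0)$, the $T$-independent constants on $\leftidx{_0}{\E}{}$ from Lemma~\ref{2.4a}~c), and smallness obtained by shrinking $T$ and the ball radius, with (A1)/(A2) controlling the perturbation terms. The only (immaterial) difference is the choice of reference: you shift by an arbitrary trace extension $u_*$ and work with zero-trace unknowns plus the extra forcing term $A_0u_*-\partial_t u_*$, whereas the paper centers the ball at the reference solution $u^*$ of $\partial_t w-A_0w=F(\cdot,0)$, $w(0)=u_0$, and keeps the initial value $u_0$ in each linear solve.
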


\begin{proof}
\textbf{(i)}
We use the maximal regularity of  $A_0 := A(0,u_0)$ in the time interval $(0,T)$ with $T\le T_0$ to obtain
estimates for the solutions of the linearized equation. For this, we first consider the equation with initial value 0,
  \begin{equation}
    \label{4-4}
    \begin{aligned}
      \partial_t w(t) - A_0 w(t) & = g(t)\quad (t\in (0,T)),\\
      w(0) & = 0.
    \end{aligned}
  \end{equation}

As $A_0\in \mreg(X_0)$, for every $g\in \F$ there exists a unique solution $w\in\E$, and we obtain the estimate
  \[ \|w\|_{\E} \le C_0 \|g\|_{\F}\]
with a constant $C_0>0$ which does not depend on $T$ or $w$
  (Lemma~\ref{4.6}). By Lemma~\ref{4.3} b), there exists a constant $C_1$ (again independent of $T>0$ and $w$) with
  \[ \|w\|_{C([0,T],\gamma_t\E)} \le C_1\|w\|_{\E}. \]
Note here that  $w(0)=0$ holds.

In the following, we consider the reference solution   $u^*\in\E$ which is defined as the unique solution of
  \begin{equation}
    \label{4-5}
    \begin{aligned}
      \partial_t w(t) - A_0 w(t) & = f(t)\quad (t\in (0,T)),\\
      w(0) & = u_0.
    \end{aligned}
  \end{equation}
Here,  $f:= F(\cdot,0)\in\F$ due to condition (A2) (iii).

  \textbf{(ii)} For $r\in (0,1]$ set
  \[ B_r := \{ v\in\E: v-u^*\in \leftidx{_0}{\E}{}, \, \|v-u^*\|_{\E}\le r\}. \]
  For each $v\in B_r$, define $\Phi(v):= u$ as the unique solution of
  \begin{equation}
    \label{4-6}
    \begin{aligned}
      \partial_t u(t) - A_0 u(t) & = F(t,v(t)) - \big( A(0,u_0)- A(t,v(t))\big) v(t)\quad (t\in (0,T)),\\
      u(0) & = u_0.
    \end{aligned}
  \end{equation}
  We will show that  $\Phi(B_r)\subset B_r$ holds and that  $\Phi$ is a contraction in $B_r$, given that both  $T$ and
   $r$ are sufficiently small.

  \textbf{(iii)} In this step, we show that $\Phi(B_r)\subset B_r$ holds for sufficiently small $T$ and $r$. For this,
   we write
  \begin{equation}
    \label{4-7}
    \|\Phi(v)-u^*\|_{\E}  = \|u-u^*\|_{\E} \le C_0\Big(  \|F(\cdot,v)-f(\cdot)\|_{\F}
  + \|(A(0,u_0)-A(\cdot,v))v\|_{\F}\Big).
  \end{equation}
  Let $m_T := \sup_{t\in[0,T]}\|A(0,u_0)-A(t,u_0)\|_{L(X_1,X_0)}$. By condition (A1) with fixed $R:= C_1 +
  \|u^*\|_{L^\infty((0,T);\gamma_t\E)}$, we obtain
  \begin{align*}
    \| A(0,u_0)v & - A(\cdot,v)v\|_{\F}  = \|A(0,u_0)v-A(\cdot,v)v\|_{L^p((0,T);X_0)}\\
    & \le \|A(0,u_0)-A(\cdot,v)\|_{L^\infty((0,T); L(X_1,X_0))} \|v\|_{L^p((0,T);X_1)}\\
    & \le \Big( \|A(0,u_0)-A(\cdot,u_0)\|_{L^\infty((0,T); L(X_1,X_0))} \\
    & \qquad + \| A(\cdot,u_0)-A(\cdot,v(\cdot))\|_{L^\infty((0,T);L(X_1,X_0))}\Big) \|v\|_{\E} \\ & \le \Big(
    m_T + L(R) \|v-u_0\|_{L^\infty((0,T);\gamma_t\E)} \Big) \|v\|_{\E}\\
    & \le \Big( m_T + L(R) C_1 \|v-u_0\|_{\E}\Big) \|v\|_{\E}.
  \end{align*}
  For   $r\le 1$, we can estimate
  \[
    \|v-u_0\|_{\E}  \le \|v-u^*\|_{\E} + \|u^*-u_0\|_{\E} \le  r + \|u^*-u_0\|_{\E}
  \]
  and
  \[ \|v\|_{\E} \le \|v-u^*\|_{\E} + \|u^*\|_{\E} \le r+ \|u^*\|_{\E}.\]
  Therefore, we obtain
  \[ \|A(0,u_0)v-A(\cdot,v)v\|_{\F} \le \Big( m_T + L(R)C_1(r+\|u^*-u_0\|_{\E} )\Big) (r+\|u^*\|_{\E}).\]
  In a similar way, using (A2), we see that
  \begin{align*}
    \|F(\cdot,v)-f\|_{\F} & \le \| F(\cdot,v)-F(\cdot,u^*)\|_{\F} + \| F(\cdot, u^*)-F(\cdot,0)\|_{\F} \\
    & \le \|\phi_R\|_{L^p((0,T))} \Big( \|v-u^*\|_{L^\infty((0,T); \gamma_t\E)} + \|u^*\|_{L^\infty((0,T); \gamma_t\E)}
    \Big)\\
    & \le \|\phi_R\|_{L^p((0,T))} \Big(C_1 \|v-u^*\|_{\E} + \|u^*\|_{L^\infty((0,T); \gamma_t\E)}\Big)\\
    & \le \|\phi_R\|_{L^p((0,T))} C_1 \big( r + \|u^*\|_{L^\infty((0,T); \gamma_t\E)}\big).
  \end{align*}
  Inserting this into \eqref{4-7}, we get
  \begin{equation}\label{4-8}
  \begin{aligned}
   \|\Phi(v)-u^*\|_{\E} &  \le C_0 \Big[ \|\phi_R\|_{L^p((0,T))} \big(  C_1 r+\|u^*\|_{L^\infty((0,T); \gamma_t\E)}
   \big) \\
   & \quad +
   \big(m_T + L(R)C_1(r+\|u^*-u_0\|_{\E})\big)(r+\|u^*\|_{\E})\Big]\\
   & \le C_0 (C_1 + \|u^*\|_{L^\infty((0,T);\gamma_t\E)})\|\phi_R\|_{L^p((0,T))} \\
   & \quad + C_0 (r+\|u^*\|_{\E})\big( m_T + L(R)C_1r+L(R)C_1\|u^*-u_0\|_{\E}\big).
   \end{aligned}
   \end{equation}
   In the limit $T\to 0$, we obtain the following convergences:
   \begin{itemize}
     \item $m_T\to 0$, as $A(\cdot, u_0)$ is continuous,
     \item $\|\phi_R\|_{L^p((0,T))}\to 0$, as $\phi_R\in L^p((0,T_0))$,
     \item $\|u^*-u_0\|_{\E_T}\to 0$, as $u^*-u_0\in \E_{T_0}$,
     \item $\|u^*\|_{\E_T} \to 0$, as $u^*\in \E_{T_0}$.
   \end{itemize}
   First, choose  $r>0$ small enough such that
     \[ C_0 L(R)C_1r<\tfrac 18\]
    holds.  Then, choose  $T>0$ small enough such that the following inequalities hold:
     \begin{align*}
      \|u^*\|_{\E} & < r\\
      C_0 (C_1 + \|u^*\|_{L^\infty((0,T);\gamma_t\E)})\|\phi_R\|_{L^p((0,T))}& < \tfrac r2,\\
       C_0( m_T + L(R)C_1\|u^*-u_0\|_{\E}\big) & < \tfrac 18.
     \end{align*}
   Inserting this into  \eqref{4-8},  we obtain
   \[ \|\Phi(v)-u^*\|_{\E} \le \tfrac r2 + (r+r)(\tfrac 18+\tfrac18) =r,\]
   which shows that $\Phi(B_r)\subset B_r$.

   \textbf{(iv)} In the same way as in (iii), one sees that for sufficiently small $r>0$ and $T>0$ the inequality
   \[ \|\Phi(v)-\Phi(\bar v)\|_{\E} \le \tfrac 12 \|v-\bar v\|_{\E}\]
   holds for all $v,\bar v\in B_r$. Therefore, $\Phi\colon B_r\to B_r$ is a contraction, and with the Banach fixed point
   theorem (contraction mapping principle), there exists a unique fixed point   $u$ of $\Phi$. By definition of $\Phi$,
   its fixed points are exactly the solutions of the nonlinear equation  \eqref{4-1}, which finishes the proof.
\end{proof}

\begin{theorem}
  \label{4.9}
  Assume (A1) and (A2) to hold, and assume $A(t,v)\in \mreg(X_0)$ for all $t\in [0,T_0)$ with $T_0\in (0,\infty]$.
  Then for every $u_0\in\gamma_t\E$ there exists a unique maximal solution of \eqref{4-1} with maximal
   existence interval $[0,T^+(u_0))\subset [0,T_0)$. If $T^+(u_0)<T_0$ (i.e., if there is no global solution),
    then  $T^+(u_0)$ is characterized by each of the following conditions.
  \begin{enumerate}
    [(i)]
    \item $\lim_{t\nearrow T^+(u_0)} u(t)$ does not exist in $\gamma_t\E$,
    \item $\int_0^{T^+(u_0)} \big( \|u(t)\|_{X_1}^p + \|\partial_t u(t)\|_{X_0}^p\big) dt = \infty$.
  \end{enumerate}
\end{theorem}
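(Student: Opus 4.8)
The plan is to first secure local uniqueness and then patch. Two solutions $u_1\in\E_{T_1}$ and $u_2\in\E_{T_2}$ of \eqref{4-1} with the same initial value must coincide on $[0,\min\{T_1,T_2\}]$: the set of $t$ for which $u_1=u_2$ on $[0,t]$ is a closed subinterval $[0,\tau]$, closedness coming from $\E_T\hookrightarrow C([0,T];\gamma_t\E)$ (Lemma~\ref{2.4a} b)); if $\tau<\min\{T_1,T_2\}$, then $w:=u_1-u_2$ has $w(\tau)=0$ and, with $A_\tau:=A(\tau,u_1(\tau))\in\mreg(X_0)$, solves $\partial_t w-A_\tau w=g$ on $(\tau,\tau+\delta)$ with $g=[A(t,u_1)-A(t,u_2)]u_1+[A(t,u_2)-A_\tau]w+F(t,u_1)-F(t,u_2)$. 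Invoking (A1), (A2), the embedding constant of $\leftidx{_0}{\E}{}\hookrightarrow C([0,T];\gamma_t\E)$ that is uniform in $T$ (Lemma~\ref{2.4a} c)), and the maximal-regularity estimate for $A_\tau$ on vanishing data, with a constant uniform over subintervals of $[0,T_0)$ since $A_\tau\in\mreg((0,\infty);X_0)$ (cf.\ Remark~\ref{2.5} c)), one obtains $\|w\|_{\E_{[\tau,\tau+\delta]}}\le C(\delta)\,\|w\|_{\E_{[\tau,\tau+\delta]}}$ with $C(\delta)\to 0$ as $\delta\to 0$, the small factors being $\|u_1\|_{L^p((\tau,\tau+\delta);X_1)}$, $\|\phi_R\|_{L^p((\tau,\tau+\delta))}$ and $\sup_{[\tau,\tau+\delta]}\|A(\cdot,u_2(\cdot))-A_\tau\|_{L(X_1,X_0)}$; hence $w\equiv 0$ near $\tau$, a contradiction. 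I then define $T^{+}(u_0)$ to be the supremum of all $T\in(0,T_0)$ for which \eqref{4-1} has a solution in $\E_T$ (positive by Theorem~\ref{4.8}); by uniqueness these solutions are consistent, so they patch into a function $u$ on $[0,T^{+}(u_0))$ with $u|_{[0,T]}\in\E_T$ for all $T<T^{+}(u_0)$, solving \eqref{4-1} a.e.\ — the unique maximal solution.

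\textbf{The blow-up alternative (ii).} Assume from now on $T^{+}:=T^{+}(u_0)<T_0$, and suppose for contradiction that $u\in\E_{T^{+}}$; this is exactly the negation of (ii), since $\E_{T^{+}}=H_p^1((0,T^{+});X_0)\cap L^p((0,T^{+});X_1)$. Then $u(T^{+}):=\lim_{t\nearrow T^{+}}u(t)$ exists in $\gamma_t\E$ (Lemma~\ref{2.4a} b)), and $A(T^{+},u(T^{+}))\in\mreg(X_0)$ by hypothesis. Applying Theorem~\ref{4.8} to the time-shifted problem, with nonlinearities $(s,v)\mapsto A(T^{+}+s,v)$ and $(s,v)\mapsto F(T^{+}+s,v)$ (which still satisfy (A1), (A2)) and initial value $u(T^{+})$, yields a solution $\tilde u\in\E_{[0,\delta]}$ for some $\delta\in(0,T_0-T^{+}]$. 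The concatenation of $u$ on $[0,T^{+}]$ with $\tilde u(\,\cdot-T^{+})$ on $[T^{+},T^{+}+\delta)$ has matching time trace at $T^{+}$ (in $\gamma_t\E\hookrightarrow X_0$), so it belongs to $\E_{T'}$ for every $T'<T^{+}+\delta$ — two $H_p^1$-in-time pieces with equal traces glue without a jump in the weak derivative — and solves \eqref{4-1} a.e.\ on $(0,T')$, contradicting the maximality of $T^{+}$. Hence $u\notin\E_{T^{+}}$, which is (ii).

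\textbf{The blow-up alternative (i).} It remains to show that the existence of $u_*:=\lim_{t\nearrow T^{+}}u(t)$ in $\gamma_t\E$ would force $u\in\E_{T^{+}}$, which contradicts (ii). If this limit exists, then $R:=\sup_{t<T^{+}}\|u(t)\|_{\gamma_t\E}<\infty$. Freeze $A_*:=A(T^{+},u_*)\in\mreg(X_0)$ and write, for $t$ near $T^{+}$, $\partial_t u-A_* u=g$ with $g:=[A(t,u(t))-A_*]u(t)+F(t,u(t))$. By (A1) and continuity of $t\mapsto A(t,u_*)$ one has $\|A(t,u(t))-A_*\|_{L(X_1,X_0)}\to 0$ as $t\nearrow T^{+}$, so choose $T_1<T^{+}$ with this norm $<(2C_0)^{-1}$ on $[T_1,T^{+})$, where $C_0$ is the maximal-regularity constant of $A_*$ on vanishing data, uniform over subintervals of $[0,T_0)$. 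On $[T_1,T]$ with $T<T^{+}$, split $u=u^{\sharp}+u^{\flat}$ into the solution $u^{\sharp}$ of the homogeneous equation with datum $u(T_1)$ (bounded in $\E_{[T_1,T]}$ by a constant depending only on $\|u(T_1)\|_{\gamma_t\E}$ and $T^{+}-T_1$, as $A_*$ generates a bounded holomorphic semigroup) and the solution $u^{\flat}$ with zero datum (bounded by $C_0\|g\|_{L^p((T_1,T);X_0)}$); using (A2) and $\|g\|_{L^p((T_1,T);X_0)}\le(2C_0)^{-1}\|u\|_{L^p((T_1,T);X_1)}+R\|\phi_R\|_{L^p((0,T_0))}+\|f\|_{L^p((0,T_0);X_0)}$, the $X_1$-term is absorbed and $\|u\|_{\E_{[T_1,T]}}$ is bounded independently of $T\in(T_1,T^{+})$. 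Letting $T\nearrow T^{+}$ gives $u\in\E_{T^{+}}$, the contradiction; so the limit does not exist in $\gamma_t\E$, i.e.\ (i). Running the same restart and absorption from an arbitrary $\sigma<T_0$ shows, conversely, that $T^{+}$ is the first time at which (i), equivalently (ii), occurs.

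\textbf{Main difficulty.} The technical core is the two absorption estimates (for local uniqueness and for (i)): one must arrange that every ``error'' term carries a genuinely small multiplier — an $L^p$-norm over a shrinking interval, a modulus of continuity, or a preassigned constant — and, decisively, that the maximal-regularity constant of the frozen operator is used in a form uniform over subintervals of the fixed interval $[0,T_0)$. Both are available through $A(\cdot,\cdot)\in\mreg(X_0)=\mreg((0,\infty);X_0)$, Remark~\ref{2.5} c), and the $T$-uniform bound of Lemma~\ref{2.4a} c). The remaining ingredients — patching $\E$-functions with equal time traces and the translation-invariance of (A1), (A2) — are routine and parallel the proof of Theorem~\ref{4.8}.
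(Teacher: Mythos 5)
Your proof is correct, and it is organized differently from the paper's. The paper builds the maximal solution by iterating Theorem~\ref{4.8}, then proves (i) directly: if $\lim_{t\nearrow T^+}u(t)$ exists in $\gamma_t\E$, restart at $T^+$ and extend, contradicting maximality; condition (ii) is then deduced from (i) via $\E_{T^+}\subset C([0,T^+];\gamma_t\E)$. You reverse the order: you prove (ii) first (negation of (ii) gives $u\in\E_{T^+}$, hence a trace at $T^+$, hence an extension by concatenation), and then obtain (i) from (ii) by a genuine a priori estimate — freezing $A_*=A(T^+,u_*)$, using the $T$-uniform maximal regularity constant for zero data and the smallness of $\|A(t,u(t))-A_*\|_{L(X_1,X_0)}$ near $T^+$ to absorb the $X_1$-term and conclude $\|u\|_{\E_{[T_1,T]}}$ is bounded uniformly in $T<T^+$, so $u\in\E_{T^+}$. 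This buys something real: in the paper's (i)-first argument the concatenated function is only a solution in the class $\E_{T'}$, $T'>T^+$, if one already knows $u\in\E_{T^+}$, i.e.\ the negation of (ii) — a point the paper glosses over; your absorption estimate (or, alternatively, a uniform lower bound on the local existence time near $T^+$) is exactly what closes that step, and your ordering makes the extension argument clean because $u\in\E_{T^+}$ is an explicit hypothesis for contradiction. You also make explicit the local uniqueness and patching behind the phrase ``unique maximal solution,'' via the same absorption mechanism on a short interval past the last agreement time; the paper leaves this to the uniqueness built into Theorem~\ref{4.8}. All the tools you invoke (the $T$-independent constant for solutions with vanishing trace, translation invariance of the autonomous frozen problem, (A1)/(A2) Lipschitz bounds on bounded sets) are the same ones the paper itself uses in the proof of Theorem~\ref{4.8}, so nothing extraneous is assumed; the only cosmetic remark is that the negation of (ii) gives directly $\partial_t u\in L^p((0,T^+);X_0)$ and $u\in L^p((0,T^+);X_1)$, and membership $u\in L^p((0,T^+);X_0)$ should be noted (it follows since $T^+<\infty$ and $u(t)=u_0+\int_0^t\partial_t u$ is bounded in $X_0$), exactly as the paper also tacitly does.
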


\begin{proof}
  Assume  $u\in \E_T$ to be a local solution on the interval $(0,T)$. Then $u\in C([0,T];\gamma_t\E)$. Therefore,
  we can apply Theorem~\ref{4.8} in the interval $(T,T_0)$ with initial condition $u_1=u(T)\in\gamma_t\E$, and obtain an
  extension of $u$ to some interval $(0,T')$ with $T'>T$. Continuing in this way, we obtain a unique maximal solution
  which exists in some time interval $[0,T^+(u_0))$.

  If $\lim_{t\nearrow T^+(u_0)} u(t)\in \gamma_t\E$ exists, this can be taken as initial value at time $T^+(u_0)$. By
  the above arguments, we see that $u$ can be extended to a small time interval  $(T^+(u_0), T^+(u_0)+\epsilon)$,
  which is a contradiction to the maximality of  $T^+(u_0)$. Therefore,  $T^+(u_0)$ is characterized by condition (i).

  For each $T<T^+(u_0)$ we have, by definition of a solution,  $\int_0^T (\|u(t)\|_{X_1}^p +
  \|\partial_t u(t)\|_{X_0}^p)dt<\infty$. If this also holds for  $T=T^+(u_0)$, then $u\in \E_{T^+(u_0)}(X_1,X_0)
  \subset C([0,T^+(u_0)]; \gamma_t\E)$. Therefore,  $\lim_{t\nearrow T^+(u_0)} u(t)$ exists in $\gamma_t\E$ in
  contradiction to (i).
\end{proof}

As an application of the above theorems, we obtain a result on lower-order perturbation (the map $B$ in the
 following lemma) for linear non-autonomous problems.

\begin{lemma}
  \label{4.10}
  Let $A\in C([0,T], L(X_1,X_0))$ with $A(t)\in \mreg (X_0)\;(t\in [0,T])$, and let $B\in L^p((0,T); L(\gamma_t\E,X_0))$.
   Then the initial value problem
  \begin{align*}
    \partial_t u(t) - A(t) u(t) & = B(t) u(t) + f(t)\quad (t\in [0,T]),\\
    u(0) = u_0
  \end{align*}
  has for each $f\in\F_T$ and each $u_0\in\gamma_t\E$ a unique solution $u\in\E_T$.
\end{lemma}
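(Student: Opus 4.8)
The plan is to deduce this from Theorems~\ref{4.8} and \ref{4.9} by reading the equation as a quasilinear problem whose leading operator is independent of the unknown and whose lower-order term is globally Lipschitz. First I would extend $A$, $B$, $f$ past $T$ to $[0,T_0)$ with $T_0:=2T$ by setting $A(t):=A(T)$, $B(t):=0$, $f(t):=0$ for $t\in[T,T_0)$; the extended $A$ still lies in $C([0,T_0],L(X_1,X_0))$ with $A(t)\in\mreg(X_0)$ for every $t$. Putting $\tilde A(t,v):=A(t)$ and $\tilde F(t,v):=B(t)v+f(t)$, condition (A1) holds with Lipschitz constant $L(R)\equiv 0$, and (A2) holds: $\tilde F(\cdot,w)$ is measurable, $\tilde F(t,\cdot)$ is linear and bounded hence continuous, $\tilde F(\cdot,0)=f\in L^p((0,T_0);X_0)$, and $\|\tilde F(t,w)-\tilde F(t,\bar w)\|_{X_0}\le\|B(t)\|_{L(\gamma_t\E,X_0)}\|w-\bar w\|_{\gamma_t\E}$, so one may take $\phi_R:=\|B(\cdot)\|_{L(\gamma_t\E,X_0)}\in L^p((0,T_0))$ independently of $R$. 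Theorem~\ref{4.9} then yields a unique maximal solution $u$ on some interval $[0,T^+(u_0))\subset[0,T_0)$.

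It remains to show $T^+(u_0)>T$, for which, by characterization (ii) in Theorem~\ref{4.9}, it suffices to bound $\|u\|_{\E_\tau}$ uniformly for $\tau<\min\{T^+(u_0),T\}$. Let $w\in\E_T$ solve $\partial_t w-A(t)w=f$, $w(0)=u_0$ (available by maximal $L^p$-regularity of the family $A$ on $(0,T)$, Theorem~\ref{2.11}); then $v:=u-w$, restricted to $(0,\tau)$, lies in $\leftidx{_0}{\E}{}$ and solves $\partial_t v-A(t)v=B(t)v+B(t)w$. Maximal regularity of $A$ for vanishing initial trace holds on every $(0,\tau)$, $\tau\le T$, with a single constant $C_0$ (extend the right-hand side by zero to $(0,T)$, solve there, restrict back and use uniqueness); combining it with the $T$-independent bound of Lemma~\ref{2.4a}~c) and writing $\beta(t):=\|B(t)\|_{L(\gamma_t\E,X_0)}\in L^p((0,T))$, one gets for $s\le\tau$
\begin{align*}
 \|v\|_{\E_s} &\le C_0\big(\|B(\cdot)v(\cdot)\|_{L^p((0,s);X_0)}+\|Bw\|_{\F_T}\big)\\
 &\le C_0C_1\Big(\int_0^s\beta(t)^p\|v\|_{\E_t}^p\,dt\Big)^{1/p}+C_0\|Bw\|_{\F_T},
\end{align*}
using $\|v(t)\|_{\gamma_t\E}\le C_1\|v\|_{\E_t}$ (valid since $v(0)=0$) and the monotonicity of $t\mapsto\|v\|_{\E_t}$. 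Raising to the $p$-th power and applying Gronwall's lemma to the nondecreasing function $s\mapsto\|v\|_{\E_s}^p$ against the $L^1$-weight $2^{p-1}(C_0C_1)^p\beta^p$ gives $\|v\|_{\E_\tau}^p\le 2^{p-1}C_0^p\|Bw\|_{\F_T}^p\exp\!\big(2^{p-1}(C_0C_1)^p\|\beta\|_{L^p((0,T))}^p\big)$, independent of $\tau$. Hence $\|u\|_{\E_\tau}\le\|w\|_{\E_T}+\|v\|_{\E_\tau}$ stays bounded as $\tau\nearrow\min\{T^+(u_0),T\}$, which forces $T^+(u_0)>T$; restricting the maximal solution to $[0,T]$ yields a solution $u\in\E_T$ of the original equation (which is unchanged on $(0,T)$), and uniqueness is inherited from Theorem~\ref{4.9}.

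I expect the a priori estimate of the second paragraph to be the only real point — in particular the observation that the maximal-regularity constant for the zero-initial-trace problem does not deteriorate as the subinterval shrinks, which is what makes the $T$-independent embedding of Lemma~\ref{2.4a}~c) the essential tool; everything else (checking (A1)--(A2) and the extension/restriction bookkeeping around $T$) is routine. As an alternative that avoids Theorems~\ref{4.8}--\ref{4.9}: reduce to $u_0=0$ via the reference solution $w$, partition $[0,T]$ into finitely many subintervals on each of which $\|\beta\|_{L^p}$ is small enough that $z\mapsto\tilde z$, where $\partial_t\tilde z-A(t)\tilde z=B(t)z+g$ with prescribed initial trace, is a contraction on the appropriate affine subspace of $\E$, solve successively, and glue the pieces using continuity in $\gamma_t\E$ at the nodes.
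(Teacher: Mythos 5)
Your proof is correct, and while it starts exactly where the paper does -- reading the problem as a quasilinear one with $\tilde A(t,v):=A(t)$, $\tilde F(t,v):=B(t)v+f(t)$, so that (A1) holds with $L(R)=0$ and (A2) with $\phi_R=\|B(\cdot)\|_{L(\gamma_t\E,X_0)}$ independent of $R$ -- the globality step is genuinely different. The paper stays inside Theorem~\ref{4.8}: it observes that, precisely because $L(R)=0$ and $\phi_R$ does not depend on $R$, the constants governing the contraction argument (and hence the length of each existence interval) can be chosen uniformly on $[0,T]$, using the continuity of $t\mapsto A(t)$ and of the maximal-regularity constant as a function of $A$; finitely many continuation steps then reach $T$. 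You instead extend the data past $T$, take the maximal solution from Theorem~\ref{4.9}, and exclude blow-up before $T$ via a purely linear a priori estimate: the reference solution $w$ from Theorem~\ref{2.11}, zero-trace maximal regularity on every subinterval $(0,\tau)$ with a $\tau$-independent constant, the $T$-independent embedding of Lemma~\ref{2.4a}\,c), and Gronwall. What your route buys is a self-contained argument that uses only the statements of Theorems~\ref{4.9} and~\ref{2.11}, rather than re-inspecting the proof of Theorem~\ref{4.8} and the (unproved in the paper) continuity claim for the constant $C_0(A)$; the cost is the Gronwall bookkeeping and two standard facts you compress into parentheses -- that $A(t)\in\mreg(X_0)$ gives maximal regularity on finite subintervals, and that a zero-trace solution on $(0,\tau)$ coincides with the restriction of the solution on $(0,T)$ (uniqueness on subintervals, obtained by extending forward past $\tau$) -- both routine but worth a sentence. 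Note also that your Gronwall estimate applied to the difference of two solutions (with $w=0$, $f=0$) yields uniqueness in $\E_T$ directly, which is cleaner than attributing it to Theorem~\ref{4.9}; and your alternative sketch (subdividing $[0,T]$ so that $\|\beta\|_{L^p}$ is small on each piece, contracting, and gluing) is in fact the variant closest in spirit to the paper's own argument.
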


\begin{proof}
  We set $A(t,u(t)) = A(t)$ and $F(t,u(t)) = B(t)u(t)+f(t)$. Obviously, the conditions  (A1) and (A2) are satisfied
  with $\phi_R(t) := \|B(t)\|_{L(\gamma_t\E, X_0)}$. The proof of Theorem~\ref{4.8} shows that the length of the
  existence interval only depends on  $u_0$ and the constants $L(R)$, $C_0$, $C_1$ and $\gamma_T$. Because of
  $A\in C([0,T], L(X_1,X_0))$ and the continuity of $A\mapsto \|(\partial_t+A)^{-1}\|_{L(\F,\E)} = C_0(A)$, all these
  constants can be chosen globally in the time interval $[0,T]$. Therefore, we have global existence of the solution.
\end{proof}

\subsection{Higher regularity}

We consider the same situation as in the last subsection and study the autonomous quasilinear differential equation
\begin{equation}
  \label{4-9}
  \begin{aligned}
  \partial_t u(t) - A(u(t)) u(t) &= F(u(t))\quad (t\in (0,T)),\\
  u(0) & = u_0.
  \end{aligned}
\end{equation}
Here, $T\in (0,\infty)$,  $u_0\in\gamma_t\E(X_1,X_0)$, $A\colon \gamma_t\E\to L(X_1,X_0)$ and $F\colon
\gamma_t\E\to\F$.

It is well known that parabolic equations are smoothing, and the solution is even -- in many applications --
real analytic. We start with a definition.

\begin{definition}
  \label{4.11}
  Let $X,Y$ be Banach spaces, $U\subset X$ open, and $T\colon U\to Y$ be a function. Then  $T$ is called real analytic
   if  for all $u_0\in U$ there exists an $r>0$  with $B(u_0,r)\subset U$ and
  \[ T(u) = \sum_{k=0}^\infty \frac{D^kT(u_0)}{k!}\, \underbrace{(u-u_0,\dots,u-u_0)}_{k-\text{times}}
  \quad (u\in B(u_0,r)).\]
  Here,  $D^kT(u_0) \in L(X\times\ldots\times X, F)$ denotes the  $k$-th Fr\'{e}chet derivative of  $T$ at  $u_0$.
   In this case, we write $T\in C^\omega(U,Y)$.
\end{definition}

The main step in the proof of smoothing properties for parabolic equations is the implicit function theorem
in Banach spaces.

\begin{theorem}[Implicit function theorem]
  \label{4.12}
  Let $X,Y,Z$ be Banach spaces, $U\subset X\times Y$ be open, and $T\in C^1(U,Z)$. Further, let $(x_0,y_0)\in U$
  with $T(x_0,y_0)=0$ and $D_y T((x_0,y_0))\in L_{\Isom}(Y,Z)$, where $D_y T$ stands for the Fr\'{e}chet derivative
  with respect to the second component.  Then there exist neighbourhoods $U_X$ of $x_0$ and $U_Y$ of $y_0$ with
  $U_X\times U_Y\subset U$ and a unique function $\psi\in C^1(U_X,U_Y)$ such that
  \[ T(x, \psi(x)) = 0 \quad (x\in U_X)\]
  and $\psi(x_0)=y_0$.  Therefore, the equation $T(x,y)=0$ is locally solvable with respect to  $y$. The function
  $\psi$ has the same regularity as $T$, i.e., if $T\in C^k(U,Z)$ for $k\in\N\cup\{\infty,\omega\}$, then also
  $\psi\in C^k(U_X, U_Y)$.
\end{theorem}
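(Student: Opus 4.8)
The plan is to reduce the solvability of $T(x,y)=0$ near $(x_0,y_0)$ to a parameter-dependent fixed-point problem, extract $\psi$ from Banach's fixed point theorem, and then bootstrap regularity. Write $S := D_yT(x_0,y_0)\in L_{\Isom}(Y,Z)$ and define
\[ \Phi(x,y) := y - S^{-1}T(x,y),\]
so that for fixed $x$ the fixed points of $\Phi(x,\cdot)$ are exactly the zeros of $T(x,\cdot)$. Since $D_y\Phi(x_0,y_0) = \id_Y - S^{-1}D_yT(x_0,y_0) = 0$ and $(x,y)\mapsto D_y\Phi(x,y)$ is continuous, there are balls $\overline{B_X(x_0,\delta)}$ and $\overline{B_Y(y_0,\rho)}$ inside $U$ on which $\|D_y\Phi(x,y)\|_{L(Y)}\le\tfrac12$; hence $\Phi(x,\cdot)$ is a $\tfrac12$-contraction on $\overline{B_Y(y_0,\rho)}$ for each such $x$. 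Shrinking $\delta$ further we get $\|\Phi(x,y_0)-y_0\|_Y = \|S^{-1}T(x,y_0)\|_Y \le \tfrac\rho2$ (using $T(x_0,y_0)=0$ and continuity of $T$), so $\Phi(x,\cdot)$ maps $\overline{B_Y(y_0,\rho)}$ into itself. Banach's fixed point theorem then yields a unique $\psi(x)\in\overline{B_Y(y_0,\rho)}$ with $T(x,\psi(x))=0$, and $\psi(x_0)=y_0$; uniqueness of $\psi$ within $U_X\times U_Y := B_X(x_0,\delta)\times B_Y(y_0,\rho)$ is built into the construction.

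Continuity of $\psi$ is the standard parameter estimate: for $x,x'\in U_X$,
\[ \|\psi(x)-\psi(x')\|_Y \le \tfrac12\|\psi(x)-\psi(x')\|_Y + \|S^{-1}\|\,\|T(x,\psi(x'))-T(x',\psi(x'))\|_Z,\]
so $\|\psi(x)-\psi(x')\|_Y \le 2\|S^{-1}\|\,\|T(x,\psi(x'))-T(x',\psi(x'))\|_Z \to 0$ as $x'\to x$; if $T\in C^1$ this moreover shows $\psi$ is locally Lipschitz. For the $C^1$-statement, shrink the neighbourhoods once more so that $x\mapsto D_yT(x,\psi(x))$ takes values in the open set $L_{\Isom}(Y,Z)$, and set $R(x) := [D_yT(x,\psi(x))]^{-1}D_xT(x,\psi(x))$. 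Expanding $0 = T(x+h,\psi(x+h))-T(x,\psi(x))$ to first order and using $\|\psi(x+h)-\psi(x)\|_Y = O(\|h\|_X)$ to absorb the Fr\'echet remainder, one solves for $\psi(x+h)-\psi(x) = -R(x)h + o(\|h\|_X)$; thus $\psi$ is differentiable with $D\psi = -R$, and continuity of $D\psi$ follows from continuity of $\psi$, of $D_xT$, $D_yT$, and of the inversion map $A\mapsto A^{-1}$ on $L_{\Isom}(Y,Z)$.

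Higher regularity is obtained by bootstrapping the identity $D\psi(x) = -[D_yT(x,\psi(x))]^{-1}D_xT(x,\psi(x))$. The map $A\mapsto A^{-1}$ is real analytic on $L_{\Isom}(Y,Z)$ (Neumann series), and $x\mapsto(x,\psi(x))$ inherits the regularity of $\psi$; so if $T\in C^k$ with $k\in\N$ and, inductively, $\psi\in C^{k-1}$, then the right-hand side is a composition of $C^{k-1}$ maps, whence $D\psi\in C^{k-1}$ and $\psi\in C^k$. The case $k=\infty$ is then immediate. \textbf{The delicate case, and the main obstacle, is $k=\omega$}: the finite-order bootstrap does not by itself give real analyticity. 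The cleanest route is complexification — a real analytic $T$ extends to a holomorphic $T_\C$ on a neighbourhood of $(x_0,y_0)$ in the complexifications $X_\C\times Y_\C$, with $D_yT_\C(x_0,y_0)$ still an isomorphism; running the above argument over $\C$ (where complex Fr\'echet differentiability already forces holomorphy, i.e.\ local representability by a convergent power series, and the bootstrap then propagates holomorphy to $\psi_\C$) produces a holomorphic $\psi_\C$ whose restriction to the real neighbourhood agrees with $\psi$ by the uniqueness part; since a holomorphic map between complex Banach spaces is real analytic on the underlying real spaces, $\psi\in C^\omega$ follows. An alternative is to majorize the fixed-point iteration directly and exhibit the convergent power series for $\psi$, but that is considerably more computational.
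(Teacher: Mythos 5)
The paper does not prove Theorem~\ref{4.12} at all: it is quoted as the classical implicit function theorem in Banach spaces and used as a black box in the proof of Theorem~\ref{4.13}, so there is no in-paper argument to compare yours against. Judged on its own merits, your proof is the standard and correct one: the contraction $\Phi(x,y)=y-S^{-1}T(x,y)$ with $D_y\Phi(x_0,y_0)=0$, the self-mapping and uniform contraction on a small closed ball, Banach's fixed point theorem for existence and uniqueness of $\psi$, the parameter estimate for continuity and local Lipschitz continuity, the first-order expansion giving $D\psi(x)=-[D_yT(x,\psi(x))]^{-1}D_xT(x,\psi(x))$, and the bootstrap of this identity for $C^k$ and $C^\infty$. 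You also correctly identify that the bootstrap alone does not yield the $k=\omega$ case and that complexification is the clean way to handle it; this works, resting on the standard facts that a $C^\omega$ map (in the power-series sense of Definition~\ref{4.11}) extends to a holomorphic map on a neighbourhood in the complexifications, that complex Fr\'echet differentiability implies local power-series representation, and that the restriction of the holomorphic solution to real arguments coincides with $\psi$ by uniqueness. Two cosmetic points you may want to tighten: with the bound $\|S^{-1}T(x,y_0)\|_Y\le\rho/2$ you only conclude $\|\psi(x)-y_0\|_Y\le\rho$, so to have $\psi(U_X)\subset U_Y$ for the \emph{open} ball $U_Y=B_Y(y_0,\rho)$ you should take a strict margin (e.g. $\le\rho/4$); and in the differentiability step you should state explicitly that the Fr\'echet remainder of $T$ at $(x,\psi(x))$ is $o(\|h\|_X+\|\psi(x+h)-\psi(x)\|_Y)$, which your Lipschitz bound converts into $o(\|h\|_X)$ — as written this is implicit. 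Neither is a genuine gap.
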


With the help of the implicit function theorem, one can prove smoothing properties with respect to the time variable.
As references, we mention \cite{Angenent90}, \cite{Pruess02}, Section~5, and \cite{Pruess-Simonett16}, Section~5.2.

\begin{theorem}
  \label{4.13}
  Let $k\in \N\cup\{\infty,\omega\}$, and let $A\in C^k(\gamma_t\E; L(X_1,X_0))$ and $F\in C^k(\gamma_t\E, X_0)$.
  Assume $u\in \E_T(X_1,X_0)$ to be a solution of \eqref{4-9}, and assume that $A(u(t))\in \mreg(X_0)$ for
  all $t\in [0,T]$. Then
  \[ t\mapsto t^j \partial_t^j u(t)\in W_p^1(J;X_0)\cap L^p(J;X_1) \]
  holds for all $j\in\N_0$ with $j\le k$. In particular,
  \[ u\in W_p^{k+1}((\epsilon,T); X_0)\cap W_p^k ((\epsilon,T); X_1)\]
  for every $\epsilon >0$ as well as
  \[ u\in C^k((0,T);\gamma_t\E)\cap C^{k+1-1/p}((0,T);X_0)\cap C^{k-1/p}((0,T);X_1).\]
  Here,  $C^{k+1-1/p}$ and $C^{k-1/p}$ stand for the H\"older spaces of order $k+1-1/p$ and $k-1/p$, respectively.
  If $k=\infty$, then $u\in C^\infty((0,T);X_1)$, and if $k=\omega$, then $u\in C^\omega((0,T);X_1)$.
\end{theorem}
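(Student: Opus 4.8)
The plan is to prove the theorem by the \emph{parameter trick} of Angenent and Pr\"uss: one introduces a real parameter $\lambda$ near $1$ into the equation, solves the resulting family of problems with the implicit function theorem, and identifies the $\lambda$-derivatives of the solution family with the quantities $t^j\partial_t^j u$. Fix the given solution $u\in\E_T$ of \eqref{4-9} on $[0,T]$, set $J=(0,T)$, and for $\lambda$ in a small interval $\Lambda:=(1-\delta,1+\delta)$ define
\[
\Psi\colon\Lambda\times\leftidx{_0}{\E_T}{}\longrightarrow\F_T,\qquad
\Psi(\lambda,w):=\partial_t(w+u)-\lambda A(w+u)(w+u)-\lambda F(w+u).
\]
The shift by $u$ guarantees that $w+u$ has initial value $u_0$ for every $w\in\leftidx{_0}{\E_T}{}$, and $\Psi(1,0)=0$ since $u$ solves \eqref{4-9}. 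First I would check that $\Psi$ is of class $C^k$, which reduces to showing that the substitution operators $v\mapsto A(v)v$ and $v\mapsto F(v)$ are $C^k$ from $\E_T$ into $\F_T$; this follows from the embedding $\E_T\hookrightarrow C([0,T];\gamma_t\E)$ (Lemma~\ref{2.4a}), the hypotheses $A\in C^k(\gamma_t\E;L(X_1,X_0))$, $F\in C^k(\gamma_t\E,X_0)$, and the continuity of the bilinear product $(S,v)\mapsto Sv$ from $L^\infty(J;L(X_1,X_0))\times L^p(J;X_1)$ to $L^p(J;X_0)=\F_T$.

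The heart of the argument is to show that $D_w\Psi(1,0)\in L(\leftidx{_0}{\E_T}{},\F_T)$ is an isomorphism. Differentiating and using the product rule gives
\[
D_w\Psi(1,0)\,w=\partial_t w-A(u(\cdot))\,w-\big(DA(u(\cdot))w\big)u(\cdot)-DF(u(\cdot))\,w,
\]
the linearized operator $\partial_t-\mathcal A(\cdot)$ with principal part $\mathcal A(t):=A(u(t))$ and a remainder of the form $w\mapsto B(\cdot)w$. Since $u\in C([0,T];\gamma_t\E)$, the map $t\mapsto A(u(t))$ lies in $C([0,T];L(X_1,X_0))$ and $A(u(t))\in\mreg(X_0)$ for all $t$ by hypothesis, so Theorem~\ref{2.11} governs the principal part. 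For the remainder, $B(t):=\big(DA(u(t))(\cdot)\big)u(t)+DF(u(t))\in L(\gamma_t\E,X_0)$, and using $\|B(t)\|_{L(\gamma_t\E,X_0)}\le C_0\big(\|u(t)\|_{X_1}+1\big)$ (with $C_0$ bounding $DA$ and $DF$ on the compact set $u([0,T])$) together with $u\in L^p(J;X_1)$ one gets $B\in L^p(J;L(\gamma_t\E,X_0))$. Thus Lemma~\ref{4.10} applies with $u_0=0$ and shows that $D_w\Psi(1,0)$ is a continuous bijection, hence an isomorphism by the open mapping theorem. The implicit function theorem (Theorem~\ref{4.12}) now yields a $C^k$ curve $\lambda\mapsto w(\lambda)$ with $w(1)=0$ and $\Psi(\lambda,w(\lambda))=0$; writing $u_\lambda:=w(\lambda)+u$, the function $u_\lambda\in\E_T$ is the unique solution of $\partial_t u_\lambda-\lambda A(u_\lambda)u_\lambda=\lambda F(u_\lambda)$, $u_\lambda(0)=u_0$, and $\lambda\mapsto u_\lambda\in\E_T$ is $C^k$.

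It remains to read off the regularity. By uniqueness of solutions (Theorem~\ref{4.8}) one has $u_\lambda(t)=u(\lambda t)$ for $t$ in any compact subinterval of $[0,T)$ and all $\lambda$ sufficiently close to $1$; hence, differentiating the $C^k$ map $\lambda\mapsto u_\lambda\in\E_T$ at $\lambda=1$ shows that $t\mapsto t^j\partial_t^j u(t)$ belongs to $\E_T=W_p^1(J;X_0)\cap L^p(J;X_1)$ for every $j\le k$, the uniform $\E_T$-bound furnished by the implicit function theorem upgrading the identity from compact subintervals to all of $J$. Dividing by $t^j$ on $(\epsilon,T)$ gives $u\in W_p^{k+1}((\epsilon,T);X_0)\cap W_p^k((\epsilon,T);X_1)$ for every $\epsilon>0$; the one-dimensional Sobolev embeddings $W_p^m((\epsilon,T);Y)\hookrightarrow C^{m-1/p}((\epsilon,T);Y)$ give the stated H\"older regularity, while $\partial_t^j u\in W_p^1((\epsilon,T);X_0)\cap L^p((\epsilon,T);X_1)\hookrightarrow C((\epsilon,T);\gamma_t\E)$ (Lemma~\ref{2.4a}) gives $u\in C^k((0,T);\gamma_t\E)$. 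The case $k=\infty$ follows by intersecting over all finite $k$; the case $k=\omega$ follows in the same way since Theorem~\ref{4.12} also propagates real analyticity, the (slightly more delicate) $X_1$-valued analyticity being obtained by one further application of the parameter trick to time-translated solutions.

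I expect the main obstacle to be the isomorphism property of $D_w\Psi(1,0)$ --- more precisely, verifying that the terms $w\mapsto\big(DA(u(\cdot))w\big)u(\cdot)$ and $w\mapsto DF(u(\cdot))w$ are genuinely of lower order, so that Theorem~\ref{2.11} handles the principal part and Lemma~\ref{4.10} absorbs the perturbation. The check that $\Psi$ is $C^k$ is routine but, like the identification $B\in L^p(J;L(\gamma_t\E,X_0))$, makes essential use of the embedding $\E_T\hookrightarrow C([0,T];\gamma_t\E)$ and of $u\in L^p(J;X_1)$.
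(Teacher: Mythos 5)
Your proposal is correct and follows essentially the same route as the paper: the Angenent/Pr\"uss parameter trick, the implicit function theorem applied to the rescaled equation, maximal regularity of the linearization via Lemma~\ref{4.10} (with $B(t)=\big(DA(u(t))\,\cdot\,\big)u(t)+DF(u(t))$ in $L^p(J;L(\gamma_t\E,X_0))$), the identification $u_\lambda(t)=u(\lambda t)$ by uniqueness, and Sobolev embedding for the H\"older statements. The only cosmetic deviations are that you encode the initial condition by shifting into $\leftidx{_0}{\E_T}{}$ instead of the paper's map into $\F_T\times\gamma_t\E$, and that you work on $(0,T)$ directly, where the paper avoids the domain issue for $u(\lambda t)$, $\lambda>1$, by first restricting to $T(\epsilon)=T/(1+\epsilon)$ and exhausting -- exactly the clean version of the compact-subinterval argument you sketch.
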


\begin{proof}
  We fix  $\epsilon\in (0,1)$ and set $T(\epsilon):= \frac{T}{1+\epsilon}$. For $\lambda\in (1-\epsilon,1+\epsilon)$
  we define the function $u_\lambda\colon [0,T(\epsilon)]\to \gamma_t\E$ by $u_\lambda(t):= u(\lambda t)\;
  (t\in [0,T(\epsilon)])$. Then $\partial_t u_\lambda(t) = \lambda (\partial_t u)(\lambda t)$, and therefore
  \begin{align*}
   \partial_t u_\lambda (t) - \lambda A(u_\lambda(t)) u_\lambda(t) & = \lambda F(u_\lambda(t))\quad (t\in
   (0, T(\epsilon))),\\
   u_\lambda(0) & = u_0.
   \end{align*}
   Now consider the function
   \[ H\colon (1-\epsilon,1+\epsilon)\times \E_{T(\epsilon)}\to \F_{T(\epsilon)}\times\gamma_t\E\]
   defined by
   \[ H(\lambda, w)(t) := \begin{pmatrix}
    \partial_t w(t)-\lambda A(w(t)) w(t) - \lambda F(w(t))\\
     w(0)-u_0
     \end{pmatrix} \quad (t\in (0,T(\epsilon)))\]
   for $\lambda\in (1-\epsilon,1+\epsilon)$ and $w\in \E_{T(\epsilon)}$. As $A$ and $F$ are both of class $C^k$,
    the same holds for $H$. Moreover, $H(1,u)=0$ and
   \begin{align*}
    D_\lambda H(\lambda,w) & = \begin{pmatrix} -A(w)w - F(w)\\  0\end{pmatrix},\\
    D_w H(\lambda,w) h & = \begin{pmatrix} \partial_t h - \lambda A(w)h - \lambda A'(w) hw - \lambda F'(w)h\\ h(0)
    \end{pmatrix}
    \end{align*}
    for $h\in \E_{T(\epsilon)}$. Here $A'(u)$ stands for the  Fr\'echet derivative of $A$ at $u$.
    In particular, we obtain for $\lambda=1$ and $w=u$
    \[ D_w H(1,u)h = \begin{pmatrix} \partial_t h + A(u) h + A'(u) hu - F'(u)h\\ h(0)\end{pmatrix}.\]
For  $t\in [0,T(\epsilon)]$ and $v\in \gamma_t\E$, we define $B(t)v:= -A'(u(t))v u(t) - F'(u(t))v$. As $A\in
C^1(\gamma_t\E, L(X_1,X_0))$ and $F\in C^1(\gamma_t\E, X_0)$, we get $B\in L^p((0,T); L(\gamma_t\E,X_0))$. Therefore,
we can apply Lemma~\ref{4.10} (replacing $A(t)$ in this lemma by $A(u(t))$). Note that $t\mapsto A(u(t)) \in
C([0,T(\epsilon)], L(X_1,X_0))$ holds because of $t\mapsto u(t)\in C([0,T(\epsilon)];\gamma_t\E)$. By assumption,
$A(u(t)) \in \mreg(X_0)$ for every $t\in [0,T]$, and we can apply Lemma~\ref{4.10}.  This yields
\[ D_w H(1,u) \in L_{\Isom}(\E_{T(\epsilon)}  , \F_{T(\epsilon)} \times \gamma_t\E ).\]
Now the implicit function theorem, Theorem~\ref{4.12}, tells us that there exists a $\delta>0$ and a $C^k$-function
 $\psi\colon (1-\delta, 1+\delta)\to \E_{T(\epsilon)}$ with $H(\lambda, \psi(\lambda))=0\;(\lambda\in
  (1-\delta,1+\delta))$ and $\psi(1)=u$.

By definition of $H$ and the uniqueness of the solution, we obtain $\psi(\lambda) = u_\lambda$, i.e., $\lambda\mapsto
u_\lambda\in C^k((1-\delta,1+\delta), \E_{T(\epsilon)})$. Because of $\E_{T(\epsilon)} \subset C([0,T(\epsilon)],
\gamma_t\E)$, we obtain $\lambda\mapsto u_\lambda(t)=u(\lambda t)\in C^k((1-\delta,1+\delta),\gamma_t\E)$. But this
means $u\in C^k((0,T(\epsilon)), \gamma_t\E)$.

Now we use $\frac{\partial}{\partial\lambda} u_\lambda(t)|_{\lambda=1} = t\partial_t u(t)\;(t\in (0,T(\epsilon))$. As
$\psi\in C^k((1-\delta,1+\delta), \E_{T(\epsilon)}$, we get $t\mapsto t\partial_t u(t)\in \E_{T(\epsilon)}  $. An
iteration shows that $t\mapsto t^k \partial_t^k u(t) \in \E_{T(\epsilon)}$, and therefore
\[ u\in W_p^{k+1}((\delta, T(\epsilon)); X_0)\cap W_p^k ((\delta,T(\epsilon)); X_1)\]
for every $\delta>0$ and $\epsilon>0$. Now we apply  Sobolev's embedding theorem which tells us that
$W_p^k((\delta,T(\epsilon))\subset C^{k-1/p}([\delta,T(\epsilon)])$. With this we obtain, as  $\epsilon>0$ and
$\delta>0$ can be chosen arbitrary,
\[ u\in C^{k+1-1/p}((0,T);X_0)\cap C^{k-1/p}((0,T);X_1).\]
In the case $k=\infty$, we get $u\in C^\infty((0,T); X_1)$. If $k=\omega$, then the function $\psi$ is real analytic.
The above embeddings are linear and therefore real analytic, too, which yields  $u\in C^\omega((0,T), X_1)$.
\end{proof}

\begin{remark}
  \label{4.13a}
  This method of proof is known as parameter trick or method of Angenent \cite{Angenent90}. Note that the two main ingredients are the
  implicit function theorem in Banach spaces and the fact that $D_w H(1,u)$ is an isomorphism. The latter is exactly
  the maximal regularity of the linearization, and it can also be seen as one of the main ideas of the maximal
  regularity approach to show that the implicit function theorem can be applied to the nonlinear equation.
\end{remark}

As an example, we consider the quasilinear autonomous second order equation in  $\R^n$
\begin{equation}
  \label{4-10}
  \begin{aligned}
    \partial_t u(t,x) - \spur \big( a(u(t,x), \nabla u(t,x)) \nabla^2 u(t,x)\big) & = f(u(t,x),\nabla u(t,x))\\
    & \qquad\qquad ((t,x)\in (0,T)\times\R^n),\\
    u(0,x) & = u_0(x)
  \end{aligned}
\end{equation}

To solve the nonlinear problem, we need the following result from the linear theory, which can be shown by the
methods of Section~5.

\begin{lemma}
  \label{4.14}
  Let  $b\in \buc(\R^n;\sym)$ with $b(x)\ge cI_n\;(x\in\R^n)$ for some constant $c>0$. Define the operator $B$ by $D(B):=
  W_p^2(\R^n)\subset L^p(\R^n)$,
  \[ (Bu)(x) := \spur \big( b(x)\nabla^2 u(x)\big) = \sum_{i,j=1}^n b_{ij}(x) \partial_i \partial_j u(x)
  \quad (x\in\R^n,\, u\in D(B)).\]
  Then $B\in \mreg( L^p(\R^n))$.
\end{lemma}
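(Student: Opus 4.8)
The plan is to recognise $B$ as the $L^p$-realisation of a second-order differential operator consisting only of its principal part, with bounded uniformly continuous coefficients, to check that $\partial_t-B$ is parabolic in the sense of Definition~\ref{5.1}, and then to apply Theorem~\ref{5.9} and remove the resulting spectral shift.

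First I would rewrite $B$ in the notation of Section~5. Since $D=-i\partial$ we have $\partial_i\partial_j=-D_iD_j$, so
$Bu=\sum_{i,j=1}^n b_{ij}\partial_i\partial_j u=-\sum_{i,j=1}^n b_{ij}D_iD_j u$
is of the form $A(x,D)=\sum_{|\alpha|=2}a_\alpha(x)D^\alpha$ with $m=1$, $N=1$, no lower-order terms, and coefficients $a_\alpha$ that are real linear combinations of the $b_{ij}$, hence in $\buc(\R^n)$ because $b\in\buc(\R^n;\sym)$; boundedness of $b$ also makes the quantity $M=\sum_{|\alpha|=2}|a_\alpha|$ entering the constants of Section~5 finite. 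Its principal symbol is $a_0(x,\xi)=-\sum_{i,j}b_{ij}(x)\xi_i\xi_j=-\xi^\top b(x)\xi$. To verify parameter-ellipticity in $\bar\C_+$ (Definition~\ref{5.1} with $\phi=\tfrac\pi2$), fix $x\in\R^n$ and $(\xi,\lambda)\in(\R^n\times\bar\C_+)\setminus\{0\}$, write $\lambda=\sigma+i\tau$ with $\sigma\ge 0$, and use $\xi^\top b(x)\xi\ge c|\xi|^2$:
\[ |a_0(x,\xi)-\lambda|^2=(\xi^\top b(x)\xi+\sigma)^2+\tau^2\ge c^2|\xi|^4+\sigma^2+\tau^2\ge\tfrac12\big(c|\xi|^2+|\lambda|\big)^2, \]
where I simply dropped the nonnegative cross term $2\sigma\,\xi^\top b(x)\xi$. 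As $c$ is independent of $x$, this is precisely \eqref{5-1} with $2m=2$, $N=1$, uniformly in $x$; hence $\partial_t-B$ is parabolic.

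With parabolicity established, Theorem~\ref{5.9} applies directly (here $1<p<\infty$, the top-order coefficients lie in $\buc$, and there are no lower-order terms): there exist $\theta>\tfrac\pi2$ and $\mu>0$ such that $B-\mu$ is $\RR$-sectorial with angle $\theta$, and in particular $B-\mu$ has maximal $L^q$-regularity for all $q\in(1,\infty)$ (recall that, by Dore's theorem, maximal $L^p$-regularity is independent of $p$, so one simply writes $B-\mu\in\mreg(L^p(\R^n))$).

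It remains to discard the shift $\mu$, which is the only delicate point: Theorem~\ref{5.9} produces $\RR$-sectoriality of $B-\mu$, not of $B$ itself — and indeed $B$ is in general not invertible on $L^p(\R^n)$, its symbol vanishing as $\xi\to0$, so on the half-line one should read "$\mreg$" in the finite-interval sense actually used in Section~7. On a finite interval $(0,T)$ the bounded perturbation $\mu\,\id$ is harmless: apply Lemma~\ref{4.10} with $A(t)\equiv B-\mu$ (constant, in $C([0,T],L(X_1,X_0))$ and in $\mreg(X_0)$) and $B(t)\equiv\mu\,\id\in L(\gamma_t\E,X_0)$ (the trace space $\gamma_t\E$ embeds continuously into $X_0=L^p(\R^n)$, cf.\ Remark~\ref{2.3}); then for every $f\in\F_T$ and $u_0\in\gamma_t\E$ the problem $\partial_t u-(B-\mu)u=\mu u+f$, $u(0)=u_0$, i.e.\ $\partial_t u-Bu=f$, $u(0)=u_0$, has a unique solution $u\in\E_T$. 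Equivalently, the substitution $v(t):=e^{-\mu t}u(t)$, which solves $\partial_t v-(B-\mu)v=e^{-\mu t}f$, reduces the problem for $B$ to the one for $B-\mu$ and transfers the estimate. This gives $B\in\mreg(L^p(\R^n))$. Thus the only genuine computation is the parabolicity check, which is elementary; the mild obstacle is purely bookkeeping around the spectral shift.
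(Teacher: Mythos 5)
Your proposal is correct and follows exactly the route the paper intends: the paper gives no detailed proof of Lemma~\ref{4.14}, merely pointing to the ``methods of Section~5'', i.e.\ checking parabolicity of $\partial_t-B$ (your uniform lower bound $|a_0(x,\xi)-\lambda|\ge \tfrac{1}{\sqrt 2}(c|\xi|^2+|\lambda|)$ is the whole computation) and invoking Theorem~\ref{5.9}. Your extra discussion of the shift $\mu$ -- reading $\mreg$ in the finite-interval sense used in Section~7 and removing $\mu$ via $v=e^{-\mu t}u$ (or Lemma~\ref{4.10}) -- is a legitimate and in fact more careful treatment of a point the paper glosses over, since $B$ itself need not be invertible on $L^p(\R^n)$.
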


For the nonlinear equation, we obtain the following result (see \cite{Pruess02}, Theorem~5.1).

\begin{theorem}
  \label{4.15}
  Let  $p\in (n+2,\infty)$ and  $k\in \N\cup\{\infty,\omega\}$. Assume that $a\in C^k(\R^{n+1}, \sym)$ and
  $f\in C^k(\R^{n+1},\R)$ with $f(0)=0$, and assume that for all  $(r,p)\in\R\times\R^n$ the matrix $a(r,p)$
  is positive definite. Then equation \eqref{4-10} has for all  $u_0\in W_p^{2-2/p}(\R^n)$ a unique maximal
  solution $u\in L^p((0,T^+); W_p^2(\R^n_+))\cap W_p^1((0,T^+); L^p(\R^n))$ in the existence interval $J=(0,T^+)$
  with $T^+=T^+(u_0)>0$. Moreover,
  \[ u\in C^k(J; W_p^{2-2/p}(\R^n))\cap C^{k+1-1/p}(J; L^p(\R^n))\cap C^{k-1/p}(J; W_p^2(\R^n)).\]
\end{theorem}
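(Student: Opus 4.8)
The plan is to verify that equation \eqref{4-10} fits the abstract framework of Theorems~\ref{4.9} and \ref{4.13}, with the choice $X_0 = L^p(\R^n)$, $X_1 = W_p^2(\R^n)$, and hence $\gamma_t\E = B_{pp}^{2-2/p}(\R^n) = W_p^{2-2/p}(\R^n)$. First I would set $A(v) := \spur\big(a(v,\nabla v)\nabla^2(\cdot)\big)$ as an operator from $X_1$ to $X_0$ with coefficient matrix $b_v(x) := a(v(x),\nabla v(x))$, and $F(v)(x) := f(v(x),\nabla v(x))$. The crucial embedding is $p > n+2$, which via the mixed Sobolev embedding for the parabolic space $\E$ (or directly, since $2 - 2/p > 1 + n/p$ when $p > n+2$) gives $\gamma_t\E = W_p^{2-2/p}(\R^n) \hookrightarrow C^1(\R^n)$ — in fact $\hookrightarrow BUC^1(\R^n)$. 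Thus for $v\in\gamma_t\E$ the functions $v$ and $\nabla v$ are bounded and uniformly continuous, so $b_v = a(v,\nabla v)\in\buc(\R^n;\sym)$, and since $a$ takes values in positive definite matrices and $v,\nabla v$ range over a bounded set, $b_v(x)\ge c I_n$ uniformly on $\R^n$ for some $c>0$ depending on $\|v\|_{\gamma_t\E}$. Hence Lemma~\ref{4.14} applies and gives $A(v)\in\mreg(L^p(\R^n))$ for every $v\in\gamma_t\E$.

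Next I would check hypotheses (A1) and (A2). For (A1), the continuity $A\in C(\gamma_t\E, L(X_1,X_0))$ and the local Lipschitz estimate follow from the fact that $a\in C^k$ (in particular $C^1$, hence locally Lipschitz) together with the algebra/multiplier property: for $v,\bar v$ with norms $\le R$ in $\gamma_t\E$, one has $\|a(v,\nabla v) - a(\bar v,\nabla\bar v)\|_{L^\infty}\le L_a(R)(\|v-\bar v\|_\infty + \|\nabla v - \nabla\bar v\|_\infty)\le C L_a(R)\|v-\bar v\|_{\gamma_t\E}$, using the embedding into $C^1$; multiplying by $\nabla^2\cdot$ then gives $\|A(v)w - A(\bar v)w\|_{L^p}\le C L_a(R)\|v-\bar v\|_{\gamma_t\E}\|w\|_{W_p^2}$. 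For (A2), measurability and continuity of $F$ are clear, $F(0) = f(0,0) = 0$ so $F(\cdot,0)\equiv 0\in L^p$, and the local Lipschitz bound $\|F(v) - F(\bar v)\|_{L^p}\le \phi_R\|v-\bar v\|_{\gamma_t\E}$ follows again from $f\in C^1$ and the embedding $\gamma_t\E\hookrightarrow C^1$, with $\phi_R$ a constant (hence in $L^p((0,T_0))$ since $T_0<\infty$, or one restricts to finite $T$). This being an autonomous problem, the $t$-dependence is trivial.

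With (A1), (A2), and $A(v)\in\mreg(X_0)$ for all $v$ established, Theorem~\ref{4.9} yields a unique maximal solution $u\in\E_{T^+}$ on $[0,T^+)$ with $T^+ = T^+(u_0)>0$, i.e. $u\in L^p((0,T^+);W_p^2(\R^n))\cap W_p^1((0,T^+);L^p(\R^n))$. For the higher regularity, I would invoke Theorem~\ref{4.13}: the hypotheses there require $A\in C^k(\gamma_t\E; L(X_1,X_0))$ and $F\in C^k(\gamma_t\E;X_0)$, which hold because $a,f\in C^k(\R^{n+1})$ and the Nemytskii/superposition maps $v\mapsto a(v,\nabla v)$ and $v\mapsto f(v,\nabla v)$ inherit $C^k$-regularity from the smooth symbols via the Banach algebra $C^1(\R^n)$-valued composition (this is where $p>n+2$ is used once more). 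Then Theorem~\ref{4.13}, applied on any compact subinterval, gives $u\in C^k(J;\gamma_t\E)\cap C^{k+1-1/p}(J;L^p(\R^n))\cap C^{k-1/p}(J;W_p^2(\R^n))$, with $J=(0,T^+)$, which is exactly the claimed conclusion; the cases $k=\infty$ and $k=\omega$ are the corresponding specializations.

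The main obstacle is the careful verification of the $C^k$-regularity (and the local Lipschitz property) of the composition operators $v\mapsto a(v,\nabla v)$ and $v\mapsto f(v,\nabla v)$ as maps into $L(X_1,X_0)$ and $X_0$ respectively. This hinges entirely on the embedding $\gamma_t\E = W_p^{2-2/p}(\R^n)\hookrightarrow BUC^1(\R^n)$, valid precisely because $p>n+2$ forces $2 - 2/p > 1 + n/p$; without this one would only control $v$ in $C^0$ or a fractional space and the nonlinearity $a(v,\nabla v)$ would not land in a space where Lemma~\ref{4.14} and the multiplier estimates apply. Once this embedding and the resulting algebra structure are in hand, the differentiability of superposition operators with smooth symbols is standard (chain rule plus the fact that pointwise multiplication is bounded bilinear on $BUC^1$), so the remaining work is bookkeeping rather than genuine difficulty.
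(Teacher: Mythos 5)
Your proposal is correct and follows essentially the same route as the paper: the same choice $X_0=L^p(\R^n)$, $X_1=W_p^2(\R^n)$, the embedding $W_p^{2-2/p}(\R^n)\hookrightarrow C^1$-type spaces from $p>n+2$, verification of (A1)--(A2) and $A(v)\in\mreg(X_0)$ via Lemma~\ref{4.14}, and then Theorems~\ref{4.9} and \ref{4.13} together with the $C^k$-regularity of the Nemytskii-type superposition operators. No gaps worth noting.
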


\begin{proof}
  For $X_0:=L^p(\R^n)$ and $X_1:= W_p^2(\R^n)$, the trace space is given by $\gamma_t\E(X_0,X_1) =
  (X_0,X_1)_{1-1/p,p} = W_p^{2-2/p}(\R^n)$. An application of Sobolev's embedding theorem yields
  \[ \gamma_t\E = W_p^{2-2/p}(\R^n) \subset C_0^1(\R^n) := \big\{u\in C^1(\R^n): \lim_{|x|\to\infty}
  |\partial^\alpha u(x)|=0\;(|\alpha|\le 1)\big\}.\]
  Now define the mappings $A\colon \gamma_t\E \to L(X_0,X_1)$ and $F\colon \gamma_t\E\to X_0$ by
  \begin{align*}
    (A(v) w)(x) & := \spur \big( a(v(x),\nabla v(x))\nabla^2 w(x)\big),\\
    (F(v))(x) & := f(v(x), \nabla v(x))
  \end{align*}
  for $x\in\R^n$, $v\in \gamma_t\E$, and $w\in W_p^2(\R^n)$.

  Let $v\in\gamma_t\E$. Because of  $v\in C_0^1(\R^n)$, the set $\{(v(x),\nabla v(x)):x\in\R^n\}\subset \R^{n+1}$
   is bounded. As $a$ is continuous by assumption, we see that
  \[ b_v := a(v(\cdot),\nabla v(\cdot)) \in \buc(\R^n)\]
  and  $b_v(x) \ge c_v I_n\;(x\in\R^n)$ with $c_v>0$. By Lemma~\ref{4.14}, we obtain  $A(v)\in \mreg(X_0)$ for
  all $v\in \gamma_t\E$.

  To show that assumptions (A1) and (A2) are satisfied, we use the fact that $a$ is a $C^1$-function and
  therefore Lipschitz on bounded sets. Therefore, we get for all $v,\bar v\in\gamma_t\E$ and $w\in X_1$
  with $\|v\|_{\gamma_t\E}\le R$, $\|\bar v\|_{\gamma_t\E}\le R$ the inequality
  \begin{align*}
    \| A(v)w - &A(\bar v)w\|_{L^p(\R^n)} = \big\| \spur\big( a(v,\nabla v) w - a(\bar v,
    \nabla\bar v)w\big)\big\|_{L^p(\R^n)} \\
    & \le C \big\| a(v,\nabla v)-a(\bar v,\nabla\bar v)\|_{L^\infty(\R^n;\R^{n\times n})} \| \nabla^2 w\|_{L^p(\R^n;
    \R^{n\times n})}\\
    & \le C L(R) \|v-\bar v\|_{C^1(\R^n)} \|w\|_{X_1} \\
    & \le C L(R) \|v-\bar v\|_{\gamma_t\E} \|w\|_{X_1}.
  \end{align*}
  This shows assumption  (A1) and, in particular, the continuity of  $A\colon \gamma_t\E \to L(X_0,X_1)$. Similiary,
  assumption (A2) can be shown. Here, we have to show the continuity of  $F\colon \gamma_t\E\to X_0$. For this we
  use the fact that  $F$ is a variant of the so-called  Nemyckii operators, i.e.,
  \[ F\colon W_p^{2-2/p}(\R^n)\to L^p(\R^n),\; F(v) := f(v(\cdot),\nabla v(\cdot))\quad (v\in W_p^{2-2/p}(\R^n)).\]
   For this, we also use $f(0)=0$. By known results on the Nemyckii operator, one obtains   $A\in C^k(\gamma_t\E,
   L(X_1,X_0))$ and $F\in C^k(\gamma_t\E, X_0)$. Therefore, all assumptions of Theorem~\ref{4.13} are satisfied,
   and we obtain higher regularity for the solution $u$ as stated in the theorem.
\end{proof}

\nocite{*}

\providecommand{\bysame}{\leavevmode\hbox to3em{\hrulefill}\thinspace} \providecommand{\href}[2]{#2}

\end{document}